\newcommand{\Ad}{\operatorname{Ad}}
\newcommand{\Eisen}{\mathcal{E}}
\renewcommand{\Re}{\operatorname{Re}}
\newcommand{\A}{\mathbb{A}}
\newcommand{\C}{\mathbb{C}}
\newcommand{\Mat}{\operatorname{Mat}}
\newcommand{\Hom}{\operatorname{Hom}}
\newcommand{\Sym}{\operatorname{Sym}}
\newcommand{\M}{\mathcal{M}}
\newcommand{\wt}{\widetilde}
\newcommand{\Span}{\operatorname{Span}}
\newcommand{\bb}{\mathfrak{b}}
\newcommand{\GG}{\mathbf{G}}
\newtheorem{theorem}{Theorem}[section]
\newtheorem{lemma}[theorem]{Lemma}
\newtheorem{definition}[theorem]{Definition}
\newtheorem{proposition}[theorem]{Proposition}
\newtheorem{corollary}[theorem]{Corollary}
\theoremstyle{remark}
\newtheorem{remark}[theorem]{Remark}
\newcommand{\pr}{\operatorname{pr}}
\newcommand{\NN}{\mathcal{N}}
\DeclareMathOperator{\diag}{diag}
\DeclareMathOperator{\Ind}{Ind}
\DeclareMathOperator{\GL}{GL}
\DeclareMathOperator{\SO}{SO}
\DeclareMathOperator{\GSpin}{GSpin}
\DeclareMathOperator{\GSO}{GSO}
\DeclareMathOperator{\GSp}{GSp}
\DeclareMathOperator{\LL}{\mathcal{L}}
\begin{document}

\title[Rankin-Selberg integrals for $\mathrm{GSpin}$ groups with application]{Rankin-Selberg integrals for $\mathrm{GSpin}$ groups with application to the global Gan-Gross-Prasad conjecture}

\author{Pan Yan}
\address{Department of Mathematics, The University of Arizona, Tucson, AZ 85721, USA}
\email{panyan@arizona.edu}

\date{\today}

\subjclass[2020]{11F70, 11F67,  11F66, 22E55}
\keywords{Rankin-Selberg integrals, global Gan-Gross-Prasad conjecture, Bessel periods}

\begin{abstract}
We construct Rankin-Selberg integrals using Bessel models for a product of tensor product partial $L$-functions
\begin{equation*}
L^S(s,\pi\times\tau_1) L^S(s,\pi\times\tau_2)\cdots L^S(s,\pi\times\tau_r)
\end{equation*}
where $\pi$ is an irreducible cuspidal automorphic representation of a quasi-split $\GSpin$ group, and $\tau_1, \cdots, \tau_r$ are irreducible unitary cuspidal automorphic representations of $\GL_{k_1}, \cdots, \GL_{k_r}$ respectively. 
As an application, we prove one direction of the global Gan-Gross-Prasad conjecture for generic representations of quasi-split $\GSpin$ groups. 
\end{abstract}

\maketitle

\goodbreak 

\tableofcontents

\goodbreak

\section{Introduction}

\subsection{Rankin-Selberg integrals}
The first goal of this paper is to construct a family of global Rankin-Selberg integrals for a finite product of tensor product partial $L$-functions for quasi-split $\GSpin$ groups and general linear groups.

We begin by introducing some notation.   
Let $F$ be a number field and let $\A=\A_F$ be the ring of adeles of $F$. Fix a non-trivial additive character $\psi$ of $F\backslash \A$. 
Let $\GSpin_{m^\prime}$ be a quasi-split $\GSpin$ group defined over $F$, associated to a quadratic space $V$ of dimension $m^\prime$ with Witt index $\tilde{m}$, which fits into a short exact sequence
\begin{equation*}
\begin{CD}
1 @>>>  \GL_1 @>>> \GSpin_{m^\prime} @>\pr>> \SO_{m^\prime} @>>>  1. 
\end{CD}
\end{equation*}
Note that $\ker(\pr)=\GL_1$ lies in the center of $\GSpin_{m^\prime}$, and in particular, if $m^\prime>2$, then $\ker(\pr)=\GL_1$ is the identity component of the center of $\GSpin_{m^\prime}$.
Take $\ell<\tilde{m}$ to be a non-negative integer. Let $N_{\ell}$ be the unipotent radical of the standard parabolic subgroup $Q_{\ell}$ of $\GSpin_{m^\prime}$, whose Levi subgroup, $L_{\ell}$, is isomorphic to $(\GL_1)^{\ell}\times\GSpin_{m^\prime-2\ell}$. The elements of $N_{\ell}$ can be represented by upper triangular unipotent matrices as in \eqref{eq-unipotent-Nl}. Let $\psi_{\ell, a}$ be a character of $N_{\ell}(\A)$ associated to $a\in F^\times$ which is trivial on $N_{\ell}(F)$, defined by \eqref{eq-psi-l-a}. Given an automorphic form $\phi$ on $\GSpin_{m^\prime}(\A)$, we will consider its Fourier coefficient of Bessel type (or the Gelfand-Graev coefficient) along $N_{\ell}$ with respect to $\psi_{\ell,a}$, given by 
\begin{equation*}
\phi^{N_{\ell}, \psi_{\ell, a}}(g):=\int_{N_{\ell}(F)\backslash N_{\ell}(\A)}\phi(ug)\psi_{\ell, a}^{-1}(u)du, \quad g\in \GSpin_{m^\prime}(\A).
\end{equation*}
The connected component of the identity of the stabilizer of $\psi_{\ell,a}$ in $L_{\ell}(\A)$ is a subgroup $G(\A)$, where $G$ is of the form $\GSpin_{m^\prime-2\ell-1}$.
Given a cuspidal automorphic form $\phi^\prime$ of $G(\A)$, such that $\phi(z)\phi^\prime(z)=1$ for all $z\in C^0_G(\A)$ where $C_G^0=\ker(\pr)=\GL_1$, we define the Bessel period for the pair $(\phi, \phi^\prime)$ by
\begin{equation}
\label{intro-eq-Bessel-period}
\mathcal{B}^{\psi_{\ell,a}}(\phi, \phi^\prime):=	 \int_{ C^0_G(\A) G(F)\backslash G(\A)} \phi^{N_{\ell}, \psi_{\ell, a}}(g) \phi^\prime(g)dg. 
\end{equation}

Let $k$ be an integer with $1\le k \le \tilde{m}$, and let $P_k$ be the standard parabolic subgroup of $\GSpin_{m^\prime}$, whose Levi part is isomorphic to $\GL_k\times \GSpin_{m^\prime-2k}$.
Take $\tau$ to be an irreducible unitary generic automorphic representation of $\GL_k(\A)$, of the following isobaric type:
\begin{equation}
\label{intro-eq-tau}
\tau=\tau_1\boxtimes \tau_2 \boxtimes \cdots \boxtimes \tau_{r},
\end{equation}
with $k=k_1+\cdots + k_r$, where each $\tau_i$ is an irreducible unitary cuspidal automorphic representation of $\GL_{k_i}(\A)$ for each $1\le i \le r$. 
Let $\sigma$ be an irreducible automorphic representation of $G_0(\A)=\GSpin_{m^\prime-2k}(\A)$. We denote by $\omega_\sigma$ the restriction of the central character of $\sigma$ to $C_{G_0}^0(\A)=\GL_1(\A)$.  Given a section 
$f_{\tau,\sigma, s}\in \Ind_{P_{k}(\A)}^{H(\A)}(\tau|\cdot|^{s-\frac{1}{2}} \otimes \sigma)$, 
we define the Eisenstein series
\begin{equation*}
E(g, f_{\tau,\sigma, s})=\sum_{\gamma\in P_k(F)\backslash H(F)} f_{\tau,\sigma, s}(\gamma g), \quad g\in \GSpin_{m^\prime}(\A).
\end{equation*}
Let $\pi$ be an irreducible cuspidal automorphic representation of $G(\A)$, such that $\omega_\pi(z) \omega_{\sigma}(z)=1$ for all $z\in C_G^0(\A)$, where $\omega_\pi$ is the restriction of the central characters of $\pi$ to $C_G^0(\A)=\GL_1(\A)$. Let  $\phi_\pi\in V_\pi$ be a non-zero cusp form. 
The global zeta integral we consider in this paper is defined to be the following Bessel period
\begin{equation}
\label{intro-eq-global-integral}
\mathcal{Z}(\phi_\pi, f_{\tau,\sigma, s}):=\mathcal{B}^{\psi_{\ell,a}}(E(\cdot, f_{\tau,\sigma, s}), \phi_\pi)= \int_{C_G^0(\A) G(F)\backslash G(\A)}  E^{N_\ell, \psi_{\ell,a}}(g, f_{\tau,\sigma,s}) \phi_\pi(g) dg.
\end{equation}
Due to the rapid decrease of the cusp form $\phi_\pi$, the integral $\mathcal{Z}(\phi_\pi, f_{\tau,\sigma, s})$ converges absolutely away from the poles of the Eisenstein series. 

Let  $\sigma^\prime$ be given as in \eqref{eq-sigma-prime-1} for $k>\ell$ and in \eqref{eq-sigma-prime-2} for $k\le \ell$.
If $k>\ell$, we assume that the pair $(\pi, \sigma^\prime)$  has a non-zero Bessel period. If $k\le \ell$, we assume the pair $(\sigma^\prime, \pi)$ has a non-zero Bessel period.

\begin{theorem}[Theorem~\ref{thm-global-unfolding-kgreaterl}, Theorem~ \ref{thm-global-unfolding-klel}, Theorem~\ref{thm-local-unramified-computation}]
\label{intro-thm-main}
Let the notation be as above. For decomposible vectors, the integral $\mathcal{Z}(\phi_\pi, f_{\tau,\sigma, s})$ has an Euler product factorization
\begin{equation*}
\mathcal{Z}(\phi_\pi, f_{\tau,\sigma, s})=\prod_{\nu} \mathcal{Z}_{\nu}(\phi_{\pi_{\nu}},  f_{\mathcal{W}(\tau_{\nu}),\sigma_{\nu},s})
\end{equation*}  
where the local zeta integral $\mathcal{Z}_{\nu}(\phi_{\pi_{\nu}},  f_{\mathcal{W}(\tau_{\nu}),\sigma_{\nu},s})$ is given in \eqref{eq-local-zeta-integral-k-larger-than-l} when $k>\ell$ and in \eqref{eq-local-zeta-integral-k-le-l} when $k\le \ell$ respectively. Moreover,  for $\mathrm{Re}(s)\gg 0$, at a finite place $\nu$ when all data are unramified, the local unramified zeta integral has the following expression 
\begin{equation*}
	\mathcal{Z}_\nu(v_{\pi_\nu}^0,	f_{\mathcal{W}(\tau_\nu),\sigma_\nu, s}^0)
	 =  \frac{L(s,\pi_\nu \times \tau_\nu)}{L(s+\frac{1}{2},\sigma_\nu  \times \tau_\nu \otimes\omega_{\pi_\nu}) L\left(2s, \tau_\nu, \rho \otimes \omega_{\pi_\nu}\right)}
\end{equation*}
where $v_{\pi_\nu}^0$, $f_{\mathcal{W}(\tau_\nu),\sigma_\nu, s}^0$ are spherical vectors suitably normalized, and
\begin{equation*}
\rho=
\begin{cases} 
\wedge^2, & 
\mbox{ if $G$ is an odd $\GSpin$ group,}
\\
\Sym^2, & 
\mbox{ if $G$ is an even $\GSpin$ group.}
\end{cases} 
\end{equation*}
\end{theorem}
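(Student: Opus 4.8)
The plan is to prove the theorem in two logically separate parts: the global unfolding, which produces the Euler factorization, and the local unramified computation, which identifies the spherical zeta integral with a ratio of $L$-functions. I would treat the cases $k>\ell$ and $k\le\ell$ in parallel, since the basic mechanism is the same although the bookkeeping differs.

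For the global unfolding, I would substitute the definition of the Eisenstein series $E(g,f_{\tau,\sigma,s})=\sum_{\gamma\in P_k(F)\backslash H(F)}f_{\tau,\sigma,s}(\gamma g)$ into the Bessel period $\mathcal{Z}(\phi_\pi,f_{\tau,\sigma,s})$, interchange summation and integration (justified for $\Re(s)\gg 0$ by absolute convergence, using the rapid decrease of $\phi_\pi$), and analyze the resulting double coset space $P_k(F)\backslash H(F)/(N_\ell \rtimes G)(F)$. The key combinatorial step is to show that among all double cosets, only one contributes a non-vanishing term, the contributions of the others being killed by cuspidality of $\phi_\pi$ or by nontriviality of $\psi_{\ell,a}$ on a unipotent subgroup over which one integrates; this is the standard ``negligible orbit'' argument. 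On the surviving open (or relevant) orbit, the stabilizer computation collapses the remaining integral, after unfolding the $N_\ell$-Fourier coefficient and the induction from $P_k$, into an inner integral that is exactly the Bessel period of $\phi_\pi$ against an automorphic form built from $\tau$ and $\sigma$ — here the hypothesis that $(\pi,\sigma')$ (resp.\ $(\sigma',\pi)$) has nonzero Bessel period is what guarantees the integral is not identically zero. Replacing the automorphic form attached to $\tau$ by its Whitehead--Jacquet--Shalika Fourier expansion in terms of Whittaker functions $\mathcal{W}(\tau_\nu)$ factorizes the global object into a product of local integrals $\mathcal{Z}_\nu(\phi_{\pi_\nu},f_{\mathcal{W}(\tau_\nu),\sigma_\nu,s})$, using the uniqueness of Bessel models locally to see that the global functional factors as a product of local ones.

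For the local unramified computation, I would take all data spherical at a finite place $\nu$ and compute $\mathcal{Z}_\nu(v_{\pi_\nu}^0,f_{\mathcal{W}(\tau_\nu),\sigma_\nu,s}^0)$ directly. The standard technique is the Casselman--Shalika formula: the spherical Whittaker function of $\pi_\nu$ and the spherical section's Whittaker-type datum are both given by Schur-polynomial expressions in the Satake parameters, so the local zeta integral becomes a sum over a lattice of cocharacters (dominant coweights of the relevant group) of a product of such polynomials weighted by $q^{-s\langle\cdot\rangle}$-factors coming from the modulus character and the $|\cdot|^{s-1/2}$ twist. Evaluating this sum is a representation-theoretic identity: one recognizes it, via a Cauchy-type or branching identity for characters of $\GL_k$ and the dual group $\widehat G$, as the quotient $L(s,\pi_\nu\times\tau_\nu)/\big(L(s+\tfrac12,\sigma_\nu\times\tau_\nu\otimes\omega_{\pi_\nu})L(2s,\tau_\nu,\rho\otimes\omega_{\pi_\nu})\big)$, where the denominator records the normalizing factors of the Eisenstein series (the constant-term $L$-factors of $\Ind_{P_k}^H$), and $\rho=\wedge^2$ or $\Sym^2$ according to the parity of the $\GSpin$ group — this parity enters precisely through the structure of the root system of $H$ relative to the Siegel-type parabolic $P_k$.

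The main obstacle I expect is the local unramified computation, specifically carrying out the lattice sum and matching it to the claimed ratio of $L$-functions with the correct $\rho$ and the correct central-character twists $\omega_{\pi_\nu}$. In the $\GSpin$ setting the bookkeeping of central characters is delicate: one must track how the $\GL_1$ in the exact sequence $1\to\GL_1\to\GSpin_{m'}\to\SO_{m'}\to 1$ interacts with the Satake parameters and with the similitude factor throughout, and the $\wedge^2$ versus $\Sym^2$ dichotomy must be shown to arise naturally from the embedding of the dual groups rather than imposed by hand. The global unfolding, by contrast, follows a well-trodden path — the orbit analysis, cuspidality vanishing, and uniqueness of Bessel models — and should present only technical rather than conceptual difficulties, though verifying that the surviving orbit is the expected one and that its stabilizer produces exactly the inner Bessel period requires care with the explicit matrix realizations in \eqref{eq-unipotent-Nl} and \eqref{eq-psi-l-a}.
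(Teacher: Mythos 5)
Your plan for the global unfolding matches the paper's in essence: both substitute the Eisenstein series, interchange sum and integral for $\Re(s)\gg 0$, analyze double cosets $P_k(F)\backslash H(F)/R_{\ell,a}(F)$ (the paper does this in two stages, first $P_k\backslash H/P_\ell$ via Lemmas~\ref{lemma-double-coset-case1}--\ref{lemma-double-coset-case2}, then $P_\ell^{\epsilon_{\alpha,\beta}}\backslash P_\ell/R_{\ell,a}$ via Lemma~\ref{lemma-double-coset-second}), kill all but one orbit by nontriviality of $\psi_{\ell,a}$ (Lemma~\ref{lemma-unfolding-vanishing-alpha=0}) or cuspidality (Lemma~\ref{lemma-unfolding-vanishing-cuspidal}), perform a Piatetski-Shapiro--Shalika mirabolic Fourier expansion in the $k>\ell$ case, and invoke uniqueness of local Bessel models from \cite{Yan2025} to factorize. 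Your ``Whitehead--Jacquet--Shalika'' should read Piatetski-Shapiro--Shalika (or Jacquet--Piatetski-Shapiro--Shalika); otherwise the account is accurate.

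Where you genuinely diverge is the local unramified computation. You propose a direct Casselman--Shalika--style calculation: expand spherical Whittaker/Bessel data as Schur polynomials in Satake parameters, evaluate a lattice sum over dominant coweights, and recognize the answer via a Cauchy/branching identity. The paper does none of this. Instead it twists by unramified square roots of the central characters: since $\omega_\pi$ and $\omega_\sigma=\omega_\pi^{-1}$ are unramified, one sets $\overline{\pi}=\pi\otimes\omega_\pi^{-1/2}\circ N$, $\overline{\sigma}=\sigma\otimes\omega_\sigma^{-1/2}\circ N$, $\overline{\tau}=\tau\otimes\omega_\pi^{1/2}$, whereupon $\overline{\pi}$ and $\overline{\sigma}$ descend to $\SO_{m'-2\ell-1}$ and $\SO_{m'-2k}$, the local zeta integral becomes verbatim the $\SO$ local zeta integral, and the known $\SO$ unramified identity of Jiang--Soudry--Zhang is applied. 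Untwisting then translates $L(s,\overline{\pi}\times\overline{\tau})$ into $L(s,\pi\times\tau)$, and similarly for the denominator, which is how the $\omega_{\pi_\nu}$ twists appear. This reduction is the main technical device of Section~\ref{section-local-zeta-integrals}; your direct approach would in effect re-prove the $\SO$ result (which is substantial) and additionally requires explicit spherical Bessel function formulas for $\GSpin$ that you do not supply. Moreover, for non-generic $\pi_\nu$ there is no spherical Whittaker function of $\pi_\nu$ to expand; the relevant object is the spherical Bessel functional, whose Casselman--Shalika--type formula is far less standard. The paper's twisting argument sidesteps both issues cleanly.
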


For special orthogonal groups and unitary groups, analogous Rankin-Selberg integrals were studied by Jiang and Zhang in \cite{JiangZhang2014, JiangZhang2020Annals}.
When $\pi$ and $\sigma$ have trivial central characters, they can be viewed as cuspidal representations of $\SO_{m^\prime-2\ell-1}(\A)$ and $\SO_{m^\prime-2k}(\A)$ respectively. In this case, 
the integral representation in Theorem~\ref{intro-thm-main} recovers the construction of Jiang and Zhang \cite{JiangZhang2014} for quasi-split special orthogonal groups. 
For quasi-split orthogonal groups, a special family of such global integrals was first investigated by Ginzburg, Piatetski-Shapiro, and Rallis \cite{GinzburgPSRallis1997}. 

We remark that, in general, the family of Bessel models interpolates between the Whittaker model and the spherical model, and thus serves as a unifying generalization of both. 
In our construction, the representation $\pi$ is not required to be globally generic, and $\tau$ is not required to be cuspidal. When $\pi$ is globally generic, $\tau$ is cuspidal and $P_k$ is a Siegel parabolic subgroup of $\GSpin_{m^\prime}$, such construction appears in a recent work of Asgari, Cogdell, and Shahidi \cite{AsgariCogdellShahidi2024}; see Remark~\ref{remark-construction-of-ACS}. Their construction has applications to describing the image of the Langlands functorial transfer of globally generic cuspidal representations of $\GSpin$ groups to the general linear groups (see \cite{AsgariShahidi2014}),  as well as to the local descent theory for $\GSpin$ groups (see \cite{KaplanLauLiu2023}). 

When $\ell=0$, $m^\prime=2n+2$, and $k=n$, our integral generalizes the Rankin-Selberg integral of the $L$-function for $\SO_{2n+1}\times\GL_n$ constructed in \cite[Appendix]{BumpFriedbergFurusawa1997}. In particular, when $n=2$, in view of the isomorphism $\GSpin_5\cong\mathrm{GSp}_4$, our integral further generalizes the integral representation of Furusawa for $\GSp_4\times \GL_2$ \cite{Furusawa1993}.

The main difference in the $\GSpin$ case, compared to the special orthogonal case, lies not only in the more delicate analysis required for the global unfolding arguments, but also in the appearance of the twisted symmetric/exterior square $L$-functions of $\tau$. A similar phenomenon is observed in the work of Asgari, Cogdell, and Shahidi \cite{AsgariCogdellShahidi2024}.

\begin{remark}
In this paper, we only considered the global integral formed by pairing the Bessel coefficient of an Eisenstein series along $N_{\ell}$ with respect to $\psi_{\ell,a}$ with a cusp form.
There should also exist a ``dual" integral to $\mathcal{Z}(\phi_\pi, f_{\tau,\sigma, s})$. Here, by ``dual" integral we mean one obtained by switching the role of cusp form and Eisenstein series, i.e.,  one formed by pairing the Bessel coefficient of a cusp form along $N_{\ell}$ with respect to $\psi_{\ell,a}$ with an Eisenstein series. We will consider the dual integral representation in our upcoming work. For generic representations such dual integrals are constructed in \cite[\S 4]{AsgariCogdellShahidi2024} and \cite[\S 5]{AsgariCogdellShahidi2024}. For the case of split special orthogonal groups, such a duality is discussed in \cite[\S 1]{Soudry2017Israel}, and aspects of the local theory are developed in \cite{Soudry2018JNT}. Additionally, a family of related global integral for the general linear groups using the Bessel coefficient of a cusp form paired with an Eisenstein series is constructed in \cite{YanZhang2023}.
\end{remark}

\subsection{Application to the global Gan-Gross-Prasad conjecture}
\label{subsection-intro-GGP}
In this section we describe an application of Theorem~\ref{intro-thm-main} to  proving one direction of the global Gan-Gross-Prasad conjecture for generic representations of quasi-split $\GSpin$ groups, namely, the implication from the non-vanishing of the Bessel period to the non-vanishing of the central value of the relevant $L$-function.

\begin{theorem}[Theorem~\ref{thm-global-GGP}]
\label{intro-thm-global-GGP}
Let $k>\ell$.
Let $\pi$, $\sigma$ be irreducible generic cuspidal automorphic representations of $\GSpin_{m^\prime-2\ell-1}(\A)$ and $\GSpin_{m^\prime-2k}(\A)$ respectively such that $\omega_\pi \omega_\sigma=1$. 
If the Bessel period $\mathcal{B}^{\psi_{k-\ell-1},-a}(\phi_\pi, \phi_\sigma)$ is non-zero for some choice of data $\phi_\pi\in V_\pi, \phi_\sigma\in V_\sigma$, then the tensor product $L$-function $L(s,\sigma\times\pi)$ is holomorphic and non-zero at $s=\frac{1}{2}$.
\end{theorem}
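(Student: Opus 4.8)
The plan is to deduce the global Gan-Gross-Prasad statement from the Rankin-Selberg integral of Theorem~\ref{intro-thm-main} in the standard way: relate the non-vanishing of the Bessel period to the non-vanishing of the global zeta integral $\mathcal{Z}(\phi_\pi, f_{\tau,\sigma,s})$ at the appropriate point, then read off the analytic properties of $L(s,\sigma\times\pi)$ from the Euler product and from known properties of the auxiliary $L$-functions. Concretely, I would take $\tau$ to be the isobaric automorphic representation of $\GL_k(\A)$ corresponding to $\sigma$ under the functorial transfer (so that $L(s,\sigma\times\pi)$ equals, up to the finitely many ramified places and the known normalizing factors, the partial $L$-function appearing in the numerator of the unramified computation), and form the residual Eisenstein series $E(g, f_{\tau,\sigma',s})$ on $\GSpin_{m'}$ whose Bessel coefficient along $N_\ell$ unfolds, by the global unfolding theorem (Theorem~\ref{thm-global-unfolding-kgreaterl}, since $k>\ell$), against $\phi_\pi$ to a product of the Bessel period $\mathcal{B}^{\psi_{k-\ell-1},-a}(\phi_\pi,\phi_\sigma)$ with a period integral involving the data of $\tau$. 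The key point is that the hypothesis of Theorem~\ref{intro-thm-main} — that $(\pi,\sigma')$ has a non-zero Bessel period — is exactly supplied here by the assumed non-vanishing of $\mathcal{B}^{\psi_{k-\ell-1},-a}(\phi_\pi,\phi_\sigma)$.

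The next step is to choose the data $\phi_\pi$ and the section $f_{\tau,\sigma',s}$ so that the Euler product $\prod_\nu \mathcal{Z}_\nu(\phi_{\pi_\nu}, f_{\mathcal{W}(\tau_\nu),\sigma_\nu,s})$ is not identically zero as a function of $s$. Away from a finite set $S$ of places one uses the unramified computation of Theorem~\ref{intro-thm-main}, which gives
\begin{equation*}
\prod_{\nu \notin S}\mathcal{Z}_\nu(v^0_{\pi_\nu}, f^0_{\mathcal{W}(\tau_\nu),\sigma_\nu,s})
= \frac{L^S(s,\pi\times\tau)}{L^S(s+\tfrac{1}{2},\sigma\times\tau\otimes\omega_\pi)\, L^S(2s,\tau,\rho\otimes\omega_\pi)};
\end{equation*}
at the finite set $S$ of ramified (and archimedean) places one invokes the non-vanishing of the local zeta integrals for a suitable choice of local data — this is the standard argument that each $\mathcal{Z}_\nu$, viewed as a family in $s$, is a non-zero meromorphic (in fact, for good choices, holomorphic and non-zero at a given point) function, so that the product does not vanish identically. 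Specializing to $s=\tfrac{1}{2}$, the denominator factors $L^S(1,\sigma\times\tau\otimes\omega_\pi)$ and $L^S(1,\tau,\rho\otimes\omega_\pi)$ are non-zero — here one uses that $\tau$, hence $\sigma\times\tau\otimes\omega_\pi$ and the twisted symmetric/exterior square $L$-function, is an $L$-function of an isobaric sum of unitary cuspidal representations of general linear groups, for which non-vanishing on $\Re(s)=1$ is known (Jacquet-Shalika, Shahidi), and one must also know these auxiliary $L$-functions are holomorphic at $s=1$ (for the twisted Rankin-Selberg this is automatic off $s=1$ being a pole point unless $\tau$, as it may, contains the relevant character; a short case analysis handles the potential pole, or one twists to avoid it). With the denominators under control, $\mathcal{Z}(\phi_\pi, f_{\tau,\sigma',\frac12})$ being finite and non-zero forces $L^S(\tfrac12,\pi\times\tau)$, hence $L(\tfrac12,\sigma\times\pi)$, to be holomorphic and non-zero at $s=\tfrac{1}{2}$, after reinserting the finitely many local factors at $S$, each of which is a non-zero finite quantity at $s=\tfrac12$.

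The main obstacle is the control of the local zeta integrals at the bad places: one must show that for any prescribed local vectors $\phi_{\pi_\nu}$ (coming from a global $\phi_\pi$ realizing the Bessel period) one can choose the local section $f_{\mathcal{W}(\tau_\nu),\sigma_\nu,s}$, compatibly with the global section, so that $\mathcal{Z}_\nu$ is non-zero at $s=\tfrac12$ — and simultaneously ensure the Eisenstein series is holomorphic there (or, if one uses a residue, that the residual datum is the correct transfer of $\sigma$). This requires a local non-vanishing statement for the Bessel-model zeta integrals of Theorem~\ref{intro-thm-main}, which I would establish by the usual ``make the integrand supported near the identity'' argument using the $\GL_1$-equivariance of the integral in $s$ and the openness of the big cell, reducing to non-vanishing of a local Bessel functional — which holds by the local GGP theory for generic representations (multiplicity one and the explicit Bessel functional construction). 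A secondary technical point is the bookkeeping of central characters: one must carry the constraint $\omega_\pi\omega_\sigma=1$ through the unfolding so that the period $\mathcal{B}^{\psi_{k-\ell-1},-a}(\phi_\pi,\phi_\sigma)$ that appears is literally the one in the hypothesis, including the sign $-a$ in the character, which should drop out of the unfolding of the Bessel coefficient of the Eisenstein series by the change of variables identifying the two characters.
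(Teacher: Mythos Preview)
Your proposal has the right overall shape but a genuine gap in the core mechanism, and the choice of $\tau$ is inverted relative to what makes the argument work.

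You take $\tau$ to be the functorial transfer of $\sigma$, so that $L(s,\sigma\times\pi)\approx L(s,\pi\times\tau)$ sits in the \emph{numerator} of the unramified computation, and then you specialize to $s=\tfrac12$. The gap is that you never justify why the global integral $\mathcal{Z}(\phi_\pi,f_{\tau,\sigma',s})$ is non-zero at $s=\tfrac12$. Non-vanishing of the Bessel period $\mathcal{B}^{\psi_{k-\ell-1,-a}}(\phi_\pi,\phi_\sigma)$ only tells you, via the unfolding and local non-vanishing, that $\mathcal{Z}(\phi_\pi,f_{\tau,\sigma',s})$ is not identically zero in $s$; it gives no control at the specific point $s=\tfrac12$. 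The global integral is the Bessel period of an Eisenstein series, and there is no a~priori reason that this period does not vanish at $s=\tfrac12$. Without this, the implication ``$\mathcal{Z}(\tfrac12)\ne 0 \Rightarrow L^S(\tfrac12,\pi\times\tau)\ne 0$'' has no input. (There is also a dimension-matching issue: the transfer of $\sigma\in\GSpin_{m'-2k}$ lands in $\GL_N$ with $N$ determined by $m'-2k$, not by $k$, so your $\tau$ does not fit the Levi $\GL_k\times\GSpin_{m'-2k}$ of $P_k$ without re-choosing the ambient group.)

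The paper's argument runs differently and this is where the leverage comes from. One takes $\tau=\Pi\otimes\omega_\pi^{-1}$ with $\Pi$ the Asgari--Shahidi transfer of $\pi$ (the \emph{larger} group), so that $L(s,\pi\times\tau)=L(s,\Pi\times\tau)$ has a pole of order $r$ at $s=1$; meanwhile $L(s,\sigma\times\pi)=L(s,\sigma\times\tau\otimes\omega_\pi)$ appears in the \emph{denominator} of the unramified factor at the shifted argument $s+\tfrac12$, and equally in the normalizing factor of the global intertwining operator. One then argues at $s=1$, not $s=\tfrac12$: the local non-vanishing (Proposition~\ref{prop-GGP-local-integral-non-zero}) together with the order-$r$ pole of $\LL(s,\tau,\pi,\sigma;\rho)$ (Lemma~\ref{lemma-GGP-L-function-pole}) forces $\mathcal{Z}(\phi_\pi,f_{\tau,\sigma',s})$ to have a pole of order $r$ at $s=1$, hence the Eisenstein series achieves its maximal possible pole order $r$ there; by Proposition~\ref{prop-GGP-EisensteinSeriesPoles}(iii) this is equivalent to $L(\tfrac12,\sigma\times\tau\otimes\omega_\sigma^{-1})=L(\tfrac12,\sigma\times\pi)$ being non-zero (and holomorphic). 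The point you are missing is that the argument produces non-vanishing by matching a \emph{pole} of known order, not by evaluating at a point.
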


We remark that the Bessel period $\mathcal{B}^{\psi_{k-\ell-1},-a}(\phi_\pi, \phi_\sigma)$ appearing in Theorem~\ref{intro-thm-global-GGP} is defined analogously as in \eqref{intro-eq-Bessel-period}, but for the pair of groups $(\GSpin_{m^\prime-2\ell-1}, \GSpin_{m^\prime-2k})$.

The global Gan-Gross-Prasad conjecture was first proposed by Gross and Prasad in \cite{GrossPrasad1992, GrossPrasad1994} for special orthogonal groups, and later was formulated by Gan, Gross and Prasad in \cite{GanGrossPrasad2012} in full generality for all classical groups including the metaplectic groups. 
In recent years, there has been much progress towards this conjecture for classical groups, see \cite{ZhangWei2014, Liu2014, Xue2014, FurusawaMorimoto2017, JiangZhang2020Annals, BPLZZ2021, BPCZ2022} for a few examples.
Recently Emory in \cite{Emory2020} formulated a refinement of this conjecture for the pair of $\GSpin$ groups $(\GSpin_{n+1}, \GSpin_{n})$, and proved it completely for $n=2, 3$  and in certain cases for $n=4$. Emory's approach relies on exceptional isomorphisms between low-rank $\GSpin$ groups and other classical groups, which allows one to reduce the conjecture to the already known case on the classical groups.  

We point out that implication from the non-vanishing of the Bessel period to the non-vanishing of the central value of the relevant $L$-function has been proved by Jiang and Zhang \cite[Theorem 5.7]{JiangZhang2020Annals} for special orthogonal groups and unitary groups in full generality, using the Rankin-Selberg method together with the endoscopic classification for these groups established by Arthur \cite{Arthur2013}, Mok \cite{Mok2015}, and Kaletha, Minguez, Shin and White \cite{KalethaMinguezShinWhite}. 
However, the endoscopic classification for $\GSpin$ groups is not available. As a result, our Theorem~\ref{intro-thm-global-GGP} is limited to generic representations. 
To remedy the lack of endoscopic classification, we use the Langlands functorial transfer for generic representations of $\GSpin$ groups established by Asgari and Shahidi \cite{AsgariShahidi2006, AsgariShahidi2014}, and combine it with the Rankin-Selberg integral in Theorem~\ref{intro-thm-main} to prove Theorem~\ref{intro-thm-global-GGP}. The proof is given in Section~\ref{section-GGP}.

For generic representations of classical groups, Theorem~\ref{intro-thm-global-GGP} was previously considered by Ginzburg, Jiang and Rallis \cite{GinzburgJiangRallis2004, GinzburgJiangRallis2005, GinzburgJiangRallis2009}, using the Rankin-Selberg method and the Arthur truncation method.

\subsection{Application to the reciprocal non-vanishing of Bessel periods}
 
 As another application of Theorem~\ref{intro-thm-main}, we prove a reciprocal non-vanishing result of Bessel periods, which relates the non-vanishing of certain Bessel periods for two different pairs of $\GSpin$ groups. 
 
We still take $k>\ell$ as in Section~\ref{subsection-intro-GGP}. Let $\sigma$ be an irreducible generic cuspidal automorphic representation of $\GSpin_{m^\prime-2\ell-1}(\A)$, and 
let $\tau=\tau_1\boxtimes \tau_2 \boxtimes \cdots \boxtimes \tau_{r}$ be an irreducible unitary generic isobaric automorphic representation of $\GL_k(\A)$ associated to distinct $\tau_1, \cdots, \tau_r$, such that $\tau_i$ is $\omega_\sigma^{-1}$-self-dual (see Definition~\ref{defn-twisted-self-dual}) for each $1\le i \le r$.
In Proposition~\ref{prop-GGP-EisensteinSeriesPoles}, we will prove that the Eisenstein series $E(\cdot, f_{\tau,\sigma, s})$ can possibly have a pole at $s=1$ of order at most $r$.
When the Eisenstein series $E(\cdot, f_{\tau,\sigma, s})$ has a pole at $s=1$ of order $r$, we denote by $\Eisen_{\tau\otimes\sigma}$ the $r$-th iterated residue at $s=1$ of $E(\cdot, f_{\tau,\sigma, s})$.   
 
\begin{theorem}[Theorem~\ref{thm-GGP-Reciprocal}]
\label{intro-thm-GGP-Reciprocal}	
Let $k>\ell$.
Let $\pi$, $\sigma$ be irreducible generic cuspidal automorphic representations of $\GSpin_{m^\prime-2\ell-1}(\A)$ and $\GSpin_{m^\prime-2k}(\A)$ respectively such that $\omega_\pi \omega_\sigma=1$. Let $\Pi$ be the functorial transfer of $\pi$ and let $\tau=\Pi\otimes\omega_\pi^{-1}$. Assume that the residue $\Eisen_{\tau\otimes\sigma^\prime}$ is non-zero. Then the Bessel period $\mathcal{B}^{\psi_{\ell},a}$ for $(\Eisen_{\tau\otimes\sigma^\prime}, \pi)$ is non-zero for some choice of data if and only if the Bessel period $\mathcal{B}^{\psi_{k-\ell-1},-a}$ for the pair $(\pi, \sigma)$ is non-zero for some choice of data. 
\end{theorem}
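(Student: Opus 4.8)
The plan is to use the Rankin--Selberg integral $\mathcal{Z}(\phi_\pi, f_{\tau,\sigma', s})$ from Theorem~\ref{intro-thm-main} as the bridge between the two Bessel periods, exploiting the fact that $\tau = \Pi \otimes \omega_\pi^{-1}$ is the (twisted) functorial transfer of $\pi$, so that the tensor product $L$-function $L(s, \pi \times \tau)$ which appears in the unramified computation factors through the Rankin--Selberg $L$-functions $\prod_{i,j} L(s, \tau_i \times \tau_j)$ (up to twist), and in particular has a pole at $s=1$ of order exactly $r$ because each $\tau_i$ is $\omega_\sigma^{-1}$-self-dual and the $\tau_i$ are distinct. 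The denominator factors $L(s+\tfrac12, \sigma' \times \tau \otimes \omega_\pi)$ and $L(2s, \tau, \rho \otimes \omega_\pi)$ are holomorphic and non-zero near $s=1$ (the first by cuspidality of $\sigma'$ and $\tau_i$, once we check the twists avoid the pole; the second by the standard non-vanishing of symmetric/exterior square $L$-functions on $\Re(s) \geq 1$). Hence the global integral $\mathcal{Z}(\phi_\pi, f_{\tau,\sigma', s})$, built from $E(\cdot, f_{\tau,\sigma',s})$ and $\phi_\pi$, inherits from its Euler product a pole at $s=1$ of order exactly $r$ whenever the relevant local integrals are non-vanishing, and its $r$-th iterated residue is (a finite sum of products of local integrals times) the Bessel period $\mathcal{B}^{\psi_\ell, a}(\Eisen_{\tau \otimes \sigma'}, \pi)$.

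The argument then proceeds in two directions. First I would show: if $\mathcal{B}^{\psi_{k-\ell-1}, -a}(\phi_\pi, \phi_\sigma) \neq 0$ for the pair $(\pi,\sigma)$, then the Bessel period of $(\Eisen_{\tau\otimes\sigma'}, \pi)$ is non-zero. By the unfolding theorems (Theorem~\ref{thm-global-unfolding-kgreaterl}), the global integral $\mathcal{Z}(\phi_\pi, f_{\tau,\sigma',s})$ equals an integral involving the Bessel period of $(\pi, \sigma')$ — where $\sigma'$ is the Eisenstein-type datum attached to $\sigma$ — against Whittaker functions of $\tau$; the hypothesis on $(\pi,\sigma)$, together with the local theory (local Bessel functionals are non-zero for generic representations, by the local GGP results for $\GL$ and the chosen parabolic), gives that $\mathcal{Z}(\phi_\pi, f_{\tau,\sigma',s}) \not\equiv 0$ as a meromorphic function. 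Combined with the order-$r$ pole at $s=1$ dictated by the $L$-function factorization and the non-vanishing of the residue $\Eisen_{\tau\otimes\sigma'}$ (which we assume), the $r$-th iterated residue of $\mathcal{Z}$ at $s=1$ is non-zero, and unwinding it expresses it as a non-zero multiple of $\mathcal{B}^{\psi_\ell, a}(\Eisen_{\tau\otimes\sigma'}, \pi)$. Conversely, if $\mathcal{B}^{\psi_\ell,a}(\Eisen_{\tau\otimes\sigma'}, \pi) \neq 0$, then $\mathcal{Z}(\phi_\pi, f_{\tau,\sigma',s})$ has a pole of order $r$ at $s=1$ for some choice of data; by the Euler product and the holomorphy/non-vanishing of the denominator $L$-factors near $s=1$, this forces $L(s,\pi\times\tau)$ to contribute a pole of order $r$ — which is automatic from functoriality — but more to the point it forces the local integrals, hence the global unfolded expression, to be non-vanishing at/near $s=1$, which by the unfolding identity means the Bessel period of $(\pi,\sigma')$, and therefore (after identifying $\sigma'$ with the data $\sigma$ via \eqref{eq-sigma-prime-1}) the Bessel period $\mathcal{B}^{\psi_{k-\ell-1},-a}(\pi,\sigma)$, is non-zero for some choice of data.

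Two technical points require care. One is the precise matching of the auxiliary representation $\sigma'$ with the pair $(\sigma, \omega_\pi)$: I would follow the normalization in \eqref{eq-sigma-prime-1} and check that the central-character condition $\omega_\pi \omega_\sigma = 1$ makes the Bessel period $\mathcal{B}^{\psi_{k-\ell-1},-a}(\pi,\sigma)$ well-defined and literally equal to the period of $(\pi, \sigma')$ appearing after unfolding. The other is the passage from non-vanishing of the \emph{global} zeta integral to non-vanishing of the residue for a \emph{suitable choice of local data}: this is where one needs that the local zeta integrals $\mathcal{Z}_\nu$ can be made non-zero at $s=1$ simultaneously for all $\nu$, which follows from the local non-vanishing of the unramified integrals (the $L$-ratio in Theorem~\ref{intro-thm-main} is finite and non-zero at $s=1$ after clearing the denominator) together with the standard fact that at ramified places the local integral is a non-zero element of the relevant space of Bessel functionals for generic data.

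The main obstacle I anticipate is establishing that the order of the pole of $\mathcal{Z}(\phi_\pi, f_{\tau,\sigma',s})$ at $s=1$ is exactly $r$ and not smaller — equivalently, that the pole predicted by the $L$-function ratio is not cancelled by vanishing of local integrals or of the Eisenstein residue along the spectral contribution that the integral sees. Controlling this requires Proposition~\ref{prop-GGP-EisensteinSeriesPoles} (the pole of $E(\cdot, f_{\tau,\sigma',s})$ at $s=1$ has order at most $r$) together with the hypothesis $\Eisen_{\tau\otimes\sigma'}\neq 0$, and then a careful argument — in the spirit of Jiang--Zhang \cite{JiangZhang2020Annals} — that the residual Eisenstein series still admits a non-zero Bessel functional, so that taking residues commutes with the (absolutely convergent, away from poles) Bessel-period integration and produces exactly $\mathcal{B}^{\psi_\ell,a}(\Eisen_{\tau\otimes\sigma'},\pi)$ up to a non-zero constant.
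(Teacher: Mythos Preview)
Your proposal is correct and follows essentially the same approach as the paper: the Rankin--Selberg integral $\mathcal{Z}(\phi_\pi,f_{\tau,\sigma',s})$ bridges the two Bessel periods, Lemma~\ref{lemma-GGP-L-function-pole} and Proposition~\ref{prop-GGP-EisensteinSeriesPoles} pin down the order of the pole at $s=1$, and the substantive technical input in the hard direction is exactly the non-vanishing of the local integrals at $s=1$ at bad places (Proposition~\ref{prop-GGP-local-integral-non-zero}), which you correctly flag. One simplification worth noting: for the implication $\mathcal{B}^{\psi_{\ell,a}}(\Eisen_{\tau\otimes\sigma'},\pi)\neq 0 \Rightarrow \mathcal{B}^{\psi_{k-\ell-1},-a}(\pi,\sigma)\neq 0$, the paper avoids any Euler-product or $L$-function analysis and argues directly that nonvanishing of the residue forces $\mathcal{Z}(\phi_\pi,f_{\tau,\sigma',s})\not\equiv 0$ for $\Re(s)\gg 0$, after which Corollary~\ref{corollary-vanishing-kgreaterl} (the contrapositive of the unfolded expression) immediately yields the $(\pi,\sigma)$ Bessel period---your route through the Euler product here is unnecessarily circuitous.
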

 
\begin{remark}
For special orthogonal groups and unitary groups, the reciprocal non-vanishing of Bessel periods is established in \cite[Theorem 5.3]{JiangZhang2020Annals}.	
\end{remark}

\subsection{Organization of the paper}

We now give a brief overview about the organization of the remainder of the paper. In Section~\ref{section-preliminaries}, we review basic facts about the $\GSpin$ groups, their subgroups, and the relevant Bessel periods. The definition of the global zeta integrals is given in Section~\ref{subsection-definition-global-zeta-integral}, and the unfolding computations are carried out in Sections~\ref{subsection-unfolding}, \ref{subsection-unfolding-kgm}, \ref{subsection-unfolding-klm}. In particular, we divide the computation into two cases, depending on whether $k>\ell$ or $k\le \ell$, and complete the analysis for each case in Sections~\ref{subsection-unfolding-kgm} and \ref{subsection-unfolding-klm} respectively. The results of the unfolding computation are summarized in Theorem~\ref{thm-global-unfolding-kgreaterl} and Theorem~\ref{thm-global-unfolding-klel}. In Section~\ref{section-local-zeta-integrals}, we carry out the local unramified computation of the local zeta integral. Our proof is to reduce the local unramified computation to the corresponding case of special orthogonal groups. In Section~\ref{section-GGP}, we establish sone analytic properties of the local zeta integrals, and the non-vanishing of the local zeta integrals at finite ramified and archimedean places which is one of the main technical results. We then prove Theorem~\ref{intro-thm-GGP-Reciprocal} in Section~\ref{subsection-GGP-Reciprocal-converse} and Theorem~\ref{intro-thm-global-GGP} in Section~\ref{subsection-GGP-proof}.

\subsection*{Acknowledgements}
I would like to thank Hang Xue for suggesting that I consider the global Gan-Gross-Prasad conjecture for the $\GSpin$ groups. This paper would not have come into existence without his suggestion and insightful discussions. I also thank Dihua Jiang for valuable discussions during my visit to the University of Minnesota. I thank Mahdi Asgari for sending me the paper \cite{AsgariCogdellShahidi2024}, which was very helpful in the preparation of this work. I am grateful to Jim Cogdell for his constant support over the years.  
The author has been partially supported by an AMS-Simons Travel Grant, and a summer grant from the Department of Mathematics at the University of Arizona.

\section{Preliminaries}
\label{section-preliminaries}

\subsection{The $\GSpin$ group}
Let $F$ be a number field and let $\A=\A_F$ be the ring of ad{\`e}les of $F$.
Let $V$ be a quadratic space over $F$ of dimension $m^\prime$ equipped with a non-degenerate quadratic form $q_V$. Let $\SO_{m^\prime}=\SO(V)$ be the special orthogonal group of $V$ associated to the quadratic form $q_V$.
Let $\tilde{m}$ be the Witt index of $V$. Then
\begin{equation*}
\tilde{m} = 
\begin{cases}
m & \mbox{ if } \dim(V) = 2m+1, \\
m & \mbox{ if } \dim(V) = 2m \mbox{ with } \SO(V) \mbox{ split, } \\
m-1 & \mbox{ if } \dim(V) = 2m \mbox{ with } \SO(V) \mbox{ quasi-split non-split. } 
\end{cases}
\end{equation*}
We take $V^+$ to be a maximal totally isotropic subspace of $V$ and $V^-$ to be its dual, so that $V$ has the following decomposition
\begin{equation*}
V=V^+\oplus V_0 \oplus V^-,	
\end{equation*}
where $V_0=(V^+\oplus V^-)^\perp$ denotes the anisotropic kernel of $V$. We choose a basis $\{e_1, e_2, \cdots, e_{\tilde{m}}\}$ of $V^+$ and a basis $\{e_{-1}, e_{-2}, \cdots, e_{-\tilde{m}}\}$ of $V^-$ so that $q_V(e_i, e_{-j})=\delta_{i,j}$ for all $1\le i, j\le \tilde{m}$. The anisotropic kernel $V_0$ is at most two dimensional. More specifically, if $\dim_F V$ is even, then $\dim_F V_0$ is either 0 or 2, and if $\dim_F V$ is odd, then $\dim_F V_0=1$. 
When $\dim_F V_0=2$ (i.e., $\SO(V)$ is quasi-split non-split, $m^\prime=2m$, , and $\tilde{m}=m-1$), we choose an orthogonal basis $\{e_0^{(1)}, e_0^{(2)}\}$ of $V_0$ so that
\begin{equation*}
q_{V_0}( e_0^{(1)}, e_0^{(1)})=1, \quad	q_{V_0}( e_0^{(2)}, e_0^{(2)})=-c,
\end{equation*}
where $c\in F^\times$ is a non-square and $q_{V_0}=q_V|_{V_0}$. When $\dim_F V_0=1$ (i.e., $\SO(V)$ is split, $m^\prime=2m+1$, , and $\tilde{m}=m$), we choose an anisotropic basis $\{e_0\}$ for $V_0$. We put the basis in the following order:
\begin{equation*}
e_1, e_2, \cdots, e_{\tilde{m}}, e_0^{(1)}, e_0^{(2)},  	e_{-\tilde{m}}, \cdots, e_{-2}, e_{-1}, \quad \textrm{ if } \dim_F V_0=2, 
\end{equation*}
\begin{equation*}
e_1, e_2, \cdots, e_{\tilde{m}}, e_0,  	e_{-\tilde{m}}, \cdots, e_{-2}, e_{-1}, \quad \textrm{ if } \dim_F V_0=1, 
\end{equation*}
\begin{equation*}
e_1, e_2, \cdots, e_{\tilde{m}},	e_{-\tilde{m}}, \cdots, e_{-2}, e_{-1}, \quad \textrm{ if } \dim_F V_0=0.
\end{equation*}
Then the corresponding standard flag of $V$ with respect to the given order defines the Borel subgroup $B_{\SO(V)}=T_{\SO(V)}\ltimes N_{\SO(V)}$ of $\SO(V)$, where $T_{\SO(V)}$ is a maximal torus. 

Let $H=\GSpin_{m^\prime}=\GSpin(V)$ be the $\GSpin$ cover of $\SO(V)$. Then we have a projection map
\begin{equation}
\label{eq-pr}
\pr:\GSpin(V)\to \SO(V)	
\end{equation} 
whose kernel lies in the center $C_{\GSpin(V)}$ and is isomorphic to $\GL_1$, and thus we get an exact sequence
\begin{equation}
\label{eq-GSpin-SO-exact-sequence}
\begin{CD}
1 @>>>  \GL_1 @>>> \GSpin(V) @>\pr>> \SO(V) @>>>  1
\end{CD}	
\end{equation}
Now $B_{\GSpin(V)}=\pr^{-1}(B_{\SO(V)})$ is a Borel subgroup of $\GSpin(V)$, $B_{\GSpin(V)}=T_{\GSpin(V)}\ltimes N_{\GSpin(V)}$, where $T_{\GSpin(V)}$ is a maximal torus. 
Note that the projection $\pr:\GSpin(V)\to \SO(V)$ induces an isomorphism of unipotent varieties, so we may specify unipotent elements or subgroups by their images under $\pr$. This defines coordinates for any unipotent element or subgroup of $\GSpin(V)$, which we use when defining characters. Thus we may write $u_{i,j}$ for the $(i,j)$ entry of $\pr(u)$. Moreover, the Weyl group $W_{\GSpin(V)}$ of $\GSpin(V)$ is isomorphic to the Weyl group $W_{\SO(V)}$ of $\SO(V)$.

Note that the $\GL_1$ in \eqref{eq-GSpin-SO-exact-sequence} is the identity component $C_{\GSpin(V)}^0$ of the center $C_{\GSpin(V)}$ when $\dim V>2$. When $\dim V=2$, the group $\GSpin(V)$ is abelian and hence $C_{\GSpin(V)}^0$ is larger than $\GL_1$. However, as a convention in this paper, we still denote $C_{\GSpin(V)}^0=\ker(\pr)=\GL_1$ even when $\dim V=2$.

We denote by $N$ the spinor norm, which is a homomorphism
\begin{equation*}
N: \GSpin(V)\to \GL_1.	
\end{equation*}
Note that $N(z)=z^2$ for $z\in C_{\GSpin(V)}^0$.

If we have an inclusion $(W, q_W)\subset (V, q_V)$ of quadratic spaces with $q_W=q_V|_{W}$, then we have an inclusion $\GSpin(W)\subset \GSpin(V)$ of $\GSpin$ groups, and a commutative diagram
\begin{equation*}
\begin{CD}
1 @>>>  C_{\GSpin(W)}^0 @>>> \GSpin(W) @>>> \SO(W) @>>>  1\\
@. @| @VVV @VVV @. \\
1 @>>>  C_{\GSpin(V)}^0 @>>> \GSpin(V) @>>> \SO(V) @>>>  1 
\end{CD}
\end{equation*}
where the vertical arrows are inclusions.

For the structure of the $\GSpin$ groups defined using based root datum, we refer the reader to \cite{AsgariShahidi2006, HundleySayag2016, AsgariCogdellShahidi2024}.
We will simply write $\GSpin_{m^\prime}$ for a quasi-split $\GSpin$ group associated to a quadratic space of dimension $m^\prime$. Following the convention in \cite{AsgariCogdellShahidi2024}, if we need to emphasize that the $\GSpin_{m^\prime}$ is quasi-split but non-split associated to the square class of an element $a\in F^\times$ in $F^\times/(F^\times)^2$, we may write $\GSpin_{m^\prime}^a$.

\subsection{Parabolic subgroups and Bessel subgroups}

Let $\ell$ be an integer such that $1\le \ell<m$ if $\dim(V)=2m+1$ or $1\le \ell <m-1$ if $\dim(V)=2m$. Let $V_{\ell}^{\pm}$ be the totally isotropic subspace generated by $\{e_{\pm 1}, e_{\pm 2}, \cdots, e_{\pm \ell}\}$, and $P_{\ell}=M_{{\ell}}\ltimes U_{{\ell}}$ the standard maximal parabolic subgroup of $\GSpin(V)$ which stabilizes $V_{\ell}^+$. Then the Levi subgroup $M_{{\ell}}$ is isomorphic to $\GL(V_\ell^+) \times \GSpin(W_\ell)$, where
\begin{equation}
\label{eq-W-l}
	W_\ell=(V_\ell^+\oplus V_\ell^-)^{\perp}.
\end{equation}
We fix the isomorphism between $\GL(V_\ell^+) \times \GSpin(W_\ell)$ and $M_{\ell}$ as in \cite[\S 1]{CaiFriedbergKaplan2024}. 
 For the Siegel parabolic subgroup of $\GSpin(V)$, we follow the convention in \cite{AsgariCogdellShahidi2024}.

We denote by $Q_\ell=L_{{\ell}}\ltimes N_{{\ell}}$ the standard maximal parabolic subgroup of $\GSpin(V)$ stabilizing the following maximal flag of isotropic subspaces:
\begin{equation}
\label{eq-parobolic-Ql-flag}
0\subset V_1^+ \subset V_2^+ \subset \cdots \subset 	V_{\ell}^+.
\end{equation}
Then the Levi subgroup $L_{\ell}\cong (\GL_1)^\ell \times \GSpin(W_\ell)$ and the unipotent radical is given by 
\begin{equation}
\label{eq-unipotent-Nl}
N_{\ell} = 	\left\{ u=\begin{pmatrix} z & y &x\\ &I_{m^{\prime}-2\ell} &y^\prime\\ &&z^*\end{pmatrix}: z\in Z_{\ell} \right\}
\end{equation}
where $Z_{\ell}$ is the standard maximal upper-triangular unipotent subgroup of $\GL_{\ell}=\GL(V_{\ell}^+)$. 

Let $\psi:F\backslash \A\to \C$ be a fixed non-trivial additive character. 
For $a\in F^\times$, let $w_0$ be the anisotropic vector in $W_{\ell}$ defined by
\begin{equation}
\label{eq-w0}
w_0=	e_{\tilde{m}}+(-1)^{m^\prime+1} \frac{a}{2} e_{-\tilde{m}}
\end{equation}
and let $\psi_{\ell, a}$ be the character of $N_{\ell}(\A)$ defined by
\begin{equation}
\label{eq-psi-l-a}
\psi_{\ell, a}(u)=	\psi(\sum^{\ell-1}_{i=1}z_{i,i+1}+y_{\ell, \tilde{m}-\ell}+(-1)^{m^\prime+1}\frac{a}{2}y_{\ell, m^\prime-\tilde{m}-\ell+1})
\end{equation}
for $u\in N_{\ell}(\A)$ as in \eqref{eq-unipotent-Nl}. 
 The Levi subgroup $L_{\ell}$ acts on the set of the characters of $N_{\ell}(\A)$. Each orbit for this action contains a character of the form $\psi_{\ell, a}$, for $a\in F^\times$. We let $G$ be the connected component of the identity of the stabilizer, i.e., 
 \begin{equation*}
 G=(L_{\ell}^{\psi_{\ell, a}})^0\cong \GSpin_{m^\prime-2\ell-1}.	
 \end{equation*}
 Then the $\ell$-th Bessel subgroup of $\GSpin(V)$ is
\begin{equation*}
R_{\ell, a}=G \ltimes N_{\ell}.
\end{equation*}

The Bessel subgroup can also be defined similarly when $\ell=0$. When $\ell=0$, the unipotent subgroup $N_0$ is trivial and the Bessel subgroup is simply $R_{0, a}=G=\GSpin_{m^\prime-1}$.

\subsection{Bessel period}

Let $\phi$ be an automorphic form on $H(\A)=\GSpin_{m^\prime}(\A)$. We define the Bessel coefficient (or Gelfand-Graev coefficient) of $\phi$ by
\begin{equation}
\label{eq-Bessel-coef}
\phi^{N_{\ell}, \psi_{\ell, a}}(g):=\int_{N_{\ell}(F)\backslash N_{\ell}(\A)}\phi(ug)\psi_{\ell, a}^{-1}(u)du. 	
\end{equation}
This defines an automorphic function on $G(\A)$. Let $\phi^\prime$ be a cuspidal automorphic form on $G(\A)$, such that
\begin{equation}
\label{eq-Bessel-period-char-condition}
\phi(z)\phi^\prime(z)=1, \quad \forall z\in 	C^0_{G}(\A),
\end{equation}
where we recall that $C^0_G=\GL_1$ is the identity component of the center $C_G$ of $G$ if $m^\prime-2\ell-1>2$.
We define a Bessel period for the pair $(\phi, \phi^\prime)$ by
\begin{equation}
\label{eq-Bessel-period}
\mathcal{B}^{\psi_{\ell,a}}(\phi, \phi^\prime):=	 \int_{ C^0_G(\A) G(F)\backslash G(\A)} \phi^{N_{\ell}, \psi_{\ell, a}}(g) \phi^\prime(g)dg
\end{equation}
and we also call this a Bessel period for the pair $(H, G)$ with respect to the character $\psi_{\ell,a}$.

In this paper, we will apply the Bessel period to certain Eisenstein series that we will discuss in Section~\ref{subsection-definition-global-zeta-integral}.

\section{The global zeta integrals}
\label{section-global-zeta-integrals}
\subsection{Definition of the global zeta integral}
\label{subsection-definition-global-zeta-integral}
Let $k$ be an integer with $1\le k \le  \tilde{m}$. Recall that the parabolic subgroup $P_k\subset H$ has a Levi subgroup isomorphic to $\GL_k\times \GSpin_{m^\prime-2k}$. 
Let $\tau$ be an irreducible unitary generic automorphic representation of $\GL_k(\A)$, of the following isobaric type:
\begin{equation}
\label{eq-tau}
\tau=\tau_1\boxtimes \tau_2 \boxtimes \cdots \boxtimes \tau_{r},
\end{equation}
with $k=k_1+\cdots + k_r$, where $\tau_i$ is an irreducible unitary cuspidal automorphic representation of $\GL_{k_i}(\A)$ for each $1\le i \le r$. 
Let $\sigma$ be an irreducible automorphic representation of $G_0(\A)=\GSpin_{m^\prime-2k}(\A)$ (note that we do not assume $\sigma$ is cuspidal) and we denote by $\omega_\sigma$ the restriction of the central character of $\sigma$ to $C_{G_0}^0(\A)=\A^\times$. 
Given a section $f_{\tau,\sigma, s}$ in the normalized induced representation
\begin{equation*}
\Ind_{P_{k}(\A)}^{H(\A)}(\tau|\cdot|^{s-\frac{1}{2}} \otimes \sigma),
\end{equation*}
we form the Eisenstein series
\begin{equation}
\label{eq-Eis}
E(g, f_{\tau,\sigma, s})=\sum_{\gamma\in P_k(F)\backslash H(F)} f_{\tau,\sigma, s}(\gamma g), \quad g\in H(\A).
\end{equation}
Let $\pi$ be an irreducible cuspidal automorphic representation of $G(\A)$, and let $\phi_\pi\in V_\pi$ be a non-zero cusp form.
We assume that 
\begin{equation}
\label{eq-assumption-central-char-G-G0}
\omega_\pi(z) \omega_{\sigma}(z)=1, \quad \forall z\in 	C^0_{G}(\A),
\end{equation}
where $\omega_\pi$ is the restriction of the central characters of $\pi$ to $C_G^0(\A)=\A^\times$. 
The global zeta integral we consider is defined to be the following Bessel period
\begin{equation}
\label{eq-global-integral}
\mathcal{Z}(\phi_\pi, f_{\tau,\sigma, s}):= \int_{C_G^0(\A) G(F)\backslash G(\A)}  E^{N_\ell, \psi_{\ell,a}}(g, f_{\tau,\sigma,s})\phi_\pi(g)dg.
\end{equation}
Because of the assumption~\eqref{eq-assumption-central-char-G-G0}, the integrand is invariant under $C_G^0(\A)$.
\begin{remark}
\label{remark-construction-of-ACS}
When $P_k$ is a Siegel parabolic subgroup of $H$, such global zeta integrals with generic $\pi$ and cuspidal $\tau$ have been studied by Asgari, Cogdell and Shahidi in \cite{AsgariCogdellShahidi2024}. These are the Case B global integrals defined in \cite[\S 4]{AsgariCogdellShahidi2024}. Hence we assume from now on that $k<\tilde{m}$. 
\end{remark}

\begin{lemma}
\label{lemma-global-integral-convergence}
The global zeta integral $\mathcal{Z}(\phi_\pi, f_{\tau,\sigma, s})$ converges absolutely and uniformly in vertical strips in $\mathbb{C}$ away from the possible poles of the Eisenstein series, and hence it defines a meromorphic function on $\mathbb{C}$ with possible poles at the locations where the Eisenstein series has poles. 
\end{lemma}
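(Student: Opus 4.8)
The plan is to prove absolute convergence of $\mathcal{Z}(\phi_\pi, f_{\tau,\sigma,s})$ by combining the rapid decrease of the cusp form $\phi_\pi$ on $G(\A)$ with standard bounds for Eisenstein series and their Fourier--Jacobi type coefficients. First I would unwind the definition: for $s$ away from the poles of $E(\cdot, f_{\tau,\sigma,s})$, the Eisenstein series is a well-defined automorphic form on $H(\A)$, of moderate growth uniformly for $s$ in compact subsets of the complement of the polar locus (and uniformly in vertical strips, since the constant terms are controlled by intertwining operators which are of moderate growth in vertical strips). The Bessel coefficient $E^{N_\ell, \psi_{\ell,a}}(g, f_{\tau,\sigma,s})$ is obtained by integrating $E$ against a unitary character over the compact domain $N_\ell(F)\backslash N_\ell(\A)$; since $N_\ell$ is unipotent this operation preserves moderate growth, so $g\mapsto E^{N_\ell,\psi_{\ell,a}}(g, f_{\tau,\sigma,s})$ is a function of moderate growth on $G(\A)$, with the growth exponent and the implied constants locally uniform in $s$ and uniform in vertical strips.

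Next I would invoke the rapid decrease of $\phi_\pi$: a cusp form on $G(\A)$ decays faster than any power of the norm $\|g\|$ on a Siegel domain for $C_G^0(\A)G(F)\backslash G(\A)$. Because the integrand is genuinely a function on the quotient $C_G^0(\A)G(F)\backslash G(\A)$ (here is where the central-character compatibility \eqref{eq-assumption-central-char-G-G0} is used, as already noted after \eqref{eq-global-integral}), one can replace the integral over the quotient by an integral over a Siegel domain $\mathfrak{S}$, picking up only the finite covolume of the fundamental domain together with finitely many $G(F)$-translates. On $\mathfrak{S}$ the product $E^{N_\ell,\psi_{\ell,a}}(g, f_{\tau,\sigma,s})\phi_\pi(g)$ is bounded by (moderate growth in $\|g\|$) times (rapid decrease in $\|g\|$), hence by $\|g\|^{-M}$ for every $M$; the classical estimate $\int_{\mathfrak{S}}\|g\|^{-M}\,dg < \infty$ for $M$ large then gives absolute convergence. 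Uniformity in vertical strips follows because all the growth bounds on the Eisenstein side are uniform there, so the dominating integrand $\|g\|^{-M}$ can be chosen independently of $\Image(s)$; a Morera/Weierstrass argument then shows $\mathcal{Z}$ is holomorphic off the polar locus, and meromorphic on $\C$ with poles contained in those of $E(\cdot,f_{\tau,\sigma,s})$.

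The step I expect to require the most care is the assertion that the Bessel coefficient of an Eisenstein series is of moderate growth with exponent and constants uniform in vertical strips, since $E(\cdot, f_{\tau,\sigma,s})$ is not a cusp form and its growth is governed by its constant terms along the parabolic subgroups of $H$; one must quote the standard theory (e.g.\ the estimates of Langlands, or the treatment in M\oe glin--Waldspurger) that away from poles an Eisenstein series built from a unitary cuspidal datum is of moderate growth locally uniformly in $s$, and that forming a Fourier coefficient along a unipotent subgroup does not worsen this. Everything else is a routine application of the reduction theory bound $\int_{\mathfrak{S}}\|g\|^{-M}\,dg<\infty$ for $M\gg 0$, which is why I would not carry out those estimates in detail.
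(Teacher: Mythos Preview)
Your proposal is correct and follows essentially the same approach as the paper: the paper's proof is a one-sentence appeal to the rapid decrease (mod $C_G^0$) of $\phi_\pi$, the moderate growth (mod $C_G^0$) of the Eisenstein series, and the compactness of $N_\ell(F)\backslash N_\ell(\A)$, which are exactly the three ingredients you identify and expand upon.
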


\begin{proof}
This follows due to the rapid decrease (mod $C_G^0$) of $\phi_\pi$, the moderate growth (mod $C_G^0$) of the Eisenstein series $E(\cdot, f_{\tau,\sigma,s})$, and the compactness of $N_{\ell}(F)\backslash N_{\ell}(\A)$. 
\end{proof}

\subsection{The unfolding computation}
\label{subsection-unfolding}
The goal of the rest of Section~\ref{section-global-zeta-integrals}  is to carry out the global unfolding computation for the global zeta integral $\mathcal{Z}(\phi_\pi, f_{\tau,\sigma, s})$. The main results are summarized in the following two theorems, which address the case $k>\ell$ and the case $k\le \ell$ respectively.

\begin{theorem}
\label{thm-global-unfolding-kgreaterl}
Assume $k>\ell$ and set $\beta=k-\ell$.
For $\Re(s)\gg 0$, the integral $\mathcal{Z}(\phi_\pi, f_{\tau,\sigma, s})$ unfolds to 
\begin{equation}
\int_{R_{\ell,\beta-1}^{\eta_2}(\A)\backslash G(\A) } \mathcal{B}^{\psi_{\beta-1,-a}}( \pi(g) \phi_\pi, \mathcal{J}_{\ell,a}(R( \epsilon_{0,\beta} \eta_2 g) {f_{\mathcal{W}(\tau,\psi_{Z_k,a}^{-1}),\sigma,s}})) dg,
\end{equation}
where  
\begin{itemize}
\item $\epsilon_{0, \beta}$ and $\eta_2$ are given in 	\eqref{eq-double-coset-epsilon-alpha-beta} and \eqref{eq-eta-kgreaterl} respectively,
\item $R_{\ell,\beta-1}^{\eta_2}$ is the Bessel subgroup of $G$ given in \eqref{eq-Bessel-subgroup-of-G},
\item $Z_k$ is the maximal unipotent subgroup of $\GL_k\subset P_k\subset \GSpin_{m^\prime}$ and $\psi_{Z_k,a}$ is the generic character of $Z_k$ given in \eqref{eq-psi-Zka},
\item $f_{\mathcal{W}(\tau,\psi_{Z_k,a}^{-1}),\sigma,s}\in \Ind_{P_{k}(\A)}^{H(\A)}(\mathcal{W}(\tau,\psi_{Z_k,a}^{-1})|\cdot|^{s-\frac{1}{2}} \otimes \sigma)$ is given by  \eqref{eq-f-upper-Zk}, where $\mathcal{W}(\tau,\psi_{Z_k,a}^{-1})$ is the $\psi_{Z_k,a}^{-1}$-Whittaker model of $\tau$,
\item $R$ denotes the right translation,
\item $\mathcal{J}_{\ell,a}$ denotes taking certain Fourier coefficient given in \eqref{eq-unfolding-J-l-a}, 
\item $\mathcal{B}^{\psi_{\beta-1,-a}}$ denotes the Bessel period for the pair $(G, G_0)$ with respect to the character $\psi_{\beta-1,-a}$ defined analogously as in \eqref{eq-psi-l-a}.
\end{itemize}
\end{theorem}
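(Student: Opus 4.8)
The plan is to carry out a Rankin--Selberg unfolding adapted to the $\GSpin$ setting. For $\Re(s)\gg 0$ the Eisenstein series is given by its defining sum \eqref{eq-Eis} and every integral below converges absolutely, so I would substitute \eqref{eq-Eis} into \eqref{eq-global-integral}, insert the definition \eqref{eq-Bessel-coef} of the Bessel coefficient, and interchange the sum with the integrations. Combining the integral over $N_\ell(F)\backslash N_\ell(\A)$ with the one over $C^0_G(\A)G(F)\backslash G(\A)$ into a single integral over $C^0_G(\A)R_{\ell,a}(F)\backslash R_{\ell,a}(\A)$ (recall $R_{\ell,a}=G\ltimes N_\ell$), one is left with $\sum_{\gamma\in P_k(F)\backslash H(F)}\int f_{\tau,\sigma,s}(\gamma r)\,\psi_{\ell,a}^{-1}(\cdot)\,\phi_\pi(\cdot)\,dr$, which reorganizes as a sum over the double coset space $P_k(F)\backslash H(F)/R_{\ell,a}(F)$. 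Using the projection $\pr\colon H\to\SO(V)$ (an isomorphism on unipotent subgroups) and the relative-position description of these double cosets, I would write down a finite set of representatives; keeping track of the central $\GL_1$-fibre and the spinor norm, these are the elements $\epsilon_{\alpha,\beta}$ of \eqref{eq-double-coset-epsilon-alpha-beta}, possibly multiplied by auxiliary Weyl elements. For each representative the inner integral decomposes, along root subgroups of $N_\ell$, into an iterated unipotent integral against $\psi_{\ell,a}^{-1}$.

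The key point is then that for every representative other than the one indexed by $\epsilon_{0,\beta}$ with $\beta=k-\ell$, the contribution vanishes, for one of two reasons: either the character $\psi_{\ell,a}^{-1}$ is non-trivial on a unipotent subgroup over which the integrand is invariant, forcing the inner integral to be zero; or, after exploiting the $P_k$-equivariance of $f_{\tau,\sigma,s}$ and a root-exchange, the integral contains the period of $\phi_\pi$ along the unipotent radical of a proper parabolic subgroup of $G$, which vanishes by the cuspidality of $\pi$. This isolates the single orbit of $\epsilon_{0,\beta}$.

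For the surviving orbit I would compute the stabilizer $R_{\ell,a}(F)\cap\epsilon_{0,\beta}^{-1}P_k(F)\epsilon_{0,\beta}$; after conjugating by the auxiliary element $\eta_2$ of \eqref{eq-eta-kgreaterl} it becomes the Bessel subgroup $R^{\eta_2}_{\ell,\beta-1}$ of $G$ of \eqref{eq-Bessel-subgroup-of-G}, sitting inside a parabolic of $G$ whose $\GSpin$-block is $G_0=\GSpin_{m^\prime-2k}$. Collapsing the orbit-sum with the quotient integral over $G(F)\backslash G(\A)$ turns the domain into $R^{\eta_2}_{\ell,\beta-1}(\A)\backslash G(\A)$, with $f_{\tau,\sigma,s}$ evaluated at $\epsilon_{0,\beta}\eta_2 g$; the hypothesis \eqref{eq-assumption-central-char-G-G0} together with $\omega_\pi\omega_\sigma=1$ guarantees the $C^0_G$- and $C^0_{G_0}$-integrations match up. The residual unipotent integrations then separate into three pieces: on the $\GL_k$-block one Fourier-expands the automorphic forms in the space of $\tau$ along the maximal unipotent $Z_k$, and since $\tau$ is generic this produces the $\psi_{Z_k,a}^{-1}$-Whittaker model and replaces $f_{\tau,\sigma,s}$ by $f_{\mathcal{W}(\tau,\psi_{Z_k,a}^{-1}),\sigma,s}$ as in \eqref{eq-f-upper-Zk}; the ``mixed'' Fourier coefficient that remains along the part of $N_\ell$ connecting the $\GL_k$-block to $G_0$ is by definition the operator $\mathcal{J}_{\ell,a}$ of \eqref{eq-unfolding-J-l-a}, producing a function on $G_0(\A)$; and the integration against $\phi_\pi$ over the $\psi_{\beta-1,-a}$-twisted unipotent $N_{\beta-1}\subset G$ followed by the quotient integral over $C^0_{G_0}G_0(F)\backslash G_0(\A)$ is exactly the Bessel period $\mathcal{B}^{\psi_{\beta-1,-a}}(\pi(g)\phi_\pi,\,\mathcal{J}_{\ell,a}(R(\epsilon_{0,\beta}\eta_2 g)f_{\mathcal{W}(\tau,\psi_{Z_k,a}^{-1}),\sigma,s}))$, which gives the stated identity.

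I expect the main obstacle to be the double-coset analysis: producing an explicit and provably complete set of representatives for $P_k(F)\backslash H(F)/R_{\ell,a}(F)$ inside the $\GSpin$ group, and then showing orbit by orbit that all but $\epsilon_{0,\beta}$ contribute zero. Unlike the special orthogonal case one must carry the $\GL_1$-fibre and the spinor norm through every matrix computation, and each vanishing must be pinned to the correct mechanism (non-triviality of $\psi_{\ell,a}$ versus cuspidality of $\pi$). A secondary difficulty is the bookkeeping in the last step needed to split the leftover unipotent integral cleanly into the Whittaker integral on $\GL_k$, the coefficient $\mathcal{J}_{\ell,a}$, and the inner Bessel period for $(G,G_0)$, while correctly tracking the twist by $\omega_\pi$.
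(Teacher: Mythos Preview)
Your overall architecture is right: unfold the Eisenstein series, analyze the resulting double cosets, kill all but one orbit by the two mechanisms you name (nontriviality of $\psi_{\ell,a}$ on a subgroup where the section is invariant, versus cuspidality of $\phi_\pi$), and then massage the surviving term. The paper does exactly this, though it organizes the double-coset analysis in two stages---first $P_k(F)\backslash H(F)/P_\ell(F)$ (giving the $\epsilon_{\alpha,\beta}$), and then $P_\ell^{\epsilon_{\alpha,\beta}}(F)\backslash P_\ell(F)/R_{\ell,a}(F)$ (giving the $\eta$'s)---which lets one import the Ginzburg--Rallis--Soudry case analysis for $\SO_{m'}$ directly.

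There is, however, a genuine gap in your treatment of the surviving orbit. You assert that the stabilizer $R_{\ell,a}\cap(\epsilon_{0,\beta}\eta_2)^{-1}P_k(\epsilon_{0,\beta}\eta_2)$, after suitable conjugation, \emph{is} the Bessel subgroup $R^{\eta_2}_{\ell,\beta-1}$ of $G$, so that collapsing immediately yields the domain $R^{\eta_2}_{\ell,\beta-1}(\A)\backslash G(\A)$. This is not correct: the $G$-part of the stabilizer is $G^{\eta_2}\cong(\GL_{\beta-1}\times G_0')\ltimes U_{\beta-1,\eta_2}$, a genuine parabolic of $G$ with a full $\GL_{\beta-1}$ Levi factor, strictly larger than the Bessel subgroup. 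Correspondingly, the $N_\ell^{\eta_2}$-integration only conjugates to $Z_\ell'\subset Z_k$ (the lower $\ell\times\ell$ block), so you do \emph{not} yet have the full $Z_k$-Whittaker coefficient of $\tau$.

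The missing ingredient is the Piatetski-Shapiro--Shalika mirabolic Fourier expansion applied to the cusp form $\phi_\pi$. After observing that the partially-unfolded inner integral $\Phi(g)$ is left $C_{\beta-1,\eta_2}(\A)$-invariant, one integrates $\phi_\pi$ over $C_{\beta-1,\eta_2}(F)\backslash C_{\beta-1,\eta_2}(\A)$ and then expands $\phi_\pi^{C_{\beta-1,\eta_2},1}$ along the mirabolic $P_\beta^1\subset\GL_\beta$ as $\sum_{d\in Z_{\beta,\eta_2}(F)\backslash P_\beta^1(F)}\phi_\pi^{N_{\ell,\beta-1}^{\eta_2},\psi_{\beta-1,-a}^{-1}}(dg)$, using cuspidality of $\pi$ to kill the degenerate terms. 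Collapsing this sum with the $P_\beta^1$-part of $G^{\eta_2}$ is what replaces the domain by $R^{\eta_2}_{\ell,\beta-1}(\A)\backslash G(\A)$; and the resulting extra $Z_{\beta,\eta_2}$-integration, combined with the earlier $Z_\ell'$-integration, is exactly what completes the $Z_k$-Whittaker coefficient and produces $f_{\mathcal{W}(\tau,\psi_{Z_k,a}^{-1}),\sigma,s}$. Without this step your argument does not reach the stated form of the integral.
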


\begin{theorem}
\label{thm-global-unfolding-klel}
Assume $k\le  \ell$.
For $\Re(s)\gg 0$, the integral $\mathcal{Z}(\phi_\pi, f_{\tau,\sigma, s})$ unfolds to 
\begin{equation}
\int_{N_{\ell}^{\eta_1}(\A)\backslash N_{\ell}(\A)}  \mathcal{B}^{\psi_{m^\prime-2k;\ell-k,a}} ( R(\epsilon_{0,0} \eta_1   u) f_{\mathcal{W}(\tau,\psi),\sigma,s}, \phi_\pi) \psi_{\ell,a}^{-1}(u)du ,
\end{equation}
where 
\begin{itemize}
\item $\epsilon_{0, 0}$ and $\eta_1$ are given in 	\eqref{eq-double-coset-epsilon-alpha-beta} and \eqref{eq-eta-klel},
\item $N_{\ell}^{\eta_1}$ is the unipotent subgroup of $N_{\ell}$ given in \eqref{eq-Nl-eta1},
\item $\mathcal{B}^{\psi_{m^\prime-2k;\ell-k,a}}$ denotes the Bessel period for the pair $(G_0, G)$ with respect to the character $\psi_{m^\prime-2k;\ell-k,a}$, which is the restriction of $\psi_{\ell,a}$ to the subgroup $N_{k,\ell-k}$ defined in \eqref{eq-N-k-l-k}. 
\end{itemize}
\end{theorem}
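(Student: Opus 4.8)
The plan is to follow the classical Rankin--Selberg unfolding procedure, patterned on the computation of Jiang and Zhang \cite{JiangZhang2014} for quasi-split special orthogonal groups in the case $k\le \ell$, while keeping careful track of the extra $\GL_1$-factor $C_G^0=\ker(\pr)$ that distinguishes the $\GSpin$ setting. First I will substitute the definition \eqref{eq-Eis} of the Eisenstein series and the definition \eqref{eq-Bessel-coef} of the Bessel coefficient into the global integral \eqref{eq-global-integral}. For $\Re(s)\gg 0$ the resulting multiple sum--integral is absolutely convergent --- this uses Lemma~\ref{lemma-global-integral-convergence}, the rapid decrease of $\phi_\pi$ modulo $C_G^0$, and the compactness of $N_\ell(F)\backslash N_\ell(\A)$ --- so the sum over $P_k(F)\backslash H(F)$ may be interchanged with the integral. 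Decomposing $P_k(F)\backslash H(F)$ into $R_{\ell,a}(F)$-orbits on the right and, for each orbit with representative $\delta$, collapsing the sum over the stabilizer $R_{\ell,a}(F)\cap\delta^{-1}P_k(F)\delta$ against the quotient measure expresses $\mathcal{Z}(\phi_\pi,f_{\tau,\sigma,s})$ as a sum indexed by the double cosets $P_k(F)\backslash H(F)/R_{\ell,a}(F)$ of integrals over $C_G^0(\A)\,(R_{\ell,a}(F)\cap\delta^{-1}P_k(F)\delta)\backslash R_{\ell,a}(\A)$.

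Next I will determine which double cosets actually contribute. A given term survives only if the integrand $r\mapsto f_{\tau,\sigma,s}(\delta r)\,\psi_{\ell,a}^{-1}(u(r))\,\phi_\pi(g(r))$ is genuinely left invariant under the stabilizer, which imposes a compatibility between the character $\psi_{\ell,a}$ and the way conjugation by $\delta$ embeds unipotent subgroups of $R_{\ell,a}$ into $P_k$. The cosets failing this compatibility contribute zero, because the offending unipotent integration is then over a compact quotient on which a non-trivial additive character is integrated; among the remaining cosets, all but one are annihilated by the cuspidality of $\phi_\pi$, since conjugation by $\delta$ exposes the unipotent radical of a proper parabolic subgroup of $G$ over which $\phi_\pi$ integrates to zero. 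I expect this orbit analysis to be the main obstacle: one must describe explicitly the relative position of $P_k$, the parabolic $Q_\ell$, and the Bessel datum $\psi_{\ell,a}$ inside $\GSpin_{m^\prime}$, pin down the surviving representative $\epsilon_{0,0}\eta_1$ (see \eqref{eq-double-coset-epsilon-alpha-beta}, \eqref{eq-eta-klel}), and verify the vanishing of every other orbit --- and in the $\GSpin$ case all of this must be done compatibly with the central-character hypothesis \eqref{eq-assumption-central-char-G-G0}.

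On the surviving coset $\delta=\epsilon_{0,0}\eta_1$ I will compute the stabilizer explicitly and reorganize the domain of integration. Since $k\le \ell$, the unipotent subgroup $N_\ell$ is too large to be fully absorbed into the Levi of $P_k$, and a residual quotient $N_\ell^{\eta_1}(\A)\backslash N_\ell(\A)$ (with $N_\ell^{\eta_1}$ as in \eqref{eq-Nl-eta1}) survives as an outer integration against $\psi_{\ell,a}^{-1}$. Conjugation by $\eta_1$ carries a further piece of the unipotent integration into the maximal unipotent subgroup of the $\GL_k$-factor of the Levi of $P_k$; the resulting inner integration over this unipotent quotient against the induced additive character is, by definition, the $\psi$-Whittaker coefficient of the automorphic form in the space of $\tau$, which is non-zero by the genericity of $\tau$ and produces the $\psi$-Whittaker-model-valued section $f_{\mathcal{W}(\tau,\psi),\sigma,s}$ of \eqref{eq-f-upper-Zk}. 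What then remains is an integration over the $G$-variable against $\phi_\pi$, together with the Bessel-type unipotent integration over $N_{k,\ell-k}$ --- the part of $N_\ell$ landing in the $\GSpin_{m^\prime-2k}$-factor of the Levi of $P_k$ --- inside $G_0=\GSpin_{m^\prime-2k}$; this is exactly the Bessel period $\mathcal{B}^{\psi_{m^\prime-2k;\ell-k,a}}$ for the pair $(G_0,G)$ attached to the restriction of $\psi_{\ell,a}$ to $N_{k,\ell-k}$ (see \eqref{eq-N-k-l-k}). Assembling these identifications yields the asserted formula, all steps being valid for $\Re(s)\gg 0$, where the manipulations are justified by absolute convergence.
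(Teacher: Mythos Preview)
Your proposal is correct and follows essentially the same approach as the paper: unfold the Eisenstein series via the double coset decomposition $P_k\backslash H/R_{\ell,a}$, eliminate all but the coset $\epsilon_{0,0}\eta_1$ by the character/cuspidality arguments, and then decompose the stabilizer $N_\ell^{\eta_1}=Z_k\cdot N_{k,\ell-k}$ so that the $Z_k$-integration produces the Whittaker section and the remaining $G$- and $N_{k,\ell-k}$-integrations give the Bessel period for $(G_0,G)$. The only points the paper makes more explicit are (i) organizing the coset analysis as a two-step decomposition $P_k\backslash H/P_\ell$ followed by $P_\ell^{\epsilon_{0,\beta}}\backslash P_\ell/R_{\ell,a}$, and (ii) justifying the final interchange of the $dg$- and $du$-integrations by a separate moderate-growth lemma (Lemma~\ref{lemma-unfolding-moderate-growth}) rather than a blanket appeal to absolute convergence.
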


The remainder of Section~\ref{section-global-zeta-integrals} is devoted to the proofs of Theorem~\ref{thm-global-unfolding-kgreaterl} and Theorem~\ref{thm-global-unfolding-klel}.

We start with the following double coset decomposition for $P_k(F)\backslash H(F)/P_\ell(F)$. Recall that $\tilde{m}$ is the Witt index of $(V, q_V)$ defining $H$, and $m=[\frac{m^\prime}{2}]$. We divide the discussion into the following two cases, given in Lemma~\ref{lemma-double-coset-case1} and Lemma~\ref{lemma-double-coset-case2} below:
\begin{itemize}
\item Case (1): $H$ is not a $F$-split even $\GSpin$ group,\\
\item Case (2): $H$ is a $F$-split even $\GSpin$ group.
\end{itemize}

\begin{lemma}
\label{lemma-double-coset-case1}
(\textbf{Case (1)})
Suppose $H$ is not a $F$-split even $\GSpin$ group. The double coset representatives for $P_k(F)\backslash H(F)/P_\ell(F)$ are given by $\epsilon_{\alpha, \beta}\in  W_H\cong W_{\SO_{m^\prime}}$ for each pair of non-negative integers $(\alpha, \beta)$ in
\begin{equation}
\label{eq-double-coset-index-set-Ekl}
\mathcal{E}_{k,\ell}=\{ (\alpha, \beta): 0\le \alpha \le \beta \le k \text{ and } k\le \ell +\beta-\alpha \le \tilde{m} \},	
\end{equation}
with 
\begin{equation}
\label{eq-double-coset-epsilon-alpha-beta}
	\pr(\epsilon_{\alpha,\beta})	 =  w_q^{k-\beta}  a_{\alpha, \beta}
\end{equation}
where  
\begin{equation}
\label{eq-a-alpha-beta}
 a_{\alpha, \beta} = 
\begin{pmatrix}
I_{\alpha} &&&&&&&\\
&0&0&I_{\beta-\alpha} &0&0&0&0&\\
&0&0&0&0&0&0&I_{k-\beta}&\\
&I_{\ell+\beta-\alpha-k} &0&0&0&0&0&0&\\
&0&0&0&I_{m^\prime-2(\ell+\beta-\alpha)} &0&0&0&\\
&0&0&0&0&0&I_{\ell+\beta-\alpha-k}&0&\\
&0&I_{k-\beta}&0&0&0 &0&0&\\
&0&0 &0&0&I_{\beta-\alpha}&0&0&\\
&&&&&&&&I_\alpha 
\end{pmatrix},
\end{equation}
and $w_q$ is given in \cite[pp. 70-71]{GinzburgRallisSoudry2011} which is an auxiliary Weyl group element.
\end{lemma}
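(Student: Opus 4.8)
The plan is to establish the double coset decomposition $P_k(F)\backslash H(F)/P_\ell(F)$ by reducing it to a purely combinatorial statement about the underlying special orthogonal group, since $\pr:\GSpin(V)\to\SO(V)$ identifies the Weyl groups and induces an isomorphism of unipotent varieties, and $P_k,P_\ell$ are preimages of the corresponding parabolic subgroups of $\SO(V)$. Thus it suffices to enumerate $P_k(F)\backslash\SO_{m^\prime}(F)/P_\ell(F)$, where $P_k$ (resp.\ $P_\ell$) is the stabilizer of the isotropic subspace $V_k^+$ (resp.\ the isotropic flag $0\subset V_1^+\subset\cdots\subset V_\ell^+$). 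Here I would invoke the standard fact (Witt's theorem / the theory of $(B,N)$-pairs, or the Bruhat-type decomposition relative to parabolics) that double cosets $P\backslash \SO(V)/Q$ for parabolic $P,Q$ are parametrized by $W_P\backslash W/W_Q$, and then carry out the explicit bookkeeping of minimal-length representatives.

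Concretely, first I would reformulate the double coset space geometrically: an element of $P_k(F)\backslash\SO_{m^\prime}(F)/P_\ell(F)$ is determined by the relative position of the isotropic subspace $gV_k^+$ with respect to the fixed flag $0\subset V_1^+\subset\cdots\subset V_\ell^+$, which by Witt's theorem is governed by the two discrete invariants $\alpha=\dim(gV_k^+\cap V_\ell^{+})$ refined appropriately and $\beta=\dim$ of the projection of $gV_k^+$ onto a complement, together with the constraint that $gV_k^+$ remains totally isotropic. This is exactly what forces the index set $\mathcal{E}_{k,\ell}=\{(\alpha,\beta):0\le\alpha\le\beta\le k,\ k\le \ell+\beta-\alpha\le\tilde m\}$: the inequality $\alpha\le\beta\le k$ records the nested dimension counts of the intersections, while $k\le\ell+\beta-\alpha\le\tilde m$ is the isotropy/Witt-index constraint ensuring such a relative position is actually realized inside $V$. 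Next I would write down an explicit representative: the matrix $a_{\alpha,\beta}$ in \eqref{eq-a-alpha-beta} is a partial permutation matrix that moves the standard blocks into the prescribed relative position, and multiplying by the auxiliary element $w_q^{k-\beta}$ from \cite[pp.~70--71]{GinzburgRallisSoudry2011} corrects for the anisotropic kernel so that the product lies in $\SO_{m^\prime}$ (this is the point where the odd/even and split/quasi-split distinctions of Case (1) enter, handled uniformly by $w_q$). One then checks directly that $\pr(\epsilon_{\alpha,\beta})=w_q^{k-\beta}a_{\alpha,\beta}$ preserves the quadratic form and realizes the invariants $(\alpha,\beta)$.

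Finally I would verify that the map $(\alpha,\beta)\mapsto P_k(F)\epsilon_{\alpha,\beta}P_\ell(F)$ is a bijection onto the double coset space: surjectivity follows from Witt's theorem (any totally isotropic $k$-subspace in a given relative position to the flag is $P_\ell(F)$-conjugate to $gV_k^+$ for $g$ of the standard form, and then $P_k(F)$-conjugate to the model), and injectivity follows because the invariants $(\alpha,\beta)$ are $P_k(F)$- and $P_\ell(F)$-invariant by construction. The main obstacle I anticipate is not the existence of the parametrization — that is classical — but the precise verification that the chosen representatives $\epsilon_{\alpha,\beta}$ lift correctly to $W_H\cong W_{\SO_{m^\prime}}$ and that the block matrix $a_{\alpha,\beta}$ together with $w_q^{k-\beta}$ has exactly the claimed form in all the sub-cases of Case (1) (odd $\GSpin$, split even, quasi-split non-split even); getting the boundary constraints in $\mathcal{E}_{k,\ell}$ exactly right, in particular the upper bound $\ell+\beta-\alpha\le\tilde m$ coming from the Witt index rather than from $m$, requires care and is where a naive count would go wrong. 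Case (2), the $F$-split even case, is deferred to Lemma~\ref{lemma-double-coset-case2} precisely because the extra outer automorphism / the two families of maximal isotropic subspaces introduce an additional wrinkle in the enumeration.
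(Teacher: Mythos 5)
Your proposal is correct in outline, but it takes a substantively different route from the paper. The paper's actual proof is a two-step citation argument: it quotes \cite[\S 4.2.1]{GinzburgRallisSoudry2011} verbatim for the decomposition $\SO_{m'}(F)=\bigsqcup_{(\alpha,\beta)}\pr(P_k(F))\pr(\epsilon_{\alpha,\beta})\pr(P_\ell(F))$, and then lifts this along $\pr$ by observing that $W_H\cong W_{\SO_{m'}}$ and that $\ker(\pr)=C_H^0\subset P_k$, so preimages of the $\SO$-double cosets give the $\GSpin$-double cosets. You instead propose to re-derive the $\SO_{m'}$ decomposition from first principles, using Witt's theorem and the relative-position invariants of $gV_k^+$ with respect to the flag, together with explicit verification that $w_q^{k-\beta}a_{\alpha,\beta}$ realizes the prescribed invariants. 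That is a legitimate and more self-contained argument (and is essentially what GRS do in the cited pages), but it carries more bookkeeping risk: you yourself flag the ``refined appropriately'' step and the boundary constraints $k\le\ell+\beta-\alpha\le\tilde m$ as delicate. The one thing you should make explicit in the lifting step is the precise mechanism: $\pr$ is surjective on $F$-points (Hilbert 90), $P_k$ and $P_\ell$ are full preimages of their images, and $\ker(\pr)$ is absorbed into $P_k$, so $P_k(F)\backslash H(F)/P_\ell(F)\to\pr(P_k)(F)\backslash\SO_{m'}(F)/\pr(P_\ell)(F)$ is a bijection; saying only that $\pr$ identifies the Weyl groups and unipotent varieties does not quite give this by itself. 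With that point spelled out, both approaches reach the same conclusion; the paper's is shorter because it delegates the combinatorics to GRS, while yours is more transparent about where the index set $\mathcal{E}_{k,\ell}$ comes from.
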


\begin{proof}
By \cite[\S 4.2.1]{GinzburgRallisSoudry2011}, we know
\begin{equation*}
\SO_{m^\prime}(F)= \bigsqcup_{(\alpha,\beta)\in \mathcal{E}_{k,\ell}} \pr(P_k(F))\pr(\epsilon_{\alpha,\beta}) \pr(P_{\ell}(F))
\end{equation*}
where the disjoint union is taken over $(\alpha, \beta)$ in the set \eqref{eq-double-coset-index-set-Ekl}.
Note that $W_{H}\cong W_{\SO_{m^\prime}}$ and that $\ker (\pr) = C_H^0 \subset P_k$	. Hence  we have
\begin{equation*}
	H(F)= \bigsqcup_{(\alpha,\beta)\in \mathcal{E}_{k,\ell}} P_k(F)\epsilon_{\alpha, \beta} P_{\ell}(F).
\end{equation*}
\end{proof}

In the next case when $H$ is the $F$-split even $\GSpin$ group, the set of pairs $(\alpha, \beta)$ is also given in \eqref{eq-double-coset-index-set-Ekl}. We still define $a_{\alpha, \beta}$ as in \eqref{eq-a-alpha-beta} and $\epsilon_{\alpha, \beta}$ as in \eqref{eq-double-coset-epsilon-alpha-beta}, but now we take $w_q=\diag(I_{m-1}, w_q^0, I_{m-1})$ with $w_q^0=\begin{pmatrix}0&1\\1&0\end{pmatrix}$ as in \cite[\S 4.2.2]{GinzburgRallisSoudry2011}.

\begin{lemma}
\label{lemma-double-coset-case2}
(\textbf{Case (2)})
Suppose $H$ is a $F$-split even $\GSpin$ group (i.e., $\tilde{m}=m$ and $m^\prime=2m$).
The coset representatives for $P_k(F)\backslash H(F)/P_\ell(F)$ are indexed by the set \eqref{eq-double-coset-index-set-Ekl} given as follows. When $\ell+\beta-\alpha<m$, there is one double coset representative $\epsilon_{\alpha, \beta}\in  W_H\cong W_{\SO_{m^\prime}}$ given by \eqref{eq-double-coset-epsilon-alpha-beta}. 
When $\ell+\beta-\alpha=m$, there are two double coset representatives $\epsilon_{\alpha, \beta}$ and $\tilde{\epsilon}_{\alpha, \beta}$, where $\pr(\tilde{\epsilon}_{\alpha, \beta})=w_q \pr({\epsilon}_{\alpha, \beta}) w_q^{-1}$.
\end{lemma}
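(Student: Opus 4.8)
The plan is to reduce Lemma~\ref{lemma-double-coset-case2} to the corresponding statement for $\SO_{m'}$, exactly as in the proof of Lemma~\ref{lemma-double-coset-case1}, while tracking the new feature of the split even case: the presence of \emph{two} double cosets (rather than one) precisely when $\ell+\beta-\alpha=m$. First I would invoke \cite[\S 4.2.2]{GinzburgRallisSoudry2011}, which gives, for the split even orthogonal group $\SO_{2m}$, the double coset decomposition
\begin{equation*}
\SO_{m^\prime}(F)= \bigsqcup \pr(P_k(F))\, x\, \pr(P_{\ell}(F)),
\end{equation*}
where $x$ ranges over $\pr(\epsilon_{\alpha,\beta})$ for $(\alpha,\beta)\in\mathcal{E}_{k,\ell}$ with $\ell+\beta-\alpha<m$, and over the pair $\pr(\epsilon_{\alpha,\beta})$, $\pr(\tilde\epsilon_{\alpha,\beta})$ for $(\alpha,\beta)\in\mathcal{E}_{k,\ell}$ with $\ell+\beta-\alpha=m$. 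The doubling phenomenon reflects the fact that for $\SO_{2m}$ the full orthogonal group $\Orth_{2m}$ has two connected components and the outer element may be taken to be conjugation by $w_q=\diag(I_{m-1},w_q^0,I_{m-1})$; when the relevant Levi factor $\GSpin(W_{\ell+\beta-\alpha})$ (equivalently $\SO(W_{\ell+\beta-\alpha})$) is itself even and nontrivially of split type — which happens exactly when $\ell+\beta-\alpha=m$, so that $W_{\ell+\beta-\alpha}=W_m=V_0$ has even dimension — the stabilizer of the flag in $P_k\times P_\ell$ fails to meet that outer component, producing a second orbit.

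Next I would transfer this to $H=\GSpin_{m'}$. The key facts, already recorded in the excerpt, are that $\pr\colon\GSpin(V)\to\SO(V)$ induces an isomorphism $W_H\cong W_{\SO_{m'}}$ of Weyl groups, that $\ker(\pr)=C_H^0$ is central and contained in $P_k(F)$ (indeed in $B_{\GSpin(V)}(F)\subset P_k(F)$), and that $\pr$ restricts to isomorphisms on unipotent subvarieties. Because $\pr$ is surjective on $F$-points with kernel $C_H^0\subset P_k(F)$, pulling back the orthogonal decomposition gives
\begin{equation*}
H(F)=\bigsqcup_{(\alpha,\beta)} P_k(F)\,\epsilon_{\alpha,\beta}\,P_\ell(F)
\end{equation*}
together with the extra cosets $P_k(F)\,\tilde\epsilon_{\alpha,\beta}\,P_\ell(F)$ when $\ell+\beta-\alpha=m$, where $\epsilon_{\alpha,\beta}$, $\tilde\epsilon_{\alpha,\beta}$ are any lifts to $W_H$ of the corresponding orthogonal Weyl elements; we fix $\epsilon_{\alpha,\beta}$ by \eqref{eq-double-coset-epsilon-alpha-beta} with the modified $w_q=\diag(I_{m-1},w_q^0,I_{m-1})$, and $\tilde\epsilon_{\alpha,\beta}$ by the requirement $\pr(\tilde\epsilon_{\alpha,\beta})=w_q\,\pr(\epsilon_{\alpha,\beta})\,w_q^{-1}$. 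Disjointness downstairs forces disjointness upstairs since $P_k(F)=\pr^{-1}(\pr(P_k(F)))$ and likewise for $P_\ell$; and distinctness of the two cosets $\epsilon_{\alpha,\beta}$, $\tilde\epsilon_{\alpha,\beta}$ upstairs follows from their distinctness downstairs by the same argument.

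The main obstacle, and the only point requiring genuine care, is verifying that $w_q\,\pr(\epsilon_{\alpha,\beta})\,w_q^{-1}$ is genuinely \emph{not} in the same $\pr(P_k)$–$\pr(P_\ell)$ double coset as $\pr(\epsilon_{\alpha,\beta})$ when $\ell+\beta-\alpha=m$, and conversely that $w_q$-conjugation \emph{does} preserve the double coset when $\ell+\beta-\alpha<m$ (so that no spurious extra coset appears there) — i.e., matching the dichotomy in \cite[\S 4.2.2]{GinzburgRallisSoudry2011} precisely. Here one observes that $w_q$ normalizes both $\pr(P_k(F))$ and $\pr(P_\ell(F))$ modulo the respective derived/even factors, so $w_q$-conjugation acts on the set of double cosets, and on the coset of $\pr(\epsilon_{\alpha,\beta})$ it acts trivially unless the middle block $\SO(W_{\ell+\beta-\alpha})$ is the full even anisotropic kernel, in which case the flag \eqref{eq-parobolic-Ql-flag} is stabilized with ``room'' only on one side and the outer automorphism is visible. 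Rather than redo this combinatorics, I would simply cite \cite[\S 4.2.2]{GinzburgRallisSoudry2011} for the orthogonal statement and note that the passage to $\GSpin$ is formal via the properties of $\pr$ listed above, exactly as in the proof of Lemma~\ref{lemma-double-coset-case1}.
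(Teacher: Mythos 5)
Your proof is correct and follows the same route as the paper's: cite \cite[\S 4.2.2]{GinzburgRallisSoudry2011} for the double coset decomposition of $\SO_{2m}(F)$, then transfer to $\GSpin_{m'}(F)$ using the facts that $\pr$ is surjective on $F$-points, $\ker(\pr)=C_H^0\subset P_k(F)$, and $W_H\cong W_{\SO_{m'}}$ — the paper itself just records this as ``similar to Lemma~\ref{lemma-double-coset-case1}.'' The heuristic aside about why $\ell+\beta-\alpha=m$ produces a second orbit (the claim that $\SO(W_m)$ is ``nontrivially of split type,'' when in fact $W_m=V_0=0$ in the split case) is a little off, but you correctly set it aside and rely on the cited source, so it doesn't affect the validity of the argument.
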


\begin{proof}
The proof is similar to the proof of Lemma~\ref{lemma-double-coset-case1}.	
\end{proof}

We use the notation $\tilde{\mathcal{E}}_{k,\ell}$ to denote a set of the double coset representatives for $P_k(F)\backslash H(F)/P_\ell(F)$ described in Lemma~\ref{lemma-double-coset-case1} and Lemma~\ref{lemma-double-coset-case2}.
Using this decomposition, for $\mathrm{Re}(s)\gg 0$, we can unfold the Eisenstein series to obtain
\begin{equation*}
\begin{split}
E^{N_\ell, \psi_{\ell,a}}(g, f_{\tau,\sigma,s}) &= \int_{N_{\ell}(F)\backslash N_{\ell}(\A)} E(ug, f_{\tau,\sigma,s})\psi_{\ell,a}^{-1}(u)du \\
&= \sum_{\epsilon_{\alpha,\beta} \in \tilde{\mathcal{E}}_{k,\ell} } \int_{N_{\ell}(F)\backslash N_{\ell}(\A)} \sum_{\delta\in P_{\ell}^{\epsilon_{\alpha,\beta} }(F)\backslash P_{\ell}(F)} f_{\tau,\sigma,s}(\epsilon_{\alpha,\beta}  \delta u g) \psi_{\ell,a}^{-1}(u)du,
\end{split}
\end{equation*}
where $P_{\ell}^{\epsilon_{\alpha,\beta}}:=\epsilon_{\alpha,\beta} ^{-1}P_k \epsilon_{\alpha,\beta}\cap P_{\ell}$ is the stabilizer in $P_{\ell}$.

In the case of $\SO_{m^\prime}$, the stabilizers $P_{\ell}^{\epsilon_{\alpha, \beta}}$ and  $P_{\ell}^{\tilde{\epsilon}_{\alpha, \beta}}$ are explicitly given in \cite[\S 4.3.1, \S 4.3.2]{GinzburgRallisSoudry2011} (see also \cite[(3.5)-(3.6)]{JiangZhang2014}). If we write  $P_{\ell}^{\epsilon_{\alpha, \beta}}=M_{\ell}^{\epsilon_{\alpha, \beta}} \ltimes U_{\ell}^{\epsilon_{\alpha, \beta}} $ (resp. $P_{\ell}^{\tilde{\epsilon}_{\alpha, \beta}}=M_{\ell}^{\tilde{\epsilon}_{\alpha, \beta}} \ltimes U_{\ell}^{\tilde{\epsilon}_{\alpha, \beta}} $) with $M_{\ell}^{\epsilon_{\alpha, \beta}}=\epsilon_{\alpha, \beta}^{-1}P_k \epsilon_{\alpha, \beta}\cap M_{\ell}$ (resp. $M_{\ell}^{\tilde{\epsilon}_{\alpha, \beta}}=\tilde{\epsilon}_{\alpha, \beta}^{-1}P_k \tilde{\epsilon}_{\alpha, \beta}\cap M_{\ell}$), then the unipotent subgroup $U_{\ell}^{\epsilon_{\alpha, \beta}}$ (resp. $U_{\ell}^{\tilde{\epsilon}_{\alpha, \beta}}$)  agrees with \cite{GinzburgRallisSoudry2011} via the projection map, and the $\GSpin_{m^\prime}/\SO_{m^\prime}$ difference is in $M_{\ell}^{\epsilon_{\alpha, \beta}}$ (resp. $M_{\ell}^{\tilde{\epsilon}_{\alpha, \beta}}$).

To continue, we further consider the double coset decomposition $P_{\ell}^{\epsilon_{\alpha,\beta}}(F)\backslash P_{\ell}(F) / R_{\ell,a}(F)$ for $\epsilon_{\alpha,\beta}\in \tilde{\mathcal{E}}_{k,\ell}$, where $R_{\ell, a}=G\ltimes N_{\ell}$. In the case of $\SO_{m^\prime}$, the representatives for these double cosets are computed in \cite[(5.2)]{GinzburgRallisSoudry2011} (see also \cite[(3.7)]{JiangZhang2014}, which are of the form
\begin{equation*}
\begin{pmatrix}
 \epsilon & & \\ & \gamma &\\ && \epsilon^*	
 \end{pmatrix} \in \SO_{m^\prime},
\end{equation*}
where $\epsilon$ runs through a set of representatives for the quotient of the Weyl groups 
\begin{equation*}
	W_{\GL_{\alpha}}\times W_{\GL_{\ell+\beta-\alpha-k}}\times W_{\GL_{k-\beta}}\backslash W_{\GL_{\ell}},
\end{equation*}
and $\gamma$ runs through a set of representatives for $P_{w,\SO_{m^\prime}}^\prime\backslash \SO(W_{\ell}) / \mathrm{Stab}_{L_{\ell,\SO_{m^\prime}}}(\psi_{\ell, a})$, where 
\begin{equation*}
P_{w, \SO_m^\prime}^\prime:= \SO(W_{\ell})\cap \pr(\epsilon_{\alpha,\beta}^{-1})P_{k,\SO_{m^\prime}} \pr(\epsilon_{\alpha,\beta}) \ \ \text{ for }w=\pr(\epsilon_{\alpha,\beta}) \text{ or } w=\pr(\tilde{\epsilon}_{\alpha,\beta}),
\end{equation*}
${L_{\ell,\SO_{m^\prime}}}\cong (\GL_1)^{\ell}\times \SO_{m^\prime-2\ell}$  is the Levi subgroup of the standard parabolic subgroup of $\SO_{m^\prime}$ stabilizing \eqref{eq-parobolic-Ql-flag}, $P_{k,\SO_{m^\prime}}$ is the standard parabolic subgroup of $\SO_{m^\prime}$ stabilizing $V_k^+$, and we recall that $W_\ell=(V_\ell^+\oplus V_\ell^-)^{\perp}$. 

\begin{remark}
\label{remark-parabolic-subgroup}
Note that $P_{w, \SO_m^\prime}^\prime$ is a maximal parabolic subgroup of $\SO(W_{\ell})$ as shown in \cite[pp. 563-564]{JiangZhang2014}. More specifically, we have the following:
\begin{itemize}
\item 	When $\SO_{m^\prime}$ is not the $F$-split even special orthogonal group, $P_{w, \SO_m^\prime}^\prime$ is the parabolic subgroup of $\SO(W_{\ell})$ preserving the $(\beta-\alpha)$-dimensional totally isotropic subspace $V_{\ell, \beta-\alpha}^+$ of $W_{\ell}$, where
\begin{equation}
\label{eq-subspace-V-lt}
V_{\ell, t}^{\pm}=\Span\{e_{\pm (\ell+ 1)}, \cdots, e_{\pm (\ell+t)} \}
\end{equation}
for $1\le t\le m-\ell$.
\item Assume $\SO_{m^\prime}$ is the $F$-split even special orthogonal group. When $\ell+\beta-\alpha<m$, $P_{w, \SO_m^\prime}^\prime$ is the parabolic subgroup of $\SO(W_{\ell})$ preserving the $(\beta-\alpha)$-dimensional totally isotropic subspace $V_{\ell, \beta-\alpha}^+$ of $W_{\ell}$. When $\ell+\beta-\alpha=m$ and $w=\pr(\epsilon_{\alpha,\beta})$, $P_{w, \SO_m^\prime}^\prime$ is the parabolic subgroup of $\SO(W_{\ell})$ preserving the $(m-\ell)$-dimensional totally isotropic subspace $V_{\ell, m-\ell}^+$ of $W_{\ell}$. When $\ell+\beta-\alpha=m$ and $w=\pr(\tilde{\epsilon}_{\alpha,\beta})$, $P_{w, \SO_m^\prime}^\prime$ is the parabolic subgroup of $\SO(W_{\ell})$ preserving the $(m-\ell)$-dimensional subspace $w_q  V_{\ell,m-\ell}^+$ of $W_{\ell}$.
\end{itemize}
\end{remark}
By a similar argument as in the proof of Lemma~\ref{lemma-double-coset-case1}, we have the following lemma. 

\begin{lemma}
\label{lemma-double-coset-second}
The representatives for 	$P_{\ell}^{\epsilon_{\alpha,\beta}}(F)\backslash P_{\ell}(F) / R_{\ell,a}(F)$ are given by $\{\eta_{\epsilon,\gamma} \}$ which are determined by
\begin{equation*}
\pr(\eta_{\epsilon,\gamma}) = \begin{pmatrix}
 \epsilon & & \\ & \gamma &\\ && \epsilon^*	
 \end{pmatrix} \in \SO_{m^\prime}(F)
 \end{equation*}
 as above.
\end{lemma}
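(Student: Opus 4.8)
The strategy is to deduce the $\GSpin$ statement from the corresponding statement for $\SO_{m^\prime}$, exactly as was done for Lemma~\ref{lemma-double-coset-case1}. The key observation is that $\ker(\pr) = C_H^0 = \GL_1$ is contained in every standard parabolic subgroup, and in particular in $P_{\ell}$, in $R_{\ell,a} = G \ltimes N_{\ell}$ (since $C_G^0 = \ker(\pr) \subset G$), and in $P_{\ell}^{\epsilon_{\alpha,\beta}} = \epsilon_{\alpha,\beta}^{-1} P_k \epsilon_{\alpha,\beta} \cap P_{\ell}$ (as $\ker(\pr)$ is central, it lies in both $\epsilon_{\alpha,\beta}^{-1} P_k \epsilon_{\alpha,\beta}$ and $P_{\ell}$). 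Therefore the projection $\pr$ identifies $P_{\ell}^{\epsilon_{\alpha,\beta}}(F)\backslash P_{\ell}(F) / R_{\ell,a}(F)$ with $\pr(P_{\ell}^{\epsilon_{\alpha,\beta}}(F))\backslash \pr(P_{\ell}(F)) / \pr(R_{\ell,a}(F))$. Here I use that for an algebraic group over a field $F$, a surjection of algebraic groups with central kernel isomorphic to $\GL_1$ induces a surjection on $F$-points (since $H^1(F, \GL_1) = 0$ by Hilbert 90); applied to each of the three groups, $\pr$ is surjective on $F$-points, so the double coset spaces genuinely match up.

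Next I would identify $\pr(P_{\ell}^{\epsilon_{\alpha,\beta}})$, $\pr(P_{\ell})$, and $\pr(R_{\ell,a})$ with the corresponding objects on the $\SO_{m^\prime}$ side. By definition $\pr(P_{\ell}) = \pr(P_k(F))\cdot$-type parabolic is the standard parabolic $P_{\ell,\SO_{m^\prime}}$ of $\SO_{m^\prime}$, $\pr(R_{\ell,a}) = \mathrm{Stab}_{L_{\ell,\SO_{m^\prime}}}(\psi_{\ell,a}) \ltimes N_{\ell,\SO_{m^\prime}}$ is the corresponding Bessel-type subgroup of $\SO_{m^\prime}$ (using that $\pr$ is an isomorphism on unipotent varieties and that $G = (L_{\ell}^{\psi_{\ell,a}})^0$ projects onto the connected stabilizer in the Levi), and $\pr(P_{\ell}^{\epsilon_{\alpha,\beta}}) = \pr(\epsilon_{\alpha,\beta})^{-1} P_{k,\SO_{m^\prime}} \pr(\epsilon_{\alpha,\beta}) \cap P_{\ell,\SO_{m^\prime}}$. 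Thus the double coset problem on the $\GSpin$ side is literally the same as the double coset problem $P_{\ell,\SO_{m^\prime}}^{\pr(\epsilon_{\alpha,\beta})}(F)\backslash P_{\ell,\SO_{m^\prime}}(F)/R_{\ell,a,\SO_{m^\prime}}(F)$, whose representatives are computed in \cite[(5.2)]{GinzburgRallisSoudry2011} (see also \cite[(3.7)]{JiangZhang2014}) and are exactly the elements of the form $\diag(\epsilon, \gamma, \epsilon^*)$ with $\epsilon$, $\gamma$ running over the stated sets of Weyl-coset and parabolic-double-coset representatives. Lifting each such representative along $\pr$ (using surjectivity on $F$-points once more) gives the elements $\eta_{\epsilon,\gamma}\in P_{\ell}(F)$ with the stated projection, and these are a complete, irredundant set of representatives because $\pr$ is a bijection on the double coset spaces.

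The main obstacle — really the only non-formal point — is verifying that the parabolic subgroup $\pr(P_\ell^{\epsilon_{\alpha,\beta}})$ of $\SO(W_\ell)$ (i.e., the group $P_{w,\SO_{m^\prime}}'$ of Remark~\ref{remark-parabolic-subgroup}) and the stabilizer $\mathrm{Stab}_{L_{\ell,\SO_{m^\prime}}}(\psi_{\ell,a})$ are precisely the groups entering the Ginzburg--Rallis--Soudry double coset computation, so that \cite[(5.2)]{GinzburgRallisSoudry2011} applies verbatim; this amounts to matching the combinatorial set-up here (the flag \eqref{eq-parobolic-Ql-flag}, the Weyl elements $\epsilon_{\alpha,\beta}$ of \eqref{eq-double-coset-epsilon-alpha-beta}, and the character $\psi_{\ell,a}$ of \eqref{eq-psi-l-a}) with theirs, which is bookkeeping already recorded in the discussion preceding the lemma and in \cite[\S 4.3, \S 5]{GinzburgRallisSoudry2011}, \cite[\S 3]{JiangZhang2014}. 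Everything else is a direct consequence of the exact sequence \eqref{eq-GSpin-SO-exact-sequence} together with the vanishing of $H^1$ of $\GL_1$.
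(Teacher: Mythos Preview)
Your proposal is correct and follows essentially the same approach as the paper: reduce to the $\SO_{m'}$ computation of \cite[(5.2)]{GinzburgRallisSoudry2011} by using that $\ker(\pr)=C_H^0$ is contained in all three subgroups $P_\ell$, $P_\ell^{\epsilon_{\alpha,\beta}}$, and $R_{\ell,a}$, so that $\pr$ induces a bijection of double coset spaces. The paper's own proof is a one-line reference back to the argument of Lemma~\ref{lemma-double-coset-case1}; your version simply makes explicit the Hilbert~90 step guaranteeing surjectivity of $\pr$ on $F$-points and the identification of the projected subgroups with their $\SO_{m'}$ counterparts.
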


We denote by $\mathcal{N}_{\alpha,\beta, \ell, a}$ the above set of representatives of 	$P_{\ell}^{\epsilon_{\alpha,\beta}}(F)\backslash P_{\ell}(F) / R_{\ell,a}(F)$ given in Lemma~\ref{lemma-double-coset-second}. Then we further obtain
\begin{equation}
\label{eq-unfolding-ENpsi-1}
\begin{split}
E^{N_\ell, \psi_{\ell,a}}(g, f_{\tau,\sigma,s}) 
= \sum_{\epsilon_{\alpha,\beta}\in \tilde{\mathcal{E}}_{k,\ell} } \sum_{\eta\in \mathcal{N}_{\alpha,\beta, \ell, a}} \int_{N_{\ell}(F)\backslash N_{\ell}(\A)}     \sum_{\delta\in R_{\ell,a}^{\eta}(F)\backslash R_{\ell,a}(F)}f_{\tau,\sigma,s}(\epsilon_{\alpha,\beta} \eta \delta u g) \psi_{\ell,a}^{-1}(u)du,	
\end{split}
\end{equation}
where 
$$R_{\ell,a}^{\eta}:=R_{\ell,a}\cap \eta^{-1} P_{\ell}^{\epsilon_{\alpha,\beta}} \eta.$$ 
Since $R_{\ell, a}=G\ltimes N_{\ell}$, we have
\begin{equation*}
	R_{\ell,a}^{\eta}=(G\cap \eta^{-1} M_{\ell}^{\epsilon_{\alpha,\beta}} \eta) \cdot (N_{\ell}\cap \eta^{-1} P_{\ell}^{\epsilon_{\alpha,\beta}} \eta) 
\end{equation*}
and 
\begin{equation*}
	R_{\ell,a}^{\eta}\backslash R_{\ell,a} = \left( (G\cap \eta^{-1} M_{\ell}^{\epsilon_{\alpha,\beta}} \eta)\backslash G\right) \cdot \left( (N_{\ell}\cap \eta^{-1} P_{\ell}^{\epsilon_{\alpha,\beta}} \eta) \backslash N_{\ell} \right).  
\end{equation*}
Denote 
\begin{equation}
\label{eq-G-eta-N-l-eta}
G^{\eta}:=G\cap \eta^{-1} M_{\ell}^{\epsilon_{\alpha,\beta}} \eta, \quad N_{\ell}^\eta:=N_{\ell}\cap \eta^{-1} P_{\ell}^{\epsilon_{\alpha,\beta}} \eta,
\end{equation}
so that
\begin{equation*}
	R_{\ell,a}^{\eta}=G^\eta\cdot N_{\ell}^\eta.
\end{equation*}
For fixed $\epsilon_{\alpha,\beta}$ and $\eta$, the inner integration in \eqref{eq-unfolding-ENpsi-1} becomes
\begin{equation*}
	\int_{N_{\ell}(F)\backslash N_{\ell}(\A)}     \sum_{\delta\in G^{\eta}(F) \backslash G(F)} \sum_{\delta^\prime \in N_{\ell}^{\eta}(F)  \backslash N_{\ell}(F)} f_{\tau,\sigma,s}(\epsilon_{\alpha,\beta} \eta \delta^\prime \delta  u g) \psi_{\ell,a}^{-1}(u)du.
\end{equation*}
When $\mathrm{Re}(s)\gg 0$, we can interchange the $du$-integration with the sum over $\delta$, by the absolute convergence of the integral. Note that any modulus character will evaluate to $1$ on $\delta$ since $\delta\in G(F)$. Also, $\delta$ stabilizes the character $\psi_{\ell,a}$. After interchanging, we may further collapse the $du$-integration with the sum over $\delta^\prime$, to obtain
\begin{equation*}
\begin{split}
E^{N_\ell, \psi_{\ell,a}}(g, f_{\tau,\sigma,s}) 
= \sum_{\epsilon_{\alpha,\beta}\in \tilde{\mathcal{E}}_{k,\ell} } \sum_{\eta\in \mathcal{N}_{\alpha,\beta, \ell, a}} \sum_{\delta\in G^{\eta}(F)\backslash G(F)}
 \int_{N_{\ell}^{\eta}(F)\backslash N_{\ell}(\A)}     f_{\tau,\sigma,s}(\epsilon_{\alpha,\beta} \eta \delta u g) \psi_{\ell,a}^{-1}(u)du.
 \end{split}
 \end{equation*}
Hence, for $\mathrm{Re}(s)\gg 0$, $E^{N_\ell, \psi_{\ell,a}}(g, f_{\tau,\sigma,s})$ is equal to
 \begin{equation}
 \label{eq-unfolding-ENpsi-2}
    \sum_{\epsilon_{\alpha,\beta}\in \tilde{\mathcal{E}}_{k,\ell} } \sum_{\eta\in \mathcal{N}_{\alpha,\beta, \ell, a}} \sum_{\delta\in G^{\eta}(F)\backslash G(F)} \int_{N_{\ell}^{\eta}(\A)\backslash N_{\ell}(\A)}   \int_{N_{\ell}^{\eta}(F)\backslash N_{\ell}^{\eta}(\A)}    f_{\tau,\sigma,s}(\epsilon_{\alpha,\beta} \eta \delta u^\prime u g) \psi_{\ell,a}^{-1}(u^\prime u)du^\prime du.
\end{equation}

\begin{lemma}
\label{lemma-unfolding-vanishing-alpha=0}
Suppose either one of the following two conditions holds:
\begin{enumerate}
\item 	$\alpha>0$,\\
\item $\alpha=0$, $\beta>\max\{k-\ell, 0\}$ and  $\gamma w_0$ is not orthogonal to $V_{\ell,\beta}^-$ for \\ $\gamma\in P_{w,\SO_{m^\prime}}^\prime\backslash \SO(W_{\ell}) / \mathrm{Stab}_{L_{\ell,\SO_{m^\prime}}}(\psi_{\ell, a})$.
\end{enumerate}
Then there exists a unipotent $F$-subgroup $S\subset N_{\ell}^\eta$ such that 
\begin{itemize}
\item $\psi_{\ell,a}$ is non-trivial on $S(\A)$, and \\
\item $\epsilon_{\alpha,\beta} \eta S \eta^{-1} \epsilon_{\alpha,\beta}^{-1}\subset U_k$, the unipotent radical of $P_k$.
\end{itemize}
Moreover, we have
\begin{equation}
\label{eq-lemma-unfolding-vanishing-alpha=0}
\int_{N_{\ell}^{\eta}(\A)\backslash N_{\ell}(\A)}   \int_{N_{\ell}^{\eta}(F)\backslash N_{\ell}^{\eta}(\A)}    f_{\tau,\sigma,s}(\epsilon_{\alpha,\beta} \eta \delta u^\prime u g) \psi_{\ell,a}^{-1}(u^\prime u)du^\prime du=0.	
\end{equation}
\end{lemma}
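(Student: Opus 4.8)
The plan is to exhibit the ``extra'' unipotent directions inside $N_\ell^\eta$ that get conjugated into the unipotent radical $U_k$ of $P_k$ by $\epsilon_{\alpha,\beta}\eta$; once such an $S$ is found on which $\psi_{\ell,a}$ restricts non-trivially, the vanishing \eqref{eq-lemma-unfolding-vanishing-alpha=0} is a standard inner-integration argument. Concretely, I would first pass to the $\SO_{m^\prime}$ side via $\pr$ (recall $\pr$ is an isomorphism on unipotent subgroups and $\ker\pr=C_H^0\subset P_k$), so that the whole computation of $N_\ell^\eta=N_\ell\cap\eta^{-1}P_\ell^{\epsilon_{\alpha,\beta}}\eta$ and of the conjugate $\epsilon_{\alpha,\beta}\eta S\eta^{-1}\epsilon_{\alpha,\beta}^{-1}$ reduces to matrix bookkeeping with the explicit Weyl element $\pr(\epsilon_{\alpha,\beta})=w_q^{k-\beta}a_{\alpha,\beta}$ from \eqref{eq-a-alpha-beta} and the coset representative $\pr(\eta_{\epsilon,\gamma})=\diag(\epsilon,\gamma,\epsilon^*)$ from Lemma~\ref{lemma-double-coset-second}. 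This is exactly parallel to \cite[\S 4.3--\S 4.4]{GinzburgRallisSoudry2011} and \cite[Lemma 3.1, Lemma 3.2]{JiangZhang2014}, so I would invoke those computations whenever possible rather than redo them.

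The heart of the matter is the case analysis of the two hypotheses. In case (1), $\alpha>0$: here I would take $S$ to be (the $\GSpin$-preimage of) the root subgroup corresponding to the entry $z_{\alpha,\alpha+1}$ of the $\GL_\ell$-block $Z_\ell$ — i.e. a one-parameter subgroup lying in the top-left $\GL_\alpha\times\GL_{\ell-\alpha}$ corner of $N_\ell$. Since $a_{\alpha,\beta}$ begins with the identity block $I_\alpha$ and then permutes the next rows, conjugating this root subgroup by $\epsilon_{\alpha,\beta}$ lands it among the $(i,j)$ entries with $i\le k<j$, i.e. inside $U_k$; and it is visibly one of the simple-root directions on which $\psi_{\ell,a}$ is non-trivial (the $\sum z_{i,i+1}$ term in \eqref{eq-psi-l-a}). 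One must check this root subgroup actually survives into $N_\ell^\eta$, i.e. is fixed by $\eta=\eta_{\epsilon,\gamma}$ up to unipotent; the relevant $\epsilon$ lives in $W_{\GL_\alpha}\times W_{\GL_{\ell+\beta-\alpha-k}}\times W_{\GL_{k-\beta}}\backslash W_{\GL_\ell}$, and for $\alpha\ge 1$ there is always such a direction in the $\GL_\alpha$-block that is preserved. In case (2), $\alpha=0$ with $\beta>\max\{k-\ell,0\}$ and $\gamma w_0$ not orthogonal to $V_{\ell,\beta}^-$: now $S$ should be built from a root subgroup in the $y$-block of \eqref{eq-unipotent-Nl}, namely a subgroup of the form $e_{\ell}\mapsto e_\ell+t\,v$ with $v$ chosen in $V_{\ell,\beta}^-\cap W_\ell$ so that pairing against $\gamma w_0$ is the coordinate $y_{\ell,\tilde m-\ell}$ (or $y_{\ell,m^\prime-\tilde m-\ell+1}$) appearing in $\psi_{\ell,a}$. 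The hypothesis that $\gamma w_0$ is \emph{not} orthogonal to $V_{\ell,\beta}^-$ is precisely what guarantees that $\psi_{\ell,a}$ does not vanish on $S(\A)$; and $\beta>k-\ell$ forces $\ell+\beta-\alpha>k$, so the block structure of $a_{0,\beta}$ sends this $y$-direction into rows indexed $\le k$ against columns $>k$, i.e. into $U_k$.

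Granting such an $S$, the proof of \eqref{eq-lemma-unfolding-vanishing-alpha=0} is routine: since $\epsilon_{\alpha,\beta}\eta S\eta^{-1}\epsilon_{\alpha,\beta}^{-1}\subset U_k$ and $f_{\tau,\sigma,s}$ transforms on the left under $P_k(\A)$ by a character of $U_k(\A)$ that is trivial on $U_k(F)$, the function $s\mapsto f_{\tau,\sigma,s}(\epsilon_{\alpha,\beta}\eta\delta s u^\prime u g)$ is left $S(F)$-invariant; integrating the identity $f_{\tau,\sigma,s}(\cdots s\cdots)=f_{\tau,\sigma,s}(\cdots)$ over $S(F)\backslash S(\A)$ inside the $u^\prime$-integration produces the factor $\int_{S(F)\backslash S(\A)}\psi_{\ell,a}^{-1}(s)\,ds$, which is zero because $\psi_{\ell,a}|_{S(\A)}$ is a non-trivial character trivial on $S(F)$. (One first arranges, after a change of variables in $u^\prime$, that $S$ sits as a direct factor in the inner quotient $N_\ell^\eta(F)\backslash N_\ell^\eta(\A)$ and that $\psi_{\ell,a}$ factors accordingly, which is possible since $S$ is normalized by $N_\ell^\eta$ and $\psi_{\ell,a}$ is a character.)

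The main obstacle I anticipate is not the vanishing mechanism but the bookkeeping in case (2): verifying that the condition ``$\gamma w_0$ not orthogonal to $V_{\ell,\beta}^-$'' translates exactly into non-triviality of $\psi_{\ell,a}|_S$, and simultaneously that the chosen $y$-direction lies in $N_\ell^\eta$ and is conjugated into $U_k$ — this requires carefully tracking how $\gamma$ (a representative for $P^\prime_{w,\SO_{m^\prime}}\backslash\SO(W_\ell)/\mathrm{Stab}_{L_\ell}(\psi_{\ell,a})$, with $P^\prime_{w,\SO_{m^\prime}}$ the stabilizer of $V_{\ell,\beta-\alpha}^+$ described in Remark~\ref{remark-parabolic-subgroup}) moves the vector $w_0$ of \eqref{eq-w0} relative to the flag \eqref{eq-subspace-V-lt}. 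I expect this to follow the template of \cite[pp. 565--568]{JiangZhang2014} closely, with only the $\GSpin$-versus-$\SO$ discrepancy (which, as noted in the excerpt, lives entirely in the Levi $M_\ell^{\epsilon_{\alpha,\beta}}$ and hence does not interfere with the unipotent argument).
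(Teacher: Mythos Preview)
Your proposal is correct and follows essentially the same approach as the paper: reduce the existence of $S$ to the $\SO_{m'}$ case via the isomorphism of unipotent varieties under $\pr$, cite \cite[Lemmas 3.1--3.2]{JiangZhang2014} for the actual construction, and then run the standard inner-integration vanishing argument. The paper's proof is terser---it simply invokes those lemmas without sketching the case analysis you outline---but the logic is identical; two small slips to fix are that $f_{\tau,\sigma,s}$ is left $U_k(\A)$-invariant (not merely $U_k(F)$-invariant), so the relevant invariance is under $S(\A)$, and that in the final step you should account for the element $\delta$ in the argument of $f$ (it commutes past $u'$ because $G$ normalizes $N_\ell$ and stabilizes $\psi_{\ell,a}$).
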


\begin{proof}
The existence of such a unipotent subgroup $S$ in $\SO_{m^\prime}$ is proved in \cite[Lemma 3.1 and Lemma 3.2]{JiangZhang2014}. However, this is a unipotent subgroup, and the unipotent varieties for $\SO_{m^\prime}$ and $H$ are the same. Hence, the existence of such a unipotent subgroup remains true for $H=\GSpin_{m^\prime}$.

Now let $S\subset N_{\ell}^{\eta}$ be such a unipotent subgroup. Then for $s\in S(\A)$, we have
\begin{equation*}
\begin{split}
 f_{\tau,\sigma,s}(\epsilon_{\alpha,\beta} \eta    s u^\prime u g) =  f_{\tau,\sigma,s}\left(( \epsilon_{\alpha,\beta} \eta s \eta^{-1} \epsilon_{\alpha,\beta}^{-1}) \epsilon_{\alpha,\beta} \eta u^\prime u g \right) = f_{\tau,\sigma,s}\left( \epsilon_{\alpha,\beta} \eta u^\prime u g \right)
\end{split}
\end{equation*}
and
\begin{equation*}
\int_{S(F)\backslash S(\A)} \psi_{\ell,a}^{-1}(s)ds=0.	
\end{equation*}
Thus we obtain \eqref{eq-lemma-unfolding-vanishing-alpha=0}.
\end{proof}

By combining \eqref{eq-unfolding-ENpsi-2} with Lemma~\ref{lemma-unfolding-vanishing-alpha=0}, we obtain the following.

\begin{proposition}
\label{prop-unfolding-1}
For $\mathrm{Re}(s)\gg 0$, we have  
 \begin{equation}
 \label{eq-unfolding-ENpsi-3}
 \begin{split}
 &E^{N_\ell, \psi_{\ell,a}}(g, f_{\tau,\sigma,s})\\
 =&   \sum_{\epsilon_{\beta}\in \tilde{\mathcal{E}}_{k,\ell}^0 } \sum_{\eta\in \mathcal{N}_{\beta, \ell, a}^0} \sum_{\delta\in G^{\eta}(F)\backslash G(F)} \int_{N_{\ell}^{\eta}(\A)\backslash N_{\ell}(\A)}   \int_{N_{\ell}^{\eta}(F)\backslash N_{\ell}^{\eta}(\A)}    f_{\tau,\sigma,s}(\epsilon_{\beta} \eta \delta u^\prime u g) \psi_{\ell,a}^{-1}(u^\prime u)du^\prime du,
\end{split}
\end{equation}
where
\begin{itemize}
\item $\epsilon_\beta=\epsilon_{0,\beta}\in \tilde{\mathcal{E}}_{k,\ell}^0$, which is the subset of $\tilde{\mathcal{E}}_{k,\ell}$ consisting of elements with $\alpha=0$,
\item $\eta\in \mathcal{N}_{\beta, \ell, a}^0$, which is the subset of $\mathcal{N}_{\alpha,\beta, \ell, a}$ consisting of elements with $\alpha=0$,  $\pr(\eta)=\begin{pmatrix}
 \epsilon & & \\ & \gamma &\\ && \epsilon^*	
 \end{pmatrix}$, where $\epsilon=\begin{pmatrix} & I_{\ell+\beta-k}\\ I_{k-\beta} &\end{pmatrix}$, and have the property that if $\beta>\max\{k-\ell, 0\}$, then $\gamma w_0$ is orthogonal to $V_{\ell,\beta}^-$ for $\gamma\in P_{w,\SO_{m^\prime}}^\prime\backslash \SO(W_{\ell}) / \mathrm{Stab}_{L_{\ell,\SO_{m^\prime}}}(\psi_{\ell, a})$.	
\end{itemize}
\end{proposition}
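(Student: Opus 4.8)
\textbf{Proof proposal for Proposition~\ref{prop-unfolding-1}.}

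The plan is to start from the already-established identity \eqref{eq-unfolding-ENpsi-2}, which expresses $E^{N_\ell, \psi_{\ell,a}}(g, f_{\tau,\sigma,s})$ for $\Re(s)\gg 0$ as a triple sum over $\epsilon_{\alpha,\beta}\in\tilde{\mathcal{E}}_{k,\ell}$, over $\eta\in\mathcal{N}_{\alpha,\beta,\ell,a}$, and over $\delta\in G^\eta(F)\backslash G(F)$ of the inner double integral of $f_{\tau,\sigma,s}$ against $\psi_{\ell,a}^{-1}$. The goal is to show that all terms with $\alpha>0$ vanish, and that among the $\alpha=0$ terms only those $\eta$ survive for which $\gamma w_0$ is orthogonal to $V_{\ell,\beta}^-$ whenever $\beta>\max\{k-\ell,0\}$; in that case the surviving $\eta$ must also have the asserted Weyl-element shape $\epsilon=\begin{pmatrix}& I_{\ell+\beta-k}\\ I_{k-\beta}&\end{pmatrix}$. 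This is exactly the contrapositive of Lemma~\ref{lemma-unfolding-vanishing-alpha=0}: that lemma says precisely that whenever (1) $\alpha>0$, or (2) $\alpha=0$ with $\beta>\max\{k-\ell,0\}$ and $\gamma w_0$ \emph{not} orthogonal to $V_{\ell,\beta}^-$, the corresponding inner double integral \eqref{eq-lemma-unfolding-vanishing-alpha=0} is zero. So the bulk of the proof is bookkeeping: delete the vanishing terms from \eqref{eq-unfolding-ENpsi-2}.

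Concretely, I would proceed as follows. First, apply Lemma~\ref{lemma-unfolding-vanishing-alpha=0}(1) to discard every $\epsilon_{\alpha,\beta}$ with $\alpha\ge 1$; what remains is indexed by $\epsilon_{0,\beta}$ with $(0,\beta)\in\mathcal{E}_{k,\ell}$, and I would record that this subset is what is called $\tilde{\mathcal{E}}_{k,\ell}^0$. Second, for each such $\epsilon_{0,\beta}$, apply Lemma~\ref{lemma-unfolding-vanishing-alpha=0}(2) to discard, within $\mathcal{N}_{0,\beta,\ell,a}$, every $\eta$ whose $\gamma$-component has $\gamma w_0$ not orthogonal to $V_{\ell,\beta}^-$ (this exclusion is only triggered when $\beta>\max\{k-\ell,0\}$; when $\beta\le\max\{k-\ell,0\}$ there is nothing to exclude). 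Call the surviving set $\mathcal{N}_{\beta,\ell,a}^0$. Third, identify the $\epsilon$-component of a surviving $\eta$: since $\alpha=0$, the Weyl-coset quotient $W_{\GL_\alpha}\times W_{\GL_{\ell+\beta-\alpha-k}}\times W_{\GL_{k-\beta}}\backslash W_{\GL_\ell}$ (from the discussion preceding Lemma~\ref{lemma-double-coset-second}) collapses to $W_{\GL_{\ell+\beta-k}}\times W_{\GL_{k-\beta}}\backslash W_{\GL_\ell}$, whose minimal-length representative is the block permutation $\begin{pmatrix}& I_{\ell+\beta-k}\\ I_{k-\beta}&\end{pmatrix}$; I would check that this choice of representative is the one compatible with the convention in \cite{GinzburgRallisSoudry2011} used to build the $\eta_{\epsilon,\gamma}$, so that it is legitimate to fix $\epsilon$ to be exactly this element. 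Substituting the reduced index sets into \eqref{eq-unfolding-ENpsi-2} gives \eqref{eq-unfolding-ENpsi-3}, completing the proof.

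I do not expect a serious obstacle here, since Proposition~\ref{prop-unfolding-1} is essentially a clean-up statement that repackages \eqref{eq-unfolding-ENpsi-2} after the vanishing lemma. The one point requiring a little care is the third step — pinning down the precise representative $\epsilon$ for the surviving $\alpha=0$ cosets. One must make sure that the normalization of the double coset representatives $\eta_{\epsilon,\gamma}$ from Lemma~\ref{lemma-double-coset-second}, together with the $\SO_{m^\prime}$-to-$\GSpin_{m^\prime}$ lift (which, as noted in the text, only affects the $M_\ell^{\epsilon_{\alpha,\beta}}$-component and not the unipotent part), is consistent with writing $\pr(\eta)=\diag(\epsilon,\gamma,\epsilon^*)$ with this specific $\epsilon$; any other choice of Weyl representative differs by an element of $M_\ell^{\epsilon_{0,\beta}}$ and would merely be absorbed into the $\delta$-sum, so the stated form is a legitimate normalization. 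Having fixed that, the identity \eqref{eq-unfolding-ENpsi-3} follows immediately by combining \eqref{eq-unfolding-ENpsi-2} with Lemma~\ref{lemma-unfolding-vanishing-alpha=0}, exactly as asserted.
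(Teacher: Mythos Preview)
Your approach is exactly the paper's: the proposition is recorded as an immediate consequence of combining \eqref{eq-unfolding-ENpsi-2} with Lemma~\ref{lemma-unfolding-vanishing-alpha=0}, and your steps 1--2 carry out precisely that combination.

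One correction to your step 3, though. The quotient $W_{\GL_{\ell+\beta-k}}\times W_{\GL_{k-\beta}}\backslash W_{\GL_\ell}$ has $\binom{\ell}{k-\beta}$ cosets, not one; these index genuinely distinct double cosets in $P_\ell^{\epsilon_{0,\beta}}\backslash P_\ell/R_{\ell,a}$ and are \emph{not} related by elements of $M_\ell^{\epsilon_{0,\beta}}$, so they cannot be absorbed into the $\delta$-sum as you suggest. The elimination of every shuffle $\epsilon$ other than the stated block permutation is by the same root-vanishing mechanism as condition~(1) of Lemma~\ref{lemma-unfolding-vanishing-alpha=0}: for any such $\epsilon$ one locates a simple-root subgroup of $Z_\ell$ on which $\psi_{\ell,a}$ is nontrivial but which $\epsilon_{0,\beta}\eta$ conjugates into $U_k$. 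This argument is contained in \cite[Lemma~3.1]{JiangZhang2014} (and \cite[\S5]{GinzburgRallisSoudry2011}), so it is implicitly covered by the reference behind Lemma~\ref{lemma-unfolding-vanishing-alpha=0} even though the statement there only lists ``$\alpha>0$''. Your instinct that something needs checking in step~3 is right; just replace the ``normalization'' explanation with this vanishing argument.
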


Therefore, by Proposition~\ref{prop-unfolding-1}, we obtain that for $\mathrm{Re}(s)\gg 0$, 
\begin{equation*}
\begin{split}
	&\mathcal{Z}(\phi_\pi, f_{\tau,\sigma, s})\\ =& \int_{C_G^0(\A) G(F)\backslash G(\A)} \phi_\pi(g) E^{N_\ell, \psi_{\ell,a}}(g, f_{\tau,\sigma,s})dg\\
	=&  \int_{C_G^0(\A) G(F)\backslash G(\A)} \phi_\pi(g) \sum_{\epsilon_\beta, \eta, \delta} \int_{N_{\ell}^{\eta}(\A)\backslash N_{\ell}(\A)}   \int_{N_{\ell}^{\eta}(F)\backslash N_{\ell}^{\eta}(\A)}    f_{\tau,\sigma,s}(\epsilon_{\beta} \eta \delta u^\prime u g) \psi_{\ell,a}^{-1}(u^\prime u)du^\prime du dg\\
	=& \sum_{\epsilon_\beta, \eta} \int_{C_G^0(\A) G(F)\backslash G(\A)} \sum_{\delta} \phi_\pi(\delta g) \int_{N_{\ell}^{\eta}(\A)\backslash N_{\ell}(\A)}   \int_{N_{\ell}^{\eta}(F)\backslash N_{\ell}^{\eta}(\A)}    f_{\tau,\sigma,s}(\epsilon_{\beta} \eta u^\prime u \delta  g) \psi_{\ell,a}^{-1}(u^\prime u)du^\prime du dg
\end{split}
\end{equation*}
where the conditions for the summations in $\epsilon_{\beta}$, $\eta$, and $\delta$ are given in Proposition~\ref{prop-unfolding-1}. Now we collapse the $dg$-integration and the summation over $\delta$ to obtain that $\mathcal{Z}(\phi_\pi, f_{\tau,\sigma, s})$ is equal to 
\begin{equation}
\label{eq-unfolding-integral1}
 \sum_{\epsilon_\beta, \eta} \int_{C_G^0(\A) G^\eta(F)\backslash G(\A)}  \phi_\pi( g) \int_{N_{\ell}^{\eta}(\A)\backslash N_{\ell}(\A)}   \int_{N_{\ell}^{\eta}(F)\backslash N_{\ell}^{\eta}(\A)}    f_{\tau,\sigma,s}(\epsilon_{\beta} \eta u^\prime u   g) \psi_{\ell,a}^{-1}(u^\prime u)du^\prime du dg.
\end{equation}

In the next lemma, we will use the cuspidality of $\phi_\pi$ to show the vanishing of some terms in \eqref{eq-unfolding-integral1}. 

\begin{lemma}
\label{lemma-unfolding-vanishing-cuspidal}
Let $\alpha=0$, $\eta=\eta_{\epsilon,\gamma}\in \mathcal{N}_{\beta, \ell, a}^0$ with $\pr(\eta)=\begin{pmatrix}
 \epsilon & & \\ & \gamma &\\ && \epsilon^*	
 \end{pmatrix}$, where	$\epsilon=\begin{pmatrix} & I_{\ell+\beta-k}\\ I_{k-\beta} &\end{pmatrix}$ and $\gamma\in P_{w,\SO_{m^\prime}}^\prime\backslash \SO(W_{\ell}) / \mathrm{Stab}_{L_{\ell,\SO_{m^\prime}}}(\psi_{\ell, a})$. If the stabilizer $G^{\eta}$ is a proper maximal parabolic subgroup of $G$, then
 \begin{equation}
 \label{eq-unfolding-vanishing-cuspidal}
 \int_{C_G^0(\A) G^\eta(F)\backslash G(\A)}  \phi_\pi( g) \int_{N_{\ell}^{\eta}(\A)\backslash N_{\ell}(\A)}   \int_{N_{\ell}^{\eta}(F)\backslash N_{\ell}^{\eta}(\A)}    f_{\tau,\sigma,s}(\epsilon_{\beta} \eta u^\prime u   g) \psi_{\ell,a}^{-1}(u^\prime u)du^\prime du dg=0.	
 \end{equation}
\end{lemma}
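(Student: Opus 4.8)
The plan is to deduce \eqref{eq-unfolding-vanishing-cuspidal} from the cuspidality of $\phi_\pi$: I will show that the inner double integral over $N_{\ell}^{\eta}$ is left-invariant under the unipotent radical of $G^{\eta}$, and then extract the constant term of $\phi_\pi$ along that unipotent radical, which vanishes. Throughout I work in the range $\Re(s)\gg 0$ in which every integral in \eqref{eq-unfolding-integral1} converges absolutely, so that all the rearrangements below are legitimate; the statement for general $s$ follows by meromorphic continuation.

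Write the Levi decomposition $G^{\eta}=M^{G^{\eta}}\ltimes U^{G^{\eta}}$ with $U^{G^{\eta}}$ the unipotent radical. Since by hypothesis $G^{\eta}$ is a \emph{proper} maximal parabolic subgroup of $G$, the group $U^{G^{\eta}}$ is non-trivial, and it is the unipotent radical of a proper parabolic subgroup of $G$. Put
\[
F(g):=\int_{N_{\ell}^{\eta}(\A)\backslash N_{\ell}(\A)}\ \int_{N_{\ell}^{\eta}(F)\backslash N_{\ell}^{\eta}(\A)} f_{\tau,\sigma,s}(\epsilon_{\beta}\eta\, u'u\, g)\,\psi_{\ell,a}^{-1}(u'u)\,du'\,du ,
\]
so that the left-hand side of \eqref{eq-unfolding-vanishing-cuspidal} is $\int_{C_G^0(\A)G^{\eta}(F)\backslash G(\A)}\phi_\pi(g)\,F(g)\,dg$. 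The key step is to show that $F$ is left $U^{G^{\eta}}(\A)$-invariant. For $v\in U^{G^{\eta}}(\A)\subset G(\A)\subset L_{\ell}(\A)$, the element $v$ normalizes $N_{\ell}$; since moreover $\eta v\eta^{-1}\in M_{\ell}^{\epsilon_{\beta}}$ normalizes $P_{\ell}^{\epsilon_{\beta}}$, the element $v$ also normalizes $N_{\ell}^{\eta}=N_{\ell}\cap\eta^{-1}P_{\ell}^{\epsilon_{\beta}}\eta$, and by the very definition of $G$ it stabilizes the character $\psi_{\ell,a}$. The one point that uses the explicit representatives is the containment $\epsilon_{\beta}\,\eta\,U^{G^{\eta}}\,\eta^{-1}\epsilon_{\beta}^{-1}\subseteq U_k$, which I will verify from the matrices \eqref{eq-a-alpha-beta} together with the description of $\eta$ in Proposition~\ref{prop-unfolding-1}, exactly as in the $\SO_{m^\prime}$ case. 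Granting it, and using that $f_{\tau,\sigma,s}$ is left $U_k(\A)$-invariant, writing $u'u\,v=v\cdot(v^{-1}u'u\,v)$ and $\epsilon_{\beta}\eta v=(\epsilon_{\beta}\eta v\eta^{-1}\epsilon_{\beta}^{-1})\,\epsilon_{\beta}\eta$ and then performing the measure-preserving substitution $u'u\mapsto v^{-1}u'u\,v$ (compatible with $N_{\ell}^{\eta}$ and $\psi_{\ell,a}$ by the previous observations, as in \cite{GinzburgRallisSoudry2011, JiangZhang2014}) gives $F(vg)=F(g)$.

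Granting the claim, the conclusion is the standard cuspidality argument. Since $C_G^0$ is central, it lies in $M^{G^{\eta}}$ and meets $U^{G^{\eta}}$ trivially, so $C_G^0(\A)G^{\eta}(F)\backslash G(\A)$ fibers over $C_G^0(\A)M^{G^{\eta}}(F)U^{G^{\eta}}(\A)\backslash G(\A)$ with compact fibers $U^{G^{\eta}}(F)\backslash U^{G^{\eta}}(\A)$. Using $F(vg)=F(g)$ together with the automorphy of $\phi_\pi$, the left-hand side of \eqref{eq-unfolding-vanishing-cuspidal} becomes
\[
\int_{C_G^0(\A)M^{G^{\eta}}(F)U^{G^{\eta}}(\A)\backslash G(\A)} F(g)\Bigl(\int_{U^{G^{\eta}}(F)\backslash U^{G^{\eta}}(\A)}\phi_\pi(vg)\,dv\Bigr)\,dg ,
\]
and the inner integral is the constant term of $\phi_\pi$ along the unipotent radical of the proper parabolic subgroup $G^{\eta}$ of $G$, hence vanishes identically by cuspidality of $\phi_\pi$. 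This proves \eqref{eq-unfolding-vanishing-cuspidal}.

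I expect the main obstacle to be the containment $\epsilon_{\beta}\,\eta\,U^{G^{\eta}}\,\eta^{-1}\epsilon_{\beta}^{-1}\subseteq U_k$ — that is, that $\Ad(\epsilon_{\beta}\eta)$ carries $U^{G^{\eta}}$ into $U_k$ rather than producing a component in the Levi factor $\GL_k\times\GSpin_{m^\prime-2k}$ of $P_k$ — together with the attendant compatibility of the change of variables with $N_{\ell}^{\eta}$ and $\psi_{\ell,a}$. As in the proof of Lemma~\ref{lemma-unfolding-vanishing-alpha=0}, these are statements about unipotent subgroups only, and since $\pr$ identifies the unipotent varieties of $\GSpin_{m^\prime}$ and $\SO_{m^\prime}$, they follow from the corresponding computations for $\SO_{m^\prime}$ in \cite[\S 5]{GinzburgRallisSoudry2011} (see also \cite{JiangZhang2014}); the passage to $\GSpin$ affects only the Levi $M^{G^{\eta}}$ and the central characters, which play no role in the vanishing.
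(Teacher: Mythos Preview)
Your proposal is correct and follows essentially the same route as the paper's proof: write $G^{\eta}=M'\ltimes U'$, show the inner $N_\ell$-integral is left $U'(\A)$-invariant (using that $G$ normalizes $N_\ell$ and stabilizes $\psi_{\ell,a}$, together with the containment $\epsilon_{\beta}\eta\,U'\,(\epsilon_{\beta}\eta)^{-1}\subset P_k$), and then extract the constant term of $\phi_\pi$ along $U'$. The paper phrases the key invariance as ``$f_{\tau,\sigma,s}$ is left-invariant under $M_k(F)U_k(\A)$'' rather than your sharper claim that the conjugate of $U'$ lands in $U_k$, but the underlying computation is the same and, as you note, reduces to the $\SO_{m'}$ case via $\pr$.
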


\begin{proof}
Assume $G^\eta$ is a proper maximal parabolic subgroup of $G$.
Write $G^{\eta}=M^\prime\cdot U^\prime$, where $U^\prime$ is the unipotent radical of $G^\eta$. Then the integral in \eqref{eq-unfolding-vanishing-cuspidal} is equal to
 \begin{equation*}
 \begin{split}
 \int_{C_G^0(\A) M^\prime(F)U^\prime(\A) \backslash G(\A)}  \int_{U^\prime(F)\backslash U^\prime(\A)} \phi_\pi( n g) \int_{N_{\ell}^{\eta}(\A)\backslash N_{\ell}(\A)}   \int_{N_{\ell}^{\eta}(F)\backslash N_{\ell}^{\eta}(\A)}    f_{\tau,\sigma,s}(\epsilon_{\beta} \eta u^\prime u    n g) \psi_{\ell,a}^{-1}(u^\prime u)du^\prime du dn dg.
 \end{split}	
 \end{equation*}
Since $G$ normalizes $N_{\ell}$ and stabilizes $\psi_{\ell, a}$, the above integral is equal to
 \begin{equation*}
 \begin{split}
 \int_{C_G^0(\A) M^\prime(F)U^\prime(\A) \backslash G(\A)}  \int_{U^\prime(F)\backslash U^\prime(\A)} \phi_\pi( n g) \int_{N_{\ell}^{\eta}(\A)\backslash N_{\ell}(\A)}   \int_{N_{\ell}^{\eta}(F)\backslash N_{\ell}^{\eta}(\A)}    f_{\tau,\sigma,s}(\epsilon_{\beta} \eta n u^\prime u      g) \psi_{\ell,a}^{-1}(u^\prime u)du^\prime du dn dg.
 \end{split}	
 \end{equation*}
 Note that $f_{\tau,\sigma,s}$ is left-invariant under $M_k(F)U_k(\A)$, and hence is left-invariant with respect to the conjugation by $\epsilon_{\beta} \eta$ on the unipotent radical $U^\prime(\A)$. Thus the above integral is equal to
  \begin{equation*}
 \begin{split}
 \int_{C_G^0(\A) M^\prime(F)U^\prime(\A) \backslash G(\A)}  \int_{U^\prime(F)\backslash U^\prime(\A)} \phi_\pi( n g) dn \int_{N_{\ell}^{\eta}(\A)\backslash N_{\ell}(\A)}   \int_{N_{\ell}^{\eta}(F)\backslash N_{\ell}^{\eta}(\A)}    f_{\tau,\sigma,s}(\epsilon_{\beta} \eta  u^\prime u      g) \psi_{\ell,a}^{-1}(u^\prime u)du^\prime du dg.
 \end{split}	
 \end{equation*}
The inner integral
\begin{equation*}
	\int_{U^\prime(F)\backslash U^\prime(\A)} \phi_\pi( n g) dn =0
\end{equation*}
by the cuspidality of $\phi_\pi$. Thus we obtain Lemma~\ref{lemma-unfolding-vanishing-cuspidal}.
\end{proof}

We will combine \eqref{eq-unfolding-integral1} with Lemma~\ref{lemma-unfolding-vanishing-cuspidal} to reduce the terms in the summation in \eqref{eq-unfolding-integral1}. Now we collect some facts about the double coset space $P_{w,\SO_{m^\prime}}^\prime\backslash \SO(W_{\ell}) / \mathrm{Stab}_{L_{\ell,\SO_{m^\prime}}}(\psi_{\ell, a})$ from \cite[Proposition 4.4]{GinzburgRallisSoudry2011}. Recall that $P_{w,\SO_{m^\prime}}^\prime$ is a maximal parabolic subgroup of $\SO(W_{\ell})$. We denote by $|P_{w,\SO_{m^\prime}}^\prime\backslash \SO(W_{\ell}) / \mathrm{Stab}_{L_{\ell,\SO_{m^\prime}}}(\psi_{\ell, a})|$ the number of representatives.

\begin{lemma}\cite[Proposition 4.4]{GinzburgRallisSoudry2011}
\label{lemma-GRS-double-coset}
Let $X$ be a non-trivial totally isotropic subspace of $W_{\ell}$ and let $P$ be the maximal parabolic subgroup of $\SO(W_{\ell})$ preserving $X$. Then we have the following.
\begin{enumerate}
\item If $\dim X<\mathrm{Witt}(W_{\ell})$, then $|P\backslash \SO(W_{\ell})/ \mathrm{Stab}_{L_{\ell,\SO_{m^\prime}}}(\psi_{\ell, a})|=2$.
\item Assume that $\dim X=\mathrm{Witt}(W_{\ell})$ and $\mathrm{Witt}(w_0^\perp\cap W_{\ell})=\dim X$. 
\begin{enumerate}
\item If $\dim W_{\ell}\ge 2\dim X+2$, then $|P\backslash \SO(W_{\ell})/ \mathrm{Stab}_{L_{\ell,\SO_{m^\prime}}}(\psi_{\ell, a})|=2$.
\item If $\dim W_{\ell}= 2\dim X+1$, then $|P\backslash \SO(W_{\ell})/ \mathrm{Stab}_{L_{\ell,\SO_{m^\prime}}}(\psi_{\ell, a})|=3$.
\end{enumerate}
\item Assume that $\dim X=\mathrm{Witt}(W_{\ell})$ and $\mathrm{Witt}(w_0^\perp\cap W_{\ell})=\dim X-1$. Then \\ $|P\backslash \SO(W_{\ell})/ \mathrm{Stab}_{L_{\ell,\SO_{m^\prime}}}(\psi_{\ell, a})|=1$.
\item If $\dim W_{\ell}=2\dim X$, then $\mathrm{Witt}(w_0^\perp\cap W_{\ell})=\dim X-1$, and, in particular,\\ $|P\backslash \SO(W_{\ell})/ \mathrm{Stab}_{L_{\ell,\SO_{m^\prime}}}(\psi_{\ell, a})|=1$.
\end{enumerate}
\end{lemma}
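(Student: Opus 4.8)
\emph{Proof proposal.} The statement is quoted verbatim from \cite[Proposition 4.4]{GinzburgRallisSoudry2011}, so strictly speaking nothing is required beyond that reference; but here is the shape of the argument one would run to reconstruct it. The first step is to translate the double coset count into an orbit count on isotropic subspaces. Since $P$ is the maximal parabolic of $\SO(W_{\ell})$ stabilizing the totally isotropic subspace $X$, the quotient $P\backslash \SO(W_{\ell})$ is identified with the set of totally isotropic subspaces of $W_{\ell}$ lying in the $\SO(W_{\ell})$-orbit of $X$: this is all totally isotropic subspaces of dimension $d:=\dim X$ when $d<\mathrm{Witt}(W_{\ell})$, and one of the two families of maximal isotropic subspaces when $\dim W_{\ell}$ is even and $d=\mathrm{Witt}(W_{\ell})$. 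On the other side, $\mathrm{Stab}_{L_{\ell,\SO_{m^\prime}}}(\psi_{\ell,a})$ acts on $W_{\ell}$ only through its $\SO(W_{\ell})$-factor (the $(\GL_1)^{\ell}$ part acts trivially on $W_{\ell}$), and that factor fixes $\psi_{\ell,a}$ precisely when it fixes the anisotropic vector $w_0$ of \eqref{eq-w0}; call the resulting stabilizer $J$, so that $J$ is, up to connected components, $\SO(w_0^{\perp}\cap W_{\ell})$. Thus the number in question equals the number of $J$-orbits on the relevant family of totally isotropic subspaces of $W_{\ell}$.

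Next I would run the orbit analysis via Witt's extension theorem. The basic $J$-invariant of a totally isotropic subspace $Y$ of dimension $d$ is the integer $\dim(Y\cap w_0^{\perp})$, which is either $d$ (when $Y\subset w_0^{\perp}\cap W_{\ell}$) or $d-1$. In the first case $Y$ is a totally isotropic subspace of the quadratic space $w_0^{\perp}\cap W_{\ell}$, which is possible exactly when $d\le \mathrm{Witt}(w_0^{\perp}\cap W_{\ell})$, and $\SO(w_0^{\perp}\cap W_{\ell})$ acts on such $Y$ by Witt's theorem, transitively unless $d$ is the Witt index of $w_0^{\perp}\cap W_{\ell}$ and that space is even-dimensional, in which case there are two families. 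In the second case one analyzes the line $Y/(Y\cap w_0^{\perp})$ together with its pairing against $w_0$ and checks that $J$ acts transitively. Combining these contributions, and tracking whether $d=\mathrm{Witt}(W_{\ell})$, whether $\mathrm{Witt}(w_0^{\perp}\cap W_{\ell})$ equals $d$ or $d-1$, and the parity of $\dim W_{\ell}-2d$, produces exactly the four cases of the lemma, with the counts $2,2,3,1$; the extra orbit in case (2)(b) is the familiar phenomenon that once the anisotropic line $Fw_0$ is split off from an odd-dimensional space the remaining even-dimensional space carries two families of maximal isotropics.

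The main obstacle — and the reason \cite{GinzburgRallisSoudry2011} spend a full proposition on this rather than a remark — is the boundary regime $\dim X=\mathrm{Witt}(W_{\ell})$, where ``the $\SO(W_{\ell})$-orbit of $X$'' is itself not a single orbit when $\dim W_{\ell}$ is even, and where the interaction between the family-type of $X$ and the position of $w_0$ relative to the maximal isotropics must be controlled; this is precisely what separates (2)(a), (2)(b), (3) and (4), and item (4) records the clean fact that in the split even case $\mathrm{Witt}(w_0^{\perp}\cap W_{\ell})$ is forced to drop by one. Since this delicate case-checking takes place entirely inside the orthogonal group $\SO(W_{\ell})$ and involves no $\GSpin$-specific features, for our purposes it suffices to import the statement from \cite[Proposition 4.4]{GinzburgRallisSoudry2011} as we have done.
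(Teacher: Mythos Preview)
Your proposal is exactly right in its main observation: the paper does not prove this lemma at all --- it is stated with a direct citation to \cite[Proposition 4.4]{GinzburgRallisSoudry2011} and used as a black box, so there is no proof in the paper to compare against. The sketch you add (identify $P\backslash\SO(W_\ell)$ with a family of isotropic subspaces, reduce the stabilizer action to that of the orthogonal group of $w_0^\perp\cap W_\ell$, and separate orbits by the invariant $\dim(Y\cap w_0^\perp)$ via Witt's theorem) is precisely the approach taken in the cited reference, so your outline is consistent with the source.
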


We combine Remark~\ref{remark-parabolic-subgroup} and Lemma~\ref{lemma-GRS-double-coset} to obtain the following description on the number of representatives $|P_{w,\SO_{m^\prime}}^\prime\backslash \SO(W_{\ell}) / \mathrm{Stab}_{L_{\ell,\SO_{m^\prime}}}(\psi_{\ell, a})|$. 
Note that we are only interested in the situation where $\alpha=0$.

\begin{lemma}
\label{lemma-double-coset2}
\begin{enumerate}
\item Suppose $\ell+\beta<\tilde{m}$. Then 
\begin{equation*}
	| P_{w,\SO_{m^\prime}}^\prime\backslash \SO(W_{\ell}) / \mathrm{Stab}_{L_{\ell,\SO_{m^\prime}}}(\psi_{\ell, a}) | =2.
\end{equation*}

\item Suppose $\ell+\beta=\tilde{m}$. Then we have the following.

\begin{enumerate}
\item If $\SO(W_{\ell})$ is an odd special orthogonal group, then
\begin{equation*}
	| P_{w,\SO_{m^\prime}}^\prime\backslash \SO(W_{\ell}) / \mathrm{Stab}_{L_{\ell,\SO_{m^\prime}}}(\psi_{\ell, a}) | = \begin{cases}
3  & \text{ if } \mathrm{Witt}(w_0^\perp \cap W_{\ell})=\tilde{m}-\ell,  \\
 1	 & \text{ if } \mathrm{Witt}(w_0^\perp \cap W_{\ell})=\tilde{m}-\ell-1.
 \end{cases}
\end{equation*}

\item 	If $\SO(W_{\ell})$ is a $F$-quasi-split non-split even special orthogonal group (with $\dim V_0=2$), then
\begin{equation*}
	| P_{w,\SO_{m^\prime}}^\prime\backslash \SO(W_{\ell}) / \mathrm{Stab}_{L_{\ell,\SO_{m^\prime}}}(\psi_{\ell, a}) | = \begin{cases}
2  & \text{ if } \mathrm{Witt}(w_0^\perp \cap W_{\ell})=\tilde{m}-\ell,  \\
 1	 & \text{ if } \mathrm{Witt}(w_0^\perp \cap W_{\ell})=\tilde{m}-\ell-1.
 \end{cases}
\end{equation*}

\item If $\SO(W_{\ell})$ is a $F$-split even special orthogonal group. Then 
\begin{equation*}
	| P_{w,\SO_{m^\prime}}^\prime\backslash \SO(W_{\ell}) / \mathrm{Stab}_{L_{\ell,\SO_{m^\prime}}}(\psi_{\ell, a}) | =2.
\end{equation*}
\end{enumerate}
\end{enumerate}
\end{lemma}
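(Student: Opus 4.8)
The plan is to deduce Lemma~\ref{lemma-double-coset2} purely by feeding the structural description of $P_{w,\SO_{m^\prime}}^\prime$ provided by Remark~\ref{remark-parabolic-subgroup} into the counting statement Lemma~\ref{lemma-GRS-double-coset}, keeping careful track of three numerical invariants: $\dim W_\ell$, $\mathrm{Witt}(W_\ell)$, and the dimension of the totally isotropic subspace $X\subset W_\ell$ preserved by $P_{w,\SO_{m^\prime}}^\prime$. Since $W_\ell=(V_\ell^+\oplus V_\ell^-)^\perp$ is obtained from $V$ by splitting off $\ell$ hyperbolic planes, one has $\dim W_\ell=m^\prime-2\ell$ and $\mathrm{Witt}(W_\ell)=\tilde m-\ell$; moreover $V_0\subset W_\ell$, so $\SO(W_\ell)\cong\SO_{m^\prime-2\ell}$ has the same type (odd / $F$-split even / $F$-quasi-split non-split even) and the same anisotropic kernel as $\SO(V)$, whence $\dim W_\ell=2(\tilde m-\ell)+\dim V_0$. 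By Remark~\ref{remark-parabolic-subgroup} with $\alpha=0$, the parabolic $P_{w,\SO_{m^\prime}}^\prime$ preserves a totally isotropic subspace $X$ of $W_\ell$ with $\dim X=\beta$, equal to $V_{\ell,\beta}^+$ in all cases except that for split even $H$ with $\ell+\beta=m$ Lemma~\ref{lemma-double-coset-case2} produces the two representatives $\epsilon_{0,\beta}$ and $\tilde{\epsilon}_{0,\beta}$, for which $X$ is $V_{\ell,m-\ell}^+$, resp. $w_q V_{\ell,m-\ell}^+$, these lying in the two distinct $\SO(W_\ell)$-families of maximal isotropic subspaces.

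First I would dispose of Case (1), $\ell+\beta<\tilde m$: here $\dim X=\beta<\tilde m-\ell=\mathrm{Witt}(W_\ell)$, so Lemma~\ref{lemma-GRS-double-coset}(1) gives the count $2$ at once. For Case (2), $\ell+\beta=\tilde m$, the subspace $X$ is maximal isotropic, $\dim X=\mathrm{Witt}(W_\ell)=\tilde m-\ell$, and the identity $\dim W_\ell=2\dim X+\dim V_0$ reads: $\dim W_\ell=2\dim X+1$ if $\SO(W_\ell)$ is odd, $\dim W_\ell=2\dim X+2$ if it is quasi-split non-split even (where $\tilde m=m-1$), and $\dim W_\ell=2\dim X$ if it is split even (where $\tilde m=m$). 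In the odd case I would invoke Lemma~\ref{lemma-GRS-double-coset}(2)(b) when $\mathrm{Witt}(w_0^\perp\cap W_\ell)=\dim X$ (count $3$) and Lemma~\ref{lemma-GRS-double-coset}(3) when $\mathrm{Witt}(w_0^\perp\cap W_\ell)=\dim X-1$ (count $1$); in the quasi-split non-split even case I would use Lemma~\ref{lemma-GRS-double-coset}(2)(a) (count $2$), resp. Lemma~\ref{lemma-GRS-double-coset}(3) (count $1$), according to the same dichotomy. A brief parity remark — $w_0$ is anisotropic, so $w_0^\perp\cap W_\ell$ is a hyperplane of $W_\ell$, and any maximal isotropic subspace of $W_\ell$ meets it in codimension at most $1$ — shows that $\mathrm{Witt}(w_0^\perp\cap W_\ell)\in\{\dim X,\dim X-1\}$, so these two alternatives exhaust Case (2) in the odd and quasi-split non-split even subcases.

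For the split even subcase (2)(c) I would argue slightly differently, since for a single $w$ the count is $1$: here $\dim W_\ell=2\dim X$, so Lemma~\ref{lemma-GRS-double-coset}(4) applies, forcing $\mathrm{Witt}(w_0^\perp\cap W_\ell)=\dim X-1$ and giving a single double coset; but at $\ell+\beta=m$ Lemma~\ref{lemma-double-coset-case2} provides the two representatives $\epsilon_{0,\beta}$ and $\tilde{\epsilon}_{0,\beta}$, whose associated parabolics preserve maximal isotropic subspaces lying in the two different $\SO(W_\ell)$-families, so the two contributions are each $1$ and the combined total is $2$, as asserted. I expect the only genuinely delicate point to be exactly this last bookkeeping — making clear that the ``$2$'' in (2)(c) is the combined count over the pair $\epsilon_{0,\beta},\tilde{\epsilon}_{0,\beta}$ rather than a miscount of a single-$w$ double coset space — together with verifying that throughout Case (2) the hypothesis $\dim X=\mathrm{Witt}(W_\ell)$ required by Lemma~\ref{lemma-GRS-double-coset}(2)--(4) is precisely the assumption $\ell+\beta=\tilde m$.
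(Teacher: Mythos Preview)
Your proposal is correct and follows exactly the paper's approach: a case-by-case application of Remark~\ref{remark-parabolic-subgroup} to identify $\dim X$, followed by matching the invariants $\dim W_\ell$, $\mathrm{Witt}(W_\ell)$, and $\mathrm{Witt}(w_0^\perp\cap W_\ell)$ against the four clauses of Lemma~\ref{lemma-GRS-double-coset}. Your treatment of (2)(c) --- reading the ``$2$'' as the combined contribution of the pair $\epsilon_{0,\beta},\tilde{\epsilon}_{0,\beta}$, each giving a single coset via Lemma~\ref{lemma-GRS-double-coset}(4) --- is in fact more explicit than the paper's proof (which only spells out the quasi-split non-split even case) and agrees with how the paper itself uses the lemma immediately afterward.
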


\begin{proof}
These can be checked case by case. We prove the case when $\SO(W_{\ell})$ is a $F$-quasi-split non-split even orthogonal group. In this case, $m^\prime=2m$, $\tilde{m}=m-1$, $\mathrm{Witt}(W_{\ell})=\tilde{m}-\ell$, and $P_{w,\SO_{m^\prime}}^\prime$ preserves a $\beta$-dimensional subspace. If $\ell+\beta<\tilde{m}$, then $\beta<\tilde{m}-\ell=\mathrm{Witt}(W_{\ell})$, so we can apply Lemma~\ref{lemma-GRS-double-coset} (1) to conclude that 
\begin{equation*}
	| P_{w,\SO_{m^\prime}}^\prime\backslash \SO(W_{\ell}) / \mathrm{Stab}_{L_{\ell,\SO_{m^\prime}}}(\psi_{\ell, a}) | =2.
\end{equation*}
If $\ell+\beta=\tilde{m}$ and $\mathrm{Witt}(w_0^\perp \cap W_{\ell})=\tilde{m}-\ell$, then $\mathrm{Witt}(w_0^\perp \cap W_{\ell})=\beta$ and $\dim W_{\ell}=2m-2\ell=2\dim X+2$, and we apply Lemma~\ref{lemma-GRS-double-coset} (2)(a) to conclude that 
\begin{equation*}
	| P_{w,\SO_{m^\prime}}^\prime\backslash \SO(W_{\ell}) / \mathrm{Stab}_{L_{\ell,\SO_{m^\prime}}}(\psi_{\ell, a}) | =2.
\end{equation*}
If $\ell+\beta=\tilde{m}$ and $\mathrm{Witt}(w_0^\perp \cap W_{\ell})=\tilde{m}-\ell-1$, then $\beta=\mathrm{Witt}(W_{\ell})$, $\mathrm{Witt}(w_0^\perp \cap W_{\ell})=\beta-1$, and we apply Lemma~\ref{lemma-GRS-double-coset} (3) to conclude that 
\begin{equation*}
	| P_{w,\SO_{m^\prime}}^\prime\backslash \SO(W_{\ell}) / \mathrm{Stab}_{L_{\ell,\SO_{m^\prime}}}(\psi_{\ell, a}) | =1.
\end{equation*}
The other cases can be proved similarly and we omit the details. 
\end{proof}

We now consider the summands in \eqref{eq-unfolding-integral1} corresponding to $\beta$ with $\max\{k-\ell, 0\}<\beta<\tilde{m}-\ell$. In this case, $P_{w,\SO_{m^\prime}}^\prime\backslash \SO(W_{\ell}) / \mathrm{Stab}_{L_{\ell,\SO_{m^\prime}}}(\psi_{\ell, a})$ has two representatives $\gamma_1$ and $\gamma_2$ such that  $\gamma_1 w_0$ is orthogonal to $V_{\ell,\beta}^-$ and $\gamma_2w_0$ is not orthogonal to $V_{\ell,\beta}^-$. Moreover, for the representative $\eta$ with $\pr(\eta)=\begin{pmatrix}
 \epsilon & & \\ & \gamma_1 &\\ && \epsilon^*	
 \end{pmatrix}$, the stabilizer $G^{\eta}$ is a proper maximal parabolic subgroup of $G$, which preserves the isotropic subspace $w_q^{k-\beta}V_{\ell,\beta}^+\cap w_0^\perp$.  Thus, all the summands in this case are equal to zero by Lemma~\ref{lemma-unfolding-vanishing-cuspidal} (for $\gamma_1$) and Lemma~\ref{lemma-unfolding-vanishing-alpha=0} (for $\gamma_2$).  

Next, we consider the summands in \eqref{eq-unfolding-integral1} corresponding to $\beta$ with $\beta=\tilde{m}-\ell$. Note that we have $\beta>\max\{0,k-\ell\}$. We have the following cases:
\begin{itemize}
\item If $\SO(W_{\ell})$ is a $F$-split even special orthogonal group, then by Lemma~\ref{lemma-double-coset-case2} there are two double coset representatives $\epsilon_{0,\beta}$ and $\tilde{\epsilon}_{0,\beta}$ for $P_k(F)\backslash H(F)/P_{\ell}(F)$ corresponding to the pair $(0,\beta)$. For these two representatives, their stabilizers $P_{\ell}^{\epsilon_{0,\beta}}$ and $P_{\ell}^{\tilde{\epsilon}_{0,\beta}}$ both preserve a maximal isotropic subspace of $W_{\ell}$, and the double cosets 
$$P_{\epsilon_{0,\beta},\SO_{m^\prime}}^\prime\backslash \SO(W_{\ell}) / \mathrm{Stab}_{L_{\ell,\SO_{m^\prime}}}(\psi_{\ell, a}) \text{ and } P_{\tilde{\epsilon}_{0,\beta},\SO_{m^\prime}}^\prime\backslash \SO(W_{\ell}) / \mathrm{Stab}_{L_{\ell,\SO_{m^\prime}}}(\psi_{\ell, a})$$ 
both consist of one element with their stabilizers $G^\eta$ being a proper maximal parabolic subgroup of $G$ in both cases. By Lemma~\ref{lemma-unfolding-vanishing-cuspidal}, the corresponding summands are equal to zero. 
\item If $\SO(W_{\ell})$ is either a $F$-quasi-split non-split even special orthogonal group or an odd special orthogonal group, with $\mathrm{Witt}(w_0^\perp \cap W_{\ell})=\mathrm{Witt}(W_{\ell})-1$, then the double coset space $P_{w,\SO_{m^\prime}}^\prime\backslash \SO(W_{\ell}) / \mathrm{Stab}_{L_{\ell,\SO_{m^\prime}}}(\psi_{\ell, a})$ has only one element, and its stabilizer $G^\eta$ is a proper maximal parabolic subgroup of $G$. Hence the corresponding summand vanishes by Lemma~\ref{lemma-unfolding-vanishing-cuspidal}.
\item If $\SO(W_{\ell})$ is an odd special orthogonal group with $\mathrm{Witt}(w_0^\perp \cap W_{\ell})=\mathrm{Witt}(W_{\ell})$, then $P_{w,\SO_{m^\prime}}^\prime\backslash \SO(W_{\ell}) / \mathrm{Stab}_{L_{\ell,\SO_{m^\prime}}}(\psi_{\ell, a})$ consists of three elements, given explicitly in \cite[(4.33)]{GinzburgRallisSoudry2011}. In this case, two stabilizers $G^{\eta}$ are a proper maximal parabolic subgroup of $G$, and the third representative $\gamma$ satisfies the condition that $\gamma w_0$ is not orthogonal to $V_{\ell,\beta}^-$. By Lemma~\ref{lemma-unfolding-vanishing-cuspidal} and Lemma~\ref{lemma-unfolding-vanishing-alpha=0}, all the corresponding summands are zero. 
\item If $\SO(W_{\ell})$ is a $F$-quasi-split non-split even special orthogonal group, with $\mathrm{Witt}(w_0^\perp \cap W_{\ell})=\mathrm{Witt}(W_{\ell})$, then $P_{w,\SO_{m^\prime}}^\prime\backslash \SO(W_{\ell}) / \mathrm{Stab}_{L_{\ell,\SO_{m^\prime}}}(\psi_{\ell, a})$ consists of two elements, and their stabilizers are similar to the case $\beta<\tilde{m}-\ell$ as discussed above. Hence by Lemma~\ref{lemma-unfolding-vanishing-cuspidal} and Lemma~\ref{lemma-unfolding-vanishing-alpha=0} the corresponding summands are equal to zero. 
\end{itemize}
Therefore, all the summands corresponding to $\beta=\tilde{m}-\ell$ are equal to zero.

We have proved that, for $\mathrm{Re}(s)\gg 0$, $\mathcal{Z}(\phi_\pi, f_{\tau,\sigma, s})$ is equal to 
\begin{equation}
\label{eq-unfolding-integral2}
 \sum_{\epsilon_\beta, \eta} \int_{C_G^0(\A) G^\eta(F)\backslash G(\A)}  \phi_\pi( g) \int_{N_{\ell}^{\eta}(\A)\backslash N_{\ell}(\A)}   \int_{N_{\ell}^{\eta}(F)\backslash N_{\ell}^{\eta}(\A)}    f_{\tau,\sigma,s}(\epsilon_{\beta} \eta u^\prime u   g) \psi_{\ell,a}^{-1}(u^\prime u)du^\prime du dg,
\end{equation}
where the summation is subject to the conditions in Proposition~\ref{prop-unfolding-1}, with additional conditions that $\beta=\max\{k-\ell,0\}$, and $\eta$ a representative in $P_{\ell}^{\epsilon_{\beta}}(F)\backslash P_{\ell}(F) / R_{\ell,a}(F)$ with $\pr(\eta)=\begin{pmatrix}
 \epsilon & & \\ & \gamma &\\ && \epsilon^*	
 \end{pmatrix}$, where $G^\eta$ is not a proper maximal parabolic subgroup of $G$. In fact, the representative $\eta$ is uniquely determined by $\beta=\max\{k-\ell,0\}$. More specifically, we have the following determination of $\eta$:
 \begin{itemize}

\item If $k>\ell$, then $\beta=k-\ell$, and $\eta$ is determined by 
$\pr(\eta)=\diag(\epsilon, \gamma, \epsilon^*)$ with $\epsilon=I_{\ell}$, and $\gamma\in P_{w,\SO_{m^\prime}}^\prime\backslash \SO(W_{\ell}) / \mathrm{Stab}_{L_{\ell,\SO_{m^\prime}}}(\psi_{\ell, a})$. Since $\ell+\beta=k<\tilde{m}$, the double coset space $P_{w,\SO_{m^\prime}}^\prime\backslash \SO(W_{\ell}) / \mathrm{Stab}_{L_{\ell,\SO_{m^\prime}}}(\psi_{\ell, a})$ has two elements by Lemma~\ref{lemma-double-coset2}.
Thus, the double coset space $P_{\ell}^{\epsilon_{\beta}}(F)\backslash P_{\ell}(F) / R_{\ell,a}(F)$ gives two representatives. One representative has the property that $\gamma=I_{m^\prime-2\ell}$ and the corresponding stabilizer $G^\eta$ is a proper maximal parabolic subgroup, and hence does not contribute to the integral by Lemma~\ref{lemma-unfolding-vanishing-cuspidal}. The other representative $\eta_2$ is uniquely determined by 
\begin{equation}
\label{eq-eta-kgreaterl}
	\pr(\eta_2)=\begin{pmatrix}
 \epsilon & & \\ & \gamma &\\ && \epsilon^*	
 \end{pmatrix}, \quad \epsilon=I_{\ell}, \quad \gamma=\begin{pmatrix} &I_{k-\ell} & & &\\ I_{\tilde{m}-k} &&&& \\ &&I_{m^\prime-2\tilde{m}}&&\\ &&&&I_{\tilde{m}-k}\\&&&I_{k-\ell} &\end{pmatrix}.
\end{equation}

 \item If $k\le \ell$, then 	$\beta=0$. In this case, the representative $\eta=\eta_1$ is determined by
\begin{equation}
\label{eq-eta-klel}
	\pr(\eta_1)=\begin{pmatrix}
 \epsilon & & \\ & \gamma &\\ && \epsilon^*	
 \end{pmatrix}, \quad \gamma=I_{m^\prime-2\ell}, \quad \epsilon=\begin{pmatrix} &I_{\ell-k}\\ I_k&\end{pmatrix}.
\end{equation}
Moreover, we have $G^{\eta_1}=G$.
 \end{itemize}

We summarize the above computation in the following proposition.

\begin{proposition}
\label{prop}
Let the notation be as above. Assume $\mathrm{Re}(s)\gg 0$.
\begin{enumerate}

\item If $k>\ell$, then $\beta=k-\ell$ and the global integral $\mathcal{Z}(\phi_\pi, f_{\tau,\sigma, s})$ is equal to 
\begin{equation}
\label{eq-unfolding-integral3-k-greater-than-l}
\int_{C_G^0(\A) G^{\eta_2}(F)\backslash G(\A)}  \phi_\pi( g) \int_{N_{\ell}^{\eta_2}(\A)\backslash N_{\ell}(\A)}   \int_{N_{\ell}^{\eta_2}(F)\backslash N_{\ell}^{\eta_2}(\A)}    f_{\tau,\sigma,s}(\epsilon_{0,\beta} \eta_2 u^\prime u   g) \psi_{\ell,a}^{-1}(u^\prime u)du^\prime du dg,
\end{equation}
with $\eta_2$ determined by \eqref{eq-eta-kgreaterl}.

\item If $k\le \ell$, then $\beta=0$ and the global integral $\mathcal{Z}(\phi_\pi, f_{\tau,\sigma, s})$ is equal to 
\begin{equation}
\label{eq-unfolding-integral3-klel}
\int_{C_G^0(\A) G(F)\backslash G(\A)}  \phi_\pi( g) \int_{N_{\ell}^{\eta_1}(\A)\backslash N_{\ell}(\A)}   \int_{N_{\ell}^{\eta_1}(F)\backslash N_{\ell}^{\eta_1}(\A)}    f_{\tau,\sigma,s}(\epsilon_{0,0} \eta_1 u^\prime u   g) \psi_{\ell,a}^{-1}(u^\prime u)du^\prime du dg,
\end{equation}
with $\eta_1$ determined by \eqref{eq-eta-klel}. 
\end{enumerate}
\end{proposition}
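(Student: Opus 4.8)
The plan is to trace through the unfolding identity \eqref{eq-unfolding-integral1} and apply the two vanishing lemmas (Lemma~\ref{lemma-unfolding-vanishing-alpha=0} and Lemma~\ref{lemma-unfolding-vanishing-cuspidal}) in tandem, eliminating all summands except for one distinguished pair $(\epsilon_\beta,\eta)$. Since \eqref{eq-unfolding-integral1} is already proved for $\mathrm{Re}(s)\gg 0$, the proposition is purely a bookkeeping statement: enumerate the surviving double coset representatives indexed by $\beta$ (recalling that by Proposition~\ref{prop-unfolding-1} we already have $\alpha=0$), and for each value of $\beta$ decide whether the corresponding integral vanishes. First I would partition the possible values of $\beta$ into three ranges: $\beta=\max\{k-\ell,0\}$ (the boundary case that should survive), $\max\{k-\ell,0\}<\beta<\tilde m-\ell$ (the generic middle range), and $\beta=\tilde m-\ell$ (the top range).

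For the middle range $\max\{k-\ell,0\}<\beta<\tilde m-\ell$, I would invoke Lemma~\ref{lemma-double-coset2}(1) to see that $P_{w,\SO_{m^\prime}}^\prime\backslash \SO(W_{\ell}) / \mathrm{Stab}_{L_{\ell,\SO_{m^\prime}}}(\psi_{\ell, a})$ has exactly two representatives $\gamma_1,\gamma_2$, distinguished by whether $\gamma_i w_0$ is orthogonal to $V_{\ell,\beta}^-$. The $\gamma_2$ term (non-orthogonal) vanishes by Lemma~\ref{lemma-unfolding-vanishing-alpha=0}(2), while for the $\gamma_1$ term I would identify the stabilizer $G^{\eta}$ as the proper maximal parabolic of $G$ preserving $w_q^{k-\beta}V_{\ell,\beta}^+\cap w_0^\perp$ (a nonzero isotropic subspace because of the orthogonality and the constraint $\beta<\tilde m-\ell$), hence that term vanishes by Lemma~\ref{lemma-unfolding-vanishing-cuspidal}. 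For the top range $\beta=\tilde m-\ell$, I would run the case analysis already laid out in the excerpt — split according to whether $\SO(W_\ell)$ is $F$-split even, $F$-quasi-split non-split even, or odd, and (in the odd/non-split cases) whether $\mathrm{Witt}(w_0^\perp\cap W_\ell)$ equals $\tilde m-\ell$ or $\tilde m-\ell-1$ — and in each subcase use Lemma~\ref{lemma-double-coset2}(2) to count representatives, then kill each one via Lemma~\ref{lemma-unfolding-vanishing-cuspidal} (stabilizer is a proper maximal parabolic, preserving a maximal isotropic subspace of $W_\ell$) or Lemma~\ref{lemma-unfolding-vanishing-alpha=0} (the ``$\gamma w_0$ not orthogonal'' representative). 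The $F$-split even case additionally requires handling the two double coset representatives $\epsilon_{0,\beta}$ and $\tilde\epsilon_{0,\beta}$ from Lemma~\ref{lemma-double-coset-case2}, but both contribute proper maximal parabolic stabilizers.

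Once all terms with $\beta\neq\max\{k-\ell,0\}$ are shown to vanish, only $\beta=\max\{k-\ell,0\}$ remains, and I would then pin down the surviving $\eta$ explicitly. If $k>\ell$ then $\beta=k-\ell$ and $\ell+\beta=k<\tilde m$, so by Lemma~\ref{lemma-double-coset2}(1) there are two representatives for $P_{w,\SO_{m^\prime}}^\prime\backslash\SO(W_\ell)/\mathrm{Stab}_{L_{\ell,\SO_{m^\prime}}}(\psi_{\ell,a})$; the one with $\gamma=I_{m^\prime-2\ell}$ gives a proper maximal parabolic stabilizer and dies by Lemma~\ref{lemma-unfolding-vanishing-cuspidal}, leaving $\eta_2$ given by \eqref{eq-eta-kgreaterl}, and one checks directly that $G^{\eta_2}$ is not a proper maximal parabolic. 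If $k\le\ell$ then $\beta=0$, the space $W_\ell$ is not further cut down, $\gamma=I_{m^\prime-2\ell}$, $\epsilon$ is the indicated Weyl element of $\GL_\ell$, so $\eta=\eta_1$ from \eqref{eq-eta-klel} and $G^{\eta_1}=G$ (full group, nothing to kill). Substituting the unique surviving representative back into \eqref{eq-unfolding-integral1} yields \eqref{eq-unfolding-integral3-k-greater-than-l} and \eqref{eq-unfolding-integral3-klel} respectively. The main obstacle I anticipate is not any single estimate but the case-by-case verification in the $\beta=\tilde m-\ell$ range — correctly matching each Witt-index scenario with the right part of Lemma~\ref{lemma-double-coset2} and confirming in every subcase that the surviving stabilizers really are proper maximal parabolics (so that cuspidality applies) or that the relevant $\gamma w_0$ fails the orthogonality condition; the $\GSpin$ versus $\SO$ distinction is harmless here since all the relevant subgroups are unipotent or differ only in the Levi, as already noted in the proof of Lemma~\ref{lemma-unfolding-vanishing-alpha=0}.
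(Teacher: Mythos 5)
Your proof follows exactly the paper's argument: starting from \eqref{eq-unfolding-integral1}, partitioning $\beta$ into the same three ranges, counting representatives via Lemma~\ref{lemma-double-coset2}, and killing all terms except $\beta=\max\{k-\ell,0\}$ via the same two vanishing lemmas (cuspidality for proper maximal parabolic stabilizers, and the Fourier-coefficient vanishing when $\gamma w_0$ is not orthogonal to $V_{\ell,\beta}^-$), then pinning down $\eta_2$ (resp.\ $\eta_1$) by eliminating the $\gamma=I_{m^\prime-2\ell}$ representative when $k>\ell$ (resp.\ noting $G^{\eta_1}=G$ when $k\le\ell$). This is the same decomposition, the same lemmas, and the same case analysis as in the paper.
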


In Section~\ref{subsection-unfolding-kgm} and Section~\ref{subsection-unfolding-klm} below, we will treat the case $k>\ell$ and the case $k\le \ell$ separately. We will prove Theorem~\ref{thm-global-unfolding-kgreaterl} and Theorem~\ref{thm-global-unfolding-klel} in Section~\ref{subsection-unfolding-kgm} and Section~\ref{subsection-unfolding-klm} respectively.

\subsection{Euler product factorization: the case of $k>\ell$}
\label{subsection-unfolding-kgm}
In this section we complete the unfolding computation for \eqref{eq-unfolding-integral3-k-greater-than-l} and prove Theorem~\ref{thm-global-unfolding-kgreaterl}.
We assume $k>\ell$, $\beta=k-\ell$. Recall that $N_{\ell}^{\eta_2}=N_{\ell}\cap \eta_2^{-1}P_{\ell}^{\epsilon_{0,\beta}} \eta_2$ is the stabilizer in $N_{\ell}$, which is explicitly given by
$$
N_{\ell}^{\eta_2}=\left\{ \begin{pmatrix}
z&0&0&0&y&0&0\\
&I_{\tilde{m}-k}&&&&&0\\
&&I_{k-\ell}&&&&y'\\
&&&I_{m^\prime-2\tilde{m}}&&&0\\
&&&&I_{k-\ell}&&0\\
&&&&&I_{\tilde{m}-k}&0\\
&&&&&&z^{*}
\end{pmatrix} \mid z\in Z_{\ell} \right\}.
$$
We conjugate the unipotent $N_{\ell}^{\eta_2}$ by $\epsilon_{0,\beta} \eta_2$ to obtain
\begin{equation}
\label{eq-Z-l-prime}
	\epsilon_{0,\beta} \eta_2 N_{\ell}^{\eta_2} (\epsilon_{0,\beta} \eta_2)^{-1}= \left\{ \hat{z}_{\ell}^\prime= \begin{pmatrix} I_{k-\ell} & y \\ &z \end{pmatrix}^\wedge \mid z\in Z_\ell \right\} = \hat{Z}_\ell^\prime
\end{equation}
where for $g\in \GL(V_{k}^+)$, we denote by $\hat{g}$ the lift of $g$ into the Levi $\GL(V_k^+)\times \GSpin(W_k)$ of $P_k$. 
For $\hat{z}^\prime= \begin{pmatrix} I_{k-\ell} & y \\ &z \end{pmatrix}^\wedge\in \hat{Z}_\ell^\prime$ with $z=(z_{i,j})$ and $y=(y_{i,j})$, the character  $\psi_{\ell,a} ( \eta_2^{-1}\epsilon_{0,\beta}^{-1}\hat{z}^\prime \epsilon_{0,\beta}\eta_2)$ is given by
\begin{equation}
\psi_{\ell,a}^{-1}( \eta_2^{-1}\epsilon_{0,\beta}^{-1}\hat{z}^\prime \epsilon_{0,\beta}\eta_2)=	\psi(z_{1,2}+\cdots + z_{\ell-1, \ell}+(-1)^{m^\prime+1}\frac{a}{2}y_{k-\ell, 1})
\end{equation}
and so on the group $\hat{Z}_\ell^\prime$ we set
\begin{equation}
\psi_{Z_{\ell}^\prime,a}	(z^\prime)=\psi( (-1)^{m^\prime+1}\frac{a}{2}z^\prime_{k-\ell, k-\ell+1} +z^\prime_{k-\ell+1,k-\ell+2}+\cdots + z^\prime_{k-1,k} ), \quad z^\prime=(z^\prime_{i,j}).
\end{equation}
If we set
\begin{equation}
\label{eq-f-Z-l-prime}
\begin{split}
f_{\tau,\sigma,s}^{Z_{\ell}^\prime, \psi^{-1}_{Z_{\ell}^\prime,a}	}(g) &= \int_{Z_{\ell}^\prime(F)\backslash  Z_{\ell}^\prime(\A)} f_{\tau,\sigma,s}(\hat{z}^\prime     g)  \psi_{Z_{\ell}^\prime,a}	(\hat{z}^\prime) d\hat{z}^\prime \\
&=  \int_{Z_{\ell}^\prime(F)\backslash  Z_{\ell}^\prime(\A)} f_{\tau,\sigma,s}(\hat{z}^\prime     g) \psi_{\ell,a}^{-1}( \eta_2^{-1}\epsilon_{0,\beta}^{-1}\hat{z}^\prime \epsilon_{0,\beta}\eta_2) d\hat{z}^\prime,
\end{split}
\end{equation}
then by \eqref{eq-unfolding-integral3-klel}, the global integral is equal to
\begin{equation}
\label{eq-unfolding-integral3-klel-2}
\mathcal{Z}(\phi_\pi, f_{\tau,\sigma, s})=\int_{C_G^0(\A) G^{\eta_2}(F)\backslash G(\A)}  \phi_\pi( g) \int_{N_{\ell}^{\eta_2}(\A)\backslash N_{\ell}(\A)}       f_{\tau,\sigma,s}^{Z_{\ell}^\prime, \psi^{-1}_{Z_{\ell}^\prime,a}	}(\epsilon_{0,\beta} \eta_2  u   g) \psi_{\ell,a}^{-1}( u)  du dg.
\end{equation}

Next, we turn to the subgroup $G^{\eta_2}=G\cap \eta_2^{-1} M_{\ell}^{\epsilon_{0,\beta}} \eta_2$, as defined in \eqref{eq-G-eta-N-l-eta}. 
Similar to the decomposition in \cite[(3.33)]{JiangZhang2014}, we also have a decomposition
\begin{equation}
\label{eq-G-eta2}
G^{\eta_2}=(\GL(V_{\tilde{m}-\beta,\beta-1}^+)\times \GSpin(\pr(\eta_2)^{-1}W_k)  ) \ltimes 	U_{\beta-1,\eta_2},
\end{equation}
where $V_{\tilde{m}-\beta,\beta-1}^+$ is defined in 
\eqref{eq-subspace-V-lt}, $W_k=(V_k^+\oplus V_k^{-})^\perp$ is defined in \eqref{eq-W-l}, and $U_{\beta-1,\eta_2}$ is the unipotent radical of the stabilizer $G^{\eta_2}$, as described in \cite[page 573]{JiangZhang2014}. More specifically, we have
\begin{equation*}
V_{\tilde{m}-\beta,\beta-1}^+=	\Span\{e_{\tilde{m}-k+\ell+1}, \cdots, e_{\tilde{m}-1} \} =\pr(\eta_2)^{-1}V_{\ell,\beta}^+\cap w_0^\perp=\pr(\eta_2)^{-1}V_{\ell,\beta-1}^+,
\end{equation*}
and
\begin{equation*}
U_{\beta-1,\eta_2}=\eta_2^{-1} U_{\beta,m^\prime-2\ell}^{\eta_2} \eta_2,
\end{equation*}
where elements of $U_{\beta,m^\prime-2\ell}^{\eta_2}$ have the form
\begin{equation}\label{eq-U-beta-mprime-2l-eta2}
\begin{pmatrix}
I_{\ell}&&&&&&\\
&I_{\beta-1}&d_{1}&u&v_{1}&v&\\
&&1&0&0&v'_{1}&\\
&&&I_{m^\prime-2k}&0&u'&\\
&&&&1&d'_{1}&\\
&&&&&I_{\beta-1}&\\
&&&&&&I_{\ell}
\end{pmatrix}.
\end{equation}
Denote
\begin{equation}
\label{eq-Z-ell-beta-1}
Z_{\ell,\beta-1}:=\left\{
\begin{pmatrix}
I_{\ell}&&&&\\
&d&&&\\
&&I_{m^\prime-2k+2}&&\\
&&&d^{*}&\\
&&&&I_{\ell}
\end{pmatrix}\mid d\in Z_{\beta-1}
\right\}	
\end{equation}
and 
\begin{equation*}
	Z_{\ell,\beta-1}^{\eta_2}:=\eta_2^{-1}Z_{\ell,\beta-1}\eta_2.
\end{equation*}
Then $Z_{\ell,\beta-1}^{\eta_2}$ is a maximal unipotent subgroup $Z_{\ell,\beta-1}^{\eta_2}:=\eta_2^{-1}Z_{\ell,\beta-1}\eta_2$ of $\GL(V_{\tilde{m}-\beta,\beta-1}^+)$.

Write
\begin{equation}
\label{eq-N-l-beta-1}
	N_{\ell, \beta-1}^{\eta_2}:=Z_{\ell,\beta-1}^{\eta_2} U_{\beta-1,\eta_2}.
\end{equation}
 This is a unipotent subgroup of $G$ of the type as defined in \eqref{eq-unipotent-Nl} with the corresponding character $\psi_{\beta-1,-a}$ on $N_{\ell,\beta-1}^{\eta_2}(\A)$ defined similarly as in \eqref{eq-psi-l-a}. The corresponding connected component of the identity of the stabilizer $(G^{\psi_{\beta-1,-a}})^0$ is equal to $\GSpin(\pr(\eta_2)^{-1}W_k)$. Hence
 $(G^{\psi_{\beta-1,-a}})^0=\eta_2^{-1} \GSpin(W_k)\eta_2 =\eta_2^{-1} G_0\eta_2$ and we denote
 \begin{equation*}
 G_0^\prime :=	\eta_2^{-1} G_0 \eta_2.
 \end{equation*}
 The elements of $N_{\ell, \beta-1}^{\eta_2}$ have the form
\begin{equation}
\label{eq-N-l-beta-1-eta2}
 (\epsilon_{0,\beta}\eta_2)^{-1} \begin{pmatrix}
d&d_1&0&u&0&v_1&v\\
&1&&&&&v_1^\prime\\
&&I_{\ell}&&&&0\\
&&&I_{m^\prime-2k}&&&u^\prime\\
&&&&I_{\ell}&&0\\
&&&&&1&d_1^\prime\\
&&&&&&d^*
\end{pmatrix}(\epsilon_{0,\beta}\eta_2) ,
\end{equation}
where $d\in Z_{\beta-1}$.
We remark that $Z_{\ell,\beta-1}^{\eta_2}$ is the set of all elements of the form \eqref{eq-N-l-beta-1-eta2} with all entries 0 except $d$. Let $Z_{\beta,\eta_2}$ be the subgroup of $N_{\ell, \beta-1}^{\eta_2}$ consisting of elements of the form \eqref{eq-N-l-beta-1-eta2} with all entires 0 except $d$ and $d_1$, and let $C_{\beta-1,\eta_2}$ be the subgroup of $N_{\ell, \beta-1}^{\eta_2}$ consisting of elements of the form \eqref{eq-N-l-beta-1-eta2} with $d=I_{\beta-1}$ and $d_1=0$. Then we have
\begin{equation*}
	N_{\ell, \beta-1}^{\eta_2}= Z_{\beta,\eta_2} C_{\beta-1,\eta_2}.
\end{equation*}
Let $R_{\ell,\beta-1}^{\eta_2}$ be the Bessel subgroup of $G$ given by
\begin{equation}
\label{eq-Bessel-subgroup-of-G}
	R_{\ell,\beta-1}^{\eta_2}= {G_0^\prime} \cdot N_{\ell, \beta-1}^{\eta_2}.
\end{equation}

Similar to \cite[p. 575]{JiangZhang2014}, we have
\begin{equation}
\label{eq-quotient-space-isom}
C_{\beta-1,\eta_2}\backslash G^{\eta_2}\cong P_{\beta}^1\times  {G_0^\prime},
\end{equation}
where
\begin{equation*}
P_{\beta}^1=\left\{ \begin{pmatrix} d & *\\0&1\end{pmatrix}\mid d\in \GL_{\beta-1} \right\} \subset \GL_{\beta}	
\end{equation*}
is the mirabolic subgroup of $\GL_\beta$. Now we return to the expression \eqref{eq-unfolding-integral3-klel-2}, and consider the inner integral
\begin{equation}
\label{eq-Phi}
	\Phi(g):=\int_{N_{\ell}^{\eta_2}(\A)\backslash N_{\ell}(\A)}       f_{\tau,\sigma,s}^{Z_{\ell}^\prime, \psi^{-1}_{Z_{\ell}^\prime,a}	}(\epsilon_{0,\beta} \eta_2  u   g) \psi_{\ell,a}^{-1}( u)  du .
\end{equation}

\begin{lemma}
\label{lemma-Phi-left-invariant-under-Cbetaeta}
The function $\Phi(g)$ is left-invariant under $C_{\beta-1,\eta_2}(\A)$.	
\end{lemma}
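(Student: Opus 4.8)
The plan is to use that every $c\in C_{\beta-1,\eta_2}(\A)$ lies in $G^{\eta_2}(\A)\subset G(\A)$ — which normalizes both $N_{\ell}$ and $N_{\ell}^{\eta_2}$ and stabilizes $\psi_{\ell,a}$ — together with the fact, read off from \eqref{eq-N-l-beta-1-eta2}, that conjugation by $\epsilon_{0,\beta}\eta_2$ carries $C_{\beta-1,\eta_2}$ into the unipotent radical $U_k$ of $P_k$. First I would fix $c\in C_{\beta-1,\eta_2}(\A)$ and, in the defining integral \eqref{eq-Phi} for $\Phi(cg)$, perform the substitution $u\mapsto c u c^{-1}$. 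Since $c$ is unipotent and $G^{\eta_2}$ normalizes $N_{\ell}$ and $N_{\ell}^{\eta_2}$, this is a measure-preserving bijection of $N_{\ell}^{\eta_2}(\A)\backslash N_{\ell}(\A)$, and since $c\in G$ fixes $\psi_{\ell,a}$ we have $\psi_{\ell,a}^{-1}(c u c^{-1})=\psi_{\ell,a}^{-1}(u)$. Hence
\begin{equation*}
\Phi(cg)=\int_{N_{\ell}^{\eta_2}(\A)\backslash N_{\ell}(\A)} f_{\tau,\sigma,s}^{Z_{\ell}^\prime, \psi^{-1}_{Z_{\ell}^\prime,a}}(\epsilon_{0,\beta}\eta_2\, c\, u\, g)\,\psi_{\ell,a}^{-1}(u)\,du ,
\end{equation*}
and it suffices to prove $f_{\tau,\sigma,s}^{Z_{\ell}^\prime, \psi^{-1}_{Z_{\ell}^\prime,a}}(\epsilon_{0,\beta}\eta_2\, c\, x)=f_{\tau,\sigma,s}^{Z_{\ell}^\prime, \psi^{-1}_{Z_{\ell}^\prime,a}}(\epsilon_{0,\beta}\eta_2\, x)$ for all $x\in H(\A)$.

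To see this, set $c^\prime=(\epsilon_{0,\beta}\eta_2)\,c\,(\epsilon_{0,\beta}\eta_2)^{-1}$. The elements of $C_{\beta-1,\eta_2}$ are precisely those in \eqref{eq-N-l-beta-1-eta2} with $d=I_{\beta-1}$ and $d_1=0$ (hence also $d_1^\prime=0$); matching the block partition $(\beta-1,1,\ell,m^\prime-2k,\ell,1,\beta-1)$ there with the coarse partition $(k,m^\prime-2k,k)$ adapted to $P_k$, one checks that $c^\prime$ is block-upper-triangular with identity diagonal blocks for the decomposition $V=V_k^+\oplus W_k\oplus V_k^-$, i.e.\ $c^\prime\in U_k(\A)$. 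Now unfold \eqref{eq-f-Z-l-prime}: for $\hat{z}^\prime\in\hat{Z}_{\ell}^\prime$ write $\hat{z}^\prime\,\epsilon_{0,\beta}\eta_2\, c=(\hat{z}^\prime c^\prime(\hat{z}^\prime)^{-1})\,\hat{z}^\prime\,\epsilon_{0,\beta}\eta_2$; since $\hat{Z}_{\ell}^\prime$ lies in the $\GL(V_k^+)$-factor of the Levi $M_k$ and therefore normalizes $U_k$, the conjugate $\hat{z}^\prime c^\prime(\hat{z}^\prime)^{-1}$ is again in $U_k(\A)$, so the left $U_k(\A)$-invariance of $f_{\tau,\sigma,s}$ gives $f_{\tau,\sigma,s}(\hat{z}^\prime\,\epsilon_{0,\beta}\eta_2\, c\, x)=f_{\tau,\sigma,s}(\hat{z}^\prime\,\epsilon_{0,\beta}\eta_2\, x)$. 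Integrating against $\psi_{Z_{\ell}^\prime,a}$ over $Z_{\ell}^\prime(F)\backslash Z_{\ell}^\prime(\A)$ yields the displayed identity, and therefore $\Phi(cg)=\Phi(g)$.

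The only point that is not purely formal is the inclusion $c^\prime\in U_k(\A)$, and this is the step I would expect to require the most care; however it is a direct reading of \eqref{eq-N-l-beta-1-eta2} once the two block partitions are lined up, and it is the $\GSpin$-analogue of the corresponding computation in \cite[\S 3]{JiangZhang2014}. All remaining manipulations are formal consequences of the left-equivariance of $f_{\tau,\sigma,s}$ under $P_k(\A)$ and of the facts that $G$ normalizes $N_{\ell}$ and fixes $\psi_{\ell,a}$.
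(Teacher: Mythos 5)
Your proof is correct and follows the same route as the paper: both arguments hinge on (i) the change of variable $u\mapsto cuc^{-1}$, justified by $c\in G^{\eta_2}(\A)$ normalizing $N_\ell$, $N_\ell^{\eta_2}$, and fixing $\psi_{\ell,a}$, and (ii) the observation that $c^\prime=(\epsilon_{0,\beta}\eta_2)c(\epsilon_{0,\beta}\eta_2)^{-1}\in U_k(\A)$, so that $\hat z^\prime c^\prime \hat z^{\prime-1}\in U_k(\A)$ and the left $U_k(\A)$-invariance of $f_{\tau,\sigma,s}$ kills $c$ inside $f_{\tau,\sigma,s}^{Z_\ell^\prime,\psi^{-1}_{Z_\ell^\prime,a}}$. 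You merely perform these two steps in the opposite order from the paper (change of variable first, then the $U_k$-invariance), which is an inessential reorganization.
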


\begin{proof}
Note that
\begin{equation*}
\epsilon_{0,\beta}\eta_2  C_{\beta-1,\eta_2} \eta_2^{-1}\epsilon_{0,\beta}^{-1}= 	\left\{ \begin{pmatrix}
I_{\beta-1}&&&u&&v_1&v\\
&1&&&&&v_1^\prime\\
&&I_{\ell}&&&&\\
&&&I_{m^\prime-2k}&&&u^\prime\\
&&&&I_{\ell}&&\\
&&&&&1&\\
&&&&&&I_{\beta-1}
\end{pmatrix}\right\},
\end{equation*}
and for any $\hat{z}^\prime\in \hat{Z}_{\ell}^\prime$, we have
\begin{equation*}
\hat{z}^\prime 	\left( \epsilon_{0,\beta}\eta_2  C_{\beta-1,\eta_2} \eta_2^{-1}\epsilon_{0,\beta}^{-1} \right) \hat{z}^{\prime -1} \subset U_k.
\end{equation*}
It follows from the left-invariance of $f_{\tau,\sigma,s}$ under $U_k(\A)$  that, for any $c\in C_{\beta-1,\eta_2}(\A)$, we have
\begin{equation*}
\begin{split}
	 f_{\tau,\sigma,s}^{Z_{\ell}^\prime, \psi^{-1}_{Z_{\ell}^\prime,a}	}(\epsilon_{0,\beta} \eta_2  c    g) &= \int_{Z_{\ell}^\prime(F)\backslash  Z_{\ell}^\prime(\A)} f_{\tau,\sigma,s}\left( \left(\hat{z}^\prime     \left( \epsilon_{0,\beta} \eta_2  c \eta_2^{-1}\epsilon_{0,\beta}^{-1} \right) \hat{z}^{\prime-1} \right)\hat{z}^\prime \epsilon_{0,\beta} \eta_2    g\right) \psi_{\ell,a}^{-1}( \eta_2^{-1}\epsilon_{0,\beta}^{-1}\hat{z}^\prime \epsilon_{0,\beta}\eta_2) d\hat{z}^\prime \\
	 &=  \int_{Z_{\ell}^\prime(F)\backslash  Z_{\ell}^\prime(\A)} f_{\tau,\sigma,s}\left( \hat{z}^\prime \epsilon_{0,\beta} \eta_2    g\right) \psi_{\ell,a}^{-1}( \eta_2^{-1}\epsilon_{0,\beta}^{-1}\hat{z}^\prime \epsilon_{0,\beta}\eta_2) d\hat{z}^\prime \\
	 &=  f_{\tau,\sigma,s}^{Z_{\ell}^\prime, \psi^{-1}_{Z_{\ell}^\prime,a}	}(\epsilon_{0,\beta} \eta_2      g).
\end{split}
\end{equation*}
It follows that 
\begin{equation*}
\begin{split}
	\Phi( cg) &= \int_{N_{\ell}^{\eta_2}(\A)\backslash N_{\ell}(\A)}       f_{\tau,\sigma,s}^{Z_{\ell}^\prime, \psi^{-1}_{Z_{\ell}^\prime,a}	}(\epsilon_{0,\beta} \eta_2  u   cg) \psi_{\ell,a}^{-1}( u)  du \\
	&=\int_{N_{\ell}^{\eta_2}(\A)\backslash N_{\ell}(\A)}       f_{\tau,\sigma,s}^{Z_{\ell}^\prime, \psi^{-1}_{Z_{\ell}^\prime,a}	}(\epsilon_{0,\beta} \eta_2  cu   g) \psi_{\ell,a}^{-1}( u)  du \\
	&=\int_{N_{\ell}^{\eta_2}(\A)\backslash N_{\ell}(\A)}       f_{\tau,\sigma,s}^{Z_{\ell}^\prime, \psi^{-1}_{Z_{\ell}^\prime,a}	}(\epsilon_{0,\beta} \eta_2  u   g) \psi_{\ell,a}^{-1}( u)  du \\
	&=\Phi(g).
\end{split}	
\end{equation*}
This finishes the proof of the lemma.
\end{proof}

By \eqref{eq-quotient-space-isom} and Lemma~\ref{lemma-Phi-left-invariant-under-Cbetaeta}, we obtain that for $\Re(s)\gg 0$, we have 
\begin{equation}
\label{eq-unfolding-integral3-klel-3}
\begin{split}
\mathcal{Z}(\phi_\pi, f_{\tau,\sigma, s}) &=\int_{P_{\beta}^1(F) {G_0^\prime}(F) C_{\beta-1,\eta_2}(\A) C_G^0(\A) \backslash G(\A)} \Phi(g) \int_{C_{\beta-1,\eta_2}(F)\backslash C_{\beta-1,\eta_2}(\A) } \phi_\pi( cg) dc dg \\
&= \int_{P_{\beta}^1(F) {G_0^\prime}(F) C_{\beta-1,\eta_2}(\A) C_G^0(\A) \backslash G(\A)} \Phi(g) \phi_\pi^{C_{\beta-1,\eta_2}, 1}(g)dg,
\end{split}
\end{equation}
where
\begin{equation*}
	\phi_\pi^{C_{\beta-1,\eta_2}, 1}(g):= \int_{C_{\beta-1,\eta_2}(F)\backslash C_{\beta-1,\eta_2}(\A) } \phi_\pi( cg) dc.
\end{equation*}

Next, we will use the standard unfolding process of Piatetski-Shapiro and Shalika using partial Fourier expansion along the mirabolic subgroup $P_{\beta}^1$. Both $\Phi(g)$ and $\phi_\pi^{C_{\beta-1,\eta_2},1}(g)$ are automorphic functions on $P_{\beta}^1(\A)$, and $\phi_\pi^{C_{\beta-1,\eta_2},1}(g)$ is cuspidal due to the cuspidality of $\phi_\pi(g)$.  We have
\begin{equation}
\label{eq-phi-pi-C}
	\phi_\pi^{C_{\beta-1,\eta_2}, 1}(g)=\sum_{d\in Z_{\beta,\eta_2}(F)\backslash P_{\beta}^1(F)} \phi_\pi^{N_{\ell,\beta-1}^{\eta_2}, \psi_{\beta-1,-a}^{-1}} \left(d g\right),
\end{equation}
where
\begin{equation}
\phi_\pi^{N_{\ell,\beta-1}^{\eta_2}, \psi_{\beta-1,-a}^{-1}}(g)=	 \int_{ N_{\ell,\beta-1}^{\eta_2}(F)\backslash N_{\ell,\beta-1}^{\eta_2}(\A)} \phi_\pi(ug)  \psi_{\beta-1,-a}(u)du. 
\end{equation}
By \eqref{eq-phi-pi-C} and \eqref{eq-unfolding-integral3-klel-3}, we obtain that 
\begin{equation}
\label{eq-unfolding-integral3-klel-4}
\begin{split}
\mathcal{Z}(\phi_\pi, f_{\tau,\sigma, s})=  \int_{Z_{\beta,\eta_2}(F) {G_0^\prime}(F) C_{\beta-1,\eta_2}(\A) C_G^0(\A) \backslash G(\A)}  \Phi(g) \phi_\pi^{N_{\ell,\beta-1}^{\eta_2}, \psi_{\beta-1,-a}^{-1}}(g)dg.
\end{split}
\end{equation}
Since
 $\phi_\pi^{N_{\ell,\beta-1}^{\eta_2}, \psi_{\beta-1,-a}^{-1}}(g)$ is left-equivariant under $(Z_{\beta,\eta_2}(\A), \psi_{\beta-1,-a}^{-1})$, the expression in \eqref{eq-unfolding-integral3-klel-4} is equal to 
\begin{equation}
\label{eq-unfolding-integral3-klel-5}
\begin{split}
\int_{{G_0^\prime}(F)N_{\ell, \beta-1}^{\eta_2}(\A)  C_G^0(\A) \backslash G(\A)}  \phi_\pi^{N_{\ell,\beta-1}^{\eta_2}, \psi_{\beta-1,-a}^{-1}}(g) \int_{Z_{\beta,\eta_2}(F)\backslash Z_{\beta,\eta_2}(\A)} \Phi(zg) \psi_{\beta-1,-a}^{-1}(z)  dz dg.
\end{split}
\end{equation}

Now we focus on the inner integral in \eqref{eq-unfolding-integral3-klel-5}, which is given by 
\begin{equation}
\label{eq-unfolding-integral3-klel-6}
	\int_{Z_{\beta,\eta_2}(F)\backslash Z_{\beta,\eta_2}(\A)} \int_{N_{\ell}^{\eta_2}(\A)\backslash N_{\ell}(\A)}       f_{\tau,\sigma,s}^{Z_{\ell}^\prime, \psi^{-1}_{Z_{\ell}^\prime,a}	}(\epsilon_{0,\beta} \eta_2  u  z g) \psi_{\ell,a}^{-1}( u)  du  \psi_{\beta-1,-a}^{-1}(z)  dz.
\end{equation}
Recall that $f_{\tau,\sigma,s}^{Z_{\ell}^\prime, \psi^{-1}_{Z_{\ell}^\prime,a}	}$ is given in \eqref{eq-f-Z-l-prime}, and $Z_{\beta,\eta_2}$ is given in \eqref{eq-N-l-beta-1-eta2} with all entires 0 except $d$ and $d_1$. For $\Re(s)$ large enough, the integrals we have considered are absolutely convergent, which allow us to change the order of integration. 
Recall that
$$
N_{\ell}=\left\{ \begin{pmatrix}
c&x_{1}&x_{2}&x_{3}&y_{6}&x_{4}&x_{5}\\
&I_{\tilde{m}-k}&&&&&x'_{4}\\
&&I_{\beta}&&&&y'_{6}\\
&&&I_{m^\prime-2\tilde{m}}&&&x'_{3}\\
&&&&I_{\beta}&&x'_{2}\\
&&&&&I_{\tilde{m}-k}&x'_{1}\\
&&&&&&c^{*}
\end{pmatrix} \mid c\in Z_{\ell} \right\}
$$
and
$$
N_{\ell}^{\eta_2}=\left\{ \begin{pmatrix}
c&0&0&0&y_{6}&0&0\\
&I_{\tilde{m}-k}&&&&&0\\
&&I_{\beta}&&&&y'_{6}\\
&&&I_{m^\prime-2\tilde{m}}&&&0\\
&&&&I_{\beta}&&0\\
&&&&&I_{\tilde{m}-k}&0\\
&&&&&&c^{*}
\end{pmatrix} \mid c\in Z_{\ell} \right\}.
$$
We compute that
\begin{equation*}
\begin{split}
\epsilon_{0,\beta}\eta_2 (N_{\ell}^{\eta_2}Z_{\beta,\eta_2}) 	\eta_2^{-1} \epsilon_{0,\beta}^{-1} = \left\{ 
\begin{pmatrix}
\left(\begin{smallmatrix}
d & d_1\\&1
\end{smallmatrix}\right) &y_6^\prime & & &\\
&c^* & &&\\
&&I_{m^\prime-2k} &&\\
&&&c& y_6\left(\begin{smallmatrix} 1&d_1^\prime\\ & d^*\end{smallmatrix}\right)\\
&&&&I_{\beta}
\end{pmatrix} \mid c\in Z_{\ell}, d\in Z_{\beta-1} \right\}.
\end{split}
\end{equation*}
From the above computation we see that $N_{\ell}^{\eta_2}Z_{\beta,\eta_2}\cong Z_k$, where $Z_k$ is the maximal upper-triangular unipotent subgroup of $\GL_k$, which is regarded canonically as a subgroup of $P_k$. 
Define 
\begin{equation}
\label{eq-f-upper-Zk}
\begin{split}
 	f_{\mathcal{W}(\tau,\psi_{Z_k,a}^{-1}),\sigma,s}(g):=\int_{Z_k(F)\backslash Z_k(\A)} f_{\tau,\sigma,s}(zg) \psi_{Z_k,a}(z)dz,
\end{split}
\end{equation}
where the character $\psi_{Z_k,a}$ on $Z_k(\A)$ is given by
\begin{equation}
\label{eq-psi-Zka}
\psi_{Z_k,a}(z)=\psi(-z_{1,2}-\cdots - z_{\beta-1,\zeta} + (-1)^{m+1}\frac{a}{2} z_{\beta,\beta+1} + z_{\beta+1,\beta+2} +\cdots + z_{k-1,k})
\end{equation}
Then 
$f_{\mathcal{W}(\tau,\psi_{Z_k,a}^{-1}),\sigma,s}\in \Ind_{P_{k}(\A)}^{H(\A)}(\mathcal{W}(\tau,\psi_{Z_k,a}^{-1})|\cdot|^{s-\frac{1}{2}} \otimes \sigma)$, where $\mathcal{W}(\tau,\psi_{Z_k,a}^{-1})$ is the $(Z_k,\psi_{Z_k,a}^{-1})$-Whittaker model of $\tau$. 
Then the expression \eqref{eq-unfolding-integral3-klel-6} is equal to
\begin{equation*}
\int_{N_{\ell}^{\eta_2}(\A)\backslash N_{\ell}(\A)} f_{\mathcal{W}(\tau,\psi_{Z_k,a}^{-1}),\sigma,s} (\epsilon_{0,\beta}\eta_2 ug) \psi_{\ell,a}^{-1}(u)du.
\end{equation*}
The following subgroup realizes the quotient $N_{\ell}^{\eta_2}\backslash N_{\ell}$:
$$
\left\{ \begin{pmatrix}
I_{\beta}&x_{1}&x_{2}&x_{3}&0&x_{4}&x_{5}\\
&I_{\tilde{m}-k}&&&&&x'_{4}\\
&&I_{\beta}&&&&0\\
&&&I_{m^\prime-2\tilde{m}}&&&x'_{3}\\
&&&&I_{\beta}&&x'_{2}\\
&&&&&I_{\tilde{m}-k}&x'_{1}\\
&&&&&&I_{\beta}
\end{pmatrix}   \right\},
$$
whose image under the adjoint action of $\epsilon_{0,\beta}\eta_2$ is
\begin{equation}
\label{eq-U-k-eta2}
U_{k,\eta_2}^-:=  \left\{ \begin{pmatrix}
 I_{\beta} & &&&& \\
 &I_{\ell} &&&&\\
 &x_2^\prime &I_{m^\prime-2k} &&\\
 x_1& x_3&x_2&I_{\ell} &\\
 &x_1^\prime &&&I_{\beta}	
 \end{pmatrix}
 \right\},
\end{equation}
which is a subgroup of the unipotent radical $U_k^-$ of the parabolic opposite to $P_k$, 
and we denote the corresponding character over $U_{k,\eta_2}^-$ by
\begin{equation}
\psi_{U_{k,\eta_2}^-}(u)=\psi((x_1)_{\ell, \beta})	
\end{equation}
for $u\in U_{k,\eta_2}^-(\A)$ as in \eqref{eq-U-k-eta2}.
Thus the expression \eqref{eq-unfolding-integral3-klel-6} is equal to  
\begin{equation}
\label{eq-unfolding-integral3-klel-7}
	\int_{U_{k,\eta_2}^-(\A)} f_{\mathcal{W}(\tau,\psi_{Z_k,a}^{-1}),\sigma,s} (u \epsilon_{0,\beta}\eta_2 g) \psi_{U_{k,\eta_2}^-}(u)du.
\end{equation}
Denote 
\begin{equation}
\label{eq-unfolding-J-l-a}
	\mathcal{J}_{\ell,a}({f_{\mathcal{W}(\tau,\psi_{Z_k,a}^{-1}),\sigma,s}})(g):=\int_{U_{k,\eta_2}^-(\A)} f_{\mathcal{W}(\tau,\psi_{Z_k,a}^{-1}),\sigma,s} (u  g) \psi_{U_{k,\eta_2}^-}(u)du.
\end{equation}
Note that $\eta_2 {G_0^\prime} \eta_2^{-1}=\GSpin(W_k)=\GSpin_{m^\prime-2k}$. 
For a fixed $g\in G(\A)$, as a function of $x\in {G_0^\prime}(\A)$, the function 
\begin{equation*}
	\mathcal{J}_{\ell,a}({f_{\mathcal{W}(\tau,\psi_{Z_k,a}^{-1}),\sigma,s}})(\epsilon_{0,\beta} \eta_2 x g) =  \mathcal{J}_{\ell,a}(R( \epsilon_{0,\beta} \eta_2 g) {f_{\mathcal{W}(\tau,\psi_{Z_k,a}^{-1}),\sigma,s}}) (\epsilon_{0,\beta} \eta_2 x \eta_2^{-1} \epsilon_{0,\beta}^{-1})
\end{equation*}
belongs to the space of automorphic representation $\sigma^{\epsilon_{0,\beta}}$, where $R$ denotes the right translation, and $\sigma^{\epsilon_0,\beta}$ is the automorphic representation of $\GSpin_{m^\prime-2k}(\A)$ defined by $\sigma^{\epsilon_{0,\beta}}(h)=\sigma( \epsilon_{0,\beta} h \epsilon_{0,\beta}^{-1})$ for $h\in \GSpin_{m^\prime-2k}(\A)$. Denote $\sigma^\prime$ the automorphic representation of $G_0(\A)$ defined by
\begin{equation}
\label{eq-sigma-prime-1}
\sigma^\prime(h) =	\sigma( \epsilon_{0,\beta} h \epsilon_{0,\beta}^{-1}).
\end{equation}
Thus the pairing
\begin{equation*}
\begin{split}
	&\mathcal{B}^{\psi_{\beta-1,-a}}( \phi_\pi, \mathcal{J}_{\ell,a}(R( \epsilon_{0,\beta} \eta_2 g) {f_{\mathcal{W}(\tau,\psi_{Z_k,a}^{-1}),\sigma,s}}))\\
	=& \int_{C_{{G_0^\prime}}(\A){G_0^\prime}(F)\backslash {G_0^\prime}(\A)} \phi_\pi^{N_{\ell,\beta-1}^{\eta_2}, \psi_{\beta-1,-a}^{-1}}(x g) \mathcal{J}_{\ell,a}({f_{\mathcal{W}(\tau,\psi_{Z_k,a}^{-1}),\sigma,s}})(\epsilon_{0,\beta}\eta_2 x g)  dx
\end{split}
\end{equation*}
defines a Bessel period for the pair $(\pi(g)\phi_\pi, \mathcal{J}_{\ell,a}(R( \epsilon_{0,\beta} \eta_2 g) {f_{\mathcal{W}(\tau,\psi_{Z_k,a}^{-1}),\sigma,s}}))$.
Combining \eqref{eq-unfolding-integral3-klel-5} and  \eqref{eq-unfolding-integral3-klel-7}, we conclude that
\begin{equation*}
\begin{split}
\mathcal{Z}(\phi_\pi, f_{\tau,\sigma, s}) &=   \int_{R_{\ell,\beta-1}^{\eta_2}(\A)\backslash G(\A) }   \int_{C_{{G_0^\prime}}(\A){G_0^\prime}(F)\backslash {G_0^\prime}(\A)} \phi_\pi^{N_{\ell,\beta-1}^{\eta_2}, \psi_{\beta-1,-a}^{-1}}(x g) \mathcal{J}_{\ell,a}({f_{\mathcal{W}(\tau,\psi_{Z_k,a}^{-1}),\sigma,s}})(\epsilon_{0,\beta}\eta_2 x g)  dx dg.\\
&= \int_{R_{\ell,\beta-1}^{\eta_2}(\A)\backslash G(\A) } \mathcal{B}^{\psi_{\beta-1,-a}}(\pi(g) \phi_\pi, \mathcal{J}_{\ell,a}(R( \epsilon_{0,\beta} \eta_2 g) {f_{\mathcal{W}(\tau,\psi_{Z_k,a}^{-1}),\sigma,s}})) dg.
\end{split}
\end{equation*}
This finishes the proof of Theorem~\ref{thm-global-unfolding-kgreaterl}.

As a corollary, we deduce the following important vanishing result from Theorem~\ref{thm-global-unfolding-kgreaterl}.

\begin{corollary}
\label{corollary-vanishing-kgreaterl}
If the Bessel period $\mathcal{B}^{\psi_{\beta-1,-a}}$ for $(\pi, \sigma^\prime)$ is zero, then the global zeta integral $\mathcal{Z}(\phi_\pi, f_{\tau,\sigma, s})$ is zero for all choices of data. 
\end{corollary}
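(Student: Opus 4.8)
The plan is to read the corollary off directly from the unfolding established in Theorem~\ref{thm-global-unfolding-kgreaterl}, which states that, for $\Re(s)\gg 0$,
\begin{equation*}
\mathcal{Z}(\phi_\pi, f_{\tau,\sigma, s})=\int_{R_{\ell,\beta-1}^{\eta_2}(\A)\backslash G(\A) } \mathcal{B}^{\psi_{\beta-1,-a}}( \pi(g) \phi_\pi, \mathcal{J}_{\ell,a}(R( \epsilon_{0,\beta} \eta_2 g) {f_{\mathcal{W}(\tau,\psi_{Z_k,a}^{-1}),\sigma,s}})) dg.
\end{equation*}
It therefore suffices to show that, under the stated hypothesis, the integrand vanishes identically in $g$.

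First I would recall, from the computation carried out just before \eqref{eq-sigma-prime-1} in the proof of Theorem~\ref{thm-global-unfolding-kgreaterl}, what the two arguments of $\mathcal{B}^{\psi_{\beta-1,-a}}$ in the integrand are: for each fixed $g\in G(\A)$, the first argument $\pi(g)\phi_\pi$ is again a cusp form lying in $V_\pi$ (since $V_\pi$ is stable under right translation), while the second argument, viewed as a function of $x\in G_0^\prime(\A)$ via $x\mapsto \mathcal{J}_{\ell,a}(R(\epsilon_{0,\beta}\eta_2 g) f_{\mathcal{W}(\tau,\psi_{Z_k,a}^{-1}),\sigma,s})(\epsilon_{0,\beta}\eta_2 x\eta_2^{-1}\epsilon_{0,\beta}^{-1})$, lies in the space of the automorphic representation $\sigma^{\epsilon_{0,\beta}}$, hence, under the conjugation identifying $G_0^\prime$ with $G_0$, in the space of $\sigma^\prime$. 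Thus each value of the integrand is the Bessel-period bilinear form $\mathcal{B}^{\psi_{\beta-1,-a}}$ for the pair $(\pi,\sigma^\prime)$ evaluated on a pair of vectors, one from $V_\pi$ and one from the space of $\sigma^\prime$.

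By the hypothesis that this bilinear form vanishes for every choice of such vectors, it vanishes in particular on the translated data above, for every $g$; hence the integrand is identically zero and $\mathcal{Z}(\phi_\pi, f_{\tau,\sigma, s})=0$ for $\Re(s)\gg 0$. Since, by Lemma~\ref{lemma-global-integral-convergence}, $s\mapsto \mathcal{Z}(\phi_\pi, f_{\tau,\sigma, s})$ is meromorphic on $\C$, vanishing on a right half-plane forces it to vanish for all $s$; and as $\phi_\pi\in V_\pi$ and the section $f_{\tau,\sigma,s}$ were arbitrary, the corollary follows.

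I do not anticipate a substantive obstacle here. The only point requiring care is the interpretation of the hypothesis: ``the Bessel period $\mathcal{B}^{\psi_{\beta-1,-a}}$ for $(\pi,\sigma^\prime)$ is zero'' must be understood as vanishing of the entire bilinear functional on $V_\pi\times V_{\sigma^\prime}$, which, being right-translation stable on both factors, automatically covers the translates $\pi(g)\phi_\pi$ and $R(\epsilon_{0,\beta}\eta_2 g)f_{\mathcal{W}(\tau,\psi_{Z_k,a}^{-1}),\sigma,s}$ that appear; the meromorphic-continuation step at the end is routine.
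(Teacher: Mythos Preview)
Your argument is correct and matches the paper's approach: the paper simply states this as an immediate corollary of Theorem~\ref{thm-global-unfolding-kgreaterl} without giving an explicit proof, and your write-up spells out exactly the intended reasoning (the integrand is a Bessel period for $(\pi,\sigma')$ applied to translated data, so the hypothesis kills it pointwise, and meromorphic continuation does the rest).
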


By the uniqueness of the Bessel models for $\GSpin$ groups proved in \cite{Yan2025}, the Bessel period $\mathcal{B}^{\psi_{\beta-1,-a}}( \pi(g) \phi_\pi, \mathcal{J}_{\ell,a}(R( \epsilon_{0,\beta} \eta_2 g) {f_{\mathcal{W}(\tau,\psi_{Z_k,a}^{-1}),\sigma,s}}))$ can be decomposed as
\begin{equation*}
\begin{split}
&\mathcal{B}^{\psi_{\beta-1,-a}}( \pi(g) \phi_\pi, \mathcal{J}_{\ell,a}(R( \epsilon_{0,\beta} \eta_2 g) {f_{\mathcal{W}(\tau,\psi_{Z_k,a}^{-1}),\sigma,s}}))\\
=&\prod_{\nu} \mathcal{B}_{\nu}^{\psi_{\beta-1,-a, \nu}}( \pi_{\nu}(g_{\nu})\phi_{\pi_{\nu}}, \mathcal{J}_{s, \nu} (R( \epsilon_{0,\beta} \eta_2 g_{\nu}) {f_{\mathcal{W}(\tau_{\nu},\psi_{Z_k,a, \nu}^{-1}),\sigma_{\nu},s}}))	
\end{split}
\end{equation*}
for factorizable vectors $\phi_\pi=\otimes_{\nu}\phi_{\pi_{\nu}}$, ${f_{\mathcal{W}(\tau,\psi_{Z_k,a}^{-1}),\sigma,s}}=\otimes_{\nu} {f_{\mathcal{W}(\tau_{\nu},\psi_{Z_k,a,\nu}^{-1}),\sigma_{\nu},s}}$, where at each local place $\nu$, $\mathcal{B}_{\nu}^{\psi_{\beta-1,-a, \nu}}$ is the unique local Bessel functional up to scalar in the space
\begin{equation}
\label{eq-Hom-Bessel}
\Hom_{R_{\ell, \beta-1}^{\eta_2}(F_\nu) }(\pi_{\nu}\otimes\sigma_{\nu}^\prime, \psi_{\beta-1,-a,\nu} ),
\end{equation}
and $\mathcal{J}_{s, \nu}$ is given by the following integration over $U_{k,\eta_2}^-(F_{\nu})$,
\begin{equation}
\label{eq-J-s-nu}
\mathcal{J}_{s, \nu}({f_{\mathcal{W}(\tau_{\nu},\psi_{Z_k,a, \nu}^{-1}),\sigma_{\nu},s}})(g):=\int_{U_{k,\eta_2}^-(F_{\nu})} f_{\mathcal{W}(\tau_{\nu},\psi_{Z_k,a,\nu}^{-1}),\sigma_{\nu},s} (u_{\nu}  g_{\nu}) \psi_{U_{k,\eta_2,\nu}^-}(u_{\nu})du_{\nu}.
\end{equation}

For $\mathrm{Re}(s)$ large enough, we define the local zeta integral for the case $k>\ell$ by
\begin{equation}
\label{eq-local-zeta-integral-k-larger-than-l}
\begin{split}
&\mathcal{Z}_{\nu}(\phi_{\pi_{\nu}},  f_{\mathcal{W}(\tau_{\nu},\psi_{Z_k,a}^{-1}),\sigma_{\nu},s}) :=\\
& \int_{R_{\ell,\beta-1}^{\eta_2}(F_{\nu})\backslash G(F_{\nu}) } \int_{U_{k,\eta_2}^-(F_{\nu})} \mathcal{B}_{\nu}^{\psi_{\beta-1,-a,\nu}}( \pi_{\nu}(g_{\nu})\phi_{\pi_{\nu}},   {f_{\mathcal{W}(\tau_{\nu},\psi_{Z_k,a,\nu}^{-1}),\sigma_{\nu},s}}(u_{\nu} \epsilon_{0,\beta} \eta_2 g_{\nu} ) ) \psi_{U_{k,\eta_2}^-}(u_{\nu}) du_{\nu}  dg_{\nu}.
\end{split}
\end{equation}
Let $\phi_\pi=\otimes_{\nu}\phi_{\pi_{\nu}}$, ${f_{\mathcal{W}(\tau,\psi_{Z_k,a}^{-1}),\sigma,s}}=\otimes_{\nu} f_{\mathcal{W}(\tau_{\nu},\psi_{Z_k,a,\nu}^{-1}), \sigma_{\nu},s}$ be factorizable vectors. Then for  $\mathrm{Re}(s)\gg 0$, we have the Euler product factorization
\begin{equation}
\label{eq-Euler-product-factorization-k-greater-than-l}
\mathcal{Z}(\phi_\pi, f_{\tau,\sigma, s})=\prod_{\nu} \mathcal{Z}_{\nu}(\phi_{\pi_{\nu}},  f_{\mathcal{W}(\tau_{\nu},\psi_{Z_k,a,\nu}^{-1}),\sigma_{\nu},s}).	
\end{equation}

\subsection{Euler product factorization: the case of $k\le \ell$}
\label{subsection-unfolding-klm}
We now prove Theorem~\ref{thm-global-unfolding-klel}.
We assume $k\le \ell$, so that $\beta=0$. Recall $N_{\ell}^{\eta_1}=N_{\ell}\cap \eta_1^{-1} P_{\ell}^{\epsilon_{0,0}} \eta_1$, which is explicitly given by
\begin{equation}
\label{eq-Nl-eta1}
N_{\ell}^{\eta_1}=\left\{\begin{pmatrix}
c&&&&\\
&b&y&z&\\
&&I_{m^\prime-2\ell}&y'&\\
&&&b^{*}&\\
&&&&c^*
\end{pmatrix} \mid b\in Z_{\ell-k}, c\in Z_{k} \right\}.
\end{equation} 
We note that $\epsilon_{0,0} \eta_1 N_{\ell}^{\eta_1} (\epsilon_{0,0} \eta_1)^{-1}= N_{\ell}^{\eta_1}$, and we can decompose $N_{\ell}^{\eta_1}$ as $Z_k N_{k,\ell-k}$, where $Z_k$ is the maximal unipotent subgroup of $\GL(V_k^+)$ which is identified as a subgroup of $H$, and 
\begin{equation}
\label{eq-N-k-l-k}
N_{k,\ell-k}= 	\left\{\begin{pmatrix}
I_k&&&&\\
&b&y&z&\\
&&I_{m^\prime-2\ell}&y'&\\
&&&b^{*}&\\
&&&&I_k
\end{pmatrix} \mid b\in Z_{\ell-k} \right\},
\end{equation}
which is the unipotent subgroup of $\GSpin_{m^\prime-2k}=\GSpin(W_k)$ defined similarly as in \eqref{eq-unipotent-Nl}. The restriction of the character $\psi_{\ell,a}$ to $N_{k,\ell-k}$ is the character defined as in \eqref{eq-psi-l-a} on the unipotent subgroup of $\GSpin_{m^\prime-2k}$, which we denote as $\psi_{m^\prime-2k;\ell-k,a}$. Since $\tau$ is generic, $\tau$ has a $\psi$-Whittaker model $\mathcal{W}(\tau,\psi)$ and we see that
\begin{equation*}
\int_{Z_k(F)\backslash Z_k(\A)} 	f_{\tau,\sigma,s}( cg)\psi^{-1}(c)dc=f_{\mathcal{W}(\tau,\psi),\sigma,s}(g),
\end{equation*}
where 
\begin{equation*}	
f_{\mathcal{W}(\tau,\psi),\sigma,s}\in  
\Ind_{P_{k}(\A)}^{H(\A)}(\mathcal{W}(\tau,\psi)|\cdot|^{s-\frac{1}{2}} \otimes \sigma).
\end{equation*}
We may view $f_{\mathcal{W}(\tau,\psi),\sigma,s}$ as a function
\begin{equation*}
f_{\mathcal{W}(\tau,\psi),\sigma,s}: H(\A)\times \GL_k(\A)\to \sigma,	
\end{equation*}
such that, for each $h\in H(\A)$, the function $a\mapsto f_{\mathcal{W}(\tau,\psi),\sigma,s}(h,a)$ lies in $W(\tau,\psi)\otimes \sigma$. We will often denote $f_{\mathcal{W}(\tau,\psi),\sigma,s}(h)=f_{\mathcal{W}(\tau,\psi),\sigma,s}(h, I_k)$ for simplicity.
Thus the inner integration in \eqref{eq-unfolding-integral3-klel} becomes
\begin{equation*}
\begin{split}
&	\int_{N_{\ell}^{\eta_1}(F)\backslash N_{\ell}^{\eta_1}(\A)}    f_{\tau,\sigma,s}(\epsilon_{0,0} \eta_1 u^\prime    g) \psi_{\ell,a}^{-1}(u^\prime )du^\prime \\
=& \int_{N_{\ell}^{\eta_1}(F)\backslash N_{\ell}^{\eta_1}(\A)}  f_{\tau,\sigma,s}( u^\prime \epsilon_{0,0} \eta_1     g) \psi_{\ell,a}^{-1}(u^\prime )du^\prime \\
=& \int_{N_{k,\ell-k}(F)\backslash N_{k,\ell-k}(\A)}  f_{\mathcal{W}(\tau,\psi),\sigma,s} ( u^\prime \epsilon_{0,0} \eta_1     g)  \psi_{m^\prime-2k;\ell-k,a}^{-1}(u^\prime)du^\prime\\
=& f_{\mathcal{W}(\tau,\psi),\sigma,s}^{N_{k,\ell-k}, \psi_{m^\prime-2k;\ell-k,a}} (  \epsilon_{0,0} \eta_1     g),
\end{split}
\end{equation*}
which is the Bessel coefficient of $f_{\mathcal{W}(\tau,\psi),\sigma,s}$ on the group $\GSpin_{m^\prime-2k}$ with respect to the unipotent subgroup $N_{k,\ell-k}$ and the character $\psi_{m^\prime-2k;\ell-k,a}$. We  denote by
 $$R^{\eta_1}_{k,\ell-k}=G\ltimes N_{k,\ell-k}$$ the Bessel subgroup of $\GSpin_{m^\prime-2k}$.
We deduce that 
\begin{equation}
\label{eq-unfolding-zeta2-1}
	\mathcal{Z}(\phi_\pi, f_{\tau,\sigma, s})= \int_{C_G^0(\A) G(F)\backslash G(\A)}  \phi_\pi( g) \int_{N_{\ell}^{\eta_1}(\A)\backslash N_{\ell}(\A)} f_{\mathcal{W}(\tau,\psi),\sigma,s}^{N_{k,\ell-k}, \psi_{m^\prime-2k;\ell-k,a}} (  \epsilon_{0,0} \eta_1   u  g) \psi_{\ell,a}^{-1}(u)dudg.
\end{equation}

Our next step is to reverse the order of integration. This is justified by the following lemma.
\begin{lemma}
\label{lemma-unfolding-moderate-growth}
The automorphic function on $G(\A)$ defined by
\begin{equation*}
g\mapsto 	\int_{N_{\ell}^{\eta_1}(\A)\backslash N_{\ell}(\A)} f_{\mathcal{W}(\tau,\psi),\sigma,s}^{N_{k,\ell-k}, \psi_{m^\prime-2k;\ell-k,a}} (  \epsilon_{0,0} \eta_1   u  g) \psi_{\ell,a}^{-1}(u)du
\end{equation*}
is of uniformly moderate growth (mod $C_G^0$) on $G(\A)$.
\end{lemma}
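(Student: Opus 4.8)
Denote by $\Psi(g)$ the function appearing in the statement. The plan is to reduce the assertion to a gauge estimate for the induced section $f_{\mathcal{W}(\tau,\psi),\sigma,s}$ together with the convergence analysis of a degenerate Jacquet-type integral, following the $\SO_{m^\prime}$ computations in \cite[\S 4, \S 5]{GinzburgRallisSoudry2011} and \cite{JiangZhang2014}, with extra care for the $\GL_1$-coordinate coming from the $\GSpin$ cover.

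First I would dispose of the inner Bessel coefficient along the \emph{compact} quotient $N_{k,\ell-k}(F)\backslash N_{k,\ell-k}(\A)$: since $\sigma$ is automorphic, the $\GSpin_{m^\prime-2k}$-valued function $h\mapsto f_{\mathcal{W}(\tau,\psi),\sigma,s}(h)$ is of moderate growth on $H(\A)$, and applying the $\sigma$-Bessel coefficient (an integral over a fixed compact set) preserves moderate growth and the left $P_k(\A)$-equivariance. Hence, writing $h = n_h\, m_h\, k_h$ for an Iwasawa decomposition relative to $P_k$ and a fixed good maximal compact $K_H$, with $m_h = (a_h, h_h)\in \GL_k(\A)\times\GSpin_{m^\prime-2k}(\A)$, one obtains on any Siegel domain a bound of the shape
\begin{equation*}
\bigl|f_{\mathcal{W}(\tau,\psi),\sigma,s}^{N_{k,\ell-k},\psi_{m^\prime-2k;\ell-k,a}}(h)\bigr|\ \le\ \delta_{P_k}(m_h)^{1/2}\,\bigl|\det a_h\bigr|^{\Re(s)-\frac{1}{2}}\,\xi(a_h)\,\Phi(h),
\end{equation*}
where $\xi$ is the gauge supplied by the Whittaker function of $\tau$ in the $\GL_k$-direction (rapidly decreasing away from the negative chamber) and $\Phi$ is continuous of moderate growth, absorbing the $K_H$- and $\sigma$-dependence; these bounds can be taken uniform for $\Re(s)$ bounded below and stable under right translation by compacta and under right-invariant differential operators.

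Next I would substitute $h = \epsilon_{0,0}\eta_1 u g$ and parametrize $N_{\ell}^{\eta_1}(\A)\backslash N_{\ell}(\A)$ by an explicit subgroup complement of $N_{\ell}^{\eta_1}$ in $N_{\ell}$ (the off-diagonal blocks present in $N_{\ell}$ but absent from \eqref{eq-Nl-eta1}). Conjugation by $\epsilon_{0,0}\eta_1$ carries this complement into a unipotent subgroup adapted to $P_k$, so the $\GL_k$-component $a_h$ of the Iwasawa decomposition of $\epsilon_{0,0}\eta_1 u g$ depends polynomially on the coordinates of $u$ and on $g$, and $\delta_{P_k}(m_h)$ is a fixed monomial in those coordinates. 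The $du$-integral is then majorized by one of the form $\int \delta_{P_k}^{1/2}\,|\det|^{\Re(s)}\,\xi\cdot(\text{moderate in }g)$, which converges absolutely for $\Re(s)\gg 0$ by the same bookkeeping as in the $\SO_{m^\prime}$ case---the rapid decay of $\xi$ overwhelms the polynomial growth of $\delta_{P_k}^{1/2}$---the only new feature being the spinor-norm $\GL_1$-coordinate in $m_h$, which enters $\delta_{P_k}$ trivially and leaves the convergence region unchanged. Tracking the $g$-dependence through the same estimates yields $|\Psi(g)| \le c\,\|g\|^{N}$ on $G(\A)$ (mod $C_G^0$) with $c, N$ independent of $g$; since the underlying gauge estimates were uniform for $\Re(s)$ in vertical strips, for right translates of $g$ by a fixed compact set, and for right-invariant differential operators applied to $\Psi$, this gives uniform moderate growth.

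The main obstacle is the bookkeeping in this middle step: one must verify, after conjugating by $\epsilon_{0,0}\eta_1$ and passing to the $\GSpin$ cover, that the coordinates of $N_{\ell}^{\eta_1}(\A)\backslash N_{\ell}(\A)$ enter the Iwasawa $\GL_k$-torus with exactly the exponents making the resulting unipotent integral fall inside the domain of absolute convergence of the corresponding $\SO_{m^\prime}$ integral. This is routine but delicate, and is where the comparison with \cite{GinzburgRallisSoudry2011, JiangZhang2014} must be invoked with care; everything else is standard.
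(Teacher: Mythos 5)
Your proposal is correct and takes essentially the same approach as the paper. The paper disposes of the lemma in one line---``This follows from the special orthogonal group case; see \cite[Lemma 3.11]{JiangZhang2014}''---and you have expanded precisely the argument that citation encapsulates: a gauge estimate for the induced section via the Iwasawa decomposition, absolute convergence of the unipotent $du$-integral from the rapid decay of the $\GL_k$-Whittaker gauge, and the observation that the extra $\GL_1$/spinor-norm coordinate in the $\GSpin$ cover enters the modular character trivially and so does not alter the convergence region or growth exponents.
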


\begin{proof}
This follows from the special orthogonal group case; see \cite[Lemma 3.11]{JiangZhang2014}.	
\end{proof}

Because $\phi_\pi$ is rapidly decreasing (mod $C_G^0$), we can interchange the order of integration in \eqref{eq-unfolding-zeta2-1}, and after replacing $u$ by $gug^{-1}$, we obtain that
\begin{equation*}
\begin{split}
	\mathcal{Z}(\phi_\pi, f_{\tau,\sigma, s}) =  \int_{N_{\ell}^{\eta_1}(\A)\backslash N_{\ell}(\A)} \int_{C_G^0(\A) G(F)\backslash G(\A)}  \phi_\pi( g) f_{\mathcal{W}(\tau,\psi),\sigma,s}^{N_{k,\ell-k}, \psi_{m^\prime-2k;\ell-k,a}} ( g  \epsilon_{0,0} \eta_1   u  )dg \psi_{\ell,a}^{-1}(u)du.
\end{split}
\end{equation*}
Note that 
\begin{equation*}
\int_{C_G^0(\A) G(F)\backslash G(\A)}  \phi_\pi( g) f_{\mathcal{W}(\tau,\psi),\sigma,s}^{N_{k,\ell-k}, \psi_{m^\prime-2k;\ell-k,a}} ( g  \epsilon_{0,0} \eta_1   u  )dg= \mathcal{B}^{\psi_{m^\prime-2k;\ell-k,a}} (R(\epsilon_{0,0} \eta_1   u) f_{\mathcal{W}(\tau,\psi),\sigma,s}, \phi_\pi)	
\end{equation*}
is the Bessel period for $(R(\epsilon_{0,0} \eta_1   u) f_{\mathcal{W}(\tau,\psi),\sigma,s}, \phi_\pi)$. Thus we obtain 
\begin{equation}
\begin{split}
	\mathcal{Z}(\phi_\pi, f_{\tau,\sigma, s}) =  \int_{N_{\ell}^{\eta_1}(\A)\backslash N_{\ell}(\A)}  \mathcal{B}^{\psi_{m^\prime-2k;\ell-k,a}} (R(\epsilon_{0,0} \eta_1   u) f_{\mathcal{W}(\tau,\psi),\sigma,s}, \phi_\pi) \psi_{\ell,a}^{-1}(u)du.
\end{split}
\end{equation}
We have proved Theorem~\ref{thm-global-unfolding-klel}.

From Theorem~\ref{thm-global-unfolding-klel}, we immediately obtain the following vanishing result.

\begin{corollary}
\label{corollary-vanishing-klel}
If the Bessel period $\mathcal{B}^{\psi_{m^\prime-2k;\ell-k,a}} $ for $(\sigma, \pi)$ is zero, then the global zeta integral $\mathcal{Z}(\phi_\pi, f_{\tau,\sigma, s})$ is zero for all choices of data. 
\end{corollary}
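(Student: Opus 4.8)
The plan is to deduce the corollary directly from the unfolding identity of Theorem~\ref{thm-global-unfolding-klel}. That theorem states that, for $\Re(s)\gg 0$,
\begin{equation*}
\mathcal{Z}(\phi_\pi, f_{\tau,\sigma, s}) = \int_{N_{\ell}^{\eta_1}(\A)\backslash N_{\ell}(\A)}  \mathcal{B}^{\psi_{m^\prime-2k;\ell-k,a}} \bigl( R(\epsilon_{0,0} \eta_1 u) f_{\mathcal{W}(\tau,\psi),\sigma,s},\, \phi_\pi\bigr) \,\psi_{\ell,a}^{-1}(u)\,du ,
\end{equation*}
an absolutely convergent integral in which, for each fixed $u$, the integrand is the Bessel period for the pair $(G_0,G)$ with respect to $\psi_{m^\prime-2k;\ell-k,a}$, formed from the section $R(\epsilon_{0,0}\eta_1 u) f_{\mathcal{W}(\tau,\psi),\sigma,s}$ in the role of an automorphic form on $G_0$ (via its $\GSpin_{m^\prime-2k}$-component) and from the cusp form $\phi_\pi$ on $G$.

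First I would observe that this integrand is subsumed by the vanishing hypothesis. Indeed, $R(\epsilon_{0,0}\eta_1 u) f_{\mathcal{W}(\tau,\psi),\sigma,s}$ is again a section of $\Ind_{P_{k}(\A)}^{H(\A)}(\mathcal{W}(\tau,\psi)|\cdot|^{s-\frac{1}{2}} \otimes \sigma)$, and by the construction of this induced representation its restriction to the $\GSpin_{m^\prime-2k}=G_0$ factor of the Levi of $P_k$ takes values in the space of $\sigma$. Unwinding the definition of $\mathcal{B}^{\psi_{m^\prime-2k;\ell-k,a}}$ exactly as in the lines leading to \eqref{eq-unfolding-zeta2-1} --- form the Bessel coefficient $f_{\mathcal{W}(\tau,\psi),\sigma,s}^{N_{k,\ell-k}, \psi_{m^\prime-2k;\ell-k,a}}$ of the section on $G_0$, then integrate it against $\phi_\pi$ over $C_G^0(\A) G(F)\backslash G(\A)$ --- exhibits the integrand as (a value of) the ordinary Bessel period $\mathcal{B}^{\psi_{m^\prime-2k;\ell-k,a}}(\varphi_\sigma, \phi_\pi)$ for a suitable automorphic form $\varphi_\sigma$ in the space of $\sigma$. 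Since by hypothesis every Bessel period for the pair $(\sigma,\pi)$ vanishes, the integrand vanishes identically, and therefore $\mathcal{Z}(\phi_\pi, f_{\tau,\sigma, s})=0$ for $\Re(s)\gg 0$.

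It then remains to upgrade this to identical vanishing in $s$. By Lemma~\ref{lemma-global-integral-convergence}, $\mathcal{Z}(\phi_\pi, f_{\tau,\sigma, s})$ is a meromorphic function of $s$ on $\C$, so its vanishing on a nonempty right half-plane forces $\mathcal{Z}(\phi_\pi, f_{\tau,\sigma, s})\equiv 0$. As $\phi_\pi\in V_\pi$ and the section $f_{\tau,\sigma,s}$ were arbitrary, this is exactly the statement of Corollary~\ref{corollary-vanishing-klel}.

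I expect the only point that really calls for care --- and it is routine --- to be the identification used in the middle paragraph: one must check that, after forming the Bessel coefficient along $N_{k,\ell-k}$ and translating by $\epsilon_{0,0}\eta_1 u$, the $\sigma$-valued section genuinely produces automorphic forms lying in the space of $\sigma$, so that the blanket hypothesis on the Bessel period for $(\sigma,\pi)$ applies. This is the same bookkeeping already performed in the proof of Theorem~\ref{thm-global-unfolding-klel}, and it mirrors the treatment of the twisted representation $\sigma^\prime$ in the case $k>\ell$; no new input is needed, and no analytic subtlety arises beyond the elementary meromorphic-continuation argument.
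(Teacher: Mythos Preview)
Your proposal is correct and follows essentially the same approach as the paper: the paper simply states that the corollary is obtained immediately from Theorem~\ref{thm-global-unfolding-klel}, and your argument spells out exactly this implication in detail (identifying the integrand as a Bessel period for $(\sigma,\pi)$ and then invoking meromorphic continuation via Lemma~\ref{lemma-global-integral-convergence}).
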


To keep the notation consistent with Section~\ref{subsection-unfolding-kgm}, we denote
\begin{equation}
\label{eq-sigma-prime-2}
\sigma^\prime=\sigma.	
\end{equation}

Let $\phi_\pi=\otimes_{\nu} \phi_{\pi_{\nu}}$, $f_{\mathcal{W}(\tau,\psi),\sigma,s}=\otimes_{\nu} f_{\mathcal{W}(\tau_{\nu},\psi_{\nu}),\sigma_{\nu},s}$ be factorizable vectors. Then by the uniqueness of Bessel models proved in \cite{Yan2025}, we have
\begin{equation*}
\begin{split}
 \mathcal{B}^{\psi_{m^\prime-2k;\ell-k,a}} (\phi_\pi,  R(\epsilon_{0,0} \eta_1   u) f_{\mathcal{W}(\tau,\psi),\sigma,s}) 	= \prod_{\nu} \mathcal{B}_{\nu}^{\psi_{m^\prime-2k;\ell-k,a,\nu}} ( R(\epsilon_{0,0} \eta_1   u_{\nu} ) f_{\mathcal{W}(\tau_{\nu},\psi_{\nu}),\sigma_{\nu},s}, \phi_{\pi_{\nu}})
\end{split}
\end{equation*}
where $\mathcal{B}_{\nu}^{\psi_{m^\prime-2k;\ell-k,a,\nu}}$ is the unique local Bessel functional up to scalar.
For $\mathrm{Re}(s)\gg 0$, we define the local zeta integral for the case $k\le \ell$ by
\begin{equation}
\label{eq-local-zeta-integral-k-le-l}
\mathcal{Z}_{\nu}(\phi_{\pi_{\nu}},  f_{\mathcal{W}(\tau_{\nu},\psi_{\nu}),\sigma_{\nu},s}) := \int_{N_{\ell}^{\eta_1}(F_{\nu})\backslash N_{\ell}(F_{\nu})} \mathcal{B}_{\nu}^{\psi_{m^\prime-2k;\ell-k,a,\nu}} (R(\epsilon_{0,0} \eta_1   u_{\nu} ) f_{\mathcal{W}(\tau_{\nu},\psi_{\nu}),\sigma_{\nu},s}, \phi_{\pi_{\nu}}) \psi_{\ell, a, \nu}^{-1}(u_{\nu})du_{\nu}. 
\end{equation}
Then for $\mathrm{Re}(s)\gg 0$, we have the following Euler product decomposition
\begin{equation}
\label{eq-Euler-product-factorization-k-le-l}
\mathcal{Z}(\phi_\pi, f_{\tau,\sigma, s})=\prod_{\nu} \mathcal{Z}_{\nu}(\phi_{\pi_{\nu}},  f_{\mathcal{W}(\tau_{\nu},\psi_{\nu}),\sigma_{\nu},s}).	
\end{equation}

\section{The local zeta integrals}
\label{section-local-zeta-integrals}
In this section, we begin to develop the local theory for the local zeta integrals arising from the Euler product factorization of the global zeta integrals. 
 
 To ease notation, we drop the subscript $\nu$ throughout this section. Let $F$ be a local field and let $\psi$ be a non-trivial additive character of $F$.

\subsection{Basic properties of the local integrals}

Let $\pi$, $\sigma$ be irreducible admissible representations of $G(F)=\GSpin_{m^\prime-2\ell-1}(F)$ and $G_0(F)=\GSpin_{m^\prime-2k}(F)$ respectively and let $\tau$ be a generic representation of $\GL_k(F)$. We assume that $\omega_\pi \omega_\sigma=1$, where $\omega_\pi$ and $\omega_\sigma$ are the restriction of the central characters of $\pi$ and $\sigma$ to $F^\times$. 
Let $\sigma^\prime$ be representation defined by
\begin{equation*}
\sigma^\prime(h)=	\begin{cases} 
\sigma( \epsilon_{0,\beta} h \epsilon_{0,\beta}^{-1}) & \mbox{ if }  k>\ell, \\ 
\sigma( h) & \mbox{ if }  k\le \ell.  \\ 
\end{cases} 
\end{equation*}
We assume that 
\begin{equation}
\label{eq-local-Bessel}
\begin{cases}
\Hom_{R_{\ell,\beta-1}^{\eta_2}(F)} (\pi\otimes \sigma^\prime, \psi_{\beta-1,-a})\not=0 &\mbox{ if } k>\ell,\\
\Hom_{R_{k,\ell-k}^{\eta_1}(F)} (\sigma^\prime \otimes \pi, \psi_{m^\prime-2k;\ell-k,a})\not=0 &\mbox{ if } k \le \ell.
\end{cases}
\end{equation}
By \cite{Yan2025}, the Hom spaces are one-dimensional, and 
and let $\bb$ be a non-zero local Bessel functional in the Hom space \eqref{eq-local-Bessel} for each case. 
Consider the normalized induced representation
\begin{equation*}
\rho_{\tau,\sigma,s}=	\Ind_{P_{k}(F)}^{H(F)}(\mathcal{W}(\tau)|\cdot|^{s-\frac{1}{2}} \otimes \sigma)
\end{equation*}
where 
\begin{equation*}
\mathcal{W}(\tau)=
\begin{cases}
\mathcal{W}(\tau,\psi_{Z_k,a}^{-1}) & \mbox{ if } k>\ell,\\
\mathcal{W}(\tau,\psi^{-1}) & \mbox{ if } k\le \ell,
\end{cases}
\end{equation*}
is the Whittaker model of $\tau$ with respect to the character $\psi_{Z_k,a}$ defined analogously as in \eqref{eq-psi-Zka} if $k>\ell$ or the character $\psi^{-1}$ if $k\le \ell$. Let $f_{\mathcal{W}(\tau),\sigma, s} \in \rho_{\tau,\sigma,s}$ be a smooth section. We view  $f_{\mathcal{W}(\tau),\sigma, s} $ as a function 
$$
f_{\mathcal{W}(\tau),\sigma, s}: H(F)\times \GL_k(F) \to V_\sigma,
$$
such that, for each $h\in H(F)$, the function $a\mapsto f_{\mathcal{W}(\tau),\sigma, s}(h,a)$ lies in $\mathcal{W}(\tau)\otimes V_\sigma$. We will often denote $f_{\mathcal{W}(\tau),\sigma, s}(h)=f_{\mathcal{W}(\tau),\sigma, s}(h,I_k)$ for ease of notation. 

Let $v_\pi\in V_\pi$.
The local zeta integrals defined in \eqref{eq-local-zeta-integral-k-larger-than-l} and \eqref{eq-local-zeta-integral-k-le-l} are
\begin{equation}
\label{eq-local-zeta}
\mathcal{Z}(v_\pi,	f_{\mathcal{W}(\tau),\sigma, s}) :=
\begin{cases}
	\int_{R_{\ell,\beta-1}^{\eta_2}(F)\backslash G(F) } \int_{U_{k,\eta_2}^-(F)} \bb( \pi(g) v_\pi,   {f_{\mathcal{W}(\tau),\sigma,s}}(u \epsilon_{0,\beta} \eta_2 g ) ) \psi_{U_{k,\eta_2}^-}(u) du  dg &\mbox{ if } k>\ell,\\
	\int_{N_{\ell}^{\eta_1}(F)\backslash N_{\ell}(F)} \bb( v_\pi,  R(\epsilon_{0,0} \eta_1   u ) f_{\mathcal{W}(\tau),\sigma, s}) \psi_{\ell,a}^{-1}(u)du &\mbox{ if } k\le \ell.
\end{cases}
\end{equation}
Note that the local zeta integral $\mathcal{Z}(v_\pi,	f_{\mathcal{W}(\tau),\sigma, s})$, when it exists, is an element of 
\begin{equation}
\label{eq-Hom-local}
\Hom_{R_{\ell,a}(F)}	(\rho_{\tau,\sigma,s} \otimes \pi , \psi_{\ell,a}) . 
\end{equation}
Again, by the uniqueness of local Bessel models \cite{Yan2025}, the Hom space~\eqref{eq-Hom-local} is at most one-dimensional, when $\rho_{\tau,\sigma,s}$ is irreducible.

\begin{lemma}
The local zeta integral $\mathcal{Z}(v_\pi,	f_{\mathcal{W}(\tau),\sigma, s})$ converges absolutely in a right half plane, which depends only on the representations $\pi, \sigma, \tau$.
\end{lemma}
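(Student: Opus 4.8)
The plan is to follow the standard template for local Rankin--Selberg integrals of Bessel type: reduce to a convenient normal form, dominate the integrand by a product of gauges, and then exploit the twist $|\cdot|^{s-\frac{1}{2}}$ to force absolute convergence once $\Re(s)$ is large. Since the unipotent integrations and torus actions occurring here are the same as in the special orthogonal case, the only new feature being the central $\GL_1$ (which contributes a bounded character and so is harmless for convergence), the argument will closely parallel \cite{GinzburgRallisSoudry2011, JiangZhang2014}. First I would reduce to $f_{\mathcal{W}(\tau),\sigma,s}$ that is $K$-finite and to a fixed smooth vector $v_\pi$, so that the estimates need only be carried out on finitely many Bruhat cells; the point relevant to the assertion that the half-plane depends only on $\pi,\sigma,\tau$ is that the exponents appearing in the gauge bounds below are invariants of the representations, not of the chosen vectors. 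Recall that $\bb$ is continuous (automatically in the $p$-adic case, in the Casselman--Wallach sense at archimedean places), so $|\bb(v,w)|$ is bounded by a product of continuous seminorms in $v$ and $w$; that the partial Bessel function $g\mapsto\bb(\pi(g)v_\pi,\,\cdot\,)$ on $G(F)$ is majorized by a gauge, i.e.\ bounded on compacta and of controlled growth toward the walls of the torus; and that the Whittaker function $\mathcal{W}(\tau)$ on the diagonal torus of $\GL_k(F)$ is majorized by a gauge decaying into the positive chamber.

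For the case $k\le\ell$, I would parametrize $N_{\ell}^{\eta_1}(F)\backslash N_{\ell}(F)$ as an affine space and put $\epsilon_{0,0}\eta_1 u$ into Iwasawa form with respect to $P_k\subset H$; the translated section $R(\epsilon_{0,0}\eta_1 u)f_{\mathcal{W}(\tau),\sigma,s}$ is then governed, in the $\GL_k$-direction, by $\mathcal{W}(\tau)$ together with the factor $|\det|^{s-\frac{1}{2}}$, and in the $\GSpin_{m^\prime-2k}$-direction by the $V_\sigma$-valued part, to which $\bb$ (composed with the $\psi_{m^\prime-2k;\ell-k,a}$-Bessel integration) is applied. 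As $u$ tends to infinity in $N_{\ell}^{\eta_1}\backslash N_{\ell}$, the $\GL_k$-coordinate is driven deep into the chamber where $\mathcal{W}(\tau)$ decays, and against this decay the factor $|\det|^{\Re(s)-\frac{1}{2}}$ is integrable once $\Re(s)$ is large; combined with the gauge bound for $\bb$ on the $\GSpin$-coordinate this gives absolute convergence in a right half-plane determined only by the exponents of $\tau,\sigma,\pi$.

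For the case $k>\ell$, I would first dispose of the inner integral over $U_{k,\eta_2}^-(F)$. Since $U_{k,\eta_2}^-$ is a subgroup of the unipotent radical $U_k^-$ of the parabolic opposite to $P_k$, the integral $\int_{U_{k,\eta_2}^-(F)}f_{\mathcal{W}(\tau),\sigma,s}(u\,g)\,\psi_{U_{k,\eta_2}^-}(u)\,du$ is of the same nature as a piece of a standard intertwining/Jacquet integral and converges absolutely for $\Re(s)\gg 0$ by the usual estimate on the big cell, producing a smooth function of $g$ of moderate growth (uniformly on compacta). I would then apply the Iwasawa decomposition $G(F)=R_{\ell,\beta-1}^{\eta_2}(F)\,A\,K^\prime$ for a suitable torus part $A$ and maximal compact $K^\prime$, reducing the outer integral to an integral over $A$ of the product of (i) the gauge bound for the partial Bessel function $g\mapsto\bb(\pi(g)v_\pi,\,\cdot\,)$ of $\pi$ and (ii) the moderate-growth bound for the processed section, which still carries the factor $|\cdot|^{\Re(s)-\frac{1}{2}}$ coming from $\mathcal{W}(\tau)$. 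For $\Re(s)$ sufficiently large the resulting sum/integral over $A$ converges geometrically, again in a half-plane depending only on $\pi,\sigma,\tau$.

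The hard part will be the gauge estimate for the local Bessel functional, namely that $g\mapsto\bb(\pi(g)v_\pi,\,\cdot\,)$ and its $\sigma^\prime$-analogue are bounded by gauges on the relevant groups. This is the Bessel-model counterpart of the classical gauge estimate for Whittaker functions, and I expect to obtain it either by realizing $\bb$ through a convergent integral of a matrix coefficient of $\pi$ against a Whittaker function on the smaller group $G_0(F)$ and invoking Jacquet-module asymptotics, or, more economically, by transporting the corresponding estimates from the special orthogonal case along $\pr$, since the unipotent integrations are literally the same and the $\GSpin/\SO$ discrepancy is a bounded central character. At archimedean places one additionally needs the continuity of $\bb$ together with Wallach's bounds on matrix coefficients, which are standard but should be cited explicitly.
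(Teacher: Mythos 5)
Your proposal is sound in outline, but it takes a substantially longer route than the paper's proof, which is essentially a one-line reduction to the special orthogonal case. The paper observes that (a) when $k>\ell$, the integration domain $R_{\ell,\beta-1}^{\eta_2}(F)\backslash G(F)$ already contains $C_G^0(F)$ in the subgroup being modded out, so it is \emph{literally} the corresponding quotient for $\SO_{m^\prime-2\ell-1}$; and (b) when $k\le\ell$, the integration domain $N_{\ell}^{\eta_1}(F)\backslash N_{\ell}(F)$ is a quotient of unipotent groups, which are identical for $\GSpin$ and $\SO$. With these identifications, the convergence statement is not re-derived at all; it is directly imported from \cite{JiangSoudryZhang, JiangZhang2014, Soudry2017Israel, Soudry2018JNT}. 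The ``more economical'' alternative you flag at the end of your last paragraph --- transporting the estimates from the orthogonal case --- is exactly what the paper does; the bulk of your sketch (gauge majorization of the partial Bessel function, Iwasawa reduction, moderate-growth bound on the processed section) amounts to re-proving the cited SO lemma for $\GSpin$, which is unnecessary.

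One imprecision worth flagging: in two places you justify the transport to the $\SO$ setting by saying the $\GSpin/\SO$ discrepancy is ``a bounded central character.'' This is not quite the right mechanism, and in general a character of $F^\times$ (e.g.\ $|\cdot|^t$) need not be bounded. The correct point, which the paper makes, is that the central $\GL_1$ is \emph{quotiented out} of the integration domain when $k>\ell$, and is simply absent from the unipotent domain when $k\le\ell$; combined with the hypothesis $\omega_\pi\omega_\sigma=1$ (so that the integrand really is $C_G^0$-invariant), the local zeta integral is identically the $\SO$ integral, with no bound on the central character required. Your Iwasawa-type decomposition $G(F)=R_{\ell,\beta-1}^{\eta_2}(F)\,A\,K'$ should also be handled with more care, since $R_{\ell,\beta-1}^{\eta_2}$ is a Bessel subgroup rather than a parabolic; but as this is only a sketch and the issue disappears once you defer to the $\SO$ literature, I would not press the point.
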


\begin{proof}
Since we mod out $R_{\ell,\beta-1}^{\eta_2}(F)$ which contains $C_G^0(F)$ in the case $k>\ell$, and unipotent subgroups for $\GSpin_{m^\prime}$ and $\SO_{m^\prime}$ are the same in the case $k\le  \ell$, the  absolute convergence follows from the the case of special orthogonal groups \cite{JiangSoudryZhang} (see also \cite[Lemma 4.1]{JiangZhang2014}, \cite[Theorem 3.1]{Soudry2017Israel}, \cite[Theorem 3.5]{Soudry2018JNT}).
\end{proof}

\subsection{Unramified representations and local $L$-functions of $\GSpin\times \GL$ groups}
From now on, we 
let $F$ be a nonarchimedean local field of characteristic zero with ring of integers $\mathcal{O}$. We fix a uniformizer $\varpi\in \mathcal{O}$ and let $q=|\mathcal{O}/\varpi\mathcal{O}|$ be the cardinality of the residue field. We normalize the absolute value $|\cdot |=|\cdot|_F$ on $F$ by $|\varpi|=q^{-1}$. Also, let $\psi$ be an additive character of $F$ which is unramified. 

Let $\GG(F)$ be either the $F$-points of a reductive group $\GG$ of interest, which is either a general linear group or a $\GSpin$ group, with $B_{\GG}=T_{\GG}U_{\GG}$ a fixed Borel subgroup and $T_{\GG}$ a maximal torus. Let $\Pi=\Ind_{B_{\GG}(F)}^{\GG(F)}(\mu)$ be an irreducible admissbile unramified representation of $\GG(F)$ where $\mu$ is a character of $T_{\GG}(F)$, and let $t_\Pi$ be the semi-simple conjugacy class in the complex dual group parametrizing $\Pi$. When $\GG=\GL_k$, we have $\mu=\chi_1\otimes\cdots \otimes \chi_k$, with $\chi_i$ unramified quasi-characters of $F^\times$. When $\GG=\GSpin_{2n+1}$ or $\GG(F)=\GSpin_{2n}$ (split), we have $\mu=\chi_0\otimes\chi_1\otimes\cdots \chi_n$, where $\chi_0$ is the restriction of the central character to $C_{\GG(F)}^0$. Then a representative $t_\Pi$ for the semi-simple conjugacy class associated to $\Pi$ is given by
\begin{equation}
\label{eq-t-Pi}
t_{\Pi} = \begin{cases} 
\operatorname{diag}\left( \chi_1(\varpi), \dots, \chi_k(\varpi)  \right) \in \GL_k(\C) & \mbox{ if } \GG = \GL_k, \\ 
\operatorname{diag}\left( \chi_1(\varpi), \dots, \chi_n(\varpi), \chi^{-1}_n \chi_0(\varpi),\dots, 
\chi^{-1}_1 \chi_0(\varpi)  \right) \in \GSp_{2n}(\C) & \mbox{ if } \GG = \GSpin_{2n+1}, \\ 
\operatorname{diag}\left( \chi_1(\varpi), \dots, \chi_n(\varpi), \chi^{-1}_n \chi_0(\varpi),\dots, 
\chi^{-1}_1 \chi_0(\varpi)  \right) \in \GSO_{2n}(\C) & \mbox{ if } \GG = \GSpin_{2n} \text{ (split)}.  \\ 
\end{cases} 
\end{equation}

 Now we assume that $\GG=\GSpin_{2n}^a$ with a non-square $a\in F^\times$  is quasi-split but not split over $F$, and let $E=F(\sqrt{a})$ be a quadratic extension of $F$ over which $\GG$ splits. Then the maximal torus of $\GG(F)$ is given by 
\[ T_{\GG} (F)  = F^\times \times \left( F^\times \right)^{n-1} \times \GSpin_2^a(F) \] 
with $\GSpin_2^a(F) = \left(\operatorname{Res}_{E/F} \GL_1\right)(F) = E^\times$. 
The unramified character $\mu$ of $T_{\GG}(F)$ can be written as 
$$\mu =  \chi_0 \otimes \chi_1 \otimes \cdots \chi_{n-1} \otimes \chi\circ\operatorname{Norm}_{E/F}, 
$$
and we may identify $t_\Pi \in {}^L{\GG} \cong \GSO_{2n}(\C) \rtimes \operatorname{Gal}(E/F)$ with 
\[ 
t_\Pi =  \operatorname{diag}\left( \chi_1(\varpi), \dots, \chi_{n-1}(\varpi), 
\begin{pmatrix} 
\alpha & \beta a \\ 
\beta & \alpha
\end{pmatrix}, 
\chi^{-1}_{n-1} \chi_0(\varpi),\dots, \chi^{-1}_1 \chi_0(\varpi)  \right) \in \GL_{2n}(\C), 
\]  
with $\alpha^2 - a \beta^2 = \chi_0(\varpi)$.

Let $\pi$, $\tau$, $\sigma$ be irreducible admissible unramified representations of $\GSpin_{m^\prime-2\ell-1}(F)$, $\GL_k(F)$, $G_{m^\prime-2k}(F)$ respectively. Then the local tensor product $L$-function associated to $\pi$ and $\tau$ is 
\begin{equation*}
L(s,\pi\times\tau)	= \det(1-q^{-s} t_{\pi}\otimes t_{\tau})^{-1}.
\end{equation*}
Similarly, the local tensor product $L$-function associated to $\sigma$ and $\tau$ is 
\begin{equation*}
L(s,\sigma\times\tau)	= \det(1-q^{-s} t_{\sigma}\otimes t_{\tau})^{-1}.
\end{equation*}
Moreover, for any finite dimensional representation $\kappa$ of $\GL_k(\C)$, define
\begin{equation*}
L(s,\tau,\kappa)=\det(1-q^{-s} \kappa(t_\tau))^{-1}.	
\end{equation*}

\subsection{Unramified computation}
\label{subsection-unramified-computation}
The goal of this section is to compute the local integral when all data are unramified.

Let $v_{\pi}^0\in V_\pi$, $v_\sigma^0\in V_\sigma$ be non-zero unramified vectors, and we normalize the local Bessel functional $\bb$ appearing in the local zeta integral by
\begin{equation*}
\bb(v_\pi^0, v_\sigma^0)=1. 	
\end{equation*}
Let $f_{\mathcal{W}(\tau),\sigma, s}^0$ be the unramified section in the space of $\rho_{\tau,\sigma,s}$ such that, for all $a\in \GL_k(F)$, 
$$
f_{\mathcal{W}(\tau),\sigma, s}^0(e,a)=W_{\tau}^0(a) v_\sigma^0,
$$ 
where $e\in H(F)$ is the identity element, and $W_{\tau}^0\in \mathcal{W}(\tau)$ is the unramified normalized Whittaker function such that $W_\tau^0(I_k)=1$.

\begin{theorem}
\label{thm-local-unramified-computation}
With all date being unramified and normalized as above, we have 
\begin{equation}
\label{eq-local-unramified-computation}
	\mathcal{Z}(v_\pi^0,	f_{\mathcal{W}(\tau),\sigma, s}^0)
	 =  \frac{L(s,\pi \times \tau)}{L(s+\frac{1}{2},\sigma  \times \tau \otimes\omega_\pi) L\left(2s, \tau, \rho \otimes \omega_{\pi}\right)}
\end{equation}
where
\begin{equation}
\label{eq-rho}
\rho=
\begin{cases} 
\wedge^2, & 
\mbox{ if $G$ is an odd $\GSpin$ group,}
\\
\Sym^2, & 
\mbox{ if $G$ is an even $\GSpin$ group.}
\end{cases} 
\end{equation}
\end{theorem}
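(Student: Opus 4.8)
The plan is to reduce the computation to the corresponding unramified computation for special orthogonal groups, and then to keep careful track of the central characters. Since $\rho_{\tau,\sigma,s}$ is irreducible for generic $s$, the local zeta integral lies in the one-dimensional space \eqref{eq-Hom-local}, hence is a rational function of $q^{-s}$; so it suffices to prove \eqref{eq-local-unramified-computation} for $\Re(s)\gg0$, where the integral converges absolutely. Throughout I would use that $\pr:\GSpin_{m^\prime}(F)\to\SO_{m^\prime}(F)$ is surjective with kernel $C_G^0=F^\times$ (Hilbert~90 applied to $1\to\GL_1\to\GSpin_{m^\prime}\to\SO_{m^\prime}\to1$), that $\pr$ restricts to an isomorphism on every unipotent subgroup, and that it identifies the Weyl elements $\epsilon_{0,\beta},\eta_1,\eta_2$ and the characters $\psi_{\ell,a},\psi_{Z_k,a},\psi_{U_{k,\eta_2}^-}$ with their special orthogonal counterparts; recall also that the spinor norm satisfies $N(z)=z^2$ for $z\in C_G^0$.

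First I would run the standard unramified unfolding. Using the Iwasawa decomposition of $G(F)$ relative to the ``Bessel parabolic'' part of $R_{\ell,\beta-1}^{\eta_2}$ when $k>\ell$ (resp. the structure of $N_\ell^{\eta_1}\backslash N_\ell$ when $k\le\ell$), together with the $H(\mathcal{O})$-invariance of $f^0_{\mathcal{W}(\tau),\sigma,s}$, the $G(\mathcal{O})$-invariance of $v_\pi^0$, and the normalization $\bb(v_\pi^0,v_\sigma^0)=1$, the integral collapses to a sum over a lattice of cocharacters of products of: the normalized unramified $\GL_k$ Whittaker value $W_\tau^0$, the value of the spherical function of $\pi$, the value of the spherical function of $\sigma$ (both entering through $\bb$), and explicit $q$-powers from the modulus characters and from $|\cdot|^{s-\frac12}$. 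Next I would peel off the central $\GL_1$: writing a torus element of $G(F)$ as $z\cdot g^{+}$ with $z\in C_G^0(F)$ and $g^{+}$ in the image of a fixed set-theoretic section of $\SO_{m^\prime-2\ell-1}(F)$, the summand acquires the factor $\omega_\pi(z)$ (from $\pi(z)v_\pi^0=\omega_\pi(z)v_\pi^0$) and, from $f^0_{\mathcal{W}(\tau),\sigma,s}$, a fixed power of $|z|$ (through $|\cdot|^{s-\frac12}$ and the spinor norm, using $N(z)=z^2$) together with $\omega_\sigma(z)=\omega_\pi(z)^{-1}$. Since $C_G^0(F)\subset R_{\ell,\beta-1}^{\eta_2}(F)$ in the first case, and acts through $\psi_{\ell,a}$-equivariance in the second, the remaining sum over the $\SO$-part is, after extracting these $\GL_1$-factors, exactly the sum computing the unramified special orthogonal Rankin--Selberg integral of Jiang--Zhang \cite{JiangZhang2014} (and, for $k\le\ell$, \cite{JiangSoudryZhang}; see also \cite{GinzburgPSRallis1997}), whose value is
\begin{equation*}
\frac{L(s,\pi^{\SO}\times\tau)}{L\!\left(s+\tfrac12,\sigma^{\SO}\times\tau\right)L(2s,\tau,\rho^{\prime})},\qquad \rho^{\prime}=\begin{cases}\wedge^2,& m^\prime \text{ even},\\ \Sym^2,& m^\prime \text{ odd}.\end{cases}
\end{equation*}

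It then remains to match local factors. Using the explicit Satake parameters \eqref{eq-t-Pi}, the similitude of $t_\pi$ (in $\GSp_{2n}(\C)$ or $\GSO_{2n}(\C)$) is $\omega_\pi(\varpi)$; inserting the $\omega_\pi$- and $|\cdot|$-contributions accumulated above converts $L(s,\pi^{\SO}\times\tau)$ into $L(s,\pi\times\tau)$, $L(s+\tfrac12,\sigma^{\SO}\times\tau)$ into $L(s+\tfrac12,\sigma\times\tau\otimes\omega_\pi)$, and $L(2s,\tau,\rho^{\prime})$ into $L(2s,\tau,\rho\otimes\omega_\pi)$. Finally, since $m^\prime-2\ell-1$ and $m^\prime$ have opposite parities, ``$m^\prime$ even'' is exactly ``$G$ an odd $\GSpin$ group'', so $\rho^{\prime}$ matches $\rho$, and \eqref{eq-local-unramified-computation} follows.

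The hard part will be making the central-$\GL_1$ bookkeeping precise. Unlike in the split or special orthogonal setting, the integrand is invariant only under $C_G^0(F)$, and only because $\omega_\pi\omega_\sigma=1$; one must determine exactly how the central $F^\times$ distributes among $\pi$, $\sigma$ and $\tau$ (the last through the induced section and the spinor norm) along the lattice in the sum, and confirm that the net effect is a twist by $\omega_\pi$ with the correct sign --- in particular that it is the twisted square $L$-function $L(2s,\tau,\rho\otimes\omega_\pi)$, not an untwisted one, that arises from the would-be constant term of the $\GSpin_{m^\prime}$ Eisenstein series, mirroring the phenomenon in \cite{AsgariCogdellShahidi2024}. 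Already defining the ``projected'' data $\pi^{\SO},\sigma^{\SO}$ is delicate, since forgetting the similitude of $t_\pi$ is well defined only up to sign and the twist must absorb this ambiguity; a further (minor) point is to present the cases $k>\ell$ and $k\le\ell$ uniformly, as the shape of the Bessel subgroup used in the unfolding differs between them.
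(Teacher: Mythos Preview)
Your high-level strategy --- reduce to the known unramified computation for special orthogonal groups and then track the central characters --- is exactly what the paper does. However, the paper's execution is considerably simpler than the route you sketch, and bypasses entirely the ``hard part'' you identify.

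Rather than decomposing torus elements as $z\cdot g^+$ and chasing how $\omega_\pi$ distributes through the lattice sum, the paper observes that since $\omega_\pi$ and $\omega_\sigma$ are unramified, they admit unique \emph{unramified} square roots $\omega_\pi^{1/2},\omega_\sigma^{1/2}$. One then sets
\[
\overline{\pi}:=\pi\otimes(\omega_\pi^{-1/2}\circ N),\qquad
\overline{\sigma}:=\sigma\otimes(\omega_\sigma^{-1/2}\circ N),\qquad
\overline{\tau}:=\tau\otimes\omega_\pi^{1/2}.
\]
Because $N(z)=z^2$ on $C_G^0$, the twisted representations $\overline{\pi},\overline{\sigma}$ have trivial central character and hence are literally representations of $\SO_{m'-2\ell-1}(F)$ and $\SO_{m'-2k}(F)$; there is no ``up to sign'' ambiguity in defining $\pi^{\SO}$. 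A one-line check shows the $\GSpin$ integrand for $(\pi,\tau,\sigma)$ equals the integrand for $(\overline{\pi},\overline{\tau},\overline{\sigma})$, and since $C_G^0\subset R_{\ell,\beta-1}^{\eta_2}$ the domain $R_{\ell,\beta-1}^{\eta_2}\backslash G$ is canonically $\overline{R}_{\ell,\beta-1}\backslash\SO_{m'-2\ell-1}$. So the $\GSpin$ integral \emph{is} the $\SO$ integral for the barred data, with no bookkeeping needed. One then untwists the $L$-functions directly:
\[
L(s,\overline{\pi}\times\overline{\tau})=L(s,\pi\times\tau),\quad
L(s+\tfrac12,\overline{\sigma}\times\overline{\tau})=L(s+\tfrac12,\sigma\times\tau\otimes\omega_\pi),\quad
L(2s,\overline{\tau},\rho)=L(2s,\tau,\rho\otimes\omega_\pi).
\]
Your approach would ultimately arrive at the same place, but the square-root twist removes precisely the delicate step you flagged: you never need to separate the central $\GL_1$ by hand, nor worry about how the similitude of $t_\pi$ interacts with the lattice sum.
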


\begin{proof}
We assume $k>\ell$. The case $k\le \ell$ is similar and omitted. 
Since $\omega_\pi$ and $\omega_\sigma$ are unramified, there exist unique unramified square roots $\omega_\pi^{1/2}$ and $\omega_\sigma^{1/2}$. Note that $\omega_\sigma=\omega_\pi^{-1}$. Denote
\begin{equation*}
\overline{\pi}=\pi\otimes \omega_\pi^{-1/2}\circ N, \quad 	\overline{\sigma}=\sigma\otimes \omega_\sigma^{-1/2}\circ N,
\end{equation*}
which have trivial central characters and hence can be viewed as representations of $\SO_{m^\prime-2\ell-1}(F)$ and $\SO_{m^\prime-2k}(F)$ respectively. Denote
$\overline{\tau}=\tau\otimes \omega_\pi^{1/2}$,
and let $\overline{f}_{\mathcal{W}(\overline{\tau}),\overline{\sigma}, s}^0$ be the unramified section in the space of $\Ind_{P_{k}(F)}^{H(F)}(\mathcal{W}(\overline{\tau})|\cdot|^{s-\frac{1}{2}} \otimes \overline{\sigma})$ such that, for all $a\in \GL_k(F)$, $\overline{f}_{\mathcal{W}(\overline{\tau}),\overline{\sigma}, s}^0(e,a)=W_{\tau}^0(a) v_\sigma^0$. Then
\begin{equation*}
\int_{U_{k,\eta_2}^-(F)} \bb( \pi(g) v_\pi^0,   {f_{\mathcal{W}(\tau),\sigma,s}^0}(u \epsilon_{0,\beta} \eta_2 g ) ) \psi_{U_{k,\eta_2}^-}(u) du = \int_{U_{k,\eta_2}^-(F)} \bb( \overline{\pi}(g) v_\pi^0,   \overline{f}_{\mathcal{W}(\overline{\tau}),\overline{\sigma}, s}^0 (u \epsilon_{0,\beta} \eta_2 g ) ) \psi_{U_{k,\eta_2}^-}(u) du.	
\end{equation*}
Now observe that $R_{\ell, \beta-1}^{\eta_2}$ is the Bessel subgroup of $G$ containing $C_G^0$, given by $R_{\ell, \beta-1}^{\eta_2}=G_0^\prime\ltimes N_{\ell, \beta-1}^{\eta_2}$, where $G_0^\prime=\eta_2^{-1}G_0\eta_2$. It follows that
\begin{equation*}
R_{\ell, \beta-1}^{\eta_2}\backslash G \cong \overline{R}_{\ell, \beta-1}	 \backslash \SO_{m^\prime-2\ell-1},
\end{equation*}
where $\overline{R}_{\ell, \beta-1}	 =  \pr(\eta_2)^{-1} \SO_{m^\prime-2k} \pr(\eta_2) \ltimes N_{\ell,\beta-1}^{\eta_2}$ is a Bessel subgroup of $\SO_{m^\prime-2\ell-1}$.
By the local unramified computation for special orthogonal groups \cite{JiangSoudryZhang} (see also \cite[Theorem 4.8]{JiangZhang2014}), we have
\begin{equation*}
\begin{split}
	\mathcal{Z}(v_\pi^0,	f_{\mathcal{W}(\tau),\sigma, s}^0) 
	& =  \int_{R_{\ell,\beta-1}^{\eta_2}(F)\backslash G(F) } \int_{U_{k,\eta_2}^-(F)} \bb( \pi(g) v_\pi^0,   {f_{\mathcal{W}(\tau),\sigma,s}^0}(u \epsilon_{0,\beta} \eta_2 g ) ) \psi_{U_{k,\eta_2}^-}(u) du  dg \\
	&=  \int_{R_{\ell,\beta-1}^{\eta_2}(F)\backslash G(F)}  \int_{U_{k,\eta_2}^-(F)} \bb( \overline{\pi}(g) v_\pi^0,   \overline{f}_{\mathcal{W}(\overline{\tau}),\overline{\sigma}, s}^0 (u \epsilon_{0,\beta} \eta_2 g ) ) \psi_{U_{k,\eta_2}^-}(u) du  \\
	&= \frac{L(s, \overline{\pi} \times \overline{\tau})}{L(s+\frac{1}{2},\overline{\sigma}  \times \overline{\tau}) L\left(2s, \overline{\tau}, \rho \right)} .
\end{split}
\end{equation*}
Finally, we compute that
\begin{equation*}
	L(s, \overline{\pi} \times \overline{\tau})=L(s, \pi\otimes \omega_\pi^{-1/2}\times \tau\otimes \omega_\pi^{1/2})= L(s, \pi\times\tau),
\end{equation*}
\begin{equation*}
L(s+\frac{1}{2},\overline{\sigma}  \times \overline{\tau}) =L(s+\frac{1}{2},\sigma\otimes \omega_\pi^{1/2}  \times \tau\otimes \omega_\pi^{1/2})=L(s+\frac{1}{2}, \sigma\times\tau\otimes \omega_\pi),
\end{equation*}
and for $\rho=\wedge^2$ (or $\Sym^2$), we have
\begin{equation*}
	L\left(2s, \overline{\tau}, \rho \right)=L(2s, \tau\otimes\omega_\pi^{\frac{1}{2}}, \rho)=L(2s, \tau, \rho\otimes\omega_\pi). 
\end{equation*}
This completes the proof of \eqref{eq-local-unramified-computation}.
\end{proof}

\begin{remark}
We will address some other properties of the local zeta integrals in Section~\ref{subsection-GGP-Normalization-Local-Zeta}. 
\end{remark}

\section{Application to the global Gan-Gross-Prasad conjecture}
\label{section-GGP}

The purpose of this section is to prove one direction of the global Gan-Gross-Prasad conjecture for generic cuspidal automorphic representations on $\GSpin$ groups. 
Throughout this section we let $k>\ell$ and $\beta=k-\ell$.

\subsection{Functoriality for generic representations of $\GSpin$ groups}
We start with the following definition.

\begin{definition}
\label{defn-twisted-self-dual}
Let $\eta$ be a Hecke character. We say that an irreducible cuspidal automorphic representation $\Pi$ of $\GL_n(\A)$ is $\eta$-self-dual if
\begin{equation*}
\wt\Pi =\Pi\otimes \eta,	
\end{equation*}
where $\wt\Pi$ is the contragredient of $\Pi$.
\end{definition}

The following description on the image of functoriality for generic representations on quasi-split $\GSpin$ groups is due to Asgari and Shahidi \cite{AsgariShahidi2006, AsgariShahidi2014}.

\begin{theorem}\cite[Theorem 4.26]{AsgariShahidi2014}
\label{thm-functoriality}
Let $G_n$ be either the split $\GSpin_{2n+1}$, the split $\GSpin_{2n}$, or one of the non-split quasi-split groups $\GSpin_{2n}^a$ (associated with a quadratic extension $F(a)/F$ for some non-square $a\in F^\times$). 
Let $\pi$ be an irreducible generic cuspidal automorphic representation of $G_n(\A)$.
Then
$\pi$ has a unique functorial transfer to an automorphic representation $\Pi$ of $\GL_{2n}(\A)$. The representation $\Pi$ satisfies $\Pi\cong \wt\Pi \otimes \omega_\pi$.  The representation $\Pi$ is an isobaric sum of the form
\begin{equation}
\label{eq-GGP-Pi}
\Pi= \Ind(\Pi_1\otimes \cdots  \otimes \Pi_k )=\Pi_1 \boxplus \cdots \boxplus \Pi_r
\end{equation}
where each $\Pi_i$ is an irreducible unitary cuspidal automorphic representation of $\GL_{n_i}(\A)$ such that for each $1\le i \le r$, the complete $L$-function $L(s,\Pi_i,\wedge^2\otimes\omega_\pi^{-1})$ has a pole at $s=1$ in the odd case and $L(s,\Pi_i,\Sym^2\otimes\omega_\pi^{-1})$ has a pole at $s=1$ in the even case. Moreover, $\Pi_i\cong \wt\Pi_i\otimes\omega_\pi$ for each $i$, $\Pi_i\not\cong \Pi_j$ if $i\not=j$, and $n_1+\cdots +n_r=2n$. Conversely, any automorphic representation $\Pi$ of $\GL_{2n}(\A)$ satisfying the above conditions is a functorial transfer of some irreducible generic cuspidal automorphic representation $\pi$ of $G_n(\A)$.
\end{theorem}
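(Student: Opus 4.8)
The statement is the main functoriality theorem of Asgari and Shahidi, so the plan is to reproduce the strategy of \cite{AsgariShahidi2006, AsgariShahidi2014}, which adapts the converse-theorem method of Cogdell--Kim--Piatetski-Shapiro--Shahidi from the classical groups. First I would realize the $\GSpin$ groups as Levi subgroups: $\GSpin_{2n}\times\GL_m$ (and its quasi-split inner forms) is the Levi factor of a maximal parabolic of a larger $\GSpin$ group, so the Langlands--Shahidi method attaches to a pair $(\pi,\tau)$, with $\pi$ generic cuspidal on $G_n$ and $\tau$ cuspidal on $\GL_m$, a Rankin--Selberg $L$-function $L(s,\pi\times\tau)$ together with $\gamma$- and $\varepsilon$-factors. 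The first task is to show that these are ``nice'': meromorphic continuation and functional equation (standard Langlands--Shahidi), boundedness in vertical strips (via Kim's argument), and, after twisting $\tau$ by a suitably highly ramified Hecke character, holomorphy, using the fact that such a twist kills the finitely many possible poles.

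Second, I would fix local lifts $\Pi_v$ of $\pi_v$ at every place: the obvious unramified lift through the embedding ${}^L G_n\hookrightarrow\GL_{2n}(\C)$ (as in \eqref{eq-t-Pi}) at almost all $v$, the archimedean local Langlands lift at the infinite places, and at the bad finite places an arbitrary irreducible generic representation with matching central character. The key point is that $\gamma(s,\pi_v\times\tau_v,\psi_v)$ agrees with $\gamma(s,\Pi_v\times\tau_v,\psi_v)$ after twisting $\tau_v$ by a sufficiently ramified character, by stability of $\gamma$-factors. Together with the first step this feeds the $\GL_{2n}$ converse theorem of Cogdell and Piatetski-Shapiro (the version requiring twists only up to $\GL_{2n-1}$, or $\GL_{2n-2}$): one obtains an automorphic representation $\Pi$ of $\GL_{2n}(\A)$ agreeing with $\otimes_v\Pi_v$ outside a finite set, and a standard argument upgrades this to $\Pi\cong\otimes_v\Pi_v$, hence a genuine functorial transfer of $\pi$; writing its cuspidal support gives $\Pi=\Pi_1\boxplus\cdots\boxplus\Pi_r$.

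Third, to pin down the image I would analyze poles of the relevant $\GL$-automorphic $L$-functions. The relation $\Pi\cong\wt\Pi\otimes\omega_\pi$ follows because the image of ${}^L G_n$ in $\GL_{2n}(\C)$ is preserved by $g\mapsto\omega(g)\,{}^t g^{-1}$, where $\omega$ is the similitude character, so the functional equation forces this self-duality on $\Pi$, hence on each $\Pi_i$. That each $\Pi_i$ is of the correct type --- $L(s,\Pi_i,\wedge^2\otimes\omega_\pi^{-1})$ has a pole at $s=1$ in the odd case and $L(s,\Pi_i,\Sym^2\otimes\omega_\pi^{-1})$ in the even case --- comes from: (i) for a $\GL$-cuspidal representation these two twisted square $L$-functions are holomorphic and nonzero at $s=1$ unless $\Pi_i$ is of symplectic, resp.\ orthogonal, type, and the two possibilities are mutually exclusive; (ii) the image of the dual group lands in $\GSp_{2n}(\C)$ (odd case) or $\GSO_{2n}(\C)$ (even case), which via the Langlands--Shahidi computation of the residue at $s=1$ of the Eisenstein series forces precisely the symplectic-, resp.\ orthogonal-, type behaviour. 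Distinctness $\Pi_i\not\cong\Pi_j$ follows because a transfer from $G_n$ must have multiplicity-free parameter, else a partial $\GL$-symmetric or exterior square $L$-function would have too high an order of pole at $s=1$, contradicting the Langlands--Shahidi normalization. For the converse direction I would invoke the automorphic descent of Ginzburg--Rallis--Soudry \cite{GinzburgRallisSoudry2011}, extended to the $\GSpin$/$\GSO$ setting: given $\Pi$ with the stated properties one constructs a nonzero generic cuspidal $\pi$ on $G_n(\A)$ whose transfer, by the forward direction together with strong multiplicity one for $\GL_{2n}$, is $\Pi$.

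The main obstacle, as in the classical case, is the control of local $\gamma$-factors at the ramified places: both the stability under highly ramified twists --- needed to match local factors without knowing local functoriality a priori --- and the simultaneous use of such twists to force the global twisted $L$-functions to be entire, as the converse theorem requires. Once that is in place, characterizing the image is a matter of carefully reading off residues in the Langlands--Shahidi method, and the converse inclusion is supplied by importing the automorphic descent construction.
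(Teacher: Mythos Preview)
The paper does not prove this theorem: it is quoted verbatim as \cite[Theorem~4.26]{AsgariShahidi2014} and used as a black box. Your proposal is a faithful sketch of the Asgari--Shahidi argument itself (converse theorem fed by Langlands--Shahidi $L$-functions, stability of $\gamma$-factors at the bad places, image description via poles of twisted symmetric/exterior square $L$-functions), so there is nothing to compare against in the present paper. One small correction on attribution: the descent construction for $\GSpin$ groups that supplies the converse direction is due to Hundley and Sayag \cite{HundleySayag2016}, not a direct extension of \cite{GinzburgRallisSoudry2011}; you should cite them for that step.
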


\begin{remark}
We remark that recently Cai, Friedberg and Kaplan in \cite{CaiFriedbergKaplan2024} established the Langlands functoriality for any cuspidal automorphic representations, generic or not, for split $\GSpin$ groups, using the generalized doubling method \cite{CaiFriedbergGinzburgKaplan2019} and the converse theorem \cite{CogdellPS1994, CogdellPS1999}. 	
\end{remark}

\subsection{Residue of the Eisenstein series}
Let $\sigma$ be irreducible generic cuspidal automorphic representations of $\GSpin_{m^\prime-2\ell-1}(\A)$.
Let $\tau=\tau_1\boxtimes \tau_2 \boxtimes \cdots \boxtimes \tau_{r}$ be an irreducible unitary generic isobaric automorphic representation of $\GL_k(\A)$ associated to distinct $\tau_1, \cdots, \tau_r$, such that $\tau_i$ is $\omega_\sigma^{-1}$-self-dual for each $1\le i \le r$.
The constant term of the Eisenstein series $E(g, f_{\tau,\sigma, s})$ along a standard parabolic subgroup $P$ is always zero unless $P=P_k$ (\cite[II.1.7]{MoglinWaldspurger1995}). In this case, we have
\begin{equation*}
\begin{split}
E_{P_k}(g, 	f_{\tau,\sigma, s}) &=  \int_{U_k(F)\backslash U_k(\A)} E(ug, 	f_{\tau,\sigma, s}) du\\
&= f_{\tau,\sigma,s}(g)+ \M(\omega_0,\tau\otimes\sigma,s)(f_{\tau,\sigma,s})(g),
\end{split}
\end{equation*}
where  $\M(\omega_0,\tau\otimes\sigma,s)$ is the intertwining operator from the induced representation 
\begin{equation}
\label{eq-induced-rep-global}
\Ind_{P_{k}(\A)}^{H(\A)}(\tau|\cdot|^{s-\frac{1}{2}} \otimes \sigma)	
\end{equation}
to 
the induced representation 
$$\Ind_{P_{k}(\A)}^{H(\A)}(\wt \tau \otimes \omega_\sigma^{-1}|\cdot|^{-s+\frac{1}{2}} \otimes \sigma)$$ 
given by the following integral
\begin{equation*}
\M(\omega_0,\tau\otimes\sigma,s)(f_{\tau,\sigma,s})(g) = \int_{U_k(F)\backslash U_k(\A)} 	 f_{\tau,\sigma,s} (\omega_0^{-1} u g) du.
\end{equation*}
Here $\omega_0$ is the Weyl group element which takes $U_k$ to its opposite $U_k^-$.
The term that carries the highest order of the pole at $s=1$ is given by $\M(\omega_0,\tau\otimes\sigma,s)(f_{\tau,\sigma,s})(g)$.
Since factorizable sections generate a dense subspace in the induced representation \eqref{eq-induced-rep-global}, it suffices to consider factorizable sections for the existence of poles of $E(g, 	f_{\tau,\sigma, s})$, or the existence of poles of $\M(\omega_0,\tau\otimes\sigma,s)(f_{\tau,\sigma,s})(g)$.

For a factorizable section $f_{\tau,\sigma,s}=\otimes f_{\tau_\nu,\sigma_\nu,s}$ where $f_{\tau_\nu,\sigma_\nu,s}$ is a section in $\Ind_{P_{k}(F_\nu)}^{H(F_\nu)}(\tau_\nu|\cdot|^{s-\frac{1}{2}} \otimes \sigma_\nu)$ and is unramified at almost all finite local places $\nu$, the term $\M(w_0,\tau\otimes\sigma,s)(f_{\tau,\sigma,s})(g)$ can be expressed as a product
\begin{equation*}
	\M(\omega_0,\tau\otimes\sigma,s)(f_{\tau,\sigma,s})(g)=\prod_{\nu} \M(\omega_0,\tau\otimes\sigma,s)_\nu (f_{\tau_\nu,\sigma_\nu,s}) (g_\nu)
\end{equation*}
where $\M(\omega_0,\tau\otimes\sigma,s)_\nu$ is the local intertwining operator at the place $\nu$.
Following the Langlands-Shahidi method (see \cite{Langlands1971EulerProducts} and \cite{Shahidi2010book}), the term $\M(\omega_0,\tau\otimes\sigma,s)(f_{\tau,\sigma,s})$ can be expressed as 
\begin{equation*}
	\M(\omega_0,\tau\otimes\sigma,s)_S (f_{\tau_S,\sigma_S,s})  \cdot \frac{L^S(s-\frac{1}{2}, \sigma\times\tau\otimes \omega_\sigma^{-1})L^S(2s-1, \tau, \rho\otimes\omega_\sigma^{-1})}{L^S(s+\frac{1}{2},\sigma\times\tau\otimes \omega_\sigma^{-1})  L^S(2s, \tau, \rho\otimes\omega_\sigma^{-1})} f_{\omega_0(\tau\otimes\sigma),s}^S
\end{equation*}
where $\rho$ is given by \eqref{eq-rho}, and 
\begin{equation*}
	\M(\omega_0,\tau\otimes\sigma,s)_S=\prod_{\nu\in S} \M(\omega_0,\tau\otimes\sigma,s)_\nu, \quad f_{\tau_S,\sigma_S,s}=\prod_{\nu\in S} f_{\tau_\nu,\sigma_\nu,s}, \quad f_{\omega_0(\tau\otimes\sigma),s}^S=\prod_{\nu\not\in S} f_{\omega_0(\tau_\nu\otimes\sigma_\nu),s}
\end{equation*}
with 
$$f_{\omega_0(\tau_\nu\otimes\sigma_\nu),s}\in \Ind_{P_{k}(F_\nu)}^{H(F_\nu)}(\wt \tau_\nu \omega_{\sigma_\nu}^{-1} |\cdot|^{-s+\frac{1}{2}} \otimes \sigma_\nu).$$
Then we define the normalized local intertwining operator $\NN(\omega_0,\tau\otimes\sigma,s)_{\nu}$ 
by the following formula
\begin{equation*}
\NN(\omega_0,\tau\otimes\sigma,s)_{\nu}=\beta_\nu(s,\tau,\sigma,\psi;\rho)\cdot \M(\omega_0,\tau\otimes\sigma,s)_\nu,
\end{equation*}
where the local normalization factor $\beta_\nu(s,\tau,\sigma,\psi;\rho)$ is defined to be
\begin{equation*}
	\frac{L(s+\frac{1}{2},\sigma_\nu\times\tau_\nu\otimes\omega_{\sigma_\nu}^{-1}) L(2s, \tau_\nu, \rho\otimes\omega_{\sigma_\nu}^{-1}) \epsilon(s-\frac{1}{2}, \sigma_\nu\times\tau_\nu\otimes\omega_{\sigma_\nu}^{-1},\psi_\nu) \epsilon(2s-1, \tau_\nu, \rho\times\omega_{\sigma_\nu}^{-1},\psi_v)} {L(s-\frac{1}{2},\sigma_\nu\times\tau_\nu\otimes\omega_{\sigma_\nu}^{-1}) L(2s-1, \tau_\nu, \rho\otimes\omega_{\sigma_\nu}^{-1})}.
\end{equation*}
Then we obtain 
\begin{equation}
\label{eq-GGP-M-intertwining-global}
\begin{split}
\M(w_0,\tau\otimes\sigma,s)= \frac{\NN(\omega_0,\tau\otimes\sigma,s) \cdot L(s-\frac{1}{2},\sigma\times\tau\otimes\omega_{\sigma}^{-1}) L(2s-1, \tau, \rho\otimes\omega_{\sigma}^{-1}) }{L(s+\frac{1}{2},\sigma\times\tau\otimes\omega_{\sigma}^{-1}) L(2s, \tau, \rho\otimes\omega_{\sigma}^{-1}) \epsilon(s-\frac{1}{2}, \sigma\times\tau\otimes\omega_{\sigma}^{-1}) \epsilon(2s-1, \tau, \rho\times\omega_{\sigma}^{-1})}. 
\end{split}
\end{equation}

\begin{proposition}
\label{prop-GGP-NormalizedIntertwining-HolomorphyNonvanishing}
Let $\sigma$ be irreducible generic cuspidal automorphic representations of $\GSpin_{m^\prime-2\ell-1}(\A)$.
Let $\tau=\tau_1\boxtimes \tau_2 \boxtimes \cdots \boxtimes \tau_{r}$ be an irreducible unitary generic isobaric automorphic representation of $\GL_k(\A)$ associated to distinct $\tau_1, \cdots, \tau_r$, such that $\tau_i$ is $\omega_\sigma^{-1}$-self-dual for each $1\le i \le r$.
 Then for each local place $\nu$  of $F$, the normalized local intertwining operator $\NN(\omega_0,\tau\otimes\sigma,s)_{\nu}$ from the induced representation $\Ind_{P_{k}(F_\nu)}^{H(F_\nu)}(\tau_\nu|\cdot|^{s-\frac{1}{2}} \otimes \sigma_\nu)$ to $\Ind_{P_{k}(F_\nu)}^{H(F_\nu)}(\wt \tau_\nu \omega_{\sigma_\nu}^{-1} |\cdot|^{-s+\frac{1}{2}} \otimes \sigma_\nu)$ is holomorphic and non-zero for $\Re(s)\ge 1$.
\end{proposition}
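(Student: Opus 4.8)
The plan is to reduce the statement to its known analogue for special orthogonal groups by a local twisting argument of the kind used in the proof of Theorem~\ref{thm-local-unramified-computation}, and then to run the usual Langlands--Shahidi argument in the orthogonal case. Fix a place $\nu$ and a unitary character $\mu_\nu$ of $F_\nu^\times$ with $\mu_\nu^2=\omega_{\sigma_\nu}$, which always exists over a local field, and set $\overline{\sigma}_\nu=\sigma_\nu\otimes(\mu_\nu^{-1}\circ N)$ and $\overline{\tau}_{i,\nu}=\tau_{i,\nu}\otimes\mu_\nu^{-1}$, $\overline{\tau}_\nu=\overline{\tau}_{1,\nu}\boxtimes\cdots\boxtimes\overline{\tau}_{r,\nu}$. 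Then $\overline{\sigma}_\nu$ has trivial central character, so it descends to a generic irreducible admissible representation of the relevant special orthogonal group; the hypothesis that each $\tau_i$ be $\omega_\sigma^{-1}$-self-dual becomes $\widetilde{\overline{\tau}}_{i,\nu}\cong\overline{\tau}_{i,\nu}$, so $\overline{\tau}_\nu$ is induced from pairwise distinct self-dual generic unitary representations. Twisting the ambient group $H(F_\nu)$ by a suitable character built from $\mu_\nu$ and the spinor norm identifies $\Ind_{P_k(F_\nu)}^{H(F_\nu)}(\tau_\nu|\cdot|^{s-\frac{1}{2}}\otimes\sigma_\nu)$ with the corresponding induced representation of $\SO_{m^\prime}(F_\nu)$, intertwines the operators $\M(\omega_0,\tau\otimes\sigma,s)_\nu$ with their orthogonal counterparts, and---exactly as in the proof of Theorem~\ref{thm-local-unramified-computation}---leaves the $L$- and $\varepsilon$-factors entering the normalization factor $\beta_\nu(s,\tau,\sigma,\psi;\rho)$ unchanged. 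This reduces the proposition to the holomorphy and non-vanishing, for $\Re(s)\ge 1$, of the normalized local intertwining operator $\NN(\omega_0,\overline{\tau}\otimes\overline{\sigma},s)_\nu$ attached to $\Ind_{P_k(F_\nu)}^{\SO_{m^\prime}(F_\nu)}(\overline{\tau}_\nu|\cdot|^{s-\frac{1}{2}}\otimes\overline{\sigma}_\nu)$, which is the statement known for special orthogonal groups (cf.\ \cite{JiangZhang2014, JiangZhang2020Annals}).

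For the orthogonal statement itself I would use Shahidi's multiplicativity of $\gamma$-factors and the attendant factorization of $\NN(\omega_0,\overline{\tau}\otimes\overline{\sigma},s)_\nu$ through the parabolic with Levi $\GL_{k_1}\times\cdots\times\GL_{k_r}\times\SO_{m^\prime-2k}$ into a product of normalized rank-one intertwining operators of two kinds. The Rankin--Selberg operators, attached to pairs $\GL_{k_i}(F_\nu)\times\GL_{k_j}(F_\nu)$ and normalized by $L$- and $\varepsilon$-factors of $\overline{\tau}_{i,\nu}\times\overline{\tau}_{j,\nu}$, are holomorphic and non-vanishing for $\Re(s)\ge 1$ because the relevant linear shift of $s$ lands comfortably inside the region of holomorphy and non-vanishing of those (local) factors, the parameters being far from the reducibility locus by unitarity. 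For the ``Siegel-type'' operators, attached to $\GL_{k_i}(F_\nu)\times\SO_{m^\prime-2k}(F_\nu)$ and normalized by $L(s+\frac{1}{2},\overline{\sigma}_\nu\times\overline{\tau}_{i,\nu})$, $L(2s,\overline{\tau}_{i,\nu},\rho)$ and matching $\varepsilon$-factors, I would invoke the classification of the reducibility points of $\Ind(\overline{\tau}_{i,\nu}|\cdot|^{x}\otimes\overline{\sigma}_\nu)$ along the real axis: since $\overline{\tau}_{i,\nu}$ is self-dual the relevant pole of $L(s,\overline{\tau}_{i,\nu},\rho)$ sits at $s=1$ (cf.\ \cite{AsgariShahidi2006, AsgariShahidi2014}) and these reducibility points occur only for $x\le\frac{1}{2}$; hence for $\Re(s)\ge 1$ the induced representation is irreducible, and by the cocycle relation the normalized operator is then an isomorphism, in particular holomorphic and non-vanishing. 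At finite unramified places the claim is immediate from the Gindikin--Karpelevich formula together with the chosen normalization.

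The main obstacle is the boundary line $\Re(s)=1$, where $\Ind(\tau_\nu|\cdot|^{s-\frac{1}{2}}\otimes\sigma_\nu)$ can be reducible and $\beta_\nu(s,\tau,\sigma,\psi;\rho)$ acquires a zero at $s=1$ from the poles there of the denominator factors $L(2s-1,\overline{\tau}_{i,\nu},\rho)$ when $\overline{\tau}_{i,\nu}$ is of the type matching $\rho$; the point is that these zeros are exactly cancelled by the corresponding poles of the unnormalized operator $\M(\omega_0,\tau\otimes\sigma,s)_\nu$, which is precisely the content of the orthogonal-group input, proved there by the Langlands--Shahidi method \cite{Langlands1971EulerProducts, Shahidi2010book}. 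The archimedean places are handled by the same reduction, using the holomorphy and non-vanishing of archimedean normalized intertwining operators attached to generic inducing data.
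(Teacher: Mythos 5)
The paper's proof of this proposition is a single-line citation to Asgari--Shahidi \cite[Proposition~3.6]{AsgariShahidi2006}, which establishes the holomorphy and non-vanishing of normalized local intertwining operators for $\GSpin$ groups directly, as part of the Langlands--Shahidi machinery they develop for those groups. No reduction to the special orthogonal case is involved.

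Your first paragraph has a genuine gap. You assert that a unitary character $\mu_\nu$ with $\mu_\nu^2=\omega_{\sigma_\nu}$ ``always exists over a local field,'' but this is false. The sign character on $\mathbb{R}^\times$ has no continuous square root: every continuous character of $\mathbb{R}^\times$ is of the form $\mathrm{sgn}^\epsilon|\cdot|^s$, whose square is $|\cdot|^{2s}$, never $\mathrm{sgn}$. Likewise, on a $p$-adic field $F_\nu$ with odd residue characteristic, the character group of $\mathcal{O}^\times$ has nontrivial $2$-torsion, so the ramified quadratic characters of $F_\nu^\times$ are not squares of any continuous characters. The twist-to-trivial-central-character device you borrow from the proof of Theorem~\ref{thm-local-unramified-computation} works there precisely because all data is unramified, in which case $\omega_{\sigma_\nu}=|\cdot|^{s_0}$ and $|\cdot|^{s_0/2}$ is its unique unramified square root; this does not extend to the ramified and archimedean places covered by the present proposition. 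Consequently your opening reduction of $\NN(\omega_0,\tau\otimes\sigma,s)_\nu$ to the corresponding operator on $\SO_{m^\prime}(F_\nu)$ is not available in general, and you cannot simply import the orthogonal-group result.

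The second and third paragraphs of your proposal---factoring the normalized operator through the parabolic with Levi $\GL_{k_1}\times\cdots\times\GL_{k_r}\times\GSpin_{m^\prime-2k}$ by multiplicativity of $\gamma$-factors, handling the $\GL\times\GL$ rank-one operators by unitarity, and handling the ``Siegel-type'' rank-one operators by irreducibility of the induced representation for $\Re(s)\ge 1$ together with the cancellation of the zero of $\beta_\nu$ against the pole of the unnormalized operator at $s=1$---are correct in outline, but they should be carried out directly in the $\GSpin$ Langlands--Shahidi setting rather than after a descent to $\SO$. This is precisely what Asgari--Shahidi do, so the cleanest repair is to delete the twist-and-descend step and cite \cite[Proposition~3.6]{AsgariShahidi2006} for the $\GSpin$ rank-one input, which is how the paper itself proceeds.
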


\begin{proof}
This follows from \cite[Proposition 3.6]{AsgariShahidi2006}.	
\end{proof}

\begin{proposition}
\label{prop-GGP-EisensteinSeriesPoles}
Let $\sigma$ be an irreducible generic cuspidal automorphic representation of $\GSpin_{m^\prime-2\ell-1}(\A)$.
Let $\tau=\tau_1\boxtimes \tau_2 \boxtimes \cdots \boxtimes \tau_{r}$ be an irreducible unitary generic isobaric automorphic representation of $\GL_k(\A)$ associated to distinct $\tau_1, \cdots, \tau_r$, such that $\tau_i$ is $\omega_\sigma^{-1}$-self-dual for each $1\le i \le r$.
Then we have the following.
\begin{enumerate}
\item[(i)] The $L$-function $L(s,\sigma\times\tau \otimes \omega_\sigma^{-1})$ is holomorphic at $s=\frac{1}{2}$.
\item[(ii)] The Eisenstein series $E(\cdot, f_{\tau,\sigma, s})$ can possibly have a pole at $s=1$ of order at most $r$.
\item[(iii)] The Eisenstein series $E(\cdot, f_{\tau,\sigma, s})$ has a pole at $s=1$ of order $r$ if and only if $L(s,\tau_i,\rho\otimes\omega_\sigma^{-1})$ has a pole at $s=1$ for $i=1, \cdots, r$ and $L(s,\sigma\times\tau \otimes \omega_\sigma^{-1})$ is non-zero at $s=\frac{1}{2}$. Here, we recall that $\rho$ is given by \eqref{eq-rho}.
\end{enumerate}
\end{proposition}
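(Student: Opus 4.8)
The plan is to read off everything from the expression \eqref{eq-GGP-M-intertwining-global} for the global intertwining operator $\M(\omega_0,\tau\otimes\sigma,s)$ together with the constant-term formula
$E_{P_k}(g,f_{\tau,\sigma,s})=f_{\tau,\sigma,s}(g)+\M(\omega_0,\tau\otimes\sigma,s)(f_{\tau,\sigma,s})(g)$,
and the holomorphy/non-vanishing of $\NN(\omega_0,\tau\otimes\sigma,s)_\nu$ for $\Re(s)\ge 1$ from Proposition~\ref{prop-GGP-NormalizedIntertwining-HolomorphyNonvanishing}. Since an Eisenstein series induced from a cuspidal datum has the same poles in $\Re(s)>\tfrac12$ as its constant term, and since $f_{\tau,\sigma,s}$ is entire, the poles of $E(\cdot,f_{\tau,\sigma,s})$ in $\Re(s)\ge 1$ coincide with the poles of $\M(\omega_0,\tau\otimes\sigma,s)$; so the whole proposition reduces to analyzing the right-hand side of \eqref{eq-GGP-M-intertwining-global} near $s=1$.

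For part (i): by Theorem~\ref{thm-functoriality} (Asgari–Shahidi), $\sigma$ transfers to an isobaric representation $\Sigma=\Sigma_1\boxplus\cdots\boxplus\Sigma_t$ of a general linear group with each $\Sigma_j$ unitary cuspidal and $\Sigma_j$-self-dual (up to $\omega_\sigma$-twist), so $L(s,\sigma\times\tau\otimes\omega_\sigma^{-1})=\prod_{i,j}L(s,\Sigma_j\times\tau_i\otimes\omega_\sigma^{-1})$ is a product of Rankin–Selberg $L$-functions of unitary cuspidal representations of general linear groups; each such factor is holomorphic at $s=\tfrac12$ because a Rankin–Selberg $L$-function $L(s,\Sigma_j\times\tau_i\otimes\omega_\sigma^{-1})$ has a pole only at $s=1$ (and only when $\tau_i\otimes\omega_\sigma^{-1}\cong\widetilde\Sigma_j$), and never at $s=\tfrac12$ by Jacquet–Shalika. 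For part (ii): in \eqref{eq-GGP-M-intertwining-global} the normalized operator $\NN$ is holomorphic at $s=1$, the factors $L(s+\tfrac12,\sigma\times\tau\otimes\omega_\sigma^{-1})$ and the $\epsilon$-factors are non-zero and finite at $s=1$, and $L(2s,\tau,\rho\otimes\omega_\sigma^{-1})$ is finite and non-zero at $s=1$ (its arguments sits at $2s=2$). So the only source of a pole at $s=1$ is the numerator $L(s-\tfrac12,\sigma\times\tau\otimes\omega_\sigma^{-1})\,L(2s-1,\tau,\rho\otimes\omega_\sigma^{-1})$: the first factor is holomorphic at $s=1$ (its argument is $\tfrac12$, covered by part (i)), and $L(2s-1,\tau,\rho\otimes\omega_\sigma^{-1})=\prod_{i}L(2s-1,\tau_i,\rho\otimes\omega_\sigma^{-1})\cdot\prod_{i<j}L(2s-1,\tau_i\times\tau_j\otimes\omega_\sigma^{-1})$ (decomposing the $\rho$-$L$-function of the isobaric sum into its diagonal symmetric/exterior-square pieces and the off-diagonal Rankin–Selberg pieces). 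Since the $\tau_i$ are distinct and $\omega_\sigma^{-1}$-self-dual, for $i\ne j$ the twisted $\tau_i\times\tau_j$ factor is holomorphic at $s=1$ (it would have a pole only if $\widetilde\tau_i\otimes\omega_\sigma^{-1}\cong\tau_j$, i.e. $\tau_i\cong\tau_j$). Each diagonal factor $L(2s-1,\tau_i,\rho\otimes\omega_\sigma^{-1})$ has at most a simple pole at $s=1$. Hence $\M$, and therefore $E(\cdot,f_{\tau,\sigma,s})$, has a pole of order at most $r$ at $s=1$. For part (iii): the order of the pole of the numerator at $s=1$ is exactly the number of indices $i$ for which $L(s,\tau_i,\rho\otimes\omega_\sigma^{-1})$ has a pole at $s=1$, \emph{minus} the order of vanishing at $s=1$ of any factor in the denominator that could cancel it. The denominator factor that sits at $s=1$ when $s=1$ is $L(s+\tfrac12,\cdots)$ evaluated at $\tfrac32$ (finite non-zero), and the $\epsilon$-factor $\epsilon(s-\tfrac12,\sigma\times\tau\otimes\omega_\sigma^{-1},\psi)$ at $s=1$ — but $\epsilon$ never vanishes. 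However $\M$ picks up an extra \emph{zero} from $L(s-\tfrac12,\sigma\times\tau\otimes\omega_\sigma^{-1})$ in the numerator: this factor \emph{vanishes} at $s=1$ exactly when $L(\tfrac12,\sigma\times\tau\otimes\omega_\sigma^{-1})=0$, killing one order of pole. So the pole of $E$ at $s=1$ has order $r$ iff all $r$ diagonal factors $L(s,\tau_i,\rho\otimes\omega_\sigma^{-1})$ have a pole at $s=1$ \emph{and} $L(\tfrac12,\sigma\times\tau\otimes\omega_\sigma^{-1})\ne 0$.

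\textbf{Key steps, in order.} First I would record that a cuspidally-induced Eisenstein series has the same poles in $\Re(s)\ge 1$ as its constant term, so the whole question is about \eqref{eq-GGP-M-intertwining-global}. Second, I would prove (i) by transferring $\sigma$ via Theorem~\ref{thm-functoriality}, writing $L(s,\sigma\times\tau\otimes\omega_\sigma^{-1})$ as a product of $\GL\times\GL$ Rankin–Selberg $L$-functions and invoking Jacquet–Shalika for holomorphy at $s=\tfrac12$. Third, I would decompose the $\rho$-$L$-function of the isobaric sum $\tau=\boxtimes\tau_i$: for $\rho=\wedge^2$, $L(s,\tau,\wedge^2\otimes\chi)=\prod_i L(s,\tau_i,\wedge^2\otimes\chi)\prod_{i<j}L(s,\tau_i\times\tau_j\otimes\chi)$, and the analogous identity for $\Sym^2$. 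Fourth, I would use the distinctness and $\omega_\sigma^{-1}$-self-duality of the $\tau_i$ to kill the off-diagonal factors' poles at $s=1$ and use Shahidi's result (each $\Sym^2$/$\wedge^2$ $L$-function has at most a simple pole at $s=1$) to bound the diagonal factors, giving (ii). Fifth, for (iii) I would carefully track the numerator zero coming from $L(s-\tfrac12,\sigma\times\tau\otimes\omega_\sigma^{-1})$ at $s=1$ and assemble the if-and-only-if statement; I would also invoke Proposition~\ref{prop-GGP-NormalizedIntertwining-HolomorphyNonvanishing} to guarantee $\NN$ contributes no pole and no zero at $s=1$, so that the order of pole of $\M$ is \emph{exactly} (number of polar diagonal factors) $-$ (order of vanishing of $L(\tfrac12,\sigma\times\tau\otimes\omega_\sigma^{-1})$).

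\textbf{Main obstacle.} The delicate point is (iii): one must show the order of pole is \emph{exactly} $r$ (not merely at most $r$) under the stated hypotheses, which requires that (a) the $r$ diagonal poles do not accidentally cancel against zeros of $L(2s,\tau,\rho\otimes\omega_\sigma^{-1})$ or the $\epsilon$-factors in the denominator — handled because those are finite and non-zero at $s=1$ and $\epsilon$ never vanishes — and (b) the numerator factor $L(s-\tfrac12,\sigma\times\tau\otimes\omega_\sigma^{-1})$ contributes a zero at $s=1$ of order \emph{precisely} one when $L(\tfrac12,\sigma\times\tau\otimes\omega_\sigma^{-1})=0$ and order zero otherwise. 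Point (b) is the heart of the matter: it uses that each Rankin–Selberg factor $L(s,\Sigma_j\times\tau_i\otimes\omega_\sigma^{-1})$ is non-zero at $s=\tfrac12$ unless the whole product vanishes there, and the vanishing is at most simple in $s$ — a standard but essential input. Once these non-cancellation facts are in place, the equivalence in (iii) follows by bookkeeping.
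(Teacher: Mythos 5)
Your proposal follows essentially the same route as the paper: transfer $\sigma$ via Theorem~\ref{thm-functoriality} and invoke Rankin--Selberg theory for general linear groups for (i), then read off the pole order of $\M(\omega_0,\tau\otimes\sigma,s)$ from \eqref{eq-GGP-M-intertwining-global} using Proposition~\ref{prop-GGP-NormalizedIntertwining-HolomorphyNonvanishing} for the normalized operator, the isobaric decomposition of $L(2s-1,\tau,\rho\otimes\omega_\sigma^{-1})$ into diagonal and off-diagonal pieces, and the distinctness plus $\omega_\sigma^{-1}$-self-duality of the $\tau_i$ to kill the off-diagonal poles. Your write-up is a bit more explicit than the paper's in two places --- naming Jacquet--Shalika for holomorphy at $s=\tfrac12$ in (i), and spelling out in (iii) that a zero of $L(s-\tfrac12,\sigma\times\tau\otimes\omega_\sigma^{-1})$ at $s=1$ is the only mechanism that can lower the pole order below $r$ --- but these are the same ideas the paper uses implicitly, so this is the paper's argument, not a genuinely different one.
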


\begin{proof}
We first prove (i). By Theorem~\ref{thm-functoriality}, $\sigma$ has a functorial transfer to an isobaric automorphic representation $\Pi_\sigma$ of $\GL_N(\A)$ for some $N$. It follows that
\begin{equation*}
L(s,\sigma\times\tau \otimes \omega_\sigma^{-1})= L(s, \Pi_\sigma \times 	\tau \otimes \omega_\sigma^{-1}) 
\end{equation*}
is holomorphic at $s=\frac{1}{2}$. 	

Now we consider (ii) and (iii). By Proposition~\ref{prop-GGP-NormalizedIntertwining-HolomorphyNonvanishing}, the normalized global intertwining operator $\NN(\omega_0,\tau\otimes\sigma,s)$ is holomorphic and non-zero for $\Re(s)\ge 1$. The terms in the denominator of the right-hand side of \eqref{eq-GGP-M-intertwining-global} do not have a pole at $s=1$. Thus it suffices to consider the possible poles of $L(2s-1, \tau, \rho\otimes\omega_{\sigma}^{-1})$ at $s=1$. Note that
\begin{equation}
\label{eq-GGP-LTauRho}
	L(2s-1, \tau, \rho\otimes\omega_{\sigma}^{-1})=\prod_{i=1}^r L(2s-1, \tau_i, \rho\otimes\omega_{\sigma}^{-1}) \cdot \prod_{1\le i <j\le r} L(2s-1, \tau_i\times\tau_j\otimes \omega_{\sigma}^{-1}).
\end{equation}
Since each $\tau_j$ is $\omega_\sigma^{-1}$-self-dual and $\tau_i\not=\tau_j$ for $i\not=j$, we conclude that
\begin{equation*}
\prod_{1\le i <j\le r} L(2s-1, \tau_i\times\tau_j\otimes \omega_{\sigma}^{-1})=\prod_{1\le i <j\le r} L(2s-1, \tau_i\times \wt\tau_j)	
\end{equation*}
is holomorphic at $s=1$. The $L$-function $L(2s-1, \tau_i, \rho\otimes\omega_{\sigma}^{-1}) $ may have a simple pole at $s=1$ for each $1\le i \le r$. 
Thus, $L(2s-1, \tau, \rho\otimes\omega_{\sigma}^{-1})$ can possibly have a pole at $s=1$ of order at most $r$. This implies the global intertwining operator $\M(\omega_0,\tau\otimes\sigma,s)$ can possibly have a pole at $s=1$ of order at most $r$. Thus the Eisenstein series $E(\cdot, f_{\tau,\sigma, s})$ can possibly have a pole at $s=1$ of order at most $r$. This proves (ii). Note that $L(s-\frac{1}{2},\sigma\times\tau \otimes \omega_\sigma^{-1})$ is holomorphic at $s=1$, but may have zero at $s=1$. It follows that the Eisenstein series $E(\cdot, f_{\tau,\sigma, s})$ has a pole at $s=1$ of order exactly $r$ if and only if $L(s,\tau_i,\rho\otimes\omega_\sigma^{-1})$ has a pole at $s=1$ for $i=1, \cdots, r$ and $L(s,\sigma\times\tau \otimes \omega_\sigma^{-1})$ is non-zero at $s=\frac{1}{2}$.
This proves (iii).
\end{proof}

When the Eisenstein series $E(\cdot, f_{\tau,\sigma, s})$ has a pole at $s=1$ of order $r$, we denote by $\Eisen_{\tau\otimes\sigma}$ the $r$-th iterated residue at $s=1$ of $E(\cdot, f_{\tau,\sigma, s})$.

\subsection{Reciprocal non-vanishing of Bessel periods}
\label{subsection-Reciprocal-Nonvanishing-Bessel}

Let $\pi$ be irreducible generic cuspidal automorphic representations of $\GSpin_{m^\prime-2k}(\A)$ such that $\omega_\pi \omega_\sigma=1$. By Theorem~\ref{thm-functoriality}, $\pi$ has a unique functorial transfer $\Pi$ as in \eqref{eq-GGP-Pi}. Let $\tau=\Pi\otimes\omega_\pi^{-1}$. Then
\begin{equation*}
	\tau=\tau_1\boxtimes \tau_2 \boxtimes \cdots \boxtimes \tau_{r}
\end{equation*}
with $\tau_i=\Pi_i\otimes\omega_\pi^{-1}$, 
is an irreducible unitary generic isobaric automorphic representation of $\GL_k(\A)$ associated to distinct $\tau_1, , \cdots, \tau_r$, with $k=m^\prime-2k$ if $m^\prime-2k$ is even and $k=m^\prime-2k-1$ if $m^\prime-2k$ is odd. Then for each $1\le i \le r$, we have
\begin{equation*}
\wt \tau_i=	\wt \Pi_i\otimes\omega_\pi =\Pi_i =\tau_i\otimes \omega_\pi= \tau_i\otimes \omega_{\sigma}^{-1}.
\end{equation*}
Hence $\tau_i$ is $\omega_\sigma^{-1}$-self-dual. 
Moreover, we have
\begin{equation}
\label{eq-GGP-L-SigmaPi}
L(s, \sigma\times\pi)=L(s, \sigma\times \Pi)=L(s, \sigma\times \tau\otimes \omega_\pi).	
\end{equation}

We have the following result on the reciprocal non-vanishing of Bessel periods for the pair $(\Eisen_{\tau\otimes\sigma}, \pi)$ and the pair $(\pi, \sigma^\prime)$, which is an analogue of \cite[Theorem 5.3]{JiangZhang2020Annals} for generic representations.
\begin{theorem}
\label{thm-GGP-Reciprocal}	
Let $\pi$, $\sigma$ be irreducible generic cuspidal automorphic representations of $\GSpin_{m^\prime-2\ell-1}(\A)$ and $\GSpin_{m^\prime-2k}(\A)$ respectively such that $\omega_\pi \omega_\sigma=1$. Let $\Pi$ be the functorial transfer of $\pi$ and let $\tau=\Pi\otimes\omega_\pi^{-1}$. Assume that the residue $\Eisen_{\tau\otimes\sigma^\prime}$ is non-zero. Then the Bessel period $\mathcal{B}^{\psi_{\ell},a}$ for $(\Eisen_{\tau\otimes\sigma^\prime}, \pi)$ is non-zero for some choice of data if and only if the Bessel period $\mathcal{B}^{\psi_{k-\ell-1},-a}$ for the pair $(\pi, \sigma)$ is non-zero for some choice of data. 
\end{theorem}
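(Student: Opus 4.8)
The plan is to deduce Theorem~\ref{thm-GGP-Reciprocal} from the global unfolding identity of Theorem~\ref{thm-global-unfolding-kgreaterl} together with the analysis of the poles of the Eisenstein series in Proposition~\ref{prop-GGP-EisensteinSeriesPoles}. The starting point is the global zeta integral $\mathcal{Z}(\phi_\pi, f_{\tau,\sigma^\prime,s})$ attached to the triple $(\pi,\tau,\sigma^\prime)$; here $\tau=\Pi\otimes\omega_\pi^{-1}$ is the twist of the functorial transfer of $\pi$, so that by the verification in Section~\ref{subsection-Reciprocal-Nonvanishing-Bessel} each $\tau_i$ is $\omega_{\sigma}^{-1}$-self-dual and the hypotheses of Proposition~\ref{prop-GGP-EisensteinSeriesPoles} are met. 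First I would take the $r$-th iterated residue at $s=1$ of both sides of the unfolding identity in Theorem~\ref{thm-global-unfolding-kgreaterl}. On the geometric side, the residue of $E(\cdot,f_{\tau,\sigma^\prime,s})$ is by definition $\Eisen_{\tau\otimes\sigma^\prime}$, so the $r$-th residue of $\mathcal{Z}(\phi_\pi,f_{\tau,\sigma^\prime,s})$ is precisely the Bessel period $\mathcal{B}^{\psi_{\ell},a}(\Eisen_{\tau\otimes\sigma^\prime},\phi_\pi)$, possibly up to collapsing the outer $g$-integration (one must check the residue commutes with the remaining integrations, which follows from absolute convergence away from the pole as in Lemma~\ref{lemma-global-integral-convergence}). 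On the spectral side, the unfolded expression is an integral over $R_{\ell,\beta-1}^{\eta_2}(\A)\backslash G(\A)$ of a product of the Bessel period $\mathcal{B}^{\psi_{\beta-1,-a}}(\pi(g)\phi_\pi,\cdot)$ against the Fourier coefficient $\mathcal{J}_{\ell,a}$ of the induced section; taking the residue there turns the section $f_{\mathcal{W}(\tau),\sigma^\prime,s}$ into its residual analogue. This exhibits $\mathcal{B}^{\psi_{\ell},a}(\Eisen_{\tau\otimes\sigma^\prime},\phi_\pi)$ as an explicit integral of the Bessel period $\mathcal{B}^{\psi_{k-\ell-1},-a}(\phi_\pi,\phi_\sigma)$-type quantity for the pair $(\pi,\sigma)$.

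The second half of the argument is to run the equivalence in both directions. For the direction "$\mathcal{B}^{\psi_{k-\ell-1},-a}$ for $(\pi,\sigma)$ non-vanishing $\Rightarrow$ $\mathcal{B}^{\psi_{\ell},a}$ for $(\Eisen_{\tau\otimes\sigma^\prime},\pi)$ non-vanishing": by the uniqueness of Bessel models (\cite{Yan2025}) the global period factors as a product of local Bessel functionals, and on the spectral side of the residued unfolding identity these are precisely the local functionals appearing in the Euler factorization \eqref{eq-Euler-product-factorization-k-greater-than-l}. Non-vanishing of the global Bessel period for $(\pi,\sigma)$ means the local functionals $\mathcal{B}_\nu$ are all nonzero; one then invokes the non-vanishing of the local zeta integrals at all places (the main local input referenced in Section~\ref{section-GGP}, for ramified and archimedean $\nu$, combined with Theorem~\ref{thm-local-unramified-computation} at unramified places, noting that $\tau=\Pi\otimes\omega_\pi^{-1}$ is $\omega_\sigma^{-1}$-self-dual so the relevant $L$-values/residues behave as required) to conclude that the residue of $\mathcal{Z}$ at $s=1$ is a nonzero element of the global Hom space, and hence $\mathcal{B}^{\psi_{\ell},a}(\Eisen_{\tau\otimes\sigma^\prime},\phi_\pi)\neq 0$ for a suitable choice of data. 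For the converse direction, if $\mathcal{B}^{\psi_{k-\ell-1},-a}$ for $(\pi,\sigma)$ vanishes identically, then by Corollary~\ref{corollary-vanishing-kgreaterl} the global zeta integral $\mathcal{Z}(\phi_\pi,f_{\tau,\sigma^\prime,s})$ vanishes for all data and all $s$, hence so does its residue, which is $\mathcal{B}^{\psi_{\ell},a}(\Eisen_{\tau\otimes\sigma^\prime},\phi_\pi)$; this gives the contrapositive of the remaining implication.

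The step I expect to be the main obstacle is the bookkeeping around taking the $r$-th iterated residue and showing it interacts cleanly with the Euler factorization and with the outer integral over $R_{\ell,\beta-1}^{\eta_2}(\A)\backslash G(\A)$. One has to be careful that (a) the residue can be pulled inside the convergent integral defining the unfolded expression, (b) the local sections after residue still span enough of the residual representation to realize an arbitrary nonzero vector of the (one-dimensional, by \cite{Yan2025}) global Hom space, and (c) the $L$-factor/epsilon-factor normalization in \eqref{eq-GGP-M-intertwining-global} together with Proposition~\ref{prop-GGP-NormalizedIntertwining-HolomorphyNonvanishing} indeed pins down the order of the pole as exactly $r$ under the nonvanishing hypothesis on $\Eisen_{\tau\otimes\sigma^\prime}$, so that the residue is genuinely the leading term and not accidentally zero. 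The reference \cite[Theorem 5.3]{JiangZhang2020Annals} for orthogonal and unitary groups provides the template, and the $\GSpin$ case should follow the same pattern once the central-character twists $\omega_\pi,\omega_\sigma$ are tracked consistently through the functorial transfer and the $L$-function identity \eqref{eq-GGP-L-SigmaPi}; the genuinely new technical point relative to the classical case is the presence of the twisted symmetric/exterior square factors $L(2s-1,\tau,\rho\otimes\omega_\sigma^{-1})$, whose poles at $s=1$ (controlled by Theorem~\ref{thm-functoriality}) are exactly what produces the order-$r$ pole.
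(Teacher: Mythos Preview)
Your proposal is correct in its overall structure and identifies all the right inputs: Corollary~\ref{corollary-vanishing-kgreaterl} for one direction, the Euler factorization together with local non-vanishing (Proposition~\ref{prop-GGP-local-integral-non-zero}) and the unramified computation for the other, and the pole-order analysis of Proposition~\ref{prop-GGP-EisensteinSeriesPoles}. This is essentially the paper's approach.

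One organizational point is worth noting. You frame the harder direction as ``take the $r$-th iterated residue at $s=1$ of both sides of the unfolding identity'' and then worry about justifying the interchange of residue with the outer integral and with the Euler product. The paper sidesteps this entirely: the unfolding identity of Theorem~\ref{thm-global-unfolding-kgreaterl} is only established for $\Re(s)\gg 0$, and the paper never tries to take a residue on the unfolded side. Instead, it uses the Euler product to write the globally meromorphic object $\mathcal{Z}(\phi_\pi,f_{\tau,\sigma',s})$ as $\LL(s,\tau,\pi,\sigma;\rho)\cdot\mathcal{Z}^*_S(\phi_\pi,f_{\tau,\sigma',s})$ (equation~\eqref{eq-GGP-NormalizedGlobalIntegral}), shows $\LL$ has a pole of order exactly $r$ at $s=1$ (Lemma~\ref{lemma-GGP-L-function-pole}, not the intertwining-operator identity~\eqref{eq-GGP-M-intertwining-global} you cite), and shows $\mathcal{Z}^*_S$ is holomorphic and nonzero at $s=1$ for suitable data (Propositions~\ref{prop-GGP-NormalizedLocalIntegral} and~\ref{prop-GGP-local-integral-non-zero-at-center}). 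This forces $\mathcal{Z}$ itself to have a pole of order $r$ at $s=1$; since the Eisenstein series has a pole of order at most $r$, the $r$-th residue of $\mathcal{Z}$---which by definition is $\mathcal{B}^{\psi_{\ell,a}}(\Eisen_{\tau\otimes\sigma'},\phi_\pi)$---must be nonzero. So the residue is taken only once, on the original Bessel-period side where meromorphic continuation is already known, and your concerns (a)--(c) dissolve.
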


\begin{proof}[Proof of Theorem~\ref{thm-GGP-Reciprocal}: forward direction]
Suppose that the Bessel period $\mathcal{B}^{\psi_{\ell},a}$ for $(\Eisen_{\tau\otimes\sigma^\prime}, \pi)$ is non-zero. Then by replacing the residue $\Eisen_{\tau\otimes\sigma^\prime}$ with the Eisenstein series $E(\cdot, f_{\tau, \sigma^\prime, s})$, we obtain that the global zeta integral $\mathcal{Z}(\phi_\pi, f_{\tau,\sigma^\prime, s})$ is not identically zero for $\Re(s)\gg 0$. By Corollary~\ref{corollary-vanishing-kgreaterl}, the Bessel period $\mathcal{B}^{\psi_{k-\ell-1},-a}$ for the pair $(\pi, \sigma)$ is non-zero for some choice of data.
\end{proof}

The proof of the converse direction of Theorem~\ref{thm-GGP-Reciprocal} is more involved, and requires that we know enough analytic properties of the local zeta integral at the ramified and archimedean places. 
We will study the local zeta integrals in the next section, and prove the converse direction of Theorem~\ref{thm-GGP-Reciprocal} in Section~\ref{subsection-GGP-Reciprocal-converse}.

\subsection{Normalization of local zeta integrals}
\label{subsection-GGP-Normalization-Local-Zeta}

In this section we continue our discussion on the global and local zeta integrals from Section~\ref{section-global-zeta-integrals} and Section~\ref{section-local-zeta-integrals}. Recall that $k>\ell$, so we are in the situation of Section~\ref{subsection-unfolding-kgm}.
Recall that
\begin{equation*}
	\mathcal{Z}(\phi_\pi, f_{\tau,\sigma^\prime, s})=\int_{R_{\ell,k-\ell-1}^{\eta_2}(\A)\backslash G(\A) } \mathcal{B}^{\psi_{k-\ell-1,-a}}(\pi(g) \phi_\pi, \mathcal{J}_{\ell,a}(R( \epsilon_{0,\beta} \eta_2 g) {f_{\mathcal{W}(\tau,\psi_{Z_k,a}^{-1}),\sigma^\prime,s}})) dg 
\end{equation*}
and
\begin{equation*}
	\mathcal{Z}(\phi_\pi, f_{\tau,\sigma^\prime, s})=  \prod_{\nu} \mathcal{Z}_{\nu}(\phi_{\pi_{\nu}},  f_{\mathcal{W}(\tau_{\nu}),\sigma^\prime_{\nu},s})= \LL^S(s, \tau,\pi,\sigma;\rho) \cdot \prod_{\nu\in S} \mathcal{Z}_{\nu}(\phi_{\pi_{\nu}},  f_{\mathcal{W}(\tau_{\nu}),\sigma^\prime_{\nu},s}).
\end{equation*}
where we denote
\begin{equation*}
\LL(s,\tau_\nu,\pi_\nu,\sigma_\nu;\rho):=	 \frac{L(s,\pi_\nu \times \tau_\nu)}{L(s+\frac{1}{2},\sigma_\nu  \times \tau_\nu \otimes\omega_{\pi_\nu}) L\left(2s, \tau_\nu, \rho \otimes \omega_{\pi_\nu}\right)}
\end{equation*}
at each place $\nu$, and 
\begin{equation*}
\LL^S(s, \tau,\pi,\sigma;\rho):=\prod_{\nu\not\in S} \LL(s,\tau_\nu,\pi_\nu,\sigma_\nu;\rho).
\end{equation*}
Here we recall that $S$ is a finite set of places consisting of all ramified places of relevant data and all archimedean places, and the data is chosen such that for $\nu\not\in S$, all data are unramified and normalized as in Section~\ref{subsection-unramified-computation}.

We normalize the local zeta integral by
\begin{equation}
\label{eq-GGP-NormalizedLocalIntegral}
	\mathcal{Z}^*_{\nu}(\phi_{\pi_{\nu}},  f_{\mathcal{W}(\tau_{\nu}),\sigma^\prime_{\nu},s})= \frac{\mathcal{Z}_{\nu}(\phi_{\pi_{\nu}},  f_{\mathcal{W}(\tau_{\nu}),\sigma^\prime_{\nu},s})}{\LL(s,\tau_\nu,\pi_\nu,\sigma_\nu;\rho)}.
\end{equation}
Then
\begin{equation*}
\prod_{\nu\in S} \mathcal{Z}_{\nu}(\phi_{\pi_{\nu}},  f_{\mathcal{W}(\tau_{\nu}),\sigma^\prime_{\nu},s}) = \prod_{\nu\in S}	\mathcal{Z}^*_{\nu}(\phi_{\pi_{\nu}},  f_{\mathcal{W}(\tau_{\nu}),\sigma^\prime_{\nu},s}) \LL(s,\tau_\nu,\pi_\nu,\sigma_\nu;\rho).
\end{equation*}
Put 
\begin{equation}
\label{eq-GGP-L-global}
\LL(s, \tau,\pi,\sigma;\rho)= \frac{L(s,\pi \times \tau)}{L(s+\frac{1}{2},\sigma  \times \tau \otimes \omega_\pi) L\left(2s, \tau, \rho \otimes \omega_{\pi}\right)} .
\end{equation}
Then we have
\begin{equation}
\label{eq-GGP-NormalizedGlobalIntegral}
\begin{split}
	\mathcal{Z}(\phi_\pi, f_{\tau,\sigma^\prime, s}) =\LL(s, \tau,\pi,\sigma;\rho) \cdot \mathcal{Z}_S^*(\phi_\pi, f_{\tau,\sigma^\prime, s}) 
\end{split}
\end{equation}
where
\begin{equation*}
\mathcal{Z}^*_S(\phi_\pi, f_{\tau,\sigma^\prime, s}) =\prod_{\nu\in S} \mathcal{Z}^*_{\nu}(\phi_{\pi_{\nu}},  f_{\mathcal{W}(\tau_{\nu}),\sigma^\prime_{\nu},s}).	
\end{equation*}

\begin{lemma}
\label{lemma-GGP-L-function-pole}
Let $\pi$, $\sigma$ be irreducible generic cuspidal automorphic representations of $\GSpin_{m^\prime-2\ell-1}(\A)$ and $\GSpin_{m^\prime-2k}(\A)$ respectively such that $\omega_\pi \omega_\sigma=1$. Let $\Pi$ be the functorial transfer of $\pi$ and let $\tau=\Pi\otimes\omega_\pi^{-1}$. Then 
$\LL(s, \tau,\pi,\sigma;\rho)$ has a pole at $s=1$ of order $r$.
\end{lemma}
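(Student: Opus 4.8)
\textbf{Proof plan for Lemma~\ref{lemma-GGP-L-function-pole}.} The plan is to analyze each of the three factors in the expression \eqref{eq-GGP-L-global} for $\LL(s, \tau,\pi,\sigma;\rho)$ separately near $s=1$, using the explicit description of $\tau$ coming from the functorial transfer $\Pi$ of $\pi$ in Theorem~\ref{thm-functoriality}. Write $\tau = \tau_1 \boxtimes \cdots \boxtimes \tau_r$ with $\tau_i = \Pi_i \otimes \omega_\pi^{-1}$, where the $\Pi_i$ are the distinct cuspidal constituents of $\Pi$; recall from Section~\ref{subsection-Reciprocal-Nonvanishing-Bessel} that each $\tau_i$ is $\omega_\sigma^{-1}$-self-dual (equivalently, $\omega_\pi$-self-dual since $\omega_\sigma = \omega_\pi^{-1}$), and that $L(s, \tau_i, \rho \otimes \omega_\pi)$ has a pole at $s=1$ for each $i$ (this is exactly the pole condition in Theorem~\ref{thm-functoriality}, translated through $\tau_i = \Pi_i \otimes \omega_\pi^{-1}$ and $\rho \in \{\wedge^2, \Sym^2\}$).

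First I would handle the denominator factor $L(2s, \tau, \rho \otimes \omega_\pi)$. Exactly as in \eqref{eq-GGP-LTauRho}, this factors as $\prod_{i=1}^r L(2s, \tau_i, \rho \otimes \omega_\pi) \cdot \prod_{1\le i<j\le r} L(2s, \tau_i \times \tau_j \otimes \omega_\pi)$. The Rankin--Selberg factors $L(2s, \tau_i \times \tau_j \otimes \omega_\pi) = L(2s, \tau_i \times \widetilde{\tau_j})$ (using $\widetilde{\tau_j} = \tau_j \otimes \omega_\pi$) are holomorphic and non-zero at $s=1$ because $\tau_i \not\cong \tau_j$ for $i\neq j$ and $\Re(2s) = 2 > 1$. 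Each factor $L(2s, \tau_i, \rho \otimes \omega_\pi)$ is holomorphic and non-zero at $s=1$ since its possible pole is at $2s=1$, i.e. $s = 1/2$, not $s=1$. Hence the denominator factor $L(2s, \tau, \rho\otimes\omega_\pi)$ contributes neither a pole nor a zero at $s=1$.

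Next I would handle the numerator $L(s, \pi \times \tau)$ and the other denominator factor $L(s+\tfrac12, \sigma\times\tau\otimes\omega_\pi)$. Using $L(s,\pi\times\tau) = L(s, \Pi \times \tau) = L(s, \Pi \times \Pi \otimes \omega_\pi^{-1})$ and the fact that $\Pi \cong \widetilde\Pi \otimes \omega_\pi$, this is (up to an automorphic $L$-function of a general linear group) an $L$-function of the form $L(s, \Pi \times \widetilde\Pi)$, whose order of pole at $s=1$ equals the number of distinct cuspidal constituents, namely $r$. Concretely, $L(s, \pi\times\tau) = \prod_{i,j} L(s, \tau_i \times \Pi_j) = \prod_{i,j} L(s, \tau_i \times \tau_j \otimes \omega_\pi) = \prod_{i,j} L(s, \tau_i \times \widetilde{\tau_j})$, and among the $r^2$ factors exactly the $r$ diagonal ones $L(s, \tau_i \times \widetilde{\tau_i})$ have a simple pole at $s=1$ while the off-diagonal ones are holomorphic and non-zero there (as $\tau_i \not\cong \tau_j$); this gives a pole of order exactly $r$. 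For the denominator factor $L(s+\tfrac12, \sigma\times\tau\otimes\omega_\pi)$, evaluated near $s=1$ this is $L(\tfrac32 + (s-1), \sigma\times\tau\otimes\omega_\pi)$; since $\sigma\times\tau\otimes\omega_\pi$ transfers to an isobaric representation of a general linear group (Theorem~\ref{thm-functoriality} applied to $\sigma$, combined with the isobaric structure of $\tau$), this $L$-function is holomorphic and non-zero at $\Re = 3/2 > 1$, so it contributes nothing at $s=1$. Combining the three factors, $\LL(s,\tau,\pi,\sigma;\rho)$ has a pole of order exactly $r$ at $s=1$.

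\textbf{Main obstacle.} The only delicate point is ensuring that none of the ``holomorphic and non-zero'' claims at $s=1$ secretly fails — in particular that the numerator's pole of order $r$ is not partially cancelled by a zero of the same order somewhere, and that $L(s+\tfrac12, \sigma\times\tau\otimes\omega_\pi)$ does not have an unexpected pole at $s=1$ (i.e. at the edge point $\Re = 3/2$, which is safely inside the region of absolute convergence, so this is fine). The cleanest way to make all of this airtight is to reduce everything to standard Rankin--Selberg theory on general linear groups via the functorial transfers of $\pi$ and $\sigma$, invoking the Jacquet--Shalika non-vanishing on $\Re(s)=1$ and the classification of poles of $L(s, \Pi\times\Pi')$ in terms of shared cuspidal constituents; once the factorization into $\GL\times\GL$ Rankin--Selberg $L$-functions is written down, each of the $r^2 + \binom{r}{2}$-type factors is individually controlled and the count of the pole order at $s=1$ is immediate.
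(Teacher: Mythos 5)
Your proposal is correct and follows essentially the same strategy as the paper: factor each of the three $L$-functions in $\LL(s,\tau,\pi,\sigma;\rho)$ along the isobaric decomposition of $\tau$ and count poles and zeros at $s=1$ factor by factor, using the functorial transfers of $\pi$ and $\sigma$. The only cosmetic difference is in the numerator, where the paper writes $L(s,\pi\times\tau)=L(s,\tau,\wedge^2\otimes\omega_\pi)L(s,\tau,\Sym^2\otimes\omega_\pi)$ and then applies the decomposition \eqref{eq-GGP-LTauRho}, while you pass directly to the $r^2$ Rankin--Selberg factors $\prod_{i,j}L(s,\tau_i\times\widetilde{\tau_j})$ and invoke Jacquet--Shalika on the diagonal; both yield a pole of order exactly $r$.
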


\begin{proof}
Since  $\Pi$ is the functorial transfer of $\pi$ and $\tau=\Pi\otimes\omega_\pi^{-1}$, we have
\begin{equation*}
L(s, \pi\times\tau)=L(s,\Pi\times\tau)=L(s, \tau\otimes\omega_\pi \times\tau)=L(s, \tau, \wedge^2\otimes\omega_\pi) L(s, \tau, \Sym^2\otimes\omega_\pi).	
\end{equation*}
Using the identity~\eqref{eq-GGP-LTauRho} again, we see that $L(s, \pi\times\tau)$ has a pole at $s=1$ of order $r$. Let $\Pi_\sigma$ be the functorial transfer of $\sigma$.
Then
\begin{equation*}
L(s+\frac{1}{2}, \sigma\times\tau\otimes \omega_\pi) = \prod_{i=1}^r L(s+\frac{1}{2}, \Pi_\sigma\times\tau_i\otimes \omega_\pi)	
\end{equation*}
is holomorphic and non-zero at $s=1$. It is clear that
\begin{equation*}
	L(2s, \tau, \rho\otimes\omega_{\sigma}^{-1})=\prod_{i=1}^r L(2s, \tau_i, \rho\otimes\omega_{\sigma}^{-1}) \cdot \prod_{1\le i <j\le r} L(2s, \tau_i\times\tau_j\otimes \omega_{\sigma}^{-1}).
\end{equation*}
is holomorphic and non-zero at $s=1$. By \eqref{eq-GGP-L-global}, $\LL(s, \tau,\pi,\sigma;\rho)$ has a pole at $s=1$ of order $r$. This proves the lemma.
\end{proof}

\begin{proposition}
\label{prop-GGP-NormalizedLocalIntegral}
Let $\pi, \tau, \sigma$ be as in Theorem~\ref{thm-GGP-Reciprocal}. Then the following statements hold.
\begin{enumerate}
\item[(i)] $\mathcal{Z}^*_S(\phi_\pi, f_{\tau,\sigma^\prime, s})$ is meromorphic in $s\in \C$ for any choice of the smooth sections $f_{\tau,\sigma^\prime,s}$.
\item[(ii)] $\mathcal{Z}^*_S(\phi_\pi, f_{\tau,\sigma^\prime, s})$ is holomorphic at $s=1$ for any choice of the smooth sections $f_{\tau,\sigma^\prime,s}$.
\end{enumerate}
\end{proposition}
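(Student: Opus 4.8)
The plan is to reduce both assertions to the global identity
\[
\mathcal{Z}(\phi_\pi, f_{\tau,\sigma^\prime, s}) = \LL(s, \tau,\pi,\sigma;\rho)\cdot \mathcal{Z}^*_S(\phi_\pi, f_{\tau,\sigma^\prime, s}),
\]
which holds for $\Re(s)\gg 0$ by \eqref{eq-GGP-NormalizedGlobalIntegral} and, being (multi)linear in the sections, extends from factorizable sections to arbitrary smooth ones. The analytic inputs I would use are all already in hand: by Lemma~\ref{lemma-global-integral-convergence} the left-hand side is meromorphic on $\C$ with poles contained in the pole set of the Eisenstein series; by Proposition~\ref{prop-GGP-EisensteinSeriesPoles}(ii) the Eisenstein series $E(\cdot, f_{\tau,\sigma^\prime,s})$ has at most a pole of order $r$ at $s=1$; the function $\LL(s,\tau,\pi,\sigma;\rho)$ is a ratio of $L$-functions, hence meromorphic on $\C$ and not identically zero (it is an Euler product convergent in a right half plane); and by Lemma~\ref{lemma-GGP-L-function-pole} it has a pole at $s=1$ of order exactly $r$.

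For (i): since $\LL(s,\tau,\pi,\sigma;\rho)$ is meromorphic and not identically zero and $\mathcal{Z}(\phi_\pi, f_{\tau,\sigma^\prime,s})$ is meromorphic on $\C$, their quotient is meromorphic on $\C$; by the displayed identity this quotient coincides with $\mathcal{Z}^*_S(\phi_\pi, f_{\tau,\sigma^\prime,s})$ for $\Re(s)\gg 0$, which furnishes the desired meromorphic continuation of the latter. Alternatively, one may first establish the meromorphic continuation of each local factor $\mathcal{Z}^*_\nu$, $\nu\in S$, by twisting $\pi$ and $\sigma$ to have trivial central character and reducing to the special orthogonal case exactly as in the proof of Theorem~\ref{thm-local-unramified-computation}; then $\mathcal{Z}^*_S$ is a finite product of meromorphic functions.

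For (ii): first I would observe that the order of the pole of $\mathcal{Z}(\phi_\pi, f_{\tau,\sigma^\prime,s})$ at $s=1$ is at most that of the pole of $E(\cdot, f_{\tau,\sigma^\prime,s})$ at $s=1$. This rests on the same considerations as Lemma~\ref{lemma-global-integral-convergence}: the finitely many principal-part terms of $E(g, f_{\tau,\sigma^\prime,s})$ at $s=1$ are automorphic forms, hence of uniformly moderate growth in $g$ modulo $C_G^0$, the quotient $N_{\ell}(F)\backslash N_{\ell}(\A)$ is compact, and $\phi_\pi$ is rapidly decreasing modulo $C_G^0$; so the Bessel period $\mathcal{B}^{\psi_{\ell,a}}(E(\cdot, f_{\tau,\sigma^\prime,s}),\phi_\pi)$ admits, term by term, a Laurent expansion at $s=1$ whose principal part has order at most $r$ by Proposition~\ref{prop-GGP-EisensteinSeriesPoles}(ii). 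Since $\LL(s,\tau,\pi,\sigma;\rho)$ has a pole of order exactly $r$ at $s=1$, the function $(s-1)^{-r}/\LL(s,\tau,\pi,\sigma;\rho)$ is holomorphic at $s=1$, while $(s-1)^{r}\mathcal{Z}(\phi_\pi, f_{\tau,\sigma^\prime,s})$ is holomorphic at $s=1$ by the preceding bound; hence their product, which equals $\mathcal{Z}^*_S(\phi_\pi, f_{\tau,\sigma^\prime,s})$ by the displayed identity, is holomorphic at $s=1$.

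I expect the only point needing genuine care to be the pole-order transfer from $E(\cdot, f_{\tau,\sigma^\prime,s})$ to its Bessel period, i.e.\ the interchange of ``extracting the Laurent coefficients at $s=1$'' with the Bessel period integration; this is standard once one invokes the uniform moderate-growth bounds for the principal-part automorphic forms of the Eisenstein series, but it is the step where an actual analytic (rather than purely formal) argument enters. Everything else is bookkeeping built on Proposition~\ref{prop-GGP-EisensteinSeriesPoles} and Lemma~\ref{lemma-GGP-L-function-pole}, both already established.
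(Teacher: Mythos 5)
Your argument is correct and follows essentially the same route as the paper: part (i) is obtained by combining the meromorphy of $\mathcal{Z}(\phi_\pi, f_{\tau,\sigma^\prime,s})$ from Lemma~\ref{lemma-global-integral-convergence} with the meromorphy of $\LL(s,\tau,\pi,\sigma;\rho)$ and the identity \eqref{eq-GGP-NormalizedGlobalIntegral}, and part (ii) is obtained by bounding the order of the pole of $\mathcal{Z}$ at $s=1$ by $r$ via Proposition~\ref{prop-GGP-EisensteinSeriesPoles} and cancelling against the exact-order-$r$ pole of $\LL$ from Lemma~\ref{lemma-GGP-L-function-pole}. Your explicit justification of the pole-order transfer from $E(\cdot,f_{\tau,\sigma^\prime,s})$ to its Bessel period (via uniform moderate growth of the Laurent coefficients and compactness of $N_\ell(F)\backslash N_\ell(\A)$) is a point the paper leaves implicit, and the alternative for (i) via local meromorphic continuation is not pursued there, but neither changes the substance of the argument.
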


\begin{proof}
By Lemma~\ref{lemma-global-integral-convergence}, $\mathcal{Z}(\phi_\pi, f_{\tau,\sigma^\prime, s})$ is meromorphic for any choice of smooth sections $f_{\tau,\sigma^\prime, s}$. Also, since $\pi$ and $\sigma$ are generic, they have functorial transfer to the general linear groups and hence  the $L$-functions $L(s,\pi \times \tau)$ and $L(s+\frac{1}{2},\sigma  \times \tau \otimes \omega_\pi)$ are meromorphic. It follows that $\LL(s, \tau,\pi,\sigma;\rho)$ is meromorphic. By \eqref{eq-GGP-NormalizedGlobalIntegral}, we conclude that $\mathcal{Z}^*_S(\phi_\pi, f_{\tau,\sigma^\prime, s})$ is meromorphic in $s\in \C$ for any choice of the smooth sections $f_{\tau,\sigma^\prime,s}$. This proves (i).

Recall that  $\Pi$ is the functorial transfer of $\pi$ and $\tau=\Pi\otimes\omega_\pi^{-1}$. Then
\begin{equation*}
L(s, \pi\times\tau)=L(s,\Pi\times\tau)=L(s, \tau\otimes\omega_\pi \times\tau)=L(s, \tau, \wedge^2\otimes\omega_\pi) L(s, \tau, \Sym^2\otimes\omega_\pi).	
\end{equation*}
Using the identity~\eqref{eq-GGP-LTauRho} again, we see that $L(s, \pi\times\tau)$ has a pole at $s=1$ of order $r$. Let $\Pi_\sigma$ be the functorial transfer of $\sigma$.
Then
\begin{equation*}
L(s+\frac{1}{2}, \sigma\times\tau\otimes \omega_\pi) = \prod_{i=1}^r L(s+\frac{1}{2}, \Pi_\sigma\times\tau_i\otimes \omega_\pi)	
\end{equation*}
is holomorphic and non-zero at $s=1$. It is clear that
\begin{equation*}
	L(2s, \tau, \rho\otimes\omega_{\sigma}^{-1})=\prod_{i=1}^r L(2s, \tau_i, \rho\otimes\omega_{\sigma}^{-1}) \cdot \prod_{1\le i <j\le r} L(2s, \tau_i\times\tau_j\otimes \omega_{\sigma}^{-1}).
\end{equation*}
is holomorphic and non-zero at $s=1$. By \eqref{eq-GGP-L-global}, $\LL(s, \tau,\pi,\sigma;\rho)$ has a pole at $s=1$ of order $r$. This proves (iii).

Using the identity \eqref{eq-GGP-NormalizedGlobalIntegral}, in order to show that $\mathcal{Z}^*_S(\phi_\pi, f_{\tau,\sigma^\prime, s})$ is holomorphic at $s=1$ for any choice of the smooth sections $f_{\tau,\sigma^\prime,s}$, it suffices to show that $\mathcal{Z}(\phi_\pi, f_{\tau,\sigma^\prime, s})$ has a pole at $s=1$ of order at most $r$ for any smooth sections. By Proposition~\ref{prop-GGP-EisensteinSeriesPoles}, the Eisenstein series $E(\cdot, f_{\tau,\sigma, s})$ can possibly have a pole at $s=1$ of order at most $r$. Thus $\mathcal{Z}(\phi_\pi, f_{\tau,\sigma^\prime, s})$ can have a pole at $s=1$ of order at most $r$. This proves (ii).
\end{proof}

As an assumption in the converse direction of Theorem~\ref{thm-GGP-Reciprocal}, we now assume that the Bessel period $\mathcal{B}^{\psi_{k-\ell-1},-a}$ for $(\pi, \sigma)$ is non-zero. We recall from Section~\ref{subsection-unfolding-kgm} that for $\phi_\pi\in V_\pi$ and $f_{\tau,\sigma^\prime, s}\in \Ind_{P_{k}(\A)}^{H(\A)}(\tau|\cdot|^{s-\frac{1}{2}} \otimes \sigma^\prime)$, the global zeta integral $\mathcal{Z}(\phi_\pi, f_{\tau,\sigma^\prime, s})$ unfolds to 
$$\int_{R_{\ell,k-\ell-1}^{\eta_2}(\A)\backslash G(\A) } \mathcal{B}^{\psi_{k-\ell-1,-a}}(\pi(g) \phi_\pi, \mathcal{J}_{\ell,a}(R( \epsilon_{0,\beta} \eta_2 g) {f_{\mathcal{W}(\tau,\psi_{Z_k,a}^{-1}),\sigma^\prime,s}})) dg,$$
where $\mathcal{B}^{\psi_{k-\ell-1,-a}}$ is a Bessel period for $(\pi, \sigma)$, and  $\mathcal{J}_{\ell,a}$ denotes a certain Fourier coefficient as given in \eqref{eq-unfolding-J-l-a}. 
For each place $\nu$, we let $\bb_\nu$ be a non-zero functional in 
$$\Hom_{R_{\ell,k-\ell-1}^{\eta_2}(F)} (\pi\otimes \sigma, \psi_{k-\ell-1,-a}),$$
which is unique, up to scalar. Let 
$$f_{\mathcal{W}(\tau_\nu),\sigma^\prime_\nu,s}\in \rho_{\tau_\nu,\sigma_\nu^\prime,s}=	\Ind_{P_{k}(F_\nu)}^{H(F_\nu)}(\mathcal{W}(\tau_\nu)|\cdot|^{s-\frac{1}{2}} \otimes \sigma^\prime_\nu).$$
Recall that the local zeta integral at $\nu$ is 
\begin{equation}
\label{eq-RamifiedNonvanishing-integral}
	\mathcal{Z}(v_{\pi_\nu},	f_{\mathcal{W}(\tau_\nu),\sigma_\nu^\prime, s}) =
	\int_{R_{\ell,k-\ell-1}^{\eta_2}(F_\nu)\backslash G(F_\nu) } \int_{U_{k,\eta_2}^-(F_\nu)} \bb_\nu( \pi_\nu(g) v_{\pi_\nu},   {f_{\mathcal{W}(\tau_\nu),\sigma_\nu^\prime,s}}(u \epsilon_{\beta} \eta_2 g ) ) \psi_{U_{k,\eta_2}^-}(u) du  dg.
\end{equation}
At an unramified finite place $\nu\not\in S$, we normalize $\bb_\nu$ such that 
\begin{equation*}
\bb_\nu	(v_{\pi_\nu}, v_{\sigma_\nu})=1,
\end{equation*}
where $v_{\pi_\nu}\in V_{\pi_\nu}, v_{\sigma_\nu}\in V_{\sigma_\nu}$ are spherical vectors, and we normalize $f_{\mathcal{W}(\tau_\nu),\sigma^\prime_\nu,s}$ such that for all $a\in \GL_k(F_\nu)$, $f_{\mathcal{W}(\tau_\nu),\sigma_\nu^\prime, s}(e,a)=W_{\tau_\nu}(a) v_{\sigma_\nu^\prime}$, where $e\in H(F_\nu)$ is the identity element, and $W_{\tau_\nu}\in \mathcal{W}(\tau_\nu)$ is the unramified normalized Whittaker function such that $W_{\tau_\nu}(I_k)=1$.
The normalization of the data at a place $\nu\in S$ will be given in the proof of Proposition~\ref{prop-GGP-local-integral-non-zero} below (see \eqref{eq-RamifiedNonvanishing-3}). 
Denote the finite product of the local zeta integrals over places in $S$ by
\begin{equation*}
\mathcal{Z}_S(\phi_\pi, f_{\tau,\sigma^\prime,s})	= \prod_{\nu\in S} \mathcal{Z}(v_{\pi_\nu},	f_{\mathcal{W}(\tau_\nu),\sigma_\nu^\prime, s}).
\end{equation*}

\begin{proposition}
\label{prop-GGP-local-integral-non-zero}
Let $\pi, \tau, \sigma$ be as in Theorem~\ref{thm-GGP-Reciprocal}. Let $s=s_0\in \C$ be fixed. If for every $\nu\in S$, the local pairing $\bb_\nu(v_{\pi_\nu}, v_{\sigma_\nu} )$ is non-zero for some $v_{\pi_\nu}\in V_{\pi_\nu}, v_{\sigma_\nu}\in V_{\sigma_\nu}$, then there exists a collection of local sections $f_{\mathcal{W}(\tau_\nu),\sigma^\prime_\nu, s}$ with $\nu$ running in $S$, such that the finite product of local zeta integrals over $S$, $\mathcal{Z}_S(\phi_\pi, f_{\tau,\sigma^\prime,s})$, is non-zero at $s=s_0$.
\end{proposition}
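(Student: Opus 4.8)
The plan is to reduce the statement to a local non-vanishing result, handling the archimedean and non-archimedean places uniformly. Since $\mathcal{Z}_S(\phi_\pi, f_{\tau,\sigma^\prime,s}) = \prod_{\nu\in S}\mathcal{Z}(v_{\pi_\nu}, f_{\mathcal{W}(\tau_\nu),\sigma_\nu^\prime,s})$ is a finite product, it suffices to show that for each $\nu \in S$ there is a choice of local section $f_{\mathcal{W}(\tau_\nu),\sigma^\prime_\nu, s}$ and a vector $v_{\pi_\nu}$ making $\mathcal{Z}(v_{\pi_\nu}, f_{\mathcal{W}(\tau_\nu),\sigma^\prime_\nu, s})$ non-zero at $s=s_0$. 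The hypothesis gives us, at each such $\nu$, vectors $v_{\pi_\nu}, v_{\sigma_\nu}$ with $\mathfrak{b}_\nu(v_{\pi_\nu}, v_{\sigma_\nu}) \neq 0$; this is exactly the local input needed to ``seed'' the construction. First I would recall from Section~\ref{subsection-unfolding-kgm} the explicit shape of the local integral \eqref{eq-RamifiedNonvanishing-integral}, which over $\nu$ is an iterated integral over $R_{\ell,k-\ell-1}^{\eta_2}(F_\nu)\backslash G(F_\nu)$ and over $U_{k,\eta_2}^-(F_\nu)$ of the Bessel functional $\mathfrak{b}_\nu$ applied to $\pi_\nu(g)v_{\pi_\nu}$ against the section evaluated at $u\,\epsilon_{0,\beta}\eta_2 g$.

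The key step is a standard ``bump function'' argument. Because the local section $f_{\mathcal{W}(\tau_\nu),\sigma^\prime_\nu,s}$ is a smooth function on $H(F_\nu)$ valued in $\mathcal{W}(\tau_\nu)\otimes V_{\sigma_\nu^\prime}$, and because $P_k(F_\nu)\epsilon_{0,\beta}\eta_2 R_{\ell,a}(F_\nu)$ is an open subset of $H(F_\nu)$ (one checks the relevant orbit is open, as in the global unfolding), I can choose $f_{\mathcal{W}(\tau_\nu),\sigma^\prime_\nu,s}$ supported in a small neighborhood of $\epsilon_{0,\beta}\eta_2$ inside this open cell, so that the $g$-integral over $R_{\ell,k-\ell-1}^{\eta_2}(F_\nu)\backslash G(F_\nu)$ localizes near the identity coset and the $u$-integral over $U_{k,\eta_2}^-(F_\nu)$ localizes near $u=1$. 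Concretely I would take $f_{\mathcal{W}(\tau_\nu),\sigma^\prime_\nu,s}$ so that its value at $\epsilon_{0,\beta}\eta_2$ is (a multiple of) $W_{\tau_\nu}\otimes v_{\sigma_\nu^\prime}$, where $W_{\tau_\nu}\in \mathcal{W}(\tau_\nu)$ is chosen with $W_{\tau_\nu}(I_k)\neq 0$ and $v_{\sigma_\nu^\prime}$ corresponds to $v_{\sigma_\nu}$ under the isomorphism $\sigma^\prime_\nu \cong \sigma_\nu^{\epsilon_{0,\beta}}$. Shrinking the support, the triple integral is approximated by $\operatorname{vol}(\text{supp})\cdot \mathfrak{b}_\nu(v_{\pi_\nu}, v_{\sigma_\nu^\prime})\cdot W_{\tau_\nu}(I_k)\cdot\psi_{U_{k,\eta_2}^-}(1)$ up to a non-zero normalizing constant depending on $s_0$; choosing the support small enough that the integrand does not change sign (or, at archimedean places, that the oscillation of $\psi_{U_{k,\eta_2}^-}$ and the variation of $\pi_\nu(g)$ are negligible) forces the integral to be non-zero. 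This is where I would record the normalization \eqref{eq-RamifiedNonvanishing-3} of the data at $\nu\in S$ — namely $f_{\mathcal{W}(\tau_\nu),\sigma^\prime_\nu,s}$ is the section just described, supported near $\epsilon_{0,\beta}\eta_2$.

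Two technical points deserve care, and one of them is the main obstacle. The minor point is convergence: we only know the local integral converges for $\Re(s)\gg 0$, so to make sense of ``$\mathcal{Z}_S$ at $s=s_0$'' for arbitrary fixed $s_0$ I would either observe that for a compactly supported section the defining integral is literally a finite (compactly supported) integral, hence entire in $s$, and the bump-function choice produces such a section; or invoke the meromorphic continuation together with the fact that a compactly supported section gives a holomorphic local integral. The genuine difficulty is the archimedean places: there the notion of ``section'' involves growth conditions and the functional $\mathfrak{b}_\nu$ is only a continuous (not algebraic) functional, so the localization argument must be carried out with smooth compactly supported sections and one must argue that $g\mapsto \mathfrak{b}_\nu(\pi_\nu(g)v_{\pi_\nu}, -)$ is continuous and non-zero at $g=1$, then use continuity to conclude non-vanishing of the integral over a sufficiently small neighborhood. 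I would handle this by citing the analogous archimedean non-vanishing results for special orthogonal groups in the Rankin–Selberg literature (e.g.\ the treatment in \cite{JiangZhang2014} and \cite{Soudry2018JNT}), reducing the $\GSpin$ case to the $\SO$ case exactly as in the proofs of the earlier lemmas, since the relevant unipotent subgroups and the Bessel subgroup structure differ from the $\SO$ case only in the central $\GL_1$ which is quotiented out in $R_{\ell,k-\ell-1}^{\eta_2}(F_\nu)\backslash G(F_\nu)$.
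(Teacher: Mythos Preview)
Your reduction to a single place $\nu\in S$ and the idea of using a section supported in the open $P_k$-cell around $\epsilon_{0,\beta}\eta_2$ is the right starting point, and it matches the paper's setup. However, the claim that such a section ``localizes the $g$-integral over $R_{\ell,k-\ell-1}^{\eta_2}(F_\nu)\backslash G(F_\nu)$ near the identity coset'' is where your argument breaks down. The quotient $R_{\ell,\beta-1}^{\eta_2}\backslash G$ does not embed as an open piece of $P_k\backslash H$: when one writes a representative $g$ via the decomposition $p\cdot\overline{n}$ with $p\in P_{m'-2\ell,k-\ell}$ and $\overline{n}\in U_{m'-2\ell,k-\ell}^-$ (as in the paper's equations \eqref{eq-RamifiedNonvanishing-7}--\eqref{eq-RamifiedNonvanishing-8}), the Levi part $\widehat{\diag(A,I_\ell)}$ with $A\in Z_{k-\ell}\backslash\GL_{k-\ell}$ conjugates under $\epsilon_{0,\beta}$ into $P_k$, and therefore is \emph{invisible} to the support of the section modulo $P_k$. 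Shrinking the support of the section only localizes the $(X,W)$-variables (and the $u\in U_{k,\eta_2}^-$ variable after a change of variables), not the $A$-variable. So after your localization you are still left with a nontrivial integral of the form
\[
\int_{Z_{k-\ell-1}(F_\nu)\backslash\GL_{k-\ell-1}(F_\nu)} |\det A|^{s_0+\rho_k-\ell}\, W_{\tau_\nu}\!\begin{pmatrix}A&\\&I_{\ell+1}\end{pmatrix}\,\bb_\nu\!\bigl(\pi_\nu(\widehat{\diag(A,I_\ell)})v_{\pi_\nu},\,v_{\sigma_\nu}\bigr)\,dA,
\]
and one cannot conclude this is non-zero just from $\bb_\nu(v_{\pi_\nu},v_{\sigma_\nu})\neq 0$ without further input.

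This residual integral is precisely what the paper treats with a separate argument: after the explicit computation \eqref{eq-RamifiedNonvanishing-13}--\eqref{eq-RamifiedNonvanishing-18}, the paper argues by contradiction and invokes the inductive machinery of \cite[\S\S6--7]{Soudry1993} together with the Dixmier--Malliavin lemma to conclude that the above $A$-integral cannot vanish for all choices of $W_{\tau_\nu}$. That step is genuinely needed (in both the non-archimedean and archimedean cases) and is not subsumed by a bump-function localization or by the reduction to $\SO$ you cite; the works \cite{JiangZhang2014, Soudry2018JNT} also rely on Soudry's argument at exactly this point. To repair your proof you would need to either reproduce that inductive argument or cite it explicitly for the remaining $A$-integral.
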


\begin{proof}
We follow the method in \cite[Appendix A]{JiangZhang2020Annals}.  
Let $s=s_0$ be given and let $\nu$ be a place in $S$. 
It suffices to prove that there exists a choice of $f_{\mathcal{W}(\tau_\nu),\sigma^\prime_\nu, s}$ such that the local zeta integral $\mathcal{Z}(v_{\pi_\nu},	f_{\mathcal{W}(\tau_\nu),\sigma_\nu^\prime, s})$ is non-zero at $s=s_0$.
We choose $f_{\mathcal{W}(\tau_\nu),\sigma^\prime_\nu, s}$ such that 
\begin{equation}
\label{eq-RamifiedNonvanishing-1}
	f_{\mathcal{W}(\tau_\nu),\sigma^\prime_\nu, s}(ah u \overline{n} \epsilon_{\beta} \eta_2 )=|\det(a)|^{s+\rho_k} W_{\tau_\nu}(a)f_{\nu}(\overline{n}) \sigma_{\nu}(h)v_{\sigma_\nu},
\end{equation}
where $a\in \GL(V_{k}^+(F_\nu))\cong\GL_k(F_\nu)$, $h\in G_0(F_\nu)$, $u\in U_k(F_\nu)$, $\overline{n}\in U_{k,\eta_2}^-(F_\nu)$. Here, $W_{\tau_\nu}$ is a Whittaker function in $\mathcal{W}(\tau_\nu)$, $f_\nu(\overline{n})$ is a smooth, compactly supported function defined on $U_{k}^-(F_{\nu})$, and $\rho_k$ is a number such that $ah\mapsto |\det(a)|^{2\rho_k}$, for $a\in \GL_k(F_\nu), h\in G_0(F_\nu)$, is the modular character of the parabolic subgroup $P_k(F_\nu)$. Over archimedean places, we may also take $f_\nu$ to be a positive real-valued function. Additionally, we may assume $f_\nu(e)=1$ where $e\in U_{k,\eta_2}^-(F_\nu)$ is the identity element. Then we have
\begin{equation}
\label{eq-RamifiedNonvanishing-2}
	f_{\mathcal{W}(\tau_\nu),\sigma^\prime_\nu, s}( \epsilon_{\beta} \eta_2 )= W_{\tau_\nu}(I_k)v_{\sigma_\nu},
\end{equation}
where $I_k$ is the identity matrix in $\GL_k(F_\nu)$. It follows that
\begin{equation}
\label{eq-RamifiedNonvanishing-2-3}
	\bb_\nu(v_{\pi_\nu}, f_{\mathcal{W}(\tau_\nu),\sigma^\prime_\nu, s}( \epsilon_{\beta} \eta_2 ) )= W_{\tau_\nu}(I_k) \cdot \bb_{\nu}(v_{\pi_\nu}, v_{\sigma_\nu}).
\end{equation}
We may assume $W_{\tau_\nu}(I_k)\not=0$, and normalize $\bb_{\nu}$ so that
\begin{equation}
\label{eq-RamifiedNonvanishing-3}
	W_{\tau_\nu}(I_k) \cdot \bb_{\nu}(v_{\pi_\nu}, v_{\sigma_\nu}) = 1.
\end{equation}
This gives the normalization of the data at a place $\nu\in S$.

Recall that $G$ is identified as a subgroup of the Levi subgroup $L_\ell$ of $H$. Since $P_k(F_\nu)U_k^-(F_\nu)$ is an open dense subset of $H(F_\nu)$ whose complement has Haar measure 0, an integral over the domain $R_{\ell,k-\ell-1}^{\eta_2}(F_\nu)\backslash G(F_\nu) $ is the same as an integral over the domain
\begin{equation}
\label{eq-RamifiedNonvanishing-4}
	R_{\ell,k-\ell-1}^{\eta_2}(F_\nu)\backslash ( G(F_\nu) \cap (\epsilon_\beta \eta_2)^{-1} P_k(F_\nu)U_k^-(F_\nu) (\epsilon_\beta \eta_2)).
\end{equation}
Let $P_{m^\prime-2\ell, k-\ell}=\GSpin({W_{\ell}})\cap \epsilon_{\beta}^{-1}P_k \epsilon_\beta$ be the standard parabolic subgroup of $\GSpin({W_{\ell}})=\GSpin_{m^\prime-2\ell}$ with Levi decomposition $(\GL_{k-\ell}\times \GSpin(W_{k}) ) \ltimes U_{m^\prime-2\ell, k-\ell}$, where $U_{m^\prime-2\ell, k-\ell}$ is the unipotent radical of $P_{m^\prime-2\ell, k-\ell}$. Let $U_{m^\prime-2\ell, k-\ell}^-$ be the opposite of $U_{m^\prime-2\ell, k-\ell}$.
Recall that $R_{\ell,k-\ell-1}^{\eta_2}\subset G^{\eta_2}$ where (see \eqref{eq-G-eta2})
$$
G^{\eta_2}=G\cap \eta_2^{-1} M_{\ell}^{\epsilon_{\beta}} \eta_2=(\GL(V_{\tilde{m}-\beta,\beta-1}^+)\times \eta_2^{-1}\GSpin_{m^\prime-2k}\eta_2  ) \ltimes 	U_{\beta-1,\eta_2}
$$
with $M_{\ell}^{\epsilon_{\beta}}=\epsilon_{\beta}^{-1}P_k \epsilon_{\beta}\cap M_{\ell}$, we see that
\begin{equation*}
	R_{\ell,k-\ell-1}^{\eta_2}(F_\nu) \subset G(F_\nu)\cap \eta_2^{-1}P_{m^\prime-2\ell, k-\ell}(F_\nu)\eta_2, 
\end{equation*}
hence the quotient \eqref{eq-RamifiedNonvanishing-4} is well-defined. Recall that $U_k^-$ is the opposite of $U_k$.  
Using the fact that
\begin{equation*}
 G(F_\nu)\cap \epsilon_{\beta}^{-1}U_k^-(F_\nu) \epsilon_\beta \subset U_{m^\prime-2\ell, k-\ell}^-(F_\nu)= 	\left\{\begin{pmatrix}
I_\ell &&&&\\
&I_{k-\ell} &&&\\
&x&I_{m^\prime-2k}&&\\
&y&x^\prime&I_{k-\ell}&\\
&&&&I_\ell
\end{pmatrix}  \right\},
\end{equation*} 
we see that an integral over the domain $R_{\ell,k-\ell-1}^{\eta_2}(F_\nu)\backslash G(F_\nu) $ is the same as an integral over the domain
\begin{equation}
\label{eq-RamifiedNonvanishing-5}
	R_{\ell,k-\ell-1}^{\eta_2}(F_\nu)\backslash ( G(F_\nu) \cap \Ad(\eta_2^{-1}) (P_{m^\prime-2\ell, k-\ell} (F_\nu) U_{m^\prime-2\ell, k-\ell}^-(F_\nu)) ) .
\end{equation}
 Moreover, by a simple change of variable, we obtain that $\mathcal{Z}(v_{\pi_\nu},	f_{\mathcal{W}(\tau_\nu),\sigma_\nu^\prime, s})$ is equal to 
 \begin{equation}
\label{eq-RamifiedNonvanishing-6}
\begin{split}
	&\int_{\Ad(\eta_2)R_{\ell,k-\ell-1}^{\eta_2}(F_\nu)\backslash \Ad(\eta_2)G(F_\nu)\cap (P_{m^\prime-2\ell, k-\ell} (F_\nu) U_{m^\prime-2\ell, k-\ell}^-(F_\nu))} \int_{U_{k,\eta_2}^-(F_\nu)} \\
	&\quad \quad \bb_\nu( \pi_\nu(\eta_2^{-1}g\eta_2) v_{\pi_\nu},   {f_{\mathcal{W}(\tau_\nu),\sigma_\nu^\prime,s}}(u \epsilon_{\beta}  g \eta_2) ) \psi_{U_{k,\eta_2}^-}(u) du  dg.
\end{split}
\end{equation}

 Note that by \eqref{eq-Bessel-subgroup-of-G}, we have
\begin{equation*}
\Ad(\eta_2) R_{\ell,k-\ell-1}^{\eta_2}(F_\nu)=\GSpin_{m^\prime-2k}(F_\nu) \cdot  Z_{\ell, \beta-1}(F_\nu)	\cdot U_{\beta,m^\prime-2\ell}^{\eta_2}(F_\nu).
\end{equation*}
We now consider  the set $\Ad(\eta_2)G(F_\nu)\cap (P_{m^\prime-2\ell, k-\ell} (F_\nu) U_{m^\prime-2\ell, k-\ell}^-(F_\nu))$. 
 We take 
 \begin{equation}
 \label{eq-RamifiedNonvanishing-7}
 p=\begin{pmatrix}
I_{k-\ell} &Y&Z&\\
&I_{m^\prime-2k}&Y^\prime&\\
& &I_{k-\ell}&\\
\end{pmatrix} \cdot \widehat{\diag(I_{\ell},A)} \cdot h \in P_{m^\prime-2\ell, k-\ell} (F_\nu),
 \end{equation}
\begin{equation}
\label{eq-RamifiedNonvanishing-8}
\overline{n}= 	\begin{pmatrix}
I_{k-\ell} &&&\\
X^\prime&I_{m^\prime-2k}&&\\
W&X&I_{k-\ell}&\\
\end{pmatrix} \in  U_{m^\prime-2\ell, k-\ell}^-(F_\nu),
\end{equation}
where $A\in \GL_{k-\ell}(F_\nu)$, $h\in \GSpin(W_k(F_\nu))$. Here, for $g\in \GL(V_k^+(F_\nu))\cong \GL_k(F_\nu)$, we denote by $\hat{g}$ the lift of $g$ into the Levi subgroup $\GL(V_k^+)\times \GSpin(W_k)$ of $P_k$. 
Since $G$ fixes the vector $w_0$, it follows that $\Ad(\eta_2)G$ fixes the vector
\begin{equation}
\label{eq-RamifiedNonvanishing-9}
\pr(\eta_2)\cdot  w_0 = \begin{pmatrix}
 E_1\\
 0_{(m^\prime-2k)\times 1}\\
 E_2	
 \end{pmatrix}_{(m^\prime-2\ell)\times 1},
\end{equation}
where
\begin{equation*}
E_1= ( 0,  \cdots, 0, 1)^t, \quad E_2=((-1)^{m^\prime+1} \frac{a}{2}, 0, \cdots, 0)^t \text{ in } \Mat_{(k-\ell)\times 1}.
\end{equation*}
Here we view $\pr(\eta_2)\cdot  w_0$ as an anisotropic vector in the space $W_{\ell}$. Then 
$$p\cdot \overline{n}\in P_{m^\prime-2\ell, k-\ell} (F_\nu)U_{m^\prime-2\ell, k-\ell}^-(F_\nu)$$ is in $\Ad(\eta_2)G(F_\nu)$ if and only if $\pr(p\cdot \overline{n})$ fixes the vector \eqref{eq-RamifiedNonvanishing-9}.
This means that if we write $p$ as in \eqref{eq-RamifiedNonvanishing-7} and $\overline{n}$ as in \eqref{eq-RamifiedNonvanishing-8}, then we get the following equations:
 \begin{equation}
\label{eq-RamifiedNonvanishing-10}
(A^{-1}-I_{k-\ell})E_1=Z E_2, \quad X^\prime E_1=\pr(h) \cdot Y^\prime E_2, \quad WE_1=(A^*-I_{k-\ell})E_2.
\end{equation}
Because the unipotent part of $\Ad(\eta_2) R_{\ell,k-\ell-1}^{\eta_2}(F_\nu)$ is given by \eqref{eq-U-beta-mprime-2l-eta2} and \eqref{eq-Z-ell-beta-1}, we may identify the domain of the outer integration in \eqref{eq-RamifiedNonvanishing-6} by setting $h=e$, $A\in Z_{k-\ell}(F_\nu)\backslash \GL_{k-\ell}(F_\nu)$, 
\begin{equation*}
Y=\begin{pmatrix}
0_{(k-\ell-1)\times (m^\prime-2k)} \\
y	
\end{pmatrix}, \quad Z= \begin{pmatrix}
 0_{(k-\ell-1)\times 1} &0_{(k-\ell-1)\times (k-\ell-1)}\\
 z &0_{1\times (k-\ell-1)}	
 \end{pmatrix}.
\end{equation*}
Due to \eqref{eq-RamifiedNonvanishing-10}, the vector $y$ in $Y$ and $z$ in $Z$ are determined by $X$ and $A$ respectively. Thus we may write $Y_X$ and $Z_A$ for $Y$ and $Z$, respectively.  

We now choose $f_\nu$ so that
\begin{equation}
\label{eq-RamifiedNonvanishing-11}
f_{\nu} \left(\begin{pmatrix}
 I_{k-\ell} & &&&& \\
 &I_{\ell} &&&&\\
 X^\prime &x_2^\prime &I_{m^\prime-2k} &&\\
 x_1& x_3&x_2&I_{\ell} &\\
 W&x_1^\prime &X&&I_{k-\ell}	
 \end{pmatrix}\right) = f_1(x_1, x_2, x_3)f_2(X, W),
\end{equation}
where $f_1$ and $f_2$ are smooth, compactly supported functions, and the sizes of $x_1, x_2, x_3, X, W$ are indicated by the size of the matrix in \eqref{eq-RamifiedNonvanishing-11}. 
With this choice of $f_\nu$, we are able to write down a more explicit formula for the local zeta integral $\mathcal{Z}(v_{\pi_\nu},	f_{\mathcal{W}(\tau_\nu),\sigma_\nu^\prime, s})$. We decompose $\Ad(\eta_2)g$ as $p\cdot \overline{n}$ where $p$ and $\overline{n}$ are given in \eqref{eq-RamifiedNonvanishing-7} and \eqref{eq-RamifiedNonvanishing-8} subject to the above conditions. Then 
\begin{equation*}
\epsilon_\beta p \epsilon_\beta^{-1}= n(Y_X, Z_A) \cdot  \widehat{\diag(A, I_{\ell})}, \quad n(Y_X, Z_A)= \begin{pmatrix}
I_{k-\ell} &&Y_X&&Z_A\\
&I_{\ell}&&&\\
&&I_{m^\prime-2k}&&Y_X^\prime\\
&&&I_{\ell}&\\
&&&&I_{k-\ell}
\end{pmatrix}. 	
\end{equation*}
We also have
\begin{equation*}
\epsilon_\beta \overline{n}\epsilon_\beta^{-1} = 	\begin{pmatrix}
I_{k-\ell} && && \\
&I_{\ell}&&&\\
X^\prime&&I_{m^\prime-2k}&& \\
&&&I_{\ell}&\\
W&&X&&I_{k-\ell}
\end{pmatrix}.
\end{equation*}
With such a decomposition, the inner integration in \eqref{eq-RamifiedNonvanishing-6} is equal to 
\begin{equation*}
\int_{U_{k,\eta_2}^-(F_\nu)}     {f_{\mathcal{W}(\tau_\nu),\sigma_\nu^\prime,s}}(u \cdot    n(Y_X, Z_A) \cdot \widehat{\diag(A, I_{\ell})} \cdot \epsilon_\beta \overline{n}\epsilon_\beta^{-1}  \cdot \epsilon_\beta \eta_2)  \psi_{U_{k,\eta_2}^-}(u) du	.
\end{equation*}
Recall the group $U_{k,\eta_2}^-$ is defined in \eqref{eq-U-k-eta2}, and $\GL(V_{k-\ell}^+)$  lies in its normalizer. For
\begin{equation*}
u(x_1, x_2, x_3)=   \begin{pmatrix}
 I_{k-\ell} & &&&& \\
 &I_{\ell} &&&&\\
 &x_2^\prime &I_{m^\prime-2k} &&\\
 x_1& x_3&x_2&I_{\ell} &\\
 &x_1^\prime &&&I_{k-\ell}	
 \end{pmatrix}
\in U_{k,\eta_2}^-(F_\nu),
\end{equation*}
we have
\begin{equation*}
\begin{split}
& u(x_1, x_2, x_3) \cdot    n(Y_X, Z_A) \cdot \widehat{\diag(A, I_{\ell})} \\
= & \widehat{\diag(A, I_{\ell})} \cdot \widehat{n_k(B_1) }  \cdot u_0   \cdot u(x_1A, x_2+x_1Y_X, x_3+B_2 x_1^\prime)
\end{split}
\end{equation*}
where $u_0\in U_k(F_\nu)$, $n_k(B_1)=\left(\begin{smallmatrix} I_{k-\ell} & B_1\\ 0 &I_\ell\end{smallmatrix}\right)$, $B_1=-A^{-1}Y_Xx_2^\prime-Z_A A^\prime x_1^\prime$, $B_2=x_2Y_X^\prime+x_1AZ_A A^\prime$.
Now we use \eqref{eq-RamifiedNonvanishing-1}, \eqref{eq-RamifiedNonvanishing-2} and \eqref{eq-RamifiedNonvanishing-11} and a change of variable to obtain that, for a representative $p\cdot \overline{n}$ chosen above,  the inner integration in \eqref{eq-RamifiedNonvanishing-6} is equal to 
\begin{equation}
\begin{split}
\label{eq-RamifiedNonvanishing-12}
& |\det(A)|^{s+\rho_k} W_{\tau_\nu}\left( \begin{pmatrix} A&\\&I_{\ell}\end{pmatrix} \right) f_2(X, W)v_\sigma \\
&\times \int_{U_{k,\eta_2}^-(F_\nu)}     f_1(x_1, x_2,x_3)  \psi((x_1A^{-1})_{\ell,k-\ell}) \psi_{Z_k,a}^{-1}(n_k(B_1)) |\det(A)|^{-\ell} dx_i, 
\end{split}
\end{equation}
where the matrices $x_1, x_2, x_3$ define the element $u(x_1, x_2, x_3)$ in $U_{k,\eta_2}^-(F_\nu)$. Since the smooth function $f_1$ is chosen to be compactly supported and is independent of the complex variable $s$, we see that the expression \eqref{eq-RamifiedNonvanishing-12} is well-defined over the entire complex plane for the above choice of data, and so is the local zeta integral $\mathcal{Z}(v_{\pi_\nu},	f_{\mathcal{W}(\tau_\nu),\sigma_\nu^\prime, s})$.

By plugging \eqref{eq-RamifiedNonvanishing-12} into \eqref{eq-RamifiedNonvanishing-6}, we obtain that the local zeta integral $\mathcal{Z}(v_{\pi_\nu},	f_{\mathcal{W}(\tau_\nu),\sigma_\nu^\prime, s})$ is equal to
\begin{equation}
\begin{split}
\label{eq-RamifiedNonvanishing-13}
&\int_{X,W}\int_{A} |\det(A)|^{s+\rho_k} W_{\tau_\nu}\left( \begin{pmatrix} A&\\&I_{\ell}\end{pmatrix} \right) f_2(X, W) \bb_\nu( \pi_\nu(\eta_2^{-1} p(A, X) \overline{n}(X, W) \eta_2) v_{\pi_\nu}, v_\sigma) \\
&\times \int_{U_{k,\eta_2}^-(F_\nu)}     f_1(x_1, x_2,x_3)  \psi((x_1A^{-1})_{\ell,k-\ell}) \psi_{Z_k,a}^{-1}(n_k(B_1)) |\det(A)|^{-\ell} dx_i dA dX dW.  
\end{split}
\end{equation}
Here, we write $p$ as $p(A, X)$ and $\overline{n}$ as $\overline{n}(X, W)$, to indicate their dependence on $A, X$ and $X, W$ respectively. The integration $\int_A$ is over the set 
\begin{equation}
\label{eq-RamifiedNonvanishing-14}
	Z_{k-\ell}(F_\nu)\backslash \{ A\in \GL_{k-\ell}(F_\nu): WE_1=(A^*-I_{k-\ell})E_2 \}
\end{equation}
with constraints given in \eqref{eq-RamifiedNonvanishing-10}. The integration $\int_{X, W}$ is over the set $U_{m^\prime-2\ell, k-\ell}^-(F_\nu)$ with the condition $WE_1\not=-E_2$, due to the constraint $WE_1=(A^*-I_{k-\ell})E_2$ in \eqref{eq-RamifiedNonvanishing-10} and $\det(A)\not=0$. Indeed, if $WE_1=-E_2$, then we deduce from $WE_1=(A^*-I_{k-\ell})E_2$ that $A^* E_2=0$, which would imply $\det(A)=0$.

Due to the above expression \eqref{eq-RamifiedNonvanishing-13} for the local zeta integral $\mathcal{Z}(v_{\pi_\nu},	f_{\mathcal{W}(\tau_\nu),\sigma_\nu^\prime, s})$, we will use proof by contradiction to finish the proof of Proposition~\ref{prop-GGP-local-integral-non-zero}.
Suppose that the local zeta integral $\mathcal{Z}(v_{\pi_\nu},	f_{\mathcal{W}(\tau_\nu),\sigma_\nu^\prime, s})$ is identically zero for all choices of data $f_1$ and $f_2$ at the given $s=s_0$. 
We can vary the function $f_2(X, W)$ first, and treat the rest of the expression in \eqref{eq-RamifiedNonvanishing-13} as a continuous function of $X$ and $W$. Since the integral over $\overline{n}(X, W)\in U_{m^\prime-2\ell, k-\ell}^-(F_\nu)$ is identically zero for any choice of data, the remaining expression as a function of $\overline{n}$ is identically zero; that is,
\begin{equation}
\begin{split}
\label{eq-RamifiedNonvanishing-15}
&\int_{A} |\det(A)|^{s+\rho_k-\ell} W_{\tau_\nu}\left( \begin{pmatrix} A&\\&I_{\ell}\end{pmatrix} \right)  \bb_\nu( \pi_\nu(\eta_2^{-1} p  \overline{n}  \eta_2) v_{\pi_\nu}, v_\sigma) \\
&\times \int_{U_{k,\eta_2}^-(F_\nu)}     f_1(x_1, x_2,x_3)  \psi((x_1A^{-1})_{\ell,k-\ell}) \psi_{Z_k,a}^{-1}(n_k(B_1))  dx_i dA  \equiv 0.  
\end{split}
\end{equation}
In particular, the integral on the left-hand side of \eqref{eq-RamifiedNonvanishing-15} is equal to zero at $\overline{n}=I_{m^\prime}$, or equivalently at $X=0_{(k-\ell)\times (m^\prime-2k)}$ and $W=0_{(k-\ell)\times (k-\ell)}$. Due to $X^\prime E_1=Y^\prime E_2$ in \eqref{eq-RamifiedNonvanishing-10}, we must have $Y_X=0_{(k-\ell)\times (m^\prime-2k)}$. Similarly, $Z_A=0_{(k-\ell)\times (k-\ell)}$. It follows that 
$$B_1=-A^{-1}Y_Xx_2^\prime-Z_A A^\prime x_1^\prime=0, \quad \psi_{Z_k,a}^{-1}(n_k(B_1))=1.$$
Moreover, since the integral over $A$ is modulo $Z_{k-\ell}(F_\nu)$ subject to the condition that $(A^*-I_{k-\ell})E_2=WE_1=0_{(k-\ell)\times 1}$ (see \eqref{eq-RamifiedNonvanishing-14}), we see that the integral over $A$ is over $Z_{k-\ell}(F_\nu) \backslash P_{k-\ell}^{\GL_{k-\ell}}(F_\nu)\cong Z_{k-\ell-1}(F_\nu)\backslash \GL_{k-\ell-1}(F_\nu)$, where $P_{k-\ell}^{\GL_{k-\ell}}$ is the standard mirabolic subgroup of $\GL_{k-\ell}$. Since $A\in  P_{k-\ell}^{\GL_{k-\ell}}(F_\nu)$, we have $(x_1A^{-1})_{\ell,k-\ell}=(x_1)_{\ell,k-\ell}$ and hence $\psi((x_1A^{-1})_{\ell,k-\ell})=\psi((x_1)_{\ell,k-\ell})$.
Thus, by plugging $\overline{n}=I_{m^\prime}$ into \eqref{eq-RamifiedNonvanishing-15} we obtain that
\begin{equation}
\begin{split}
\label{eq-RamifiedNonvanishing-16}
&\int_{Z_{k-\ell-1}(F_\nu)\backslash \GL_{k-\ell-1}(F_\nu)} |\det(A)|^{s+\rho_k-\ell} W_{\tau_\nu}\left( \begin{pmatrix} A&\\&I_{\ell+1}\end{pmatrix} \right)  \bb_\nu( \pi_\nu(  \widehat{\diag(A, I_{\ell})} ) v_{\pi_\nu}, v_\sigma) dA \\
&\times \int_{U_{k,\eta_2}^-(F_\nu)}     f_1(x_1, x_2,x_3)  \psi((x_1)_{\ell,k-\ell})  dx_i  = 0.  
\end{split}
\end{equation}
Now we choose a suitable smooth, compactly supported function $f_1$ such that 
\begin{equation}
\label{eq-RamifiedNonvanishing-17}
\int_{U_{k,\eta_2}^-(F_\nu)}     f_1(x_1, x_2,x_3)  \psi((x_1)_{\ell,k-\ell})  dx_i=1.	
\end{equation}
By combining \eqref{eq-RamifiedNonvanishing-16} and \eqref{eq-RamifiedNonvanishing-17}, we conclude that 
\begin{equation}
\begin{split}
\label{eq-RamifiedNonvanishing-18}
\int_{Z_{k-\ell-1}(F_\nu)\backslash \GL_{k-\ell-1}(F_\nu)} |\det(A)|^{s+\rho_k-\ell} W_{\tau_\nu}\left( \begin{pmatrix} A&\\&I_{\ell+1}\end{pmatrix} \right)  \bb_\nu( \pi_\nu(  \widehat{\diag(A, I_{\ell})} ) v_{\pi_\nu}, v_\sigma) dA  = 0.  
\end{split}
\end{equation}
We can now apply the same inductive argument in \cite[Sections 6 and 7]{Soudry1993} and the Dixmier-Marlliavin Lemma \cite{DixmierMalliavin1978}
to conclude that 
\begin{equation*}
	W_{\tau_\nu}(I_k) \cdot \bb_{\nu}(v_{\pi_\nu}, v_{\sigma_\nu}) = 1.
\end{equation*}
This contradicts \eqref{eq-RamifiedNonvanishing-3}. Therefore, there must exist a choice of data such that the local zeta integral $\mathcal{Z}(v_{\pi_\nu},	f_{\mathcal{W}(\tau_\nu),\sigma_\nu^\prime, s})$ is non-zero at $s=s_0$. This completes the proof of Proposition~\ref{prop-GGP-local-integral-non-zero}. 
\end{proof}

\begin{proposition}
\label{prop-GGP-local-integral-non-zero-at-center}
Let $\pi, \tau, \sigma$ be as in Theorem~\ref{thm-GGP-Reciprocal}. Assume further that $(\pi, \sigma)$ has a non-zero Bessel period  $\mathcal{B}^{\psi_{k-\ell-1,-a}}$. Then there exist factorizable data $\phi_\pi\in V_\pi, \phi_{\sigma}\in V_{\sigma}, f_{\tau,\sigma^\prime, s}\in f_{\tau,\sigma^\prime, s}\in \Ind_{P_{k}(\A)}^{H(\A)}(\tau|\cdot|^{s-\frac{1}{2}} \otimes \sigma^\prime)$ such that $\mathcal{Z}_S^*(\phi_\pi, f_{\tau,\sigma^\prime,s})$ at $s=1$ and the Bessel period $\mathcal{B}^{\psi_{k-\ell-1,-a}}(\phi_\pi, \phi_\sigma)$ for $(\phi_\pi, \phi_\sigma)$ are simultaneously non-zero.
\end{proposition}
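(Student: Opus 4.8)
The plan is to combine the local non-vanishing statement of Proposition~\ref{prop-GGP-local-integral-non-zero} at $s=1$ with the Euler factorization of the Bessel period, the crucial input being the uniqueness of local Bessel models for $\GSpin$ groups \cite{Yan2025}.

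First I would exploit that $\mathcal{B}^{\psi_{k-\ell-1,-a}}$ for $(\pi,\sigma)$ is a non-zero functional on $V_\pi\times V_\sigma$. Since $V_\pi$ and $V_\sigma$ are spanned by pure tensors $\otimes_\nu v_{\pi_\nu}$ and $\otimes_\nu v_{\sigma_\nu}$ (spherical at almost all $\nu$), it does not vanish on some such pure tensor; and by the uniqueness of local Bessel models its restriction to pure tensors is a non-zero scalar multiple of $\prod_\nu\bb_\nu$, where $\bb_\nu$ is a generator of the one-dimensional space in \eqref{eq-Hom-Bessel}, normalized at $\nu\notin S$ as in Section~\ref{subsection-GGP-Normalization-Local-Zeta}. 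Hence there are factorizable $\phi_\pi=\otimes_\nu v_{\pi_\nu}\in V_\pi$ and $\phi_\sigma=\otimes_\nu v_{\sigma_\nu}\in V_\sigma$ with $\mathcal{B}^{\psi_{k-\ell-1,-a}}(\phi_\pi,\phi_\sigma)\neq 0$, and in particular $\bb_\nu(v_{\pi_\nu},v_{\sigma_\nu})\neq 0$ for every place $\nu$.

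Next I would fix $s_0=1$ and feed these $v_{\pi_\nu},v_{\sigma_\nu}$ ($\nu\in S$) into Proposition~\ref{prop-GGP-local-integral-non-zero}: since each $\bb_\nu(v_{\pi_\nu},v_{\sigma_\nu})\neq 0$, it produces local sections $f_{\mathcal{W}(\tau_\nu),\sigma_\nu^\prime, s}$, $\nu\in S$, with $\mathcal{Z}_S(\phi_\pi,f_{\tau,\sigma^\prime,s})=\prod_{\nu\in S}\mathcal{Z}(v_{\pi_\nu},f_{\mathcal{W}(\tau_\nu),\sigma_\nu^\prime, s})$ non-zero at $s=1$; moreover the construction in the proof of that proposition makes each factor $\mathcal{Z}(v_{\pi_\nu},f_{\mathcal{W}(\tau_\nu),\sigma_\nu^\prime, s})$ entire in $s$. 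Completing the data by the normalized spherical sections at $\nu\notin S$ gives a global choice. To convert this into a statement about $\mathcal{Z}_S^*$, recall $\mathcal{Z}_\nu^*=\mathcal{Z}_\nu/\LL(s,\tau_\nu,\pi_\nu,\sigma_\nu;\rho)$ from \eqref{eq-GGP-NormalizedLocalIntegral}; since the local $L$-factors are nowhere zero, it suffices to check that each $\LL(s,\tau_\nu,\pi_\nu,\sigma_\nu;\rho)$ is holomorphic at $s=1$. The denominator factors are $L(\frac{3}{2},\sigma_\nu\times\tau_\nu\otimes\omega_{\pi_\nu})$ and $L(2,\tau_\nu,\rho\otimes\omega_{\pi_\nu})$, which are manifestly finite, and for the numerator I would use $\tau=\Pi\otimes\omega_\pi^{-1}$ with $\Pi$ the functorial transfer of $\pi$ (Theorem~\ref{thm-functoriality}) together with the self-duality $\wt\tau_i=\tau_i\otimes\omega_\pi$ to write
\begin{equation*}
L(s,\pi_\nu\times\tau_\nu)=L(s,\Pi_\nu\times\tau_\nu)=\prod_{1\le i,j\le r}L(s,\tau_{i,\nu}\times\wt\tau_{j,\nu}),
\end{equation*}
each local Rankin--Selberg factor on the right being holomorphic at $s=1$ by the standard bounds towards Ramanujan for cuspidal representations of general linear groups. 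Thus $\LL(s,\tau_\nu,\pi_\nu,\sigma_\nu;\rho)$ is finite and non-zero at $s=1$ for every $\nu\in S$, so that $\mathcal{Z}_\nu^*$ is holomorphic and non-zero there (consistent with Proposition~\ref{prop-GGP-NormalizedLocalIntegral}(ii)); hence $\mathcal{Z}_S^*(\phi_\pi,f_{\tau,\sigma^\prime,s})$ is non-zero at $s=1$, and together with $\mathcal{B}^{\psi_{k-\ell-1,-a}}(\phi_\pi,\phi_\sigma)\neq 0$ from the first step this proves the proposition. The main obstacle is the first step --- extracting factorizable data whose \emph{local} Bessel pairings are all non-zero from the mere non-vanishing of the abstract global period --- which is precisely where uniqueness of local Bessel models is indispensable; the $L$-factor bookkeeping in the last step is routine once the functorial transfer is available.
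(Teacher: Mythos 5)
Your proof is correct and follows the same route as the paper's own argument: use uniqueness of local Bessel models to extract a factorizable pair $(\phi_\pi,\phi_\sigma)$ with every local Bessel pairing non-zero, feed those into Proposition~\ref{prop-GGP-local-integral-non-zero} at $s_0=1$ to make $\mathcal{Z}_S$ non-zero there, and then divide by $\LL_S(s,\tau,\pi,\sigma;\rho)$ after noting that each local factor is holomorphic (and automatically non-vanishing) at $s=1$. The only difference is cosmetic: you spell out the holomorphy of $\LL_\nu$ at $s=1$ via the functorial transfer and bounds towards Ramanujan, whereas the paper simply asserts it.
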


\begin{proof}
By assumption, there exist factorizable vectors $\phi_\pi=\otimes_{\nu} v_{\pi_\nu}\in V_\pi, \phi_{\sigma}=\otimes_{\nu} v_{\sigma_\nu} \in V_{\sigma}$ such that the Bessel period $\mathcal{B}^{\psi_{k-\ell-1,-a}}(\phi_\pi, \phi_\sigma)$ is non-zero. By the uniqueness of Bessel models, we have
\begin{equation*}
	\mathcal{B}^{\psi_{k-\ell-1,-a}}(\phi_\pi, \phi_\sigma) = c_{\pi,\sigma} \prod_{\nu} \bb_\nu(v_{\pi_\nu}, v_{\sigma_\nu} )
\end{equation*}
 for some constant $c_{\pi,\sigma}$. Since $\mathcal{B}^{\psi_{k-\ell-1,-a}}(\phi_\pi, \phi_\sigma)\not=0$, it follows that 
 $$
 c_{\pi,\sigma}\not=0, \quad \bb_\nu(v_{\pi_\nu}, v_{\sigma_\nu} )\not=0 \text{ for all } \nu.
 $$ 
 By Proposition~\ref{prop-GGP-local-integral-non-zero}, there exists a smooth factorizable section $f_{\tau,\sigma^\prime, s}$ such that $\mathcal{Z}_S(\phi_\pi, f_{\tau,\sigma^\prime,s})$ is a non-zero constant at $s=1$. 
 
 Recall from \eqref{eq-GGP-NormalizedLocalIntegral} that
 \begin{equation*}
\begin{split}
	\mathcal{Z}_S(\phi_\pi, f_{\tau,\sigma^\prime,s}) =\LL_S(s, \tau,\pi,\sigma;\rho) \cdot \mathcal{Z}_S^*(\phi_\pi, f_{\tau,\sigma^\prime,s}),
\end{split}
\end{equation*}
where
\begin{equation*}
\LL_S(s, \tau,\pi,\sigma;\rho)=\prod_{\nu\in S} \LL(s,\tau_\nu,\pi_\nu,\sigma_\nu;\rho).
\end{equation*}
For each $\nu\in S$, the $L$-function part $\LL(s,\tau_\nu,\pi_\nu,\sigma_\nu;\rho)$ is holomorphic for $\Re(s)\ge 1$. Thus $\LL_S(s, \tau,\pi,\sigma;\rho)$ is holomorphic for $\Re(s)\ge 1$. This proves that $\mathcal{Z}_S^*(\phi_\pi, f_{\tau,\sigma^\prime,s})$ is non-zero at $s=1$.
\end{proof}

\vspace{0.5in}
\subsection{Proof of Theorem~\ref{thm-GGP-Reciprocal}: converse direction}
\label{subsection-GGP-Reciprocal-converse}

The goal of this section is to finish the proof of Theorem~\ref{thm-GGP-Reciprocal}. Recall that we have proved the forward direction of Theorem~\ref{thm-GGP-Reciprocal}. It remains to prove the converse direction. 

\begin{proof}[Proof of Theorem~\ref{thm-GGP-Reciprocal}: converse direction]
Assume that the Bessel period $\mathcal{B}^{\psi_{k-\ell-1},-a}(\phi_\pi, \phi_\sigma)$ for the pair $(\pi, \sigma)$ is non-zero for some choice of data $\phi_\pi\in V_\pi$ and $\phi_\sigma\in V_\sigma$. Recall from \eqref{eq-GGP-NormalizedGlobalIntegral} that we have
\begin{equation*}
\begin{split}
	\mathcal{Z}(\phi_\pi, f_{\tau,\sigma^\prime, s}) =\LL(s, \tau,\pi,\sigma;\rho) \cdot \mathcal{Z}_S^*(\phi_\pi, f_{\tau,\sigma^\prime, s}).
\end{split}
\end{equation*}
By Proposition~\ref{prop-GGP-NormalizedLocalIntegral}, $\mathcal{Z}_S^*(\phi_\pi, f_{\tau,\sigma^\prime, s})$ is meromorphic in $s$ and is holomorphic at $s=1$ for any choice of the smooth sections $f_{\tau,\sigma^\prime, s}$. By Proposition~\ref{prop-GGP-local-integral-non-zero-at-center}, there exists a choice of factorizable data 
$$\phi_\pi\in V_\pi, \quad \phi_{\sigma}\in V_{\sigma}, \quad f_{\tau,\sigma^\prime, s}\in f_{\tau,\sigma^\prime, s}\in \Ind_{P_{k}(\A)}^{H(\A)}(\tau|\cdot|^{s-\frac{1}{2}} \otimes \sigma^\prime)$$ 
such that $\mathcal{Z}_S^*(\phi_\pi, f_{\tau,\sigma^\prime,s})$ at $s=1$ is non-zero and the Bessel period $\mathcal{B}^{\psi_{k-\ell-1,-a}}(\phi_\pi, \phi_\sigma)$ for $(\phi_\pi, \phi_\sigma)$ is non-zero.

With such a choice of data, and with a factorizable section $f_{\tau,\sigma^\prime, s}=\otimes_{\nu} f_{\tau_\nu,\sigma_\nu^\prime, s}$ that correspond to the above factorizable section $\otimes_\nu f_{\mathcal{W}(\tau_\nu),\sigma_\nu^\prime, s}$, by Lemma~\ref{lemma-GGP-L-function-pole}, $\LL(s, \tau,\pi,\sigma;\rho)$ has a pole at $s=1$ of order $r$. Hence the right-hand side of \eqref{eq-GGP-NormalizedGlobalIntegral} has a pole at $s=1$ of order $r$. It follows that the left-hand side of \eqref{eq-GGP-NormalizedGlobalIntegral}, i.e., $\mathcal{Z}(\phi_\pi, f_{\tau,\sigma^\prime, s})$, has a pole at $s=1$ of order $r$. Since we know from Proposition~\ref{prop-GGP-EisensteinSeriesPoles} that the Eisenstein series $E(\cdot, f_{\tau,\sigma^\prime,s})$ has a pole at $s=1$ of order at most $r$, it follows that the Eisenstein series $E(\cdot, f_{\tau,\sigma^\prime,s})$ must have a pole at $s=1$ of order exactly equal to $r$. Thus 
$$\mathcal{Z}(\phi_\pi, f_{\tau,\sigma^\prime, s})=\mathcal{B}^{\psi_{\ell,a}}(E(\cdot, f_{\tau,\sigma^\prime, s}), \phi_\pi)$$ 
has a pole at $s=1$ of order $r$.  Since $\phi_\pi$ is cuspidal, by taking the iterated residue of $\mathcal{B}^{\psi_{\ell,a}}(E(\cdot, f_{\tau,\sigma^\prime, s}), \phi_\pi)$, we see that the Bessel period  $\mathcal{B}^{\psi_{\ell,a}}(\Eisen_{\tau\otimes\sigma^\prime}, \phi_\pi)$ is non-zero, where we recall that $\Eisen_{\tau\otimes\sigma^\prime}$ is the $r$-th iterated residue at $s=1$ of $E(\cdot, f_{\tau,\sigma^\prime, s})$. This completes the proof of Theorem~\ref{thm-GGP-Reciprocal}.
\end{proof}

\subsection{Global Gan-Gross-Prasad conjecture for generic representations}
\label{subsection-GGP-proof}
We now state the main result on one direction of the global Gan-Gross-Prasad conjecture for generic representations of $\GSpin$ groups. 

\begin{theorem}
\label{thm-global-GGP}
Let $k>\ell$.
Let $\pi$, $\sigma$ be irreducible generic cuspidal automorphic representations of $\GSpin_{m^\prime-2\ell-1}(\A)$ and $\GSpin_{m^\prime-2k}(\A)$ respectively such that $\omega_\pi \omega_\sigma=1$. 
If the Bessel period $\mathcal{B}^{\psi_{k-\ell-1},-a}(\phi_\pi, \phi_\sigma)$ is non-zero for some choice of data $\phi_\pi\in V_\pi, \phi_\sigma\in V_\sigma$, then the tensor product $L$-function $L(s,\sigma\times\pi)$ is holomorphic and non-zero at $s=\frac{1}{2}$.
\end{theorem}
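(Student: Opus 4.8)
## Proof Proposal for Theorem~\ref{thm-global-GGP}

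The plan is to deduce this from the Rankin-Selberg machinery developed above, following the strategy of Jiang-Zhang \cite{JiangZhang2020Annals} but substituting the Asgari-Shahidi functorial transfer for the endoscopic classification. First I would apply Theorem~\ref{thm-functoriality} to $\pi$ to obtain its functorial transfer $\Pi$ to $\GL_{2n}(\A)$ (where $2n = m^\prime - 2k$ or $m^\prime - 2k - 1$ depending on parity), and set $\tau = \Pi \otimes \omega_\pi^{-1}$, which by the computation in Section~\ref{subsection-Reciprocal-Nonvanishing-Bessel} is an irreducible unitary generic isobaric representation $\tau_1 \boxtimes \cdots \boxtimes \tau_r$ with each $\tau_i$ distinct and $\omega_\sigma^{-1}$-self-dual. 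The key identity is \eqref{eq-GGP-L-SigmaPi}: $L(s,\sigma\times\pi) = L(s,\sigma\times\tau\otimes\omega_\pi)$, so it suffices to prove that $L(s,\sigma\times\tau\otimes\omega_\pi)$ is holomorphic and non-zero at $s=\frac{1}{2}$.

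Next I would invoke Theorem~\ref{thm-GGP-Reciprocal}: since $\mathcal{B}^{\psi_{k-\ell-1},-a}(\phi_\pi,\phi_\sigma)\neq 0$ by hypothesis, and assuming the residue $\Eisen_{\tau\otimes\sigma^\prime}$ is non-zero, the converse direction of that theorem tells us (from the proof) that the Eisenstein series $E(\cdot, f_{\tau,\sigma^\prime,s})$ has a pole at $s=1$ of order \emph{exactly} $r$. By Proposition~\ref{prop-GGP-EisensteinSeriesPoles}(iii), the Eisenstein series has a pole at $s=1$ of order $r$ \emph{if and only if} $L(s,\tau_i,\rho\otimes\omega_\sigma^{-1})$ has a pole at $s=1$ for every $i$ (which is automatic here, since each $\tau_i$ comes from the functorial transfer and hence satisfies the pole condition in Theorem~\ref{thm-functoriality}) \emph{and} $L(s,\sigma\times\tau\otimes\omega_\sigma^{-1})$ is non-zero at $s=\frac{1}{2}$. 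Combined with part (i) of that proposition — holomorphy of $L(s,\sigma\times\tau\otimes\omega_\sigma^{-1})$ at $s=\frac{1}{2}$, which follows from the functorial transfer of $\sigma$ — we obtain that $L(\frac{1}{2},\sigma\times\tau\otimes\omega_\sigma^{-1})$ is holomorphic and non-zero. Since $\omega_\sigma^{-1} = \omega_\pi$, this is exactly $L(\frac{1}{2},\sigma\times\tau\otimes\omega_\pi) = L(\frac{1}{2},\sigma\times\pi)$, which finishes the argument.

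The main obstacle is handling the case where the residue $\Eisen_{\tau\otimes\sigma^\prime}$ vanishes (or more precisely, establishing that we may reduce to a situation where the reciprocal non-vanishing result applies). I would need to argue directly with the global zeta integral: by Proposition~\ref{prop-GGP-local-integral-non-zero-at-center}, the non-vanishing of $\mathcal{B}^{\psi_{k-\ell-1,-a}}(\phi_\pi,\phi_\sigma)$ produces factorizable data for which $\mathcal{Z}_S^*(\phi_\pi,f_{\tau,\sigma^\prime,s})$ is non-zero at $s=1$; by Lemma~\ref{lemma-GGP-L-function-pole}, $\LL(s,\tau,\pi,\sigma;\rho)$ has a pole of order $r$ at $s=1$; hence via \eqref{eq-GGP-NormalizedGlobalIntegral} the global integral $\mathcal{Z}(\phi_\pi,f_{\tau,\sigma^\prime,s}) = \mathcal{B}^{\psi_{\ell,a}}(E(\cdot,f_{\tau,\sigma^\prime,s}),\phi_\pi)$ has a pole of order exactly $r$ at $s=1$, forcing $E(\cdot,f_{\tau,\sigma^\prime,s})$ to have a pole of order $r$ there — and in particular $\Eisen_{\tau\otimes\sigma^\prime}\neq 0$ automatically. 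So the vanishing case never arises under our hypothesis, and the logical loop closes. The remaining care is purely bookkeeping: tracking the twist $\sigma^\prime$ versus $\sigma$, the identification $C_G^0 = C_{G_0}^0 = \GL_1$ forcing the central character compatibility $\omega_\pi\omega_\sigma = 1$ throughout, and confirming that the parity-dependent choice of $\rho$ in \eqref{eq-rho} matches the pole condition in Theorem~\ref{thm-functoriality} so that $L(s,\tau_i,\rho\otimes\omega_\sigma^{-1})$ genuinely has its pole at $s=1$.
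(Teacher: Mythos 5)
Your proposal is correct and follows essentially the same route as the paper's own argument: functorial transfer of $\pi$ to define $\tau$, the pole-of-order-$r$ argument for the Eisenstein series at $s=1$ via Propositions~\ref{prop-GGP-NormalizedLocalIntegral}, \ref{prop-GGP-local-integral-non-zero-at-center} and Lemma~\ref{lemma-GGP-L-function-pole}, then Proposition~\ref{prop-GGP-EisensteinSeriesPoles}(i) and (iii) to extract holomorphy and non-vanishing of $L(\frac{1}{2},\sigma\times\tau\otimes\omega_\sigma^{-1})=L(\frac{1}{2},\sigma\times\pi)$. You also correctly observe that the hypothesis ``$\Eisen_{\tau\otimes\sigma^\prime}\neq 0$'' in Theorem~\ref{thm-GGP-Reciprocal} is not a genuine restriction here, since the underlying zeta-integral argument (the converse direction of Section~\ref{subsection-GGP-Reciprocal-converse}) derives the pole of order $r$ — and hence the non-vanishing of the iterated residue — directly from the non-vanishing of the Bessel period, so the paper's invocation of ``the same proof as in Section~\ref{subsection-GGP-Reciprocal-converse}'' rather than the theorem's statement matches your final paragraph exactly.
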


\begin{proof}
Let $\Pi$ be the functorial transfer of $\pi$ and let $\tau=\Pi\otimes\omega_\pi^{-1}$. By the same proof as in Section~\ref{subsection-GGP-Reciprocal-converse}, we see that the Eisenstein series $E(\cdot, f_{\tau,\sigma^\prime,s})$ must have a pole at $s=1$ of order exactly equal to $r$. By Proposition~\ref{prop-GGP-EisensteinSeriesPoles}, we conclude that $L(s,\sigma^\prime\times\tau \otimes \omega_\sigma^{-1})$ is holomorphic and non-zero at $s=\frac{1}{2}$. Note that
\begin{equation*}
L(s, \sigma\times\pi)=L(s, \sigma\times \Pi)=L(s, \sigma\times \tau\otimes \omega_\pi)=L(s,\sigma\times\tau \otimes \omega_\sigma^{-1})=L(s,\sigma^\prime\times\tau \otimes \omega_\sigma^{-1}).	
\end{equation*}
Thus $L(s,\sigma\times\pi)$ is holomorphic and non-zero at $s=\frac{1}{2}$, as desired. This finishes the proof of the theorem.
\end{proof}

\begin{remark}
The assumption that $\pi$ and $\sigma$ are generic in Theorem~\ref{thm-global-GGP} is used to apply the result of Asgari and Shahidi \cite{AsgariShahidi2014} on the image of the Langlands functorial transfer for generic representations of $\GSpin$ groups. Recently the Langlands functorial transfer for any cuspidal representations, generic or not, of the split $\GSpin$ groups has been established by Cai, Friedberg and Kaplan \cite{CaiFriedbergKaplan2024}. We expect that Theorem~\ref{thm-global-GGP} can be extended to non-generic representations on split $\GSpin$ groups. 
\end{remark}

\begin{remark}
The other direction of the global Gan-Gross-Prasad conjecture for $\GSpin$ groups is more delicate. We hope to report our progress on the other direction in the future.	
\end{remark}

\bibliographystyle{alpha}
\bibliography{References}

\begin{thebibliography}{KMSW14}

\bibitem[ACS24]{AsgariCogdellShahidi2024}
Mahdi Asgari, James~W. Cogdell, and Freydoon Shahidi.
\newblock Rankin-selberg $l$-functions for $\mathrm{GSpin} \times \mathrm{GL}$
  groups.
\newblock {\em arXiv preprint, available at
  \url{https://arxiv.org/abs/2409.17323}}, 2024.

\bibitem[Art13]{Arthur2013}
James Arthur.
\newblock {\em The endoscopic classification of representations}, volume~61 of
  {\em American Mathematical Society Colloquium Publications}.
\newblock American Mathematical Society, Providence, RI, 2013.
\newblock Orthogonal and symplectic groups.

\bibitem[AS06]{AsgariShahidi2006}
Mahdi Asgari and Freydoon Shahidi.
\newblock Generic transfer for general spin groups.
\newblock {\em Duke Math. J.}, 132(1):137--190, 2006.

\bibitem[AS14]{AsgariShahidi2014}
Mahdi Asgari and Freydoon Shahidi.
\newblock Image of functoriality for general spin groups.
\newblock {\em Manuscripta Math.}, 144(3-4):609--638, 2014.

\bibitem[BFF97]{BumpFriedbergFurusawa1997}
Daniel Bump, Solomon Friedberg, and Masaaki Furusawa.
\newblock Explicit formulas for the {W}aldspurger and {B}essel models.
\newblock {\em Israel J. Math.}, 102:125--177, 1997.

\bibitem[BPCZ22]{BPCZ2022}
Rapha\"el Beuzart-Plessis, Pierre-Henri Chaudouard, and Micha\l Zydor.
\newblock The global {G}an-{G}ross-{P}rasad conjecture for unitary groups: the
  endoscopic case.
\newblock {\em Publ. Math. Inst. Hautes \'Etudes Sci.}, 135:183--336, 2022.

\bibitem[BPLZZ21]{BPLZZ2021}
Rapha\"el Beuzart-Plessis, Yifeng Liu, Wei Zhang, and Xinwen Zhu.
\newblock Isolation of cuspidal spectrum, with application to the
  {G}an-{G}ross-{P}rasad conjecture.
\newblock {\em Ann. of Math. (2)}, 194(2):519--584, 2021.

\bibitem[CFGK19]{CaiFriedbergGinzburgKaplan2019}
Yuanqing Cai, Solomon Friedberg, David Ginzburg, and Eyal Kaplan.
\newblock Doubling constructions and tensor product {$L$}-functions: the linear
  case.
\newblock {\em Invent. Math.}, 217(3):985--1068, 2019.

\bibitem[CFK24]{CaiFriedbergKaplan2024}
Yuanqing Cai, Solomon Friedberg, and Eyal Kaplan.
\newblock Doubling constructions: global functoriality for non-generic cuspidal
  representations.
\newblock {\em Ann. of Math. (2)}, 200(3):893--966, 2024.

\bibitem[CPS94]{CogdellPS1994}
J.~W. Cogdell and I.~I. Piatetski-Shapiro.
\newblock Converse theorems for {${\rm GL}_n$}.
\newblock {\em Inst. Hautes \'Etudes Sci. Publ. Math.}, (79):157--214, 1994.

\bibitem[CPS99]{CogdellPS1999}
J.~W. Cogdell and I.~I. Piatetski-Shapiro.
\newblock Converse theorems for {${\rm GL}_n$}. {II}.
\newblock {\em J. Reine Angew. Math.}, 507:165--188, 1999.

\bibitem[DM78]{DixmierMalliavin1978}
Jacques Dixmier and Paul Malliavin.
\newblock Factorisations de fonctions et de vecteurs ind\'efiniment
  diff\'erentiables.
\newblock {\em Bull. Sci. Math. (2)}, 102(4):307--330, 1978.

\bibitem[Emo20]{Emory2020}
Melissa Emory.
\newblock On the global {G}an-{G}ross-{P}rasad conjecture for general spin
  groups.
\newblock {\em Pacific J. Math.}, 306(1):115--151, 2020.

\bibitem[FM17]{FurusawaMorimoto2017}
Masaaki Furusawa and Kazuki Morimoto.
\newblock On special {B}essel periods and the {G}ross-{P}rasad conjecture for
  {${\rm SO}(2n+1)\times{\rm SO}(2)$}.
\newblock {\em Math. Ann.}, 368(1-2):561--586, 2017.

\bibitem[Fur93]{Furusawa1993}
Masaaki Furusawa.
\newblock On {$L$}-functions for {${\rm GSp}(4)\times {\rm GL}(2)$} and their
  special values.
\newblock {\em J. Reine Angew. Math.}, 438:187--218, 1993.

\bibitem[GGP12]{GanGrossPrasad2012}
Wee~Teck Gan, Benedict~H. Gross, and Dipendra Prasad.
\newblock Symplectic local root numbers, central critical {$L$} values, and
  restriction problems in the representation theory of classical groups.
\newblock Number 346, pages 1--109. 2012.
\newblock Sur les conjectures de Gross et Prasad. I.

\bibitem[GJR04]{GinzburgJiangRallis2004}
David Ginzburg, Dihua Jiang, and Stephen Rallis.
\newblock On the nonvanishing of the central value of the {R}ankin-{S}elberg
  {$L$}-functions.
\newblock {\em J. Amer. Math. Soc.}, 17(3):679--722, 2004.

\bibitem[GJR05]{GinzburgJiangRallis2005}
David Ginzburg, Dihua Jiang, and Stephen Rallis.
\newblock On the nonvanishing of the central value of the {R}ankin-{S}elberg
  {$L$}-functions. {II}.
\newblock In {\em Automorphic representations, {$L$}-functions and
  applications: progress and prospects}, volume~11 of {\em Ohio State Univ.
  Math. Res. Inst. Publ.}, pages 157--191. de Gruyter, Berlin, 2005.

\bibitem[GJR09]{GinzburgJiangRallis2009}
David Ginzburg, Dihua Jiang, and Stephen Rallis.
\newblock Models for certain residual representations of unitary groups.
\newblock In {\em Automorphic forms and {$L$}-functions {I}. {G}lobal aspects},
  volume 488 of {\em Contemp. Math.}, pages 125--146. Amer. Math. Soc.,
  Providence, RI, 2009.

\bibitem[GP92]{GrossPrasad1992}
Benedict~H. Gross and Dipendra Prasad.
\newblock On the decomposition of a representation of {${\rm SO}_n$} when
  restricted to {${\rm SO}_{n-1}$}.
\newblock {\em Canad. J. Math.}, 44(5):974--1002, 1992.

\bibitem[GP94]{GrossPrasad1994}
Benedict~H. Gross and Dipendra Prasad.
\newblock On irreducible representations of {${\rm SO}_{2n+1}\times{\rm
  SO}_{2m}$}.
\newblock {\em Canad. J. Math.}, 46(5):930--950, 1994.

\bibitem[GPSR97]{GinzburgPSRallis1997}
D.~Ginzburg, I.~Piatetski-Shapiro, and S.~Rallis.
\newblock {$L$} functions for the orthogonal group.
\newblock {\em Mem. Amer. Math. Soc.}, 128(611):viii+218, 1997.

\bibitem[GRS11]{GinzburgRallisSoudry2011}
David Ginzburg, Stephen Rallis, and David Soudry.
\newblock {\em The descent map from automorphic representations of {${\rm
  GL}(n)$} to classical groups}.
\newblock World Scientific Publishing Co. Pte. Ltd., Hackensack, NJ, 2011.

\bibitem[HS16]{HundleySayag2016}
Joseph Hundley and Eitan Sayag.
\newblock Descent construction for {GS}pin groups.
\newblock {\em Mem. Amer. Math. Soc.}, 243(1148):v+124, 2016.

\bibitem[JSZ]{JiangSoudryZhang}
Dihua Jiang, David Soudry, and Lei Zhang.
\newblock The unramified computation of rankin- selberg integrals for
  quasisplit classical groups: Bessel model case.
\newblock in preparation.

\bibitem[JZ14]{JiangZhang2014}
Dihua Jiang and Lei Zhang.
\newblock A product of tensor product {$L$}-functions of quasi-split classical
  groups of {H}ermitian type.
\newblock {\em Geom. Funct. Anal.}, 24(2):552--609, 2014.

\bibitem[JZ20]{JiangZhang2020Annals}
Dihua Jiang and Lei Zhang.
\newblock Arthur parameters and cuspidal automorphic modules of classical
  groups.
\newblock {\em Ann. of Math. (2)}, 191(3):739--827, 2020.

\bibitem[KLL23]{KaplanLauLiu2023}
Eyal Kaplan, Jing~Feng Lau, and Baiying Liu.
\newblock Local descent to quasi-split even general spin groups.
\newblock {\em Math. Z.}, 303(3):Paper No. 69, 29, 2023.

\bibitem[KMSW14]{KalethaMinguezShinWhite}
Tasho Kaletha, Alberto Minguez, Sug~Woo Shin, and Paul-James White.
\newblock Endoscopic classification of representations: Inner forms of unitary
  groups.
\newblock {\em arXiv preprint, available at
  \url{https://arxiv.org/abs/1409.3731}}, 2014.

\bibitem[Lan71]{Langlands1971EulerProducts}
Robert~P. Langlands.
\newblock {\em Euler products}, volume~1 of {\em Yale Mathematical Monographs}.
\newblock Yale University Press, New Haven, Conn.-London, 1971.
\newblock A James K. Whittemore Lecture in Mathematics given at Yale
  University, 1967.

\bibitem[Liu14]{Liu2014}
Yifeng Liu.
\newblock Relative trace formulae toward {B}essel and {F}ourier-{J}acobi
  periods on unitary groups.
\newblock {\em Manuscripta Math.}, 145(1-2):1--69, 2014.

\bibitem[MgW95]{MoglinWaldspurger1995}
C.~M\oe~glin and J.-L. Waldspurger.
\newblock {\em Spectral decomposition and {E}isenstein series}, volume 113 of
  {\em Cambridge Tracts in Mathematics}.
\newblock Cambridge University Press, Cambridge, 1995.
\newblock Une paraphrase de l'\'Ecriture [A paraphrase of Scripture].

\bibitem[Mok15]{Mok2015}
Chung~Pang Mok.
\newblock Endoscopic classification of representations of quasi-split unitary
  groups.
\newblock {\em Mem. Amer. Math. Soc.}, 235(1108):vi+248, 2015.

\bibitem[Sha10]{Shahidi2010book}
Freydoon Shahidi.
\newblock {\em Eisenstein series and automorphic {$L$}-functions}, volume~58 of
  {\em American Mathematical Society Colloquium Publications}.
\newblock American Mathematical Society, Providence, RI, 2010.

\bibitem[Sou93]{Soudry1993}
David Soudry.
\newblock Rankin-{S}elberg convolutions for {${\rm SO}_{2l+1}\times{\rm
  GL}_n$}: local theory.
\newblock {\em Mem. Amer. Math. Soc.}, 105(500):vi+100, 1993.

\bibitem[Sou17]{Soudry2017Israel}
David Soudry.
\newblock The unramified computation of {R}ankin-{S}elberg integrals expressed
  in terms of {B}essel models for split orthogonal groups: {P}art {I}.
\newblock {\em Israel J. Math.}, 222(2):711--786, 2017.

\bibitem[Sou18]{Soudry2018JNT}
David Soudry.
\newblock The unramified computation of {R}ankin-{S}elberg integrals expressed
  in terms of {B}essel models for split orthogonal groups: {P}art {II}.
\newblock {\em J. Number Theory}, 186:62--102, 2018.

\bibitem[Xue14]{Xue2014}
Hang Xue.
\newblock The {G}an-{G}ross-{P}rasad conjecture for {${\rm U}(n)\times{\rm
  U}(n)$}.
\newblock {\em Adv. Math.}, 262:1130--1191, 2014.

\bibitem[Yan25]{Yan2025}
Pan Yan.
\newblock Uniqueness of bessel models for $\mathrm{GSpin}$ groups.
\newblock {\em arXiv preprint, available at
  \url{https://arxiv.org/abs/2503.20116}}, 2025.

\bibitem[YZ23]{YanZhang2023}
Pan Yan and Qing Zhang.
\newblock Product of rankin-selberg convolutions and a new proof of jacquet's
  local converse conjecture.
\newblock {\em arXiv preprint, available at
  \url{https://arxiv.org/abs/2309.10445}}, 2023.

\bibitem[Zha14]{ZhangWei2014}
Wei Zhang.
\newblock Fourier transform and the global {G}an-{G}ross-{P}rasad conjecture
  for unitary groups.
\newblock {\em Ann. of Math. (2)}, 180(3):971--1049, 2014.

\end{thebibliography}

\end{document}